\theoremstyle{plain}
\theoremstyle{definition}
\newenvironment{customthm}[1]
  {\innercustomthm}
  {\endinnercustomthm}
\newtheorem{thm}{Theorem}
\newtheorem{prop}[thm]{Proposition}
\newtheorem{lem}[thm]{Lemma}
\newtheorem{rem}[thm]{Remark}
\newtheorem{cor}[thm]{Corollary}
\newtheorem{exa}[thm]{Example}
\newtheorem{defn}[thm]{Definition}
\newtheorem{con}[thm]{Conjecture}
\DeclareMathOperator{\Geo}{Geo}
\DeclareMathOperator{\OptGeo}{OptGeo}
\DeclareMathOperator{\Lip}{Lip}
\DeclareMathOperator{\LIP}{LIP}
\DeclareMathOperator{\Ch}{Ch}
\begin{document}

\begin{center}
 \Large On fundamental groups of $RCD$ spaces

\normalsize Jaime Santos-Rodr\'iguez, Sergio Zamora-Barrera

jaime.santsr@gmail.com, zamora@mpim-bonn.mpg.de\\

\end{center}

\vspace{-0.4cm}

\begin{abstract}
We obtain results about fundamental groups of $RCD^{\ast}(K,N)$ spaces previously known under additional conditions such as smoothness or lower sectional curvature bounds. For fixed $K \in \mathbb{R}$, $N \in [1,\infty )$, $D > 0 $, we show the following,
\begin{itemize}
    \item There is $C>0$ such that for each $RCD^{\ast}(K,N)$ space $X$ of diameter $\leq D$, its fundamental group $\pi_1(X)$ is generated by at most $C$ elements.
    \item There is $\tilde{D}>0$ such that for each $RCD^{\ast}(K,N)$ space $X$ of diameter $\leq D$ with compact universal cover $\tilde{X}$, one has diam$(\tilde{X})\leq \tilde{D}$.
    \item If a sequence of  $RCD^{\ast}(0,N)$ spaces $X_i$ of diameter $\leq D$ and rectifiable  dimension $n$ is such that their universal covers $\tilde{X}_i$ converge in the pointed Gromov--Hausdorff sense to a space $X$ of rectifiable dimension $n$, then there is $C>0$ such that for each $i$, the fundamental group $\pi_1(X_i)$ contains an abelian subgroup of index $\leq C$.
    \item If a sequence of  $RCD^{\ast}(K,N)$ spaces $X_i$ of diameter $\leq D$ and rectifiable dimension $n$ is such that their universal covers $\tilde{X}_i$ are compact and converge in the pointed Gromov--Hausdorff sense to a space $X$ of rectifiable dimension $n$, then there is $C>0$ such that for each $i$, the  fundamental group $\pi_1(X_i)$ contains an abelian subgroup of index $\leq C$.
    \item If a sequence of $RCD^{\ast}(K,N)$ spaces $X_i$ with first Betti number $\geq r$ and rectifiable dimension $ n$ converges in the Gromov--Hausdorff sense to a compact space $X$ of rectifiable dimension $m$, then the first Betti number of $X$ is at least $r + m - n$.
\end{itemize}
The main tools are the splitting theorem by Gigli, the splitting blow-up property by Mondino--Naber, the semi-locally-simple-connectedness of $RCD^{\ast}(K,N)$ spaces by Wang, the isometry group structure by Guijarro and the first author, and the structure of approximate subgroups by Breuillard--Green--Tao.
\end{abstract}


\section{Introduction}\label{sec:intro}

For $K \in \mathbb{R},$ $ N \in [1, \infty )$, the class of $RCD^{\ast}(K,N) $ spaces consists of proper  metric measure spaces that satisfy a synthetic condition of having Ricci curvature bounded below by $K$ and dimension bounded above by $N$. This class is closed under measured Gromov--Hausdorff convergence and contains the class of complete Riemannian manifolds of Ricci curvature $\geq K$ and dimension $\leq N$.

Building upon work of Mondino, Pan, Wei, and himself (\cite{mondino-wei}, \cite{pan-wang}, \cite{pan-wei}, \cite{wang}), Wang recently proved that $RCD^{\ast}(K,N)$ spaces are semi-locally-simply-connected \cite{wang-rcd}.  All the results in this paper were originally proved for the revised fundamental group, defined as the group of deck transformations of the universal cover, but thanks to the work of Wang, we can state them in terms of fundamental groups.

It was shown by Sosa, and independently by Guijarro and the first author that the isometry group of an arbitrary $RCD^{\ast}(K,N)$ space $(X,d,\mathfrak{m})$ is a Lie group. As noted by Cheeger--Colding \cite{cheeger-colding}, one could exploit this property to draw results about the groups $\pi_1(X)$. This is what we do throughout this paper.

$RCD^{\ast}(K,N)$ spaces have a well defined notion of dimension called \textit{rectifiable dimension} (see Theorem \ref{bs-dimension}), which is always an integer between 0 and $N$, and is lower semi-continuous with respect to pointed measured Gromov--Hausdorff convergence (see Theorem \ref{dim-semicont-2}).

For $K \in \mathbb{R},$ $ N \in [1, \infty ) $, $D>0$, we will denote by $RCD^{\ast}(K,N; D)$ the class of compact $RCD^{\ast}(K,N)$ spaces of diameter $\leq D$. We write $C(\alpha , \beta, \gamma )$ to denote a constant that depends only on the quantities $\alpha , \beta, \gamma$.

\subsection{Finite generatedness and compact universal covers}

For a compact semi-locally-simply-connected geodesic space $X$, its fundamental group is always finitely generated  (see \cite{mondello-mondino-perales}, Proposition 2.25). For the class of compact $N$-dimensional smooth Riemannian manifolds of Ricci curvature $\geq K$ and diameter $\leq D$, Kapovitch--Wilking showed that the number of generators of the fundamental group can be controlled in terms of $K$, $N$, and $D$ \cite{kapovitch-wilking}. We extend this result to the non-smooth case.

\begin{thm}\label{fg}
 Let $(X,d,\mathfrak{m})$ be an $RCD^{\ast}(K,N;D)$ space. Then $\pi_1(X)$ can be generated by $\leq C(N,K,D)$ elements.
\end{thm}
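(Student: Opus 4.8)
The plan is to adapt the Kapovitch–Wilking argument to the RCD setting, using a short-basis (equivalently, a Gromov-style ball-packing) argument together with a compactness/contradiction scheme. First I would lift to the universal cover $\tilde X$ with its deck group $\Gamma = \pi_1(X)$ acting by isometries, and choose a short generating set: enumerate generators $\gamma_1, \gamma_2, \dots$ of $\Gamma$ greedily so that $d(\tilde p, \gamma_{k+1}\tilde p)$ is minimal among elements not in $\langle \gamma_1, \dots, \gamma_k\rangle$, where $\tilde p$ is a fixed lift of a basepoint $p \in X$. The classical point is that for such a short basis the ``triangles are fat'': for $i \neq j$ the angle at $\tilde p$ between the geodesics to $\gamma_i \tilde p$ and $\gamma_j \tilde p$ is bounded below, hence in a smooth manifold only $C(N,K,D)$ such generators can exist by Bishop–Gromov volume comparison applied to disjoint cones. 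In the RCD setting I cannot speak of angles pointwise, so instead I would run this as a contradiction argument: suppose no uniform bound $C(N,K,D)$ exists, so there are $RCD^{\ast}(K,N;D)$ spaces $X_i$ whose short bases have length $\to \infty$.

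Next, by precompactness of $RCD^{\ast}(K,N;D)$ under measured Gromov–Hausdorff convergence (the class is closed under mGH limits and uniformly totally bounded), pass to a subsequence $X_i \to X_\infty$, an $RCD^{\ast}(K,N)$ space with $\operatorname{diam} \le D$. The key is to analyze the equivariant/pointed limit of the universal covers $(\tilde X_i, \tilde p_i, \Gamma_i)$. Here I would invoke the Mondino–Naber splitting blow-up property together with Gigli's splitting theorem: on the limit $\tilde X_\infty$, the presence of infinitely many ``independent'' short generators forces, after a blow-up at $\tilde p_\infty$, an extra Euclidean line on which a limiting group acts, and iterating produces more $\mathbb{R}$-factors than the rectifiable dimension permits — giving a contradiction because the rectifiable dimension is an integer $\le N$ and is lower semicontinuous (Theorems \ref{bs-dimension}, \ref{dim-semicont-2}). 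Concretely, each time one splits off an $\mathbb{R}$ one drops the ``effective dimension'' available for further independent generators; since the tangent cone structure of RCD spaces is controlled, only boundedly many independent directions can appear, and a Gromov–Margulis-type argument (short loops generate a nilpotent-by-finite subgroup via the structure of approximate subgroups, Breuillard–Green–Tao) then bounds the number of generators needed.

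A cleaner route that avoids the full blow-up machinery: use the isometry group of $\tilde X_\infty$ being a Lie group (Guijarro–Santos-Rodríguez / Sosa), so the limit group $G_\infty = \overline{\lim \Gamma_i}$ is a Lie group acting on $\tilde X_\infty$ with $\tilde X_\infty / G_\infty$ compact of diameter $\le D$; a Lie group acting cocompactly with diameter bound is generated (as a topological group, hence the relevant discrete approximants are generated) by boundedly many elements in terms of its dimension, which is in turn bounded by $N$ via the rectifiable-dimension estimate. Then a standard ``generators of the limit lift back to generators of $\Gamma_i$ for large $i$'' argument — using that $X_i$ is semi-locally-simply-connected (Wang) so that short loops in $X_i$ that are nullhomotopic in a coarser scale are actually trivial — produces the desired bound $C(N,K,D)$ on the number of generators of $\pi_1(X_i)$, contradicting the assumption.

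I expect the main obstacle to be the last ``lifting back'' step: making rigorous that a bounded generating set of the limiting configuration $(\tilde X_\infty, G_\infty)$ yields a bounded generating set of $\pi_1(X_i)$ for large $i$. This requires an equivariant Gromov–Hausdorff convergence of the triples $(\tilde X_i, \tilde p_i, \Gamma_i)$ with good control on how group elements converge, plus a stability statement: short generators of $\Gamma_i$ either converge to a generator of $G_\infty$ or die in the limit, and the ones that die can be absorbed using the (quantitative form of) semi-local simple connectedness. Handling the possibility that $\Gamma_i$ has elements of very small translation length at $\tilde p_i$ — i.e. a genuine ``collapsing'' phenomenon — is where the splitting blow-up property and the Breuillard–Green–Tao structure theorem do the essential work, exactly as in Kapovitch–Wilking; transplanting that part to RCD spaces, where one only has the splitting theorem and blow-up property rather than Toponogov comparison, is the technical heart of the argument.
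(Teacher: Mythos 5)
Your first route is in the right spirit and matches the paper's strategy in outline: set up a contradicting sequence of $RCD^{\ast}(K,N;D)$ spaces whose short bases (with respect to a short basis at a lift of a basepoint) have length $\to \infty$, pass to a pmGH/equivariant limit, and use Mondino--Naber blow-up together with Gigli's splitting theorem in a reverse induction on the rectifiable dimension of the limit of the quotients, terminating because that dimension is bounded by $N$. This is essentially Theorem \ref{short-short} in the paper, followed by Lemma \ref{gen}.

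However, there is a genuine gap at the heart of the induction step, and it is the part you flag as ``the technical heart.'' Saying that ``infinitely many independent short generators force an extra Euclidean line'' does not by itself give the dimension increase. The paper's mechanism is more delicate: after Mondino--Naber gives a good point $q_i$, the blow-up scale $\alpha_i$ is chosen so that $1/\alpha_i$ lies in the spectrum $\sigma(\Gamma_i, \lambda_i X_i, q_i^{\prime})$ of the group action (i.e.\ there is a short-basis element at exactly that scale). After rescaling, $1 \in \sigma(\Gamma_i, \alpha_i\lambda_i X_i, q_i^{\prime})$. If the blown-up quotient did not gain a dimension, Proposition \ref{gigli-corollary} would force the $\Gamma$-orbits in the limit to be precisely the $Z$-fibers of a splitting $\mathbb{R}^m \times Z$, so that $\sigma(\Gamma) = \{0\}$ by Remark \ref{continuous-spectrum}; but this contradicts the spectrum continuity of Proposition \ref{spec-cont}. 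You also need Proposition \ref{long-short-basis} (the Bishop--Gromov packing estimate for short-basis elements in a fixed scale range) to guarantee that blowing up preserves the ``contradicting sequence'' property, which you gesture at via ``ball-packing'' but do not carry through. Without the spectrum-tracking and the careful choice of $\alpha_i$, the argument that the quotient dimension strictly increases does not close.

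Two further remarks. First, Breuillard--Green--Tao plays no role in Theorem \ref{fg}; it is used for Theorem \ref{diam} and beyond, so invoking it here is a red herring. Second, your proposed ``cleaner route'' (bound the generators of the limiting Lie group $G_\infty$ and lift back to $\Gamma_i$) does not resolve the obstacle you identify: elements of $\Gamma_i$ with translation length $\to 0$ at $\tilde p_i$ are exactly the ones that do not converge to anything in $G_\infty$, and semi-local simple connectedness gives no quantitative control on how many such elements a short basis can have. The Mondino--Naber / spectrum argument is precisely what replaces Toponogov comparison in handling that collapsed part, and it is what the proof must actually supply.
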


 In the particular case of spaces with a lower sectional curvature bound, there is an elementary proof of Theorem \ref{fg}, and one could get a simple explicit expression of $C$ in terms of $K,$ $ N,$ and  $D$ (see \cite{gromov-af}, Section 2). 

Kapovitch--Wilking showed that if a compact $N$-dimensional smooth Riemannian manifold has Ricci curvature $\geq K$, diameter $\leq D$, and finite fundamental group, the diameter of its universal cover is bounded above by a constant depending on $K$, $N$, and $D$ \cite{kapovitch-wilking}. We extend this result to the non-smooth case.

\begin{thm}\label{diam}
 Let $(X,d,\mathfrak{m})$ be an $RCD^{\ast}(K,N;D)$ space. If the universal cover $(\tilde{X},\tilde{d},\tilde{\mathfrak{m}})$ is compact, then diam$( \tilde{X} )\leq \tilde{D}( N,K, D)$.
\end{thm}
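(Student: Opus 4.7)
The plan is to argue by contradiction: assume there exist $(X_i, d_i, \mathfrak{m}_i) \in RCD^{\ast}(K,N;D)$ with compact universal covers $\tilde{X}_i$ but $\diam(\tilde{X}_i) \to \infty$. Since $\tilde{X}_i$ is compact, $\Gamma_i := \pi_1(X_i)$ is finite. The first step is to apply equivariant pointed Gromov--Hausdorff compactness to extract, along a subsequence, an equivariant limit $(\tilde{X}_i, \tilde{p}_i, \Gamma_i) \to (Y, y_0, G)$ together with $X_i \to X_\infty$, where $Y$ is a pointed $RCD^{\ast}(K,N)$ space and $G \leq \Iso(Y)$ is a closed subgroup acting cocompactly with quotient isometric to $X_\infty$; by the Guijarro--Santos-Rodr\'iguez structure theorem, $G$ is a Lie group.

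Because $\diam(\tilde{X}_i) \to \infty$ the space $Y$ is unbounded, so the cocompact $G$-action produces a geodesic line in $Y$ via the standard construction of connecting $y_0$ to points escaping to infinity, translating back by $G$ to keep midpoints near $y_0$, and extracting a sublimit. Iterating Gigli's splitting theorem then yields an isometric splitting $Y \cong \mathbb{R}^k \times Y'$ with $k \geq 1$ and $Y'$ line-free. A finite-index subgroup $G_0 \leq G$ preserves the splitting and projects via a homomorphism $\tau \colon G_0 \to \Iso(\mathbb{R}^k)$ whose image acts cocompactly on $\mathbb{R}^k$.

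Next I would argue that $G$ is discrete and in fact equals $\pi_1(X_\infty)$. Each $\tilde{X}_i$ is simply connected (the universal cover exists by Wang), and Wang's semi-locally-simple-connectedness of $X_\infty$ forbids any nontrivial element of $G$ from arising as the limit of $\gamma_i \in \Gamma_i$ with $d(\tilde{p}_i, \gamma_i \tilde{p}_i) \to 0$, since such a limit would correspond to arbitrarily short essential loops in $X_\infty$. Discreteness makes the $G$-action on $Y$ free, proper, and cocompact, so $Y = \tilde{X}_\infty$ and $G = \pi_1(X_\infty)$. Bieberbach's theorem applied to $\tau(G_0)$ yields a lattice $\mathbb{Z}^k \leq G$, so $\pi_1(X_\infty)$ is infinite.

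The final contradiction would come from a Sormani--Wei-type stability of the fundamental group adapted to the $RCD$ setting: for $i$ large, $\Gamma_i$ surjects onto $\pi_1(X_\infty)$, forcing the latter to be finite. The hardest parts will be rigorously establishing the discreteness of $G$ via Wang's semi-locally-simple-connectedness and adapting Sormani--Wei $\pi_1$-stability to $RCD^{\ast}(K,N)$ spaces; this is presumably where the Mondino--Naber splitting blow-up property and the Breuillard--Green--Tao structure of approximate subgroups listed in the abstract enter, providing the Margulis-type effective estimates on the short-displacement subgroups of $\Gamma_i$ needed to control convergence of the finite groups $\Gamma_i$ onto the discrete limit $G$.
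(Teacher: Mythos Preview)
Your setup matches the paper's: assume a contradicting sequence, pass to an equivariant limit $(\tilde{X}_i,p_i,\pi_1(X_i))\to(Y,p,\Gamma)$ with $\Gamma\leq\Iso(Y)$ a Lie group, and observe $\Gamma$ is non-compact since $Y$ is non-compact and $Y/\Gamma$ is compact. But from this point on your strategy diverges from the paper's and contains a genuine gap.

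The gap is the claim that $G$ is discrete. Your argument is that a nontrivial $g\in G$ arising as a limit of $\gamma_i\in\Gamma_i$ with $d(\tilde p_i,\gamma_i\tilde p_i)\to 0$ would produce arbitrarily short essential loops in $X_\infty$, contradicting semi-local simple connectedness. This does not work: the loops $\gamma_i$ are essential in $X_i$, not in $X_\infty$, and their images in $X_\infty$ can very well be null-homotopic even though the limiting isometry $g$ of $Y$ is nontrivial. Semi-local simple connectedness of $X_\infty$ says nothing about isometries of $Y$. In fact, the paper explicitly lists as an open conjecture the statement that limits of universal covers of $RCD^\ast(K,N;D)$ spaces are simply connected; your plan essentially assumes (a form of) this conjecture. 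Without discreteness, the identification $G=\pi_1(X_\infty)$ collapses, and so does the Bieberbach/Sormani--Wei endgame.

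The paper's route avoids this entirely. It never asserts that $\Gamma$ is discrete or that $Y$ is a universal cover. Instead it packages the equivariant convergence into a \emph{clean discrete approximation} $f_i:\pi_1(X_i)\to\Gamma$ (Corollary~\ref{clean-uc}) and proves a purely group-theoretic result (Theorem~\ref{infinite}): if a non-compact Lie group $\Gamma$ admits a clean discrete approximation by groups $\Gamma_i$, then $\Gamma_i$ is infinite for large $i$. The proof of Theorem~\ref{infinite} reduces, via Lemmas~\ref{clean-cci}, \ref{cci-reduction}, \ref{sc-reduction}, to the case where $\Gamma$ is a simply connected nilpotent Lie group, and then uses the Breuillard--Green--Tao nilprogression structure (Theorem~\ref{bgt}) together with the Malcev embedding (Theorem~\ref{malcev}) to exhibit $\Gamma_i/H_i$ as a quotient of $(\mathbb{Z}^r,\ast_{P_i})$ with trivial kernel on a determining set, forcing $\Gamma_i$ to be infinite. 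So the BGT machinery you guessed would appear is indeed the engine, but it is used to analyze the approximate morphisms $f_i$ directly, not to establish discreteness of the limit or $\pi_1$-stability.
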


The proof of Theorem \ref{diam} is considerably different from the one for the smooth case, which relies heavily on the $C$-nilpotency of fundamental groups of manifolds with almost non-negative Ricci curvature, unknown for $RCD^{\ast}(K,N)$ spaces. Our proof of Theorem \ref{diam} is based on the Malcev embedding theorem (see Theorem \ref{malcev}), and the structure of approximate groups by Breuillard--Green--Tao \cite{breuillard-green-tao}.

\subsection{Uniform virtual abelianness}

From the Cheeger--Gromoll splitting theorem \cite{cheeger-gromoll}, one can conclude that the fundamental group of a compact Riemannian manifold of non-negative Ricci curvature is virtually abelian. It is not known whether the index of this abelian subgroup can be bounded by a constant depending only on the dimension. Mazur--Rong--Wang showed that for smooth Riemannian manifolds of non-negative sectional curvature and diameter $\leq 1$, this index depends only on the dimension and the volume of a ball of radius $1$ in the universal cover \cite{mazur-rong-wang}.  We generalize their results to non-smooth spaces with lower Ricci curvature bounds.

\begin{thm}\label{ua-1}
 Let $(X_i,d_i,\mathfrak{m}_i)$ be a sequence of $RCD^{\ast}(0,N;D)$ spaces of rectifiable dimension $ n$ such that the sequence of universal covers $(\tilde{X}_i,\tilde{d}_i,\tilde{\mathfrak{m}}_i)$ converges in the pointed measured Gromov--Hausdorff sense to some $RCD^{\ast}(0,N)$ space of rectifiable dimension $n$. Then there is $C > 0 $ such that for each $i$ there is an abelian subgroup $A_i \leq \pi_1(X_i)$ with index  $[\pi_1(X_i): A_i] \leq C$.
\end{thm}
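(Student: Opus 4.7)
The plan is to proceed by contradiction using equivariant Gromov--Hausdorff convergence and the structure of non-negatively curved limits. Assume the conclusion fails, so after extracting a subsequence we have $\pi_1(X_i)$ with no abelian subgroup of index $\leq i$. After choosing basepoints and passing to a further subsequence, the triples $(\tilde{X}_i, \tilde{p}_i, \pi_1(X_i))$ converge equivariantly in the pointed Gromov--Hausdorff sense to some $(\tilde{X}_\infty, \tilde{p}_\infty, G_\infty)$, with $G_\infty$ a closed subgroup of $\Iso(\tilde{X}_\infty)$, hence a Lie group by the work of Guijarro and the first author. The diameter bound $\diam(X_i)\leq D$ guarantees that $G_\infty$ acts cocompactly on $\tilde{X}_\infty$ with $\diam(\tilde{X}_\infty/G_\infty)\leq D$.

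Next I would apply Gigli's splitting theorem iteratively: cocompactness of the action produces a line at every stage until none remain, yielding $\tilde{X}_\infty\cong \mathbb{R}^k\times K$ for some compact $RCD^{\ast}(0,N-k)$ space $K$ whose isometry group is compact. The de Rham--type uniqueness of the maximal Euclidean factor forces $G_\infty\leq E(k)\times \Iso(K)$. The non-collapsing hypothesis enters decisively at this step: by Bishop--Gromov and the preservation of rectifiable dimension, $|\pi_1(X_i)\cap B_R(\tilde{p}_i)|$ is bounded uniformly in $i$ for each fixed $R$, so the orbit $G_\infty\cdot \tilde{p}_\infty$ is discrete. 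Effectiveness of the $G_\infty$-action on $\tilde{X}_\infty$ then rules out any positive-dimensional identity component $G_\infty^{0}$: such a component would either fix every point of $\tilde{X}_\infty$, contradicting effectiveness, or sweep out a positive-dimensional orbit at some point, contradicting the discreteness of orbits. Therefore $G_\infty$ is discrete.

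As a discrete cocompact subgroup of $E(k)\times \Iso(K)$ with the second factor compact, $G_\infty$ has finite kernel $F:=G_\infty\cap \Iso(K)$ for the projection to $E(k)$ and crystallographic image in $E(k)$. Bieberbach's theorems then supply an abelian $\mathbb{Z}^k$-subgroup of that image of index at most a universal constant $b(k)\leq b(N)$, and pulling back one obtains an abelian subgroup of $G_\infty$ of index at most some $C_1=C_1(N,D)<\infty$.

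The last step, which I expect to be the main obstacle, is to transfer this bounded-index structure from $G_\infty$ down to the finite-stage groups $\pi_1(X_i)$. Each $\pi_1(X_i)$ is itself virtually abelian by Gigli's splitting at finite stages, but a priori its index could blow up. I would invoke Theorem \ref{fg} to get a uniform bound on the number of short generators of $\pi_1(X_i)$, and then apply the Breuillard--Green--Tao structure theorem to the approximate subgroups $\pi_1(X_i)\cap B_R(\tilde{p}_i)$. The combination of this approximate-group structure with the equivariant convergence to the discrete, virtually abelian $G_\infty$ should produce, for all large $i$, an abelian subgroup $A_i\leq \pi_1(X_i)$ of index $\leq C_1$, contradicting $[\pi_1(X_i):A_i]>i$. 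The real difficulty is ensuring that the passage between the equivariant limit and the finite stages is quantitative enough to control the index of the abelian subgroup, and this is precisely where the approximate-group machinery is essential.
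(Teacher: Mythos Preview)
Your approach has a genuine gap at the step where you claim $G_\infty$ is discrete. The assertion that Bishop--Gromov together with preservation of rectifiable dimension bounds $|\pi_1(X_i)\cap B_R(\tilde p_i)|$ uniformly in $i$ is false. Take flat tori $X_i=\mathbb{R}^n/\Lambda_i$ with $\diam(X_i)\leq D$ but with one generator of $\Lambda_i$ of length tending to $0$. Then $\tilde X_i=\mathbb{R}^n$ for all $i$, the pmGH limit of the universal covers is $\mathbb{R}^n$ (so rectifiable dimension is preserved and the hypotheses of the theorem hold), yet $|\Lambda_i\cap B_R(0)|\to\infty$ and the equivariant limit $G_\infty$ contains a one-parameter subgroup, hence is not discrete. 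Your orbit-discreteness argument therefore collapses, and with it the direct appeal to Bieberbach for $G_\infty$. The transfer step you flag as ``the main obstacle'' then has nothing to transfer.

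The paper's route is structurally different and avoids this issue. Rather than splitting the limit, it splits each finite-stage cover $\tilde X_i=Y_i\times\mathbb{R}^{m}$ via Gigli and treats the two projections of $\pi_1(X_i)$ separately. The projection to $\Iso(\mathbb{R}^m)$ is discrete because $Y_i$ is compact, so Bieberbach applies there directly for every $i$. For the projection to $\Iso(Y_i)$, the paper passes to a compact limit $Y$, embeds $\Iso(Y)$ unitarily via Peter--Weyl, and uses Kazhdan's theorem on $\varepsilon$-representations of amenable groups to straighten the almost-morphisms $\phi_i(\pi_1(X_i))\to\Iso(Y)\hookrightarrow U(H)$ into genuine representations $\rho_i$. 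The non-collapsing hypothesis enters precisely here, through Theorem~\ref{nss}: the kernels $\ker\rho_i$ are small subgroups of measure-preserving isometries of $Y_i$ (hence of $\tilde X_i$), and non-collapsing forces them to be trivial, so $\phi_i(\pi_1(X_i))$ embeds in $U(H)$ and Corollary~\ref{dense-abelian} gives a uniform abelian-index bound. Intersecting the preimages of the two abelian subgroups yields $A_i$ at each finite stage, with no transfer from a limit group required and no use of Breuillard--Green--Tao.
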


\begin{thm}\label{ua-2}
 Let $(X_i,d_i,\mathfrak{m}_i)$ be a sequence of $RCD^{\ast}(K,N;D)$ spaces of rectifiable dimension $n$ with compact universal covers $(\tilde{X}_i,\tilde{d}_i,\tilde{\mathfrak{m}}_i)$. If the sequence $(\tilde{X}_i,\tilde{d}_i,\tilde{\mathfrak{m}}_i)$ converges in the pointed measured Gromov--Hausdorff sense to some $RCD^{\ast}(K,N)$ space of rectifiable dimension $n$, then there is $C > 0 $ such that for each $i$ there is an abelian subgroup $A_i \leq \pi_1(X_i)$ with index  $[\pi_1(X_i): A_i] \leq C$.
\end{thm}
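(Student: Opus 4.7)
The plan is to argue by contradiction via equivariant Gromov--Hausdorff convergence, reducing the claim to showing that the identity component of the limit group is abelian, and then using the Breuillard--Green--Tao structure theorem to pass from the limit back to a uniform abelian subgroup of $\pi_1(X_i)$.

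Suppose the conclusion fails; passing to a subsequence we may assume the minimum index of an abelian subgroup of $\Gamma_i := \pi_1(X_i)$ tends to infinity. By Theorem~\ref{diam} we have $\diam(\tilde{X}_i) \leq \tilde{D}(N,K,D)$, so each $\Gamma_i$ is finite and each $\tilde{X}_i$ lies in $RCD^{\ast}(K,N;\tilde{D})$; by Theorem~\ref{fg} the generating rank of $\Gamma_i$ is uniformly bounded. By the theorem of Guijarro and the first author, $\Iso(X)$ is a Lie group. After a further subsequence, the free deck actions of $\Gamma_i$ on $\tilde{X}_i$ converge equivariantly (in the sense of Fukaya) to an isometric action of a closed subgroup $G \leq \Iso(X)$, and compactness of $X$ (the pmGH limit of uniformly bounded compact spaces) makes $G$ a compact Lie group with identity component $G_0$ of index $k_0 := [G:G_0] < \infty$.

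The key step, and the main obstacle, is to show $G_0$ is abelian. Unlike in Theorem~\ref{ua-1}, we cannot apply Gigli's global splitting on $X$, since $K$ may be negative. Instead one uses the Mondino--Naber splitting blow-up property: non-collapse (rectifiable dimension $n$ is preserved in the limit) forces $\mathfrak{m}$-a.e.\ tangent cone of $X$ to be $\mathbb{R}^n$. Performing an equivariant blow-up at such a regular point, rescaled so that a shortest nontrivial orbit of the approximating free actions has unit size, yields a limit action of a connected Lie group $H$ on $\mathbb{R}^n$ that is itself a pointed equivariant limit of free isometric actions. Every nontrivial compact subgroup of $\Iso(\mathbb{R}^n)$ has a fixed point, so freeness of the limit action forces $H$ to lie inside the translation subgroup; hence $H$ is abelian. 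Because the blow-up identifies the Lie algebra of $H$ with a nonzero quotient of the Lie algebra of $G_0$, the latter is also abelian, and being compact and connected, $G_0$ is a torus.

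Once $G_0$ is abelian, we extract the desired subgroup. For $\epsilon > 0$ let $\Lambda_i^{\epsilon} \subset \Gamma_i$ consist of those elements whose equivariant distance to $G_0$ is at most $\epsilon$; for $i$ large, $[\Gamma_i : \Lambda_i^{\epsilon}] \leq k_0$, and $\Lambda_i^\epsilon$ is an $O(1)$-approximate subgroup whose pairwise commutators have displacement tending to $0$ as $\epsilon \to 0$. By the Breuillard--Green--Tao structure theorem, $\Lambda_i^\epsilon$ contains a nilpotent subgroup of bounded index, and the approximate commutativity coming from abelianness of $G_0$ promotes it, for $\epsilon$ small and $i$ large, to an honest abelian subgroup $A_i \leq \Gamma_i$ with $[\Gamma_i:A_i] \leq C(K,N,D)$, contradicting our standing assumption.
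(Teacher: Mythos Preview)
Your overall architecture (contradiction, equivariant convergence to a compact Lie group $G$, then transfer back) matches the paper's, but the two central steps contain genuine gaps, and the paper's route avoids them entirely.

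\textbf{The blow-up step.} You assert that the equivariant blow-up at a regular point yields a \emph{free} action of a connected group $H$ on $\mathbb{R}^n$, and from freeness you conclude $H$ consists of translations. But freeness does not pass to equivariant limits: this is precisely the content of Theorem~\ref{nss} (no small subgroups under non-collapse), which you never invoke. Without it there is no reason the limit isotropy is trivial, and the sentence ``freeness of the limit action forces $H$ to lie inside the translation subgroup'' is unjustified. Even granting that $H$ has no nontrivial compact subgroups, the further claim that $H$ lies in the translation subgroup is false (a one-parameter screw motion in $\mathbb{R}^3$ is a counterexample), so at best you would need a separate argument that $H$ is abelian. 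Finally, the assertion that ``the blow-up identifies the Lie algebra of $H$ with a nonzero quotient of the Lie algebra of $G_0$'' is not established: your rescaling is of the approximating actions of the finite groups $\Gamma_i$, not of the limit action of $G$, and you give no mechanism linking the two Lie algebras.

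\textbf{The transfer step.} Even if $G_0$ were known to be abelian, your extraction of a uniformly bounded-index abelian subgroup of $\Gamma_i$ is only sketched. The sets $\Lambda_i^{\epsilon}$ need not be subgroups, and ``approximate commutativity promotes nilpotent to abelian'' is not a statement of the Breuillard--Green--Tao theorem; making this precise amounts to reproving Turing's theorem (Theorem~\ref{turing}) together with Theorem~\ref{nss}.

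The paper's proof is much shorter: once one has the clean discrete approximation $f_i:\pi_1(X_i)\to\Gamma$ with $\Gamma$ compact (Corollary~\ref{clean-uc}), Theorem~\ref{turing} directly produces small normal subgroups $H_i\triangleleft\pi_1(X_i)$ with $\pi_1(X_i)/H_i$ uniformly virtually abelian, and Theorem~\ref{nss} (whose proof contains the only blow-up argument needed) forces $H_i$ to be eventually trivial. No separate analysis of $G_0$ or appeal to Breuillard--Green--Tao is required here.
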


The proofs of Theorems \ref{ua-1} and \ref{ua-2} are based on Turing and Kazhdan results on discrete approximations of Lie groups (\cite{turing}, \cite{kazhdan}). Just like in the smooth case, the key step consists on showing that non-collapsing sequences of $RCD^{\ast}(K,N)$ spaces do not admit non-trivial sequences of small groups of isometries (see Theorem \ref{nss}).

\subsection{First Betti number}

The second author has shown that when a sequence of smooth Riemannian manifolds collapses under a lower Ricci curvature bound, the first Betti number cannot drop more than the dimension \cite{zamora-betti}. We extend this result to the non-smooth setting.

\begin{thm}\label{b1}
 Let $(X_i,d_i,\mathfrak{m}_i)$ be $RCD^{\ast}(K,N;D)$ spaces of rectifiable dimension $ n$ and first Betti number $\beta_1(X_i) \geq r$. If the sequence $X_i$ converges in the measured Gromov--Hausdorff sense to an $RCD^{\ast}(K,N;D)$ space $X$ of rectifiable dimension $m$, then
\[\beta_1(X) \geq r + m - n .\] 
\end{thm}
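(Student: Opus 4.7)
The plan is to adapt the equivariant blow-up argument of the second author from the smooth setting to $RCD^{\ast}(K,N)$ spaces, substituting Gigli's splitting theorem and the Mondino--Naber splitting blow-up property for their Cheeger--Colding precursors. By Theorem \ref{fg} the first Betti numbers $\beta_1(X_i)$ are uniformly bounded, so after passing to a subsequence we may assume $\beta_1(X_i) = r' \geq r$ is constant, and it suffices to prove $\beta_1(X) \geq r' + m - n$. For each $i$, let $\hat{X}_i \to X_i$ denote the Galois cover associated to the kernel of the natural surjection $\pi_1(X_i) \to H_1(X_i;\mathbb{Z})/\mathrm{tors} \cong \mathbb{Z}^{r'}$; each $\hat{X}_i$ is an $RCD^{\ast}(K,N)$ space on which the deck group $\Gamma_i \cong \mathbb{Z}^{r'}$ acts freely, cocompactly, and by measure-preserving isometries with quotient $X_i$.

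Next, I would choose basepoints $p_i \in X_i$ converging to $p \in X$, lifts $\tilde{p}_i \in \hat{X}_i$, and a short basis $\gamma_1^i,\dots,\gamma_{r'}^i$ of $\Gamma_i$ produced by the usual greedy procedure on the translation lengths $L_j^i := d(\tilde{p}_i,\gamma_j^i \tilde{p}_i)$; since $\Gamma_i$ acts cocompactly with quotient of diameter $\leq D$, the $L_j^i$ can be taken uniformly bounded in terms of $r'$ and $D$. After a further subsequence, $L_j^i \to \ell_j \in [0,\infty)$ for each $j$, and equivariant pointed Gromov--Hausdorff compactness yields a limit $(\hat{X}_i,\tilde{p}_i,\Gamma_i) \to (Y,\tilde{p},G)$ with $Y/G \cong X$ and $Y$ an $RCD^{\ast}(K,N)$ space. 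By the Lie group theorem of Guijarro and the first author, $G$ is a closed abelian Lie subgroup of $\Iso(Y)$. Let $s$ be the number of indices $j$ with $\ell_j = 0$; a standard lattice convergence argument shows that these accumulating generators sweep out the identity component $G_0$, forcing $\dim G_0 = s$, while the images of the remaining $r'-s$ generators in $G/G_0$ generate a free abelian subgroup of rank $r'-s$.

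The core of the proof is the dimension estimate $\dim Y \geq s + m$. Pick an $m$-regular point $x \in X$ (provided by Mondino--Naber) and a preimage $y \in Y$ with trivial $G_0$-stabilizer, so that the orbit $G_0 \cdot y$ is an embedded abelian Lie group of dimension $s$. The $s$ infinitesimal orbit directions yield $s$ independent lines in the tangent cone of $Y$ at $y$ (an $RCD^{\ast}(0,N)$ space by Mondino--Naber), so Gigli's splitting theorem applied iteratively peels off an $\mathbb{R}^s$-factor, while the $m$-regularity of $x$ contributes a transverse $\mathbb{R}^m$-factor. Hence $Y$ has rectifiable dimension at least $s+m$. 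Since $\hat{X}_i$ has the same rectifiable dimension $n$ as $X_i$ (covers are locally isometric) and rectifiable dimension is lower semi-continuous under pointed measured Gromov--Hausdorff convergence (Theorem \ref{dim-semicont-2}), one obtains $s + m \leq n$, i.e.\ $s \leq n - m$.

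Finally, to convert the free abelian rank-$(r'-s)$ quotient of $G/G_0$ into an abelian quotient of $\pi_1(X)$, observe that the intermediate covering $Y/G_0 \to X$ is a normal quotient by the discrete group $G/G_0$, which acts freely on a dense open set; combining this with Wang's semi-local simple connectedness and lifting to the universal cover $\tilde{X}$ produces a homomorphism $\pi_1(X) \to G/G_0$ whose image has torsion-free rank at least $r'-s$. Therefore $\beta_1(X) \geq r' - s \geq r' - (n-m) = r' + m - n \geq r + m - n$. The hardest step will be the dimension estimate above: rigorously showing that the $s$ infinitesimal $G_0$-orbit directions and the $m$ directions from the regular tangent cone of $X$ are mutually transverse and together produce a full $\mathbb{R}^{s+m}$-splitting of the tangent cone of $Y$. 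In the smooth case this transversality is immediate from Cheeger--Colding's segment inequality; in the $RCD$ setting it requires a careful equivariant use of the Mondino--Naber blow-up property together with iterated application of Gigli's splitting theorem.
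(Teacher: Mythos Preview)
Your approach differs from the paper's and has a genuine gap in the final step. The paper does not pass to an equivariant limit $(Y,G)$ of the abelian covers and then try to read off $\pi_1(X)$ from $G/G_0$. Instead it applies the Sormani--Wei theorem (Theorem~\ref{sw}) directly to the approximations $X_i\to X$: since $X$ is semi-locally-simply-connected there is $\varepsilon_1>0$ with $H_1^{\varepsilon_1}(X)=0$, so for large $i$ one has a surjection $H_1(X_i)\to H_1(X)$ with kernel $H_1^{\varepsilon_1/10}(X_i)$, and the task reduces to showing that $\mathrm{rank}\,H_1^{\varepsilon_1/10}(X_i)\leq n-m$. This is done via the Normal Subgroup Theorem (Theorem~\ref{normal-abelian}) together with a single blow-up at an $m$-regular point, where Proposition~\ref{gigli-corollary} splits the rescaled cover as $\mathbb{R}^m\times Z$ with the deck-group orbits equal to the $Z$-fibers, and Theorem~\ref{zamora-1} then bounds the rank by $\dim Z\leq n-m$. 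No step requires interpreting a limit of covers as a cover of the limit.

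The real difficulty in your outline is the last paragraph, not the transversality step you flag. The map $Y/G_0\to X=Y/G$ is a quotient by the discrete group $G/G_0$, but that action is only known to be proper, not free; ``free on a dense open set'' (which is all Lemma~\ref{no-fixed} gives) does not produce a covering map and hence does not produce a homomorphism $\pi_1(X)\to G/G_0$. That a limit of covers should itself furnish a cover of the limit is essentially the open conjecture recorded at the end of Section~1.5. Separately, your claim that the $r'-s$ non-collapsing short-basis elements remain independent in $G/G_0$ is unjustified: $\ell_j>0$ only says the limit element $g_j$ moves the basepoint, not that $g_j\notin G_0$, and there is no ``standard lattice convergence argument'' forcing $\dim G_0=s$ in this generality. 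The Sormani--Wei plus Normal Subgroup Theorem route is designed precisely to bypass both obstacles, extracting everything at finite stage from the subgroups $H_1^{\delta}(X_i)\leq H_1(X_i)$ rather than from the limit group $G$.
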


\subsection{Previously known results}

Before it was known that $RCD^{\ast}(K,N)$ spaces are semi-locally-simply-connected,  Mondino--Wei showed that they always have universal covers. In that same paper they obtain some consequences parallel to classic results. We mention a couple of them here, and refer the reader to \cite{mondino-wei} for a more complete list. 

\begin{thm}
 (Myers) Let $(X,d,\mathfrak{m})$ be a $RCD^{\ast}(K,N)$ space with $K > 0$ and $N > 1$. Then $\pi_1(X)$ is finite.
\end{thm}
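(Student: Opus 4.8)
The plan is to deduce this from the synthetic Bonnet--Myers theorem together with the fact that the universal cover of an $RCD^{\ast}(K,N)$ space is again an $RCD^{\ast}(K,N)$ space. First I would record the diameter estimate: if $K>0$ and $N>1$, the curvature-dimension inequality forces $\diam(X)\le \pi\sqrt{(N-1)/K}<\infty$, since along any geodesic the concavity estimate for the densities of the interpolating measures fails once the length exceeds this threshold.

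Next I would pass to the universal cover. By Mondino--Wei \cite{mondino-wei}, $X$ admits a universal cover $p\colon\tilde X\to X$, and the lifted metric-measure structure turns $\tilde X$ into an $RCD^{\ast}(K,N)$ space; applying the diameter estimate again yields $\diam(\tilde X)\le\pi\sqrt{(N-1)/K}<\infty$. Since $RCD^{\ast}(K,N)$ spaces are proper, a bounded one is compact, so $\tilde X$ is compact. To finish, fix $x\in X$: because $p$ is a local homeomorphism the fiber $p^{-1}(x)$ is discrete, and being closed it is finite as a closed discrete subset of the compact space $\tilde X$. Since $p$ is the universal cover, the monodromy action identifies $\pi_1(X,x)$ with $p^{-1}(x)$, so $\pi_1(X)$ is finite. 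Equivalently, $\pi_1(X)$ acts on the compact space $\tilde X$ by deck transformations, and this action is properly discontinuous, so $\{\gamma:\gamma\tilde X\cap\tilde X\neq\emptyset\}=\pi_1(X)$ is finite.

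The one step that is not elementary point-set topology is the claim that $\tilde X$ inherits the $RCD^{\ast}(K,N)$ condition, and this is where I expect the real work to be. It follows from the fact that $\tilde X$ is locally isometric, as a metric-measure space with the lifted reference measure, to $X$, combined with the local-to-global (globalization) property of the reduced curvature-dimension condition --- precisely the input supplied by Mondino--Wei. Granting that, the rest of the argument is formal.
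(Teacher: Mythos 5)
This theorem appears in the paper only as a quoted result attributed to Mondino--Wei (\cite{mondino-wei}); the paper supplies no proof of its own, so there is no in-paper argument to compare against. Your proof is the standard Bonnet--Myers argument, and it is the one used in the literature: (i) the synthetic Bonnet--Myers diameter bound for $K>0$, $N>1$; (ii) the fact, justified in Section~2.3 of the paper via Theorem~\ref{local-to-global}, that covers of $RCD^{\ast}(K,N)$ spaces with the lifted length metric and measure are again $RCD^{\ast}(K,N)$; (iii) properness (Bishop--Gromov, Theorem~\ref{bg-in}) so that a bounded space is compact; (iv) finiteness of a properly discontinuous deck group on a compact space. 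One historical caveat worth keeping in mind: Mondino--Wei's universal cover is the Spanier-type universal cover, not a priori simply connected, so their original conclusion concerns the revised fundamental group $\bar\pi_1(X)$ (the deck group of $\tilde X$); your identification of the deck group with $\pi_1(X)$ via monodromy tacitly uses Wang's semi-local simple connectedness (Theorem~\ref{jikang}), which the paper explicitly invokes to pass from $\bar\pi_1$ to $\pi_1$. Also, the diameter constant $\pi\sqrt{(N-1)/K}$ is the sharp one for $CD(K,N)$; directly from the $\sigma^{(t)}_{K,N}$ coefficients defining $CD^{\ast}(K,N)$ one only reads off $\pi\sqrt{N/K}$, which is what is actually needed here --- any finite bound does. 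With those two footnotes, the argument is complete and correct.
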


\begin{thm}
(Torus rigidity) Let $(X,d,\mathfrak{m})$ be an $RCD^{\ast}(0, N ; 1)$ space such that  $\pi_1(X) $ has $\lfloor N \rfloor$ independent elements of infinite order. Then a finite sheeted cover of $(X,d,\mathfrak{m})$ is isomorphic as a metric measure space to a flat Riemannian torus.
\end{thm}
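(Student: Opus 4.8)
The plan is to reduce to the structure theory of $RCD^*(0,N)$ spaces with maximal abelian symmetry on the universal cover, and then invoke the splitting theorem of Gigli together with a rigidity analysis. First I would pass to the universal cover $(\tilde X,\tilde d,\tilde{\mathfrak m})$, which exists and is an $RCD^*(0,N)$ space (by Mondino--Wei, or by semi-local-simple-connectedness via Wang), and on which $\Gamma := \pi_1(X)$ acts by measure-preserving isometries, freely and discretely, with compact quotient of diameter $\le 1$. By hypothesis $\Gamma$ contains $k := \lfloor N\rfloor$ independent elements of infinite order. The first goal is to show that a finite-index subgroup of $\Gamma$ is isomorphic to $\mathbb{Z}^k$ and acts on $\tilde X$ by translations along a Euclidean factor.

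The key step is to produce lines in $\tilde X$ and iterate Gigli's splitting theorem. Since $\Gamma$ has $k$ independent infinite-order elements, standard arguments (as in Cheeger--Gromoll, adapted to the $RCD$ setting) show that $\tilde X$ contains at least one line, hence by Gigli's splitting theorem $\tilde X \cong \mathbb{R}\times Y_1$ isometrically as metric measure spaces (with the product measure), where $Y_1$ is $RCD^*(0,N-1)$. One then must show the splitting can be performed $k$ times: the subgroup of $\Gamma$ generated by the $k$ infinite-order elements, being a finitely generated group acting cocompactly, has polynomial growth of degree $\ge k$; combined with the Bishop--Gromov volume bound of exponent $N$ on $\tilde X$ and the product structure of the measure, this forces $\tilde X \cong \mathbb{R}^k \times Z$ where $Z$ is a compact $RCD^*(0,N-k)$ space. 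Because $k = \lfloor N\rfloor$, the factor $Z$ has rectifiable dimension $0$, hence is a point (a one-point space is the only compact $RCD^*(0,s)$ space with $s<1$, and the rectifiable dimension being $0$ together with the doubling/cocompactness forces $Z$ to be a single point). Therefore $\tilde X$ is isometric to $\mathbb{R}^k$ as a metric space, and the measure $\tilde{\mathfrak m}$ is a constant multiple of the $k$-dimensional Lebesgue measure by the product/rigidity statement in Gigli's theorem.

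Next I would analyze the action. The isometry group of $\mathbb{R}^k$ is $\mathbb{R}^k \rtimes O(k)$, and by the work of Guijarro and the first author $\Iso(\tilde X)$ is a Lie group; $\Gamma$ embeds as a discrete cocompact subgroup acting freely. By Bieberbach's theorems, the subgroup $\Gamma \cap \mathbb{R}^k$ of pure translations is a lattice $\Lambda \cong \mathbb{Z}^k$ of finite index in $\Gamma$, and the corresponding finite cover of $X$ is the flat torus $\mathbb{R}^k / \Lambda$ equipped with the (renormalized) Lebesgue measure, i.e. a flat Riemannian torus as a metric measure space. Finally, to conclude that $N$ need not be an integer but the conclusion still reads ``a finite sheeted cover is isomorphic to a flat torus,'' I note that $k = \lfloor N\rfloor \le N$ forces $Z$ to be trivial exactly when the rectifiable dimension argument pins the dimension of $\tilde X$ to $k$; if $N$ is not an integer this still works since $\lfloor N\rfloor$ independent translations already force rectifiable dimension $\ge k = \lfloor N\rfloor$, and Bishop--Gromov caps it at $\lfloor N \rfloor$.

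\textbf{Main obstacle.} The delicate point is the iterated splitting: showing that having $k$ independent infinite-order elements in $\Gamma$ actually forces $k$ independent lines (equivalently a $\mathbb{R}^k$ factor) rather than just one line. In the smooth case this is handled via the Cheeger--Gromoll argument combined with the fact that the deck group normalizes the Euclidean factor; in the $RCD$ setting one must be careful that after splitting $\tilde X \cong \mathbb{R}\times Y_1$, the group $\Gamma$ respects the splitting up to finite index (this uses that $\Iso(\mathbb{R}\times Y_1) = \Iso(\mathbb{R})\times\Iso(Y_1)$ when $Y_1$ has no line, which follows from maximality of the Euclidean factor in Gigli's splitting and the Lie group structure), so that the remaining infinite-order generators descend to infinite-order elements of the deck group of $Y_1$, allowing the induction. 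Controlling this interplay between the splitting theorem, the Lie group structure of $\Iso(\tilde X)$, and the algebra of $\Gamma$ is where the real work lies; the final Bieberbach step and the identification of the measure are then routine.
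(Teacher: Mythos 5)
First, note that the paper does not actually prove this statement: it appears under ``Previously known results'' and is quoted from Mondino--Wei \cite{mondino-wei}, so there is no internal proof to compare against. Judged on its own terms, your proposal follows the right overall strategy --- pass to the universal cover, split off a Euclidean factor, finish with Bieberbach --- which is indeed the Cheeger--Gromoll-style route. The endgame is also correct: once $\tilde X\cong\mathbb{R}^{\lfloor N\rfloor}\times Z$ with $Z$ a compact $RCD^{\ast}(0,N-\lfloor N\rfloor)$ space, the clause ``if $N-m\in[0,1)$ then $Y$ is a point'' in Theorem \ref{gigli} kills $Z$, the measure is identified as a multiple of Lebesgue by the same theorem, and the Bieberbach step is routine.

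The genuine gap is the step that produces the full $\mathbb{R}^{\lfloor N\rfloor}$ factor. The mechanism you offer --- the subgroup generated by the $k$ independent elements has polynomial growth of degree $\ge k$, which ``combined with Bishop--Gromov forces $\tilde X\cong\mathbb{R}^k\times Z$'' --- does not work: orbit growth of degree $k$ together with the Bishop--Gromov upper bound of exponent $N$ yields only the inequality $k\le N$ (the first Betti number bound), not a splitting; volume growth of a prescribed degree never by itself produces lines. (Moreover, $k$ independent infinite-order elements need not generate a free abelian subgroup, so even the orbit-growth estimate requires care.) Your fallback in the ``main obstacle'' paragraph is also off: after a single application of Gigli's theorem the factor $Y_1$ may still contain lines, so you cannot invoke $\Iso(\mathbb{R}\times Y_1)=\Iso(\mathbb{R})\times\Iso(Y_1)$ at that stage. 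The correct mechanism is to pass directly to the \emph{maximal} splitting $\tilde X=\mathbb{R}^j\times\bar X$ with $\bar X$ containing no line; this exists because each splitting lowers the dimension parameter by one, so the iteration terminates with $j\le N$. The maximal splitting is canonical, hence preserved by every isometry, so $\Gamma\le\Iso(\mathbb{R}^j)\times\Iso(\bar X)$ and the projected action on $\bar X$ is cocompact; since $\bar X$ has no lines, it must be compact (a non-compact proper geodesic space with a cocompact isometric action contains a line). Then $\Iso(\bar X)$ is compact, so the projection $\Gamma\to\Iso(\mathbb{R}^j)$ has finite kernel and discrete cocompact image, and Bieberbach shows that the number of independent infinite-order elements of $\Gamma$ is at most $j$. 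Hence $j\ge\lfloor N\rfloor$, while $j\le N$, forcing $j=\lfloor N\rfloor$ and $N-j<1$, whence $\bar X$ is a point; only at this stage does your lattice argument apply.
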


\begin{thm}
 (Milnor polynomial growth) Let $X$ be a $RCD^{\ast}(0, N)$ space. Then any finitely generated subgroup of $\pi_1(X)$ is virtually nilpotent.
\end{thm}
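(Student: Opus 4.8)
The plan is to transcribe Milnor's classical argument to the $RCD$ setting: the Bishop--Gromov inequality and the universal‑cover theory for $RCD^\ast(0,N)$ spaces stand in for their Riemannian counterparts, and the group‑theoretic conclusion is Gromov's polynomial growth theorem.

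First I would pass to the universal cover. By the work of Mondino--Wei \cite{mondino-wei} (and, for the identification of the deck group with $\pi_1$, Wang's semi‑local simple connectedness \cite{wang-rcd}), $X$ admits a universal cover $\pi\colon\tilde X\to X$ that, with the lifted length metric $\tilde d$ and lifted measure $\tilde{\mathfrak m}$, is itself an $RCD^\ast(0,N)$ space, on which $\pi_1(X)$ acts freely, properly discontinuously, and by $\tilde{\mathfrak m}$‑preserving isometries, with $\tilde X/\pi_1(X)=X$. Let $\Gamma\leq\pi_1(X)$ be generated by a finite set $\{g_1,\dots,g_k\}$ and fix $\tilde x_0\in\tilde X$. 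Setting $L:=\max_j\tilde d(\tilde x_0,g_j\tilde x_0)$ and using that each $g_j$ is an isometry, every $\gamma\in\Gamma$ of word length at most $n$ moves $\tilde x_0$ by at most $nL$.

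The next step is the volume count. Proper discontinuity of the action together with local compactness of $\tilde X$ gives an $\varepsilon>0$ with $\{B_\varepsilon(\gamma\tilde x_0)\}_{\gamma\in\Gamma}$ pairwise disjoint; as $\Gamma$ preserves $\tilde{\mathfrak m}$ and $\tilde{\mathfrak m}$ has full support, $\tilde{\mathfrak m}(B_\varepsilon(\gamma\tilde x_0))=\tilde{\mathfrak m}(B_\varepsilon(\tilde x_0))=:v_0\in(0,\infty)$ for every $\gamma$. If $|\gamma|\leq n$ then $B_\varepsilon(\gamma\tilde x_0)\subseteq B_{nL+\varepsilon}(\tilde x_0)$, so
\[
 \#\{\gamma\in\Gamma:|\gamma|\leq n\}\cdot v_0 \;\leq\; \tilde{\mathfrak m}\bigl(B_{nL+\varepsilon}(\tilde x_0)\bigr)\;\leq\; v_0\Bigl(\tfrac{nL+\varepsilon}{\varepsilon}\Bigr)^{N},
\]
the last inequality being Bishop--Gromov for $RCD^\ast(0,N)$. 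Hence $\#\{\gamma:|\gamma|\leq n\}\leq\bigl((L+\varepsilon)/\varepsilon\bigr)^{N}n^{N}$ for all $n\geq1$, i.e.\ $\Gamma$ has polynomial growth of degree at most $N$. By Gromov's polynomial growth theorem, a finitely generated group of polynomial growth is virtually nilpotent, so $\Gamma$ is virtually nilpotent.

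I expect no serious obstacle here: the one ingredient that is genuinely nontrivial beyond Milnor's original Riemannian argument --- the existence and the $RCD^\ast(0,N)$ regularity of the universal cover, together with the measure‑preservation of deck transformations --- is precisely what Mondino--Wei established, and the Bishop--Gromov inequality is part of the definition of $CD^\ast(0,N)$. Once these are granted, the counting argument and the appeal to Gromov's theorem are routine. (One should of course remember that $\pi_1(X)$ itself may fail to be finitely generated when $X$ is noncompact, which is exactly why the statement is phrased for finitely generated subgroups; Milnor's argument is insensitive to this.)
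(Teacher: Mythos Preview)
The paper does not give its own proof of this statement: it appears in Section~1.4 (``Previously known results'') as one of several consequences attributed to Mondino--Wei \cite{mondino-wei}, with no argument supplied. Your proposal is the classical Milnor argument transcribed verbatim to the $RCD^\ast(0,N)$ setting --- universal cover, Bishop--Gromov volume comparison, packing count, Gromov's polynomial growth theorem --- and it is correct; this is exactly the approach one expects and presumably the one taken in the cited reference.
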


In a more recent work Mondello--Mondino--Perales obtained a first Betti number upper bound and a classification of spaces attaining it \cite{mondello-mondino-perales}.

\begin{thm}
 (Torus stability) Let $(X,d, \mathfrak{m})$ be an $RCD^{\ast}(K,N;D)$ space with $\beta_1(X) \geq \lfloor N \rfloor $ and $KD^2 \geq - \varepsilon (N)$. Then 
\begin{itemize}
\item The rectifiable dimension of $X$ is $\lfloor N \rfloor$, and $X$ is $\lfloor N \rfloor $-rectifiable.
\item A finite sheeted cover of $X$ is $\delta ( KD^2 )$-close in the Gromov--Hausdorff sense to a flat torus with $\delta \to 0$ as $ KD^2  \to 0$.
\item If $N \in \mathbb{N}$, then $\mathfrak{m}$ is a constant multiple of the $ N $-dimensional Hausdorff measure, and $X$ is bi-H\"older homeomorphic to a flat Riemannian torus.
\end{itemize}
\end{thm}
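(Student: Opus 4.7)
The plan is to prove the torus stability theorem by a compactness and contradiction argument built on top of Theorem~\ref{b1}, the upper bound $\beta_1(Y) \leq \lfloor N \rfloor$ for $RCD^{\ast}(0,N)$ spaces $Y$ (the Gromov-type estimate in the $RCD$ setting), and the torus rigidity theorem stated just above. Assume towards contradiction that there exists a sequence $(X_i, d_i, \mathfrak{m}_i)$ of $RCD^{\ast}(K_i, N; D_i)$ spaces with $\beta_1(X_i) \geq \lfloor N \rfloor$ and $K_i D_i^2 \to 0$, for which at least one of the three conclusions fails with a uniform $\delta_i \to 0$. Rescaling, we may assume $D_i = 1$ and $K_i \to 0$. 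Gromov precompactness in the $RCD^{\ast}(K,N;D)$ class yields, along a subsequence, measured Gromov--Hausdorff convergence $X_i \to X_\infty$ to an $RCD^{\ast}(0, N; 1)$ space $X_\infty$.

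Let $n$ be the common rectifiable dimension of the $X_i$ (after further extraction) and $m$ the rectifiable dimension of $X_\infty$. Theorem~\ref{b1} applied with $r = \lfloor N \rfloor$ gives $\beta_1(X_\infty) \geq \lfloor N \rfloor + m - n$. Combined with the upper bound $\beta_1(X_\infty) \leq \lfloor N \rfloor$, this forces $m \leq n$ and $\beta_1(X_\infty) \geq \lfloor N \rfloor$. Torus rigidity then provides a finite sheeted cover $\hat{X}_\infty$ of $X_\infty$ isomorphic, as a metric measure space, to a flat Riemannian $k$-torus. Since $\hat{X}_\infty$ is $RCD^{\ast}(0,N)$ we have $k \leq N$, and since $\beta_1(\hat{X}_\infty) \geq \beta_1(X_\infty) \geq \lfloor N \rfloor$ we have $k \geq \lfloor N \rfloor$, so $k = \lfloor N \rfloor$. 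In particular $m = \lfloor N \rfloor$, and since $n \in \mathbb{Z} \cap [m, N]$ we conclude $n = \lfloor N \rfloor$ for all large $i$, yielding the first conclusion.

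For the second conclusion, I would lift the finite index subgroup $H \leq \pi_1(X_\infty)$ corresponding to the torus cover $\hat{X}_\infty$ to subgroups $H_i \leq \pi_1(X_i)$ of the same index, using equivariant Gromov--Hausdorff stability of $\pi_1$-actions in the non-collapsed regime (combining the semi-local simple connectedness of $RCD^{\ast}$ spaces from Wang, the Lie group structure of $\Iso(\hat{X}_\infty)$, and the non-small-subgroup property underlying the proofs of Theorems~\ref{ua-1} and \ref{ua-2}). The associated covers $\hat{X}_i$ then converge in mGH to $\hat{X}_\infty$, which is a flat $\lfloor N \rfloor$-torus, so $\hat{X}_i$ is $\delta_i$-close in Gromov--Hausdorff distance to a flat torus with $\delta_i \to 0$, contradicting the failure of the second conclusion.

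For the third conclusion, when $N \in \mathbb{N}$ the equality $n = N$ means the sequence $X_i$ is non-collapsed. By the structure theory of non-collapsed $RCD^{\ast}(K,N)$ spaces (De Philippis--Gigli, Kapovitch--Mondino, Honda), the reference measure $\mathfrak{m}_i$ must be a constant multiple of $\mathcal{H}^N$, and the mGH convergence $\hat{X}_i \to \hat{X}_\infty$ can be upgraded via an intrinsic Reifenberg argument to a bi-H\"older homeomorphism between $\hat{X}_i$ and the flat torus $\hat{X}_\infty$. Passing to the quotient by the deck transformation group produces a bi-H\"older identification of $X_i$ with a flat Riemannian torus. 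The main obstacle lies in this transfer step: pushing the rigidity enjoyed by $X_\infty$ all the way back to the $X_i$ at the level of finite covers, while matching up the deck transformation groups $\pi_1(X_i)/H_i$ with $\pi_1(X_\infty)/H$ in a way compatible with the torus quotient structure, which requires combining equivariant Gromov--Hausdorff theory for isometric group actions with the intrinsic Reifenberg-type regularity of non-collapsed $RCD$ limits.
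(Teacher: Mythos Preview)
The paper does not prove this theorem. It is stated in Section~1.4 under ``Previously known results'' and attributed to Mondello--Mondino--Perales \cite{mondello-mondino-perales}; no argument is given or sketched. So there is no proof in the paper to compare your proposal against.

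As an independent attempt, your outline is plausible in spirit but has real issues. First, it relies on Theorem~\ref{b1}, which is one of the paper's \emph{new} results proved much later; this is not circular (the proof of Theorem~\ref{b1} does not use torus stability), but it means your argument cannot be what the authors had in mind when listing this as prior work. Second, and more substantively, the lifting step in the second bullet is underspecified: you need to produce, for each $i$, a subgroup $H_i \leq \pi_1(X_i)$ of the correct fixed index whose associated cover $\hat X_i$ converges to the flat torus $\hat X_\infty$. Equivariant convergence gives you a limit group $\Gamma \leq \Iso(\tilde X_\infty)$ and approximate morphisms, but promoting these to honest finite-index subgroups of $\pi_1(X_i)$ with matching covers requires more than ``stability of $\pi_1$-actions in the non-collapsed regime''; you would have to argue carefully using the clean discrete approximation machinery of Section~\ref{reformulations} and Theorem~\ref{nss}. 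Third, in the $N\in\mathbb{N}$ case you obtain a bi-H\"older homeomorphism $\hat X_i \to \hat X_\infty$ and then ``pass to the quotient,'' but the theorem asserts that $X_i$ itself is bi-H\"older homeomorphic to a flat torus. For that you need the deck group $\pi_1(X_i)/H_i$ to correspond, under your bi-H\"older map, to a group acting on the flat torus $\hat X_\infty$ by translations (otherwise the quotient need not be a torus). You have not argued this, and it is exactly where the work lies.
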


From the work of Breuillard--Green--Tao, one can conclude that the fundamental group of almost non-negatively curved $RCD^{\ast}(K,N)$ spaces is virtually nilpotent \cite{breuillard-green-tao}. 

\begin{thm}
 (Virtual nilpotency) There is $\varepsilon (N) >0$, such that for any $RCD^{\ast}(- \varepsilon , N;1) $ space $(X,d,\mathfrak{m})$ of rectifiable dimension $m$, there is a subgroup $G \leq \pi_1(X)$ and a finite normal subgroup $H \triangleleft G$ such that
\begin{itemize}
\item $[ \pi_1(X) : G ] \leq C(N)$.
\item $G/H$ is isomorphic to a lattice in a nilpotent Lie group of dimension $\leq m$. 
\end{itemize}
In particular, $\beta_1(X)\leq m$.
\end{thm}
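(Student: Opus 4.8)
Here is a plan for proving the Virtual Nilpotency statement; throughout I write $\Gamma:=\pi_1(X)$.

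The idea is to study the action of $\Gamma$ on the universal cover and feed the resulting geometric information into the approximate‐group machinery. First I would pass to the universal cover: by the semi‑local‑simple‑connectedness of $RCD^{\ast}(-\varepsilon,N)$ spaces \cite{wang-rcd} and the work of Mondino--Wei \cite{mondino-wei}, $(\tilde X,\tilde d,\tilde{\mathfrak m})$ exists, is simply connected, proper and again $RCD^{\ast}(-\varepsilon,N)$, with $\Gamma$ acting on it freely, properly discontinuously and by measure‑preserving isometries, and $\tilde X/\Gamma=X$ of diameter $\le 1$. Fix $\tilde p\in\tilde X$ over a point of $X$. Gromov's short‑generators argument, using the Bishop--Gromov inequality for $RCD^{\ast}(-\varepsilon,N)$ (whose constants stay bounded by the Euclidean ones as $\varepsilon\to 0$), produces a finite symmetric generating set $S$ of $\Gamma$ with $\# S\le C_1(N)$ and $d(\tilde p,\gamma\tilde p)\le 3$ for all $\gamma\in S$; by Milnor--\v{S}varc, $(\Gamma,d_S)$ is quasi‑isometric to $\tilde X$.

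The main step is to show that for some radius $R=R(N)$ the ball $B_S(R)\subseteq\Gamma$ is a $C_2(N)$‑approximate subgroup of $\Gamma$. Granting this, the structure theorem of Breuillard--Green--Tao \cite{breuillard-green-tao} yields a subgroup $G\le\Gamma$ with $[\Gamma:G]\le C_3(N)$ which is nilpotent of step $\le C_3(N)$; letting $H\triangleleft G$ be its torsion subgroup (finite, since $G$ is finitely generated nilpotent), $G/H$ is torsion‑free nilpotent, hence by Malcev's theorem (Theorem \ref{malcev}) a lattice in a simply connected nilpotent Lie group $\mathcal N$ with $\dim\mathcal N$ equal to the Hirsch length of $G/H$. \emph{This reduction is the hard part and the main obstacle:} since the $\Gamma$‑orbit of $\tilde p$ need not be uniformly separated (the space may collapse), the naive volume‑counting estimate for $|B_S(R)|$ is not controlled in terms of $N$ alone. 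To get around this I would run the scale‑induction of Kapovitch--Wilking \cite{kapovitch-wilking}, now available in the $RCD$ setting: at the smallest scale carrying nontrivial elements one rescales so that this scale becomes $1$, passes to an equivariant Gromov--Hausdorff limit (still almost non‑negatively curved, as $-\varepsilon\lambda^2\to 0$), and uses Gigli's splitting theorem to split off a Euclidean factor on which a finite‑index part of the "small" subgroup acts by translations, while the no‑small‑subgroups property of non‑collapsing sequences (Theorem \ref{nss}), the Mondino--Naber splitting blow‑up property, and the fact that $\Iso(\tilde X)$ is a Lie group are used to exclude the remaining alternatives without losing control of the constants. This forces the required controlled doubling of $B_S(R)$.

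It remains to bound $d:=\dim\mathcal N$ by $m$ and to deduce $\beta_1(X)\le m$. Since $\tilde X$ is quasi‑isometric to $\Gamma$, hence to $G/H$, hence to $\mathcal N$ with a left‑invariant Riemannian metric, $\tilde X$ and $\mathcal N$ have bi‑Lipschitz asymptotic cones (for a common scaling sequence and ultrafilter); Pansu's theorem identifies the asymptotic cone of $\mathcal N$ with a graded nilpotent Lie group of topological dimension $d$. On the other hand, rescaling $(\tilde X,\tilde p,\tilde d,\tilde{\mathfrak m})$ by $\lambda_j\to 0$ (after normalizing measures) gives, along a subsequence, a pointed measured Gromov--Hausdorff limit which is an $RCD^{\ast}(0,N)$ space; since rescaling leaves the rectifiable dimension unchanged, the rectifiable dimension of $\tilde X$ equals that of $X$, namely $m$ (the cover is a local isometry), and the rectifiable dimension is lower semicontinuous (Theorem \ref{dim-semicont-2}), this limit has rectifiable — hence topological — dimension $\le m$. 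Comparing the two descriptions of an asymptotic cone of $\tilde X$ gives $d\le m$. Finally, since restriction $H^1(\Gamma;\mathbb Q)\hookrightarrow H^1(G;\mathbb Q)$ is injective for the finite‑index subgroup $G$, we get $\beta_1(X)=\operatorname{rank}\Gamma^{\mathrm{ab}}\le\operatorname{rank}G^{\mathrm{ab}}=\operatorname{rank}(G/H)^{\mathrm{ab}}\le d\le m$.
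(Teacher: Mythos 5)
First, a framing remark: the statement you are attacking is Theorem in Section 1.4, ``Previously known results,'' and the paper does \emph{not} prove it --- it is cited to Breuillard--Green--Tao (with the dimension bound $\leq m$ and the deduction $\beta_1(X)\leq m$ implicitly drawn from the second author's \cite{zamora-fglahs}, via the tools assembled in Sections 2.7--2.8, notably Theorem \ref{bgt} and Theorem \ref{zamora-1}). So there is no in-paper proof to compare against, and your proposal has to be judged on its own.

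Judged on its own, there is a genuine gap exactly where you flag ``the hard part.'' Two issues. (i) Already the opening claim --- that Gromov's short-generators argument with Bishop--Gromov produces a symmetric generating set $S$ with $\#S\le C_1(N)$ and $\|\gamma\|_{\tilde p}\le 3$ --- is not correct for a Ricci lower bound alone: in the collapsing regime the orbit $\Gamma\cdot\tilde p$ is not uniformly separated, so packing controls the number of short-basis elements in any \emph{fixed} annulus of scales (Proposition \ref{long-short-basis}) but not over all scales simultaneously. Bounding the total number of generators is precisely Theorem \ref{fg} of this paper, whose proof is a nontrivial reverse induction; you should cite it rather than derive it from Bishop--Gromov. (ii) The crux --- showing $B_S(R)$ is a $C_2(N)$-approximate subgroup --- is left as a vague appeal to ``scale induction'' via Theorem \ref{nss}, Mondino--Naber, and $\Iso(\tilde X)$ being Lie. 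But Theorem \ref{nss} only applies to non-collapsing sequences, and you do not explain how these ingredients force controlled doubling of $B_S(R)$ in the collapsing case, which is the only hard case. The route the paper's machinery actually supports is different: one does not prove that a metric ball in $\Gamma$ is an approximate group. Instead one takes a contradicting sequence $X_i$ with $\varepsilon_i\to 0$, blows down the universal covers together with the $\pi_1(X_i)$-actions, extracts an equivariant pGH limit and a \emph{clean discrete approximation} $f_i:\pi_1(X_i)\to\Gamma_\infty$ onto a limit Lie group (Corollary \ref{clean-uc}), and then applies the group-valued form of BGT (Theorem \ref{bgt}) together with Theorem \ref{zamora-1} applied to the rescaled orbits. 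The dimension bound $\le m$ then comes from Proposition \ref{gigli-corollary} and Remark \ref{homogeneous}: the blow-down limit is homogeneous, hence a Riemannian manifold, hence its topological and rectifiable dimensions coincide and are $\le m$ by Theorem \ref{dim-semicont}. Your asymptotic-cone/Pansu route silently needs this same observation: you pass from ``the rescaled limit has rectifiable dimension $\le m$'' to ``the asymptotic cone of $\tilde X$ has topological dimension $\le m$,'' and for a general $RCD^{\ast}(0,N)$ space the equality of rectifiable and topological dimension is not automatic --- you need to notice that this particular limit is homogeneous (because $\Gamma$ acts cocompactly on $\tilde X$ and one blows down) to invoke Remark \ref{homogeneous}. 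The last paragraph of your sketch ($\beta_1(X)\le d\le m$) is fine once $d\le m$ is established.
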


\subsection{Open problems}

Naturally, it would be interesting to know if the fundamental group of a space of non-negative Ricci curvature is finitely generated.

\begin{con}
\rm (Milnor) Let $(X,d,\mathfrak{m})$ be an $RCD^{\ast}(0,N)$ space. Then $\pi_1(X)$ is finitely generated.
\end{con}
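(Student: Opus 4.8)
The plan is to study the asymptotic geometry of the universal cover, in the spirit of Milnor, Gromov and Cheeger--Colding, using the splitting and structure theory now available in the $RCD$ category. By Wang's theorem the universal cover $(\tilde X,\tilde d,\tilde{\mathfrak m})$ exists and is again $RCD^{\ast}(0,N)$, and the deck group $\Gamma:=\pi_1(X)$ acts on it by measure--preserving isometries, properly discontinuously; since $\tilde X$ is proper, for each $p\in\tilde X$ and $r>0$ the set $\{\gamma\in\Gamma:\tilde d(\gamma p,p)\le r\}$ is finite. Let $\Gamma(r)\le\Gamma$ be the subgroup it generates: each $\Gamma(r)$ is finitely generated, hence virtually nilpotent (and of polynomial growth) by the Milnor polynomial growth theorem quoted above. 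Since $\Gamma=\bigcup_{r>0}\Gamma(r)$, the group $\Gamma$ is finitely generated exactly when this exhausting filtration stabilizes, so one assumes for contradiction that there are scales $r_1<r_2<\cdots\to\infty$ along which it strictly increases, i.e. $\Gamma(r_k)$ is a proper subgroup of $\Gamma(r_{k+1})$ for every $k$. Thus infinitely often a genuinely new generator $\gamma_k$, of displacement $s_k:=\tilde d(\gamma_k p,p)\in(r_k,r_{k+1}]$, is forced into $\Gamma$.

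Next I would renormalize at the scales $s_k$. Since $RCD^{\ast}(0,N)$ is scale--invariant, the pointed spaces $(\tilde X,\tilde d/s_k,p)$ lie in a precompact class; after passing to an equivariant pointed measured Gromov--Hausdorff subsequential limit of $(\tilde X,\tilde d/s_k,p,\Gamma)$ one obtains an $RCD^{\ast}(0,N)$ space $(Y,y)$, of rectifiable dimension $\le\lfloor N\rfloor$, carrying a limit group $G$ of isometries, which is a Lie group by the structure theorem of Guijarro and the first author. The limits of the rescaled translations by $\gamma_k$ give an element, or at least a one--parameter subgroup, of $G$ that moves $y$ by distance $1$ and has infinite order (torsion may survive only by escaping to infinite order in the limit). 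Using Gigli's splitting theorem together with the splitting blow--up property of Mondino--Naber, one would then peel off the Euclidean factor of $Y$ --- or of an iterated tangent cone --- spanned by such limiting translations; in the classical short--basis argument each genuinely new long generator contributes an independent line to this split factor. As $\dim Y\le\lfloor N\rfloor$, only finitely many independent such directions are available, contradicting the existence of infinitely many essentially new generators. The Breuillard--Green--Tao description of approximate subgroups is the natural device for making ``essentially new generator'' quantitative and uniform over the scales $s_k$, turning the heuristic into an estimate.

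The hard part --- and the reason the statement is genuinely delicate --- is that this renormalization really only sees the torsion--free abelianization ``at infinity'': a virtually nilpotent group already has generators appearing at polynomially spaced scales along its lower central series, and, more seriously, torsion elements can be forced into $\Gamma$ at arbitrarily large scales without producing any Euclidean direction in a blow--down, so the short--basis mechanism does not by itself detect them. Closing the argument would require ruling out infinitely generated torsion subgroups of $\Gamma$ under a nonnegative Ricci bound, which is precisely the missing ingredient --- and it is false in general: the complete $7$--dimensional Riemannian manifold of nonnegative Ricci curvature with $\pi_1\cong\mathbb Q/\mathbb Z$ of Bru\`e--Naber--Semola is an $RCD^{\ast}(0,7)$ space, so the conjecture fails for $N\ge 7$. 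What the strategy above should still deliver, and what I would actually aim to prove, is finite generation under an additional non--collapsing (uniform lower volume density) or sub--Euclidean volume/diameter growth hypothesis, where the torsion obstruction disappears and the blow--down argument can be completed; classifying which groups can occur as $\pi_1$ of a collapsed $RCD^{\ast}(0,N)$ space is then the natural replacement for the conjecture.
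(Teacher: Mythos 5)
The statement you were asked to prove is not a theorem of this paper: it appears in the ``Open problems'' subsection as the Milnor conjecture, and the authors offer no proof of it, so there is no argument of theirs to compare yours against. Your write-up correctly treats it as such. You outline the standard strategy (blow down the universal cover at the displacement scales of successive new short-basis generators, pass to an equivariant pmGH limit, and use the splitting theorem of Gigli together with the Mondino--Naber blow-up property to extract an independent Euclidean direction for each genuinely new generator), you identify precisely where it breaks --- torsion elements can enter $\Gamma$ at arbitrarily large scales without producing any line in a blow-down, so the splitting mechanism cannot detect them --- and you correctly observe that the conjecture as stated is in fact \emph{false} for $N\geq 7$: the Bru\`e--Naber--Semola complete $7$-manifold of nonnegative Ricci curvature with $\pi_1\cong\mathbb{Q}/\mathbb{Z}$ is an $RCD^{\ast}(0,7)$ space with infinitely generated fundamental group.

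So you have not proved the statement, but no one can as stated, and your diagnosis of the obstruction is exactly the right one; the gap you name (infinitely generated torsion forced in at large scales) is not a defect of your argument but the genuine content of the problem. Two small cautions: your claim that each $\Gamma(r)$ is virtually nilpotent does follow from the Milnor polynomial growth theorem quoted in the paper, but the claim that $\Gamma$ is finitely generated ``exactly when the filtration stabilizes'' needs the observation that a finitely generated group is generated by elements of bounded displacement, which is immediate here; and your proposed fallback (finite generation under non-collapsing) is a reasonable research direction but is not established by the sketch and should not be presented as a consequence of it.
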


It has been pointed out multiple times (see \cite{fukaya-yamaguchi}, \cite{kapovitch-petrunin-tuschmann}, \cite{kapovitch-wilking}, \cite{mazur-rong-wang}) that an important problem is to determine if one could remove the non-collapsing hypothesis from Theorems  \ref{ua-1} and \ref{ua-2}.

\begin{con}
\rm (Fukaya--Yamaguchi) Let $(X,d,\mathfrak{m})$ be an $RCD^{\ast}(0,N;1)$ space. Then $\pi_1(X)$ contains an abelian subgroup $A$ of index $[\pi_1(X): A] \leq C(N)$.
\end{con}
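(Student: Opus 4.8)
The plan is to run the classical Cheeger--Gromoll analysis of the $\pi_1$-action on the universal cover through Gigli's splitting theorem, keeping all constants effective, and to pinpoint the one place where uniformity is not currently available.

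\textit{Step 1: splitting of the universal cover.} Since $X$ is compact and semi-locally-simply-connected (Wang), $\pi_1(X)$ is finitely generated (\cite{mondello-mondino-perales}, Proposition 2.25), and it acts on the proper $RCD^{\ast}(0,N)$ space $\tilde X$ by measure-preserving isometries, properly discontinuously and cocompactly, with quotient $X$. If $\pi_1(X)$ is infinite, $\tilde X$ is non-compact, and an Arzel\`a--Ascoli argument exploiting properness and cocompactness yields a line in $\tilde X$; Gigli's splitting theorem then gives an isometric measure-preserving splitting off an $\mathbb{R}$-factor, and iterating (the maximal Euclidean factor being canonical, the action descends) one arrives at
\[\tilde X\;\cong\;\mathbb{R}^{k}\times Z,\qquad k\le n\le N,\]
where $Z$ is a \emph{compact} $RCD^{\ast}(0,N-k)$ space containing no line --- compactness being forced, since a non-compact factor admitting a cocompact action would again contain a line --- and $\pi_1(X)$ acts preserving the product.

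\textit{Step 2: Bieberbach reduction.} The projection $\rho\colon\pi_1(X)\to\operatorname{Isom}(\mathbb{R}^{k})=\mathbb{R}^{k}\rtimes O(k)$ has discrete cocompact image, i.e.\ a $k$-dimensional crystallographic group; by the Bieberbach theorems it contains a normal copy of $\mathbb{Z}^{k}$ of index $\le C_{0}(k)\le C_{0}(N)$ (finitely many crystallographic groups per dimension). The kernel $K=\ker\rho$ acts trivially on $\mathbb{R}^{k}$, hence embeds in $\operatorname{Isom}(Z)$ acting properly discontinuously on the compact space $Z$, so $K$ is finite. Pulling back yields $\Lambda\le\pi_1(X)$ with $[\pi_1(X):\Lambda]\le C_{0}(N)$ and $1\to K\to\Lambda\to\mathbb{Z}^{k}\to1$. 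Granting that $K$ has an abelian subgroup of index $\le C_{1}(N)$, a short chain of elementary steps --- pass to the normal core of that subgroup in $\Lambda$; then to the kernel of the conjugation action of $\Lambda$ on the remaining finite part; and finally use that a finitely generated $2$-step nilpotent group whose commutator subgroup has order $\le D$ has an abelian subgroup of index bounded in terms of $D$ and $k$ --- produces an abelian subgroup of $\pi_1(X)$ of index bounded solely in terms of $N$. (If $\pi_1(X)$ is finite the same Bieberbach-free variant applies directly to $\pi_1(X)\hookrightarrow\operatorname{Isom}(\tilde X)$.)

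\textit{The main obstacle.} Everything above is effective except the needed bound on the index of an abelian subgroup of $K\le\operatorname{Isom}(Z)$. Its identity component is a compact Lie group of dimension $\le C(N)$ by the work of Guijarro and the first author, so Jordan's theorem controls the part of $K$ lying in it; the genuine difficulty is the component group $\pi_{0}(\operatorname{Isom}(Z))$, for which no bound depending only on $N$ is known --- one would have to exclude compact $RCD^{\ast}(0,N')$ spaces with no line whose isometry group has unbounded ``Jordan complexity'' (for instance a symmetric group $S_{p}$, whose smallest faithful representation has dimension $\sim p$). This is precisely the collapsed difficulty that the non-collapsing hypothesis circumvents in Theorems \ref{ua-1} and \ref{ua-2} via the no-small-subgroups property (Theorem \ref{nss}). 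The natural alternative --- supposing the conjecture fails, taking an equivariant pointed limit of $(\tilde X_i,\pi_1(X_i))$ along a counterexample sequence, with the Mondino--Naber splitting blow-up property controlling the local geometry and the structure theory of approximate subgroups \cite{breuillard-green-tao} controlling the groups --- is obstructed by the fact that under collapse the limit of the $\tilde X_i$ need not be the universal cover of $\lim X_i$, the rectifiable dimension can drop, and the limiting isometry group can be a positive-dimensional Lie group rather than a discrete one.
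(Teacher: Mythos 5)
The statement you were given is not a theorem of the paper: it appears in the ``Open problems'' subsection as the Fukaya--Yamaguchi conjecture, and the authors offer no proof of it --- indeed they explicitly flag the removal of the non-collapsing hypothesis from Theorems \ref{ua-1} and \ref{ua-2} as an important open problem. So your submission cannot be judged against a proof in the paper, and, as you yourself make clear, it is not a proof but a correct diagnosis of why one is not currently available. Your Steps 1 and 2 are sound and essentially reproduce the reduction the paper carries out inside the proof of Theorem \ref{Turing-no-compact}: Gigli's splitting gives $\tilde X\cong\mathbb{R}^{k}\times Z$ with $Z$ compact, the projection to $\Iso(\mathbb{R}^k)$ is proper with discrete crystallographic image (Bieberbach, Theorem \ref{bieber}), and the kernel $K$ is a finite group of isometries of $Z$. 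The obstruction you isolate --- the absence of a Jordan-type bound, uniform in $N$, for finite subgroups of $\Iso(Z)$ when $Z$ is a collapsed compact $RCD^{\ast}(0,N')$ factor --- is exactly the difficulty that Theorem \ref{nss} (no small subgroups under non-collapsing) is designed to kill, and your explanation of why the blow-up/approximate-group route also stalls under collapse is consistent with the paper's discussion.

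One secondary point you should not gloss over: even \emph{granting} an abelian subgroup of $K$ of index $\le C_1(N)$, your ``short chain of elementary steps'' does not deliver a bound depending only on $N$. In the extension $1\to K\to\Lambda\to\mathbb{Z}^k\to 1$, the index of the centralizer $C_\Lambda(K)$ is controlled only by the image of $\Lambda$ in $\mathrm{Aut}(K)$, which can be huge even when $K$ is abelian (e.g.\ $K=(\mathbb{Z}/2)^n$ with a single generator of $\mathbb{Z}^k$ acting by a high-order element of $GL_n(\mathbb{F}_2)$); and in the resulting central extension the commutator pairing $\mathbb{Z}^k\times\mathbb{Z}^k\to K$ is bilinear, so the abelian subgroup one extracts has index of order $\exp(K)^k$, again unbounded in terms of $N$ alone. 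So the conditional reduction really requires a uniform bound on $|K|$ itself (equivalently, on the order of the isotropy-like finite group acting on the compact factor), not merely on the index of an abelian subgroup of $K$; this is a second, closely related, place where uniformity fails and should be stated as part of the obstacle rather than absorbed into ``elementary steps.''
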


\begin{con}
\rm (Fukaya--Yamaguchi) Let $(X,d,\mathfrak{m})$ be an $RCD^{\ast}(K,N;D)$ space. If $\pi_1(X)$ is finite, then it contains an abelian subgroup $A$ of index $[\pi_1(X): A] \leq C(N, KD^2)$.
\end{con}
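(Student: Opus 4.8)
The plan is to treat this conjecture as the finite-$\pi_1$ counterpart of Theorem \ref{ua-2} (and, when $K=0$, of Theorem \ref{ua-1}), with the non-collapsing hypothesis removed: finiteness of $\pi_1(X)$ supplies, through Theorem \ref{diam}, compact universal covers of uniformly bounded diameter, but \emph{not} a bound on the rectifiable dimension of the limit. Concretely, suppose the conclusion fails. Rescaling so that $D=1$ (hence $K$ fixed), we obtain $RCD^{\ast}(K,N;1)$ spaces $(X_i,d_i,\mathfrak m_i)$ with $\pi_1(X_i)$ finite and with the minimal index of an abelian subgroup of $\pi_1(X_i)$ tending to $\infty$. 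Since $\pi_1(X_i)$ is finite, the universal cover $\tilde X_i$ is compact, and Theorem \ref{diam} gives $\diam(\tilde X_i)\le \tilde D(N,K,1)$. After normalizing $\tilde{\mathfrak m}_i$ and passing to a subsequence, the isometric actions of $\Gamma_i:=\pi_1(X_i)$ on $\tilde X_i$ converge, in the equivariant pointed measured Gromov--Hausdorff sense, to an isometric action of a group $G$ on an $RCD^{\ast}(K,N)$ space $Y$ with $Y/G$ the limit of the $X_i$; the uniform diameter bound forces $G$ to be compact, and by the theorem of Guijarro and the first author $\Iso(Y)$ is a Lie group, so $G$ is a compact Lie group.

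It then remains to convert information about $G$ into a uniform bound on the abelian index of $\Gamma_i$ for large $i$. Writing $G^0$ for the identity component, one would first want to show $G^0$ is abelian — analyzing the $G^0$-orbits in $Y$ and the local geometry of $Y$ via Gigli's splitting theorem and the splitting blow-up property of Mondino--Naber, exactly as in the proof of Theorem \ref{nss} — and then that a sequence of finite groups converging onto a compact Lie group with abelian identity component must eventually contain abelian subgroups of index bounded in terms of $\dim G$ and $|G/G^0|$. This last implication, which passes from the Lie-group picture to a uniform statement at the discrete level, is of the type furnished by the structure theory of approximate subgroups of Breuillard--Green--Tao \cite{breuillard-green-tao} together with the discrete-approximation theorems of Turing and Kazhdan (\cite{turing}, \cite{kazhdan}) — the same machinery behind Theorems \ref{ua-1} and \ref{ua-2}.

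The main obstacle, and the reason this remains a conjecture, is the collapsing case, in which the rectifiable dimension of $Y$ is strictly smaller than that of the $\tilde X_i$. Then the unit balls of $\tilde X_i$ have measure tending to $0$, the limiting $G$-action acquires isotropy along the collapsed directions, the ``no small subgroups'' input (Theorem \ref{nss}) is unavailable, and nothing above prevents $\Gamma_i$ from carrying increasingly intricate structure. The prototypical danger is a sequence of finite nilpotent groups of unbounded minimal abelian index — such as the Heisenberg groups $H_3(\mathbb Z/k\mathbb Z)$, whose maximal abelian subgroups have index $\sim k$ — arising as fundamental groups of collapsing almost-non-negatively-curved spaces. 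In the smooth category this is ruled out through the $C$-nilpotency of fundamental groups of manifolds with almost non-negative Ricci curvature (as in Kapovitch--Wilking), a tool unavailable for $RCD^{\ast}(K,N)$ spaces (cf.\ the remark after Theorem \ref{diam}). A proof in general would therefore need either an analogue of $C$-nilpotency for $RCD^{\ast}(K,N)$ spaces, or a use of the approximate-group machinery that stays quantitative under collapse and exploits the fact that $\Gamma_i$ is a \emph{free} deck action rather than an arbitrary finite group of isometries. In the non-collapsing regime the scheme above goes through and is precisely the finite-$\pi_1$ instance of Theorems \ref{ua-1} and \ref{ua-2}; removing the non-collapsing hypothesis there is the crux.
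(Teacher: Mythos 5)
This statement is not a theorem of the paper: it is listed verbatim among the open problems (a conjecture attributed to Fukaya--Yamaguchi), and the paper offers no proof of it. Your proposal, appropriately, does not actually prove it either --- it proves the non-collapsing instance (which is essentially the finite-$\pi_1$ case of Theorem \ref{ua-2}, using Theorem \ref{diam} to get the uniform diameter bound on $\tilde X_i$ and Theorem \ref{nss} to kill small subgroups) and then correctly identifies the collapsing case as the genuine obstruction, where Theorem \ref{nss} is unavailable and no uniform bound on the abelian index is known. That diagnosis matches the paper's own framing of why this remains a conjecture, so there is nothing to ``fix'': the gap you name is the gap.

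One inaccuracy worth flagging: you write that in the smooth category the danger of Heisenberg-type examples ``is ruled out through the $C$-nilpotency of fundamental groups.'' It is not. $C$-nilpotency (Kapovitch--Wilking) provides a nilpotent subgroup of uniformly bounded index with a bounded number of generators; the finite Heisenberg groups $H_3(\mathbb Z/k\mathbb Z)$ are $2$-step nilpotent on two generators and are therefore perfectly compatible with $C$-nilpotency while having minimal abelian index $\sim k$. This is precisely why the conjecture is open even for smooth Riemannian manifolds --- the question is whether such groups can be realized as fundamental groups under the curvature and diameter constraints, and neither $C$-nilpotency nor the approximate-group machinery answers that. So the correct statement of the state of the art is: the non-collapsing case follows from the methods of Section \ref{section-5}, and the collapsing case is open in both the smooth and the $RCD^{\ast}$ settings, with the smooth setting having the \emph{additional} tool of $C$-nilpotency that is still insufficient.
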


It is well known that in the class of $RCD^{\ast}(K,N;D)$ spaces, the first Betti number is lower semi-continuous with respect to Gromov--Hausdorff convergence (see Theorem \ref{sw}). It would be interesting to determine if this still holds for their universal covers.

\begin{con}
\rm Let $(X_i,d_i,\mathfrak{m}_i)$ be a sequence of $RCD^{\ast}(K,N;D)$ spaces and assume their universal covers $(\tilde{X}_i,\tilde{d}_i,\tilde{\mathfrak{m}}_i)$ converge in the pointed measured Gromov--Hausdorff sense to an $RCD^{\ast}(K,N)$ space $X$. Then $(X,d,\mathfrak{m})$ is simply connected.
\end{con}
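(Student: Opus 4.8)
The plan is to combine the theory of $\delta$-covers with the equivariant structure of the collapse. First I would pass to a subsequence along which the deck group actions converge: since the isometry groups of $RCD^\ast(K,N)$ spaces are Lie groups (Guijarro and the first author), one may extract (as in Fukaya--Yamaguchi) an equivariant pointed Gromov--Hausdorff limit $(\tilde X_i,\tilde p_i,\pi_1(X_i))\to(X,p,G)$ with $G\leq\Iso(X)$ a closed, hence Lie, subgroup, together with $X_i=\tilde X_i/\pi_1(X_i)\to X/G=:Y$. Because $\diam(X_i)\leq D$, the quotient $Y$ is compact, so the $G$-action on $X$ is cocompact. This is the only point at which the bounded-diameter hypothesis enters, and it is crucial: without it one cannot control $X$ at infinity.

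Next I would record that $X$ has a \emph{uniform} semi-local simple connectedness radius. By Wang's theorem $X$ is semi-locally simply connected, so the function $r(x):=\sup\{s>0:\text{every loop in }B(x,s)\text{ is null-homotopic in }X\}$ is everywhere positive (if it is ever $+\infty$ then $X$ is already simply connected); one checks it is $1$-Lipschitz, and it is manifestly $\Iso(X)$-invariant, in particular $G$-invariant. Since $X$ is proper and $G$ acts cocompactly, $\rho_0:=\inf_X r=\inf_F r>0$ for any compact $F$ with $GF=X$. It follows that for every $\delta\leq\rho_0$ the $\delta$-cover $\tilde X^\delta$ (the cover of $X$ associated to the subgroup of $\pi_1(X)$ normally generated by loops lying in balls of radius $\delta$) coincides with the universal cover $\hat X$ of $X$ (which exists, again by Wang): the loops generating that subgroup lie in balls of radius $\leq\delta\leq r(\cdot)$, hence are null-homotopic.

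The decisive step would be to prove the stability $(\tilde X_i)^\delta\to\tilde X^\delta$ for a fixed $\delta\leq\rho_0$. Granting it, the proof finishes at once: each $\tilde X_i$ is simply connected, so $(\tilde X_i)^\delta=\tilde X_i$ with the same basepoint; hence $\tilde X_i=(\tilde X_i)^\delta\to\tilde X^\delta=\hat X$, while by hypothesis $\tilde X_i\to X$, and uniqueness of pointed limits forces the covering map $\hat X\to X$ to be an isometry, i.e.\ $\pi_1(X)=1$. For \emph{compact} spaces, stability of $\delta$-covers under Gromov--Hausdorff convergence is a theorem of Sormani--Wei; here the $\tilde X_i$ are non-compact, and the task is to run their argument equivariantly, using the convergence $\pi_1(X_i)\to G$ and the cocompactness of the $\pi_1(X_i)$-actions (which makes the $\tilde X_i$ geometrically periodic at scale $\leq D$, supplying the uniform control at infinity that the compact argument gets for free), together with the structure of $G$ — its identity component being nilpotent via the Mondino--Naber blow-up property and the Breuillard--Green--Tao analysis — to forbid topology from appearing in the limit.

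I expect this last step to be the genuine obstacle; it is essentially the whole difficulty of the conjecture. A naive form of $\delta$-cover stability is simply false for non-compact sequences: a complete metric on $\mathbb{R}^2$ with $\operatorname{Ric}\geq 0$ that agrees with a flat cylinder outside a compact set converges, as the basepoint escapes to infinity, to $S^1\times\mathbb{R}$, which is not simply connected. Such examples are excluded from the conjecture precisely because their isometry groups are compact and so they cannot be universal covers of bounded-diameter spaces; turning this exclusion into a uniform bound on the size of the null-homotopies of short loops in the $\tilde X_i$ is what I do not see how to do in general. In the non-collapsing case $G$ is discrete and the picture is much more rigid (cf.\ Theorem \ref{diam} and Theorem \ref{nss}), and the virtual-nilpotency results quoted above suggest the collapsed directions are small enough that the bound ought to persist through collapse.
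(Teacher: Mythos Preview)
This statement is listed in the paper as an open \emph{conjecture} (Section~1.5, ``Open problems''), not a theorem; the paper contains no proof of it, and none of the results proved in the paper (Theorems~\ref{fg}--\ref{b1}) imply it. So there is nothing to compare your proposal against.

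That said, your write-up is not a proof either, and you are candid about this: the ``decisive step'' you isolate---stability of $\delta$-covers for the non-compact sequence $\tilde X_i$---is exactly where the difficulty lies, and you correctly note that the naive statement is false (your $\mathbb{R}^2$-to-cylinder example). The preliminary reductions you give (equivariant subconvergence of the deck actions to a cocompact Lie group action on $X$, and the uniform lower bound $\rho_0>0$ on the semi-local simple-connectedness radius via Wang plus cocompactness) are sound and are indeed the natural first moves. But ``turning this exclusion into a uniform bound on the size of the null-homotopies'' is not a technical detail to be filled in---it is the conjecture. Nothing in the toolkit assembled in the paper (Theorems~\ref{nss}, \ref{infinite}, the normal subgroup theorem, or the Breuillard--Green--Tao structure) gives such a bound in the collapsed case; in particular, the nilpotency of the identity component of $G$ constrains the algebra of the limit action but says nothing directly about contractibility of short loops in the approximants $\tilde X_i$. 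Your proposal should be read as a plausible outline of where a proof might go, together with an honest identification of the missing idea, rather than as a proof.
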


\subsection{Overview}

In Section \ref{section-2}, we introduce the definitions and background material we will need. In Sections 2.2, 2.3, we review the definition and  properties of $RCD^{\ast}(K,N)$ spaces. In Sections 2.4, 2.5, we give the definition  of Gromov--Hausdorff convergence of metric measure spaces and equivariant convergence of groups of isometries. In Section 2.6 we recall the short basis construction. In Section 2.7 we gather results concerning approximations of Lie groups by discrete groups. Finally in Section 2.8 we discuss the relationship between these discrete approximations and equivariant convergence of groups.

In Section \ref{section-3}, we prove Theorem \ref{fg}. Our proof is not significantly different in spirit from the one by Kapovitch--Wilking for smooth spaces (see Theorem \ref{short-short}). Assuming the theorem fails one gets a sequence $(X_i,d_i,\mathfrak{m}_i)$ of $RCD^{\ast}(K,N)$ spaces contradicting the statement of the theorem. Then by taking a subsequence one can assume the sequence $(X_i,d_i,\mathfrak{m}_i)$ converges to some space of dimension $m$, and proceed by reverse induction on $m$. The induction step consists of blowing up a contradictory sequence via a construction of Mondino--Naber in order to obtain another one that converges to a space of strictly higher dimension.

In Section \ref{section-4}, we prove Theorem \ref{diam}. Assuming the Theorem fails one gets a sequence of $RCD^{\ast}(K,N;D)$ spaces $X_i$ whose universal covers $(\tilde{X}_i,\tilde{d}_i,\tilde{\mathfrak{m}}_i)$ are compact but their diameters go to infinity. After taking a subsequence one can assume that the sequence $(\tilde{X}_i,\tilde{d}_i,\tilde{\mathfrak{m}}_i)$ converges to a non-compact space $(X,d,\mathfrak{m})$ and the actions of $\pi_1(X_i)$ converge to the action of a non-compact Lie group of isometries $\Gamma \leq Iso(X)$ (see Definition \ref{def:equivariant}). From this we can extract a sequence of approximate morphisms $\pi_1(X_i) \to \Gamma$ (see Definition \ref{def:discreteapprox}). This ends up being incompatible since the groups $\pi_1(X_i)$ are finite but $\Gamma$ is not compact (see Theorem \ref{infinite}).

In Section \ref{section-5}, we show Theorems \ref{ua-1} and \ref{ua-2}. The key step is showing that when a sequence of $RCD^{\ast}(K,N)$ spaces doesn't collapse, it cannot have small groups of measure preserving isometries (see Theorem \ref{nss}). The results then follow from group theory (see Theorems \ref{turing} and \ref{Turing-no-compact}). 

In Section \ref{section-6}, we prove Theorem \ref{b1}. We first extend the normal subgroup theorem by Kapovitch--Wilking to the non-smooth setting. The normal subgroup theorem provides a subgroup of the first homology group that can be detected essentially everywhere (see Theorem \ref{normal-abelian}). We then apply a Theorem of Sormani--Wei that allows us to compute the first homology of the limit space $(X,d,\mathfrak{m})$ in terms of the first homology groups of the spaces $(X_i,d_i,\mathfrak{m}_i)$ and their subgroups generated by small loops (see Theorem \ref{sw} and Corollary \ref{sw-gap}).

\section{Preamble}\label{section-2}

\subsection{Notation}\label{notation-subsection}

If $(X,d)$ is a metric space, $p \in X$, and $r > 0 $, we denote the open ball of radius $r$ around $p$ in $X$ by $B_p(r,X)$.  For $m \in  \mathbb{N}$, the $m$-dimensional Hausdorff measure is denoted by $\mathcal{H}^m$, which we assume normalized so that 
\[ \int_{B_0(1,\mathbb{R}^m)}(1- d(0,\cdot )) d\mathcal{H}^m  = 1   . \]
For a metric space $(X,d)$, we can adjoin a point $\ast$ at infinite distance from any point of $X$ to get a new space we denote as $X \cup \{ \ast \}$. Similarly, to any group $G$ we can adjoin an element $\ast$, whose product with any element of $G$ is defined as $\ast$, obtaining an algebraic structure on $G \cup \{ \ast \}$. 

For a group $G$ and elements $g_1, \ldots , g_n \in G$, we denote by $g_1 \ldots g_n$ the word whose letters are the  $g_i$'s, and by $g_1 \cdots g_n \in G$ the result of multiplying the elements $g_i$. For a subset $S \subset G$, and $n \in \mathbb{N}$, we write $S^n : = \{ s_1 \cdots s_n \in G \vert s_1, \ldots , s_n \in S \}$.

We write $C(\alpha , \beta ,\gamma )$ to denote a constant $C$ that depends only on the quantities $\alpha , \beta , \gamma $.

\subsection{Optimal transport}

In this section we introduce the basic theory of metric measure spaces we will need. We refer the reader to \cite{ambrosio-gigli} for proofs and further details.

 A \textit{metric measure space} $(X,d,\mathfrak{m})$ consists of a complete, separable, geodesic metric space $(X,d)$ and a Radon measure $\mathfrak{m}$ on the Borel $\sigma$-algebra $\mathcal{B}(X)$ with supp$(\mathfrak{m}) = X$ and $\mathfrak{m}(X) > 0$.  We denote by $\mathbb{P}(X)$ the set of probability measures on $\mathcal{B}(X)$ and by
$$\mathbb{P}_2(X) := \left\lbrace \mu \in \mathbb{P}(X) \,|\,  \int_X d^2(x_0, \cdot )d\mu < \infty \text{ for some (and hence all) } x_0 \in X   \right\rbrace $$
 the set of probability measures with finite second moments.

Given measures $\mu_1, \mu_2 \in \mathbb{P}_2(X)$, a \textit{coupling} between them  is a probability measure $\pi \in \mathbb{P}(X\times X)$ with $p_{1\#}\pi = \mu_1 $ and $p_{2\#}\pi = \mu_2 $,  where $p_1,p_2 : X\times X \rightarrow X,$  are the natural projections.  The $L^2$\textit{-Wassertein distance} in $\mathbb{P}_2(X)$ is defined   as:
 $$\mathbb{W}_2^2 (\mu,\nu) := \inf\left\lbrace \int_{X\times X} d^2(x,y) d\pi(x,y) \,|\, \pi \text{ is a coupling between } \mu \text{ and } \nu \right\rbrace. $$ 
 It is known that  $(\mathbb{P}_2(X),\mathbb{W}_2)$ inherits good properties from $(X,d)$ such as being complete, separable and geodesic.

We denote by $\Geo(X)$ the space of constant speed geodesics $\gamma : [0,1] \to X$  and equip it with the topology of uniform convergence. For $t \in [0,1]$ we denote the evaluation map $e_t : \Geo(X) \rightarrow X$ as $\gamma \mapsto \gamma_t.$ 

Given $\mu_0, \mu_1 \in \mathbb{P}_2(X)$ and a geodesic $\left(\mu_t \right)_{t \in [0,1]}$ in $\mathbb{P}_2(X),$  there is $\pi \in \mathbb{P}(\Geo(X))$ with $e_{t\#}\pi = \mu_t$ for all $t \in [0,1]$, and 
\begin{equation}\label{opt}
    \mathbb{W}_2^2 (\mu_0,\mu_1) = \int_{\Geo(X)}\text{length}^2(\gamma)d\pi(\gamma) .
\end{equation} 
The collection of all measures $\pi \in \mathbb{P}( \Geo (X))$ with $e_{0\#}\pi =\mu_0$, $e_{1\#}\pi = \mu_1$, and satisfying Equation \ref{opt} with will be denoted as $\OptGeo(\mu_0,\mu_1) $.
 \subsection{Riemannian curvature dimension condition}
 In \cite{ambrosio-gigli-savare}, Ambrosio--Gigli--Savar\'e introduced a class of  metric probability spaces satisfying a  ``Riemannian curvature dimension'' condition. This class was later extended by Ambrosio--Gigli--Mondino--Rajala to obtain what is known now as $RCD^{\ast}(K,N)$ spaces \cite{agmr}. Multiple equivalent definitions and reformulations have been obtained throughout the years. We give a definition here but refer the reader to (\cite{ambrosio-gigli-savare}, \cite{erbar-kuwada-sturm}) for many others.

For $K \in \mathbb{R},$  $N \in [1, \infty)$,  the distortion coefficients  $\sigma_{K,N}^{(\cdot )}(\cdot):[0,1]\times \mathbb{R}^{+}$ are defined as  
\begin{equation*}
\sigma_{K,N}^{(t)}(\theta):=
\begin{cases}
\infty & \text{if }\, K\theta^2\geq N\pi^2, \\
\frac{\sin(t\theta \sqrt{K/N})}{\sin(\theta\sqrt{K/N})} & \text{if }\, 0<K\theta^2<N\pi^2,\\
t & \text{if }\, K\theta^2=0,\\
\frac{\sinh(t\theta\sqrt{-K/N})}{\sinh(\theta\sqrt{-K/N})} & \text{if } K\theta^2<0.
\end{cases}
\end{equation*}
\begin{defn}
\rm For $K \in \mathbb{R}$, $N \geq 1$, we say that a metric measure space $(X,d,\mathfrak{m})$ is a $CD^{\ast}(K,N)$ space if for any pair of measures $\mu_0, \mu_1 \in \mathbb{P}_2(X)$ with bounded support there is a measure $\pi \in \OptGeo (\mu_0, \mu_1) $ such that for each $t \in [0,1]$ and $N^{\prime} \geq N$, one has 
\[ \int_X \rho_t ^{1-\frac{1}{N^{\prime}}}  d \mathfrak{m} \geq \int_{\Geo (X)}\left( \sigma_{K,N^{\prime}}^{(1-t)} (d(\gamma_0, \gamma_1))\rho_0^{-\frac{1}{N^{\prime}}} (\gamma _0) + \sigma_{K,N^{\prime}}^{(t)} (d(\gamma_0, \gamma_1))\rho_1^{-\frac{1}{N^{\prime}}} (\gamma _1) \right)   d \pi (\gamma)  , \]
where $ e_{t\#}\pi = \rho_t \mathfrak{m} + \mu_t^{s}$ with  $\mu_t^{s} \perp \mathfrak{m}$ for each $t \in [0,1]$. 
\end{defn}
We denote by $\LIP (X)$ the set of Lipschitz functions $f: X \to \mathbb{R}.$ For  $f  \in \Lip(X)$, its \textit{local Lipschitz constant} $\Lip f(x) : X \to \mathbb{R} $ is defined as:
\[ \Lip f(x) := \limsup_{y\rightarrow x} \frac{|f(x)-f(y) |}{d(x,y)}. \]

\begin{defn}\label{def.Cheegerenergy}
\rm Let  $(X,d,\mathfrak{m})$ be a metric measure space.  For   $f \in L^2(\mathfrak{m})$ we define its \textit{Cheeger energy} as:
\[  \Ch_{\mathfrak{m}} (f) :=\inf \left\lbrace \liminf_{n \rightarrow \infty} \frac{1}{2}\int |\Lip f_n|^2 d \mathfrak{m} \,|\, f_n \in \LIP(X),  f_n \rightarrow f  \text{ in } L^2 (\mathfrak{m})   \right\rbrace. \]
\end{defn}
The \textit{Sobolev space} $W^{1,2}(X,d,\mathfrak{m})$ is defined as the space of $L^2$-functions $f:X \to \mathbb{R}$ with $\Ch_{\mathfrak{m}}(f) < \infty$. It is equipped with the norm
\[ \Vert f \Vert _{W^{1,2}}:= \Vert f\Vert_{L^2(\mathfrak{m})} + 2 \Ch_{\mathfrak{m}}(f).\]
If this space is a Hilbert space we say that $(X,d,\mathfrak{m})$ is \textit{infinitesimally Hilbertian}.

\begin{defn}
\rm We say a $CD^{\ast}(K,N)$ space $(X,d,\mathfrak{m})$ is an $RCD^{\ast}(K,N)$ space if it is infinitesimally Hilbertian.
\end{defn}
A well known property of the $RCD^{\ast}(K,N)$ condition is that it can be checked locally. We refer the reader to (\cite{erbar-kuwada-sturm}, Section 3) by Erbar--Kuwada--Sturm for details and further  discussions.

\begin{thm}\label{local-to-global}
 Let $(X,d , \mathfrak{m}) $ be a metric measure space. If for each $x \in X$ there is $r > 0 $, a pointed $RCD^{\ast}(K,N)$ space $(Y,d^Y, \mathfrak{m}^Y,y)$, and a  measure preserving isometry $\varphi_x: B_x(r,X)\to B_y(r,Y)$, then $(X,d,\mathfrak{m})$ is an $RCD^{\ast}(K,N)$ space. 
\end{thm}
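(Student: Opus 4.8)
The plan is to reduce the statement to the already-known local-to-global property of the $CD^{\ast}(K,N)$ condition together with the local nature of infinitesimal Hilbertianity. First I would recall that the $CD^{\ast}(K,N)$ condition, in the \emph{reduced} form (or equivalently, via the entropy/Rényi functionals $S_{N'}$), satisfies a local-to-global principle: this is precisely the content of the Erbar--Kuwada--Sturm analysis cited in the excerpt, and earlier work of Bacher--Sturm on $CD^{\ast}$. So if every point $x$ has a neighborhood $B_x(r,X)$ that is, via a measure-preserving isometry $\varphi_x$, isometric to a ball in some $RCD^{\ast}(K,N)$ space $Y$, then in particular $B_x(r,X)$ inherits the \emph{local} $CD^{\ast}(K,N)$ property (geodesic convexity inequalities for densities of Wasserstein geodesics supported in that ball); patching these together via the local-to-global theorem for $CD^{\ast}(K,N)$ yields that $(X,d,\mathfrak{m})$ is globally $CD^{\ast}(K,N)$. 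One must first check the hypotheses needed for the global patching: essential non-branching, or the technical requirement that $(X,d,\mathfrak{m})$ be a length space with the measure having full support --- both of which hold here since $X$ is by assumption a metric measure space in the sense defined above (complete, separable, geodesic, $\operatorname{supp}(\mathfrak{m})=X$), and the non-branching-type conditions transfer from the $RCD$ model spaces $Y$ on each ball.

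Next I would handle infinitesimal Hilbertianity. The Cheeger energy $\Ch_{\mathfrak{m}}$ is defined through local Lipschitz constants, and it is a standard fact (Ambrosio--Gigli--Savaré; see also the locality of minimal relaxed/weak upper gradients) that $\Ch_{\mathfrak{m}}$ is a \emph{local} functional: the minimal weak upper gradient $|\nabla f|$ agrees $\mathfrak{m}$-a.e.\ on any open set with the minimal weak upper gradient computed intrinsically on that open set (equipped with the restricted distance and measure). Hence whether the parallelogram identity
\[
\Ch_{\mathfrak{m}}(f+g)+\Ch_{\mathfrak{m}}(f-g)=2\Ch_{\mathfrak{m}}(f)+2\Ch_{\mathfrak{m}}(g)
\]
holds is detectable locally. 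Since each $B_x(r,X)$ is measure-preservingly isometric to a ball in an infinitesimally Hilbertian space $Y$, the Cheeger energy restricted to that ball is a quadratic form; covering $X$ by countably many such balls (using separability) and using a partition-of-unity / localization argument shows $W^{1,2}(X,d,\mathfrak{m})$ is a Hilbert space. Alternatively, and perhaps more cleanly, one invokes the characterization of $RCD^{\ast}(K,N)$ via the Bochner inequality / the fact that the heat semigroup is linear, which is again a local condition on the generator $\Delta$; a measure-preserving local isometry transports the Dirichlet form, so linearity of the heat flow on each ball forces it globally.

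The main obstacle I anticipate is \emph{purely bookkeeping at the level of the right technical form of the $CD^\ast$ condition}: the global-to-local and local-to-global equivalences for curvature-dimension conditions are notoriously sensitive to the precise formulation (pointwise $CD^\ast$ with distortion coefficients $\sigma_{K,N}^{(t)}$ as in the displayed definition above, versus the entropic $CD^e$, versus $CD^*$ via reduced distortion), and to whether one assumes (essential) non-branching. The cleanest route is: (i) use that $RCD^{\ast}(K,N)$ implies essentially non-branching (Rajala--Sturm), so each $Y$ is essentially non-branching, and this property localizes to the balls $B_x(r,X)$ and then globalizes to $X$; (ii) on essentially non-branching spaces the pointwise $CD^\ast(K,N)$ condition of the Definition above is equivalent to the local one and satisfies local-to-global (Erbar--Kuwada--Sturm, Cavalletti--Milman); (iii) combine with the locality of $\Ch$. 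I would present the argument at this level of granularity, citing \cite{erbar-kuwada-sturm} for the $CD^\ast$ local-to-global step and \cite{ambrosio-gigli-savare} (and \cite{ambrosio-gigli}) for locality of the Cheeger energy, rather than reproving either.
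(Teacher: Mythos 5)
Your plan identifies the same two main ingredients the paper uses: local-to-global for the curvature-dimension side via Erbar--Kuwada--Sturm, and localization of infinitesimal Hilbertianity via Ambrosio--Gigli--Savar\'e. The ``bookkeeping'' obstacle you flag --- which exact form of the $CD^\ast$ condition to globalize, and whether essential non-branching has to be carried along --- is precisely what the paper resolves by routing through the \emph{entropic} condition $CD^e(K,N)$ rather than the pointwise $CD^\ast$ form with the $\sigma_{K,N}^{(t)}$ coefficients. Concretely: each model ball in $Y$ satisfies $CD^e_{loc}(K,N)$, and the local isometries $\varphi_x$ transport this to $X$; then $CD^e_{loc}(K,N)\Rightarrow CD^e(K,N)$ by (EKS, Theorem~3.14); infinitesimal Hilbertianity globalizes via (AGS, Theorem~6.22) applied to balls $B$ chosen with $\mathfrak{m}(\partial B)=0$; finally $CD^e(K,N)$ together with infinitesimal Hilbertianity is equivalent to $RCD^\ast(K,N)$ by (EKS, Theorem~7). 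This sidesteps the essential-non-branching discussion entirely --- one does not need Rajala--Sturm or Cavalletti--Milman, nor a partition-of-unity argument on the Cheeger energy, nor the heat-flow characterization. Your proposal would also work if pushed through carefully, but it imports more machinery than the paper's route; if you write it up, it would be cleaner to switch to $CD^e$ early, as you yourself suggest as an alternative, and to cite the specific AGS theorem that makes infinitesimal Hilbertianity checkable on a ball rather than reconstructing a localization argument by hand.
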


\begin{proof}
Since $(Y,d^Y,\mathfrak{m}^Y)$ is an $RCD^{\ast}(K,N)$ space, it
satisfies $CD_{loc}^e(K,N)$ (see (\cite{erbar-kuwada-sturm}, Section 3) for the precise definition).  Hence via $\varphi_x$ 
 we can find a neighborhood of $x$ such that for any two measures supported 
there and absolutely continuous with respect to $\mathfrak{m}$, there is a Wasserstein geodesic between them satisfying the inequality 
of the $CD^e(K,N)$ condition (see \cite{erbar-kuwada-sturm}, Equation 3.1). Therefore, from (\cite{erbar-kuwada-sturm}, Theorem 3.14),  $(X,d,\mathfrak{m})$ is a $CD^e(K,N)$ space.

Similarly, we can find for every $x\in X$ via $\varphi_x$ a ball $B$ with $\mathfrak{m}(\partial B)=0$ and such that $(B,d|_B,\mathfrak{m}\llcorner B)$ is 
infinitesimally Hilbertian. From (\cite{ambrosio-gigli-savare}, Theorem 6.22) it then follows that $(X,d,\mathfrak{m})$ is infinitesimally Hilbertian.

Finally by (\cite{erbar-kuwada-sturm}, Theorem 7), this is equivalent to being an $RCD^{\ast}(K,N)$ space.    
\end{proof}

\begin{rem}\label{normalize}
\rm A direct computation shows that if $(X,d,\mathfrak{m}) $ is an $RCD^{\ast}(K,N)$ space, then for any $c>0$, $(X,d,c \mathfrak{m})$ is also an $RCD^{\ast} (K,N) $ space, and for any $\lambda > 0$, $(X, \lambda d , \mathfrak{m})$ is an $RCD^{\ast}(\lambda ^{-2}K,N)$ space.
\end{rem}

Let $(X,d^X,\mathfrak{m})$ be an $RCD^{\ast}(K,N)$ space  and $\rho : Y \to X$ a covering space. $Y$ has a natural geodesic structure such that for any curve $\gamma : [0,1] \to Y$ one has 
\[ \text{length}(\rho \circ \gamma) = \text{length}(\gamma).\]
 Throughout this paper, we will implicitly assume $Y$ carries such geodesic metric. Set
\[  \mathcal{W} : = \{ W \subset Y  \text{ open bounded }\vert \text{ } \rho \vert_{W} : W \to \rho (W)  \text{ is an isometry}    \}     \] 
and define a measure $\mathfrak{m}^{Y}$ on $Y$ by setting $\mathfrak{m}^Y(A) : = \mathfrak{m}(\rho (A))$ for each Borel set $A$ contained in an element of $\mathcal{W}$. The measure $\mathfrak{m}^Y$ makes $\rho : Y \to X$ a local isomorphism of metric measure spaces, so by Theorem \ref{local-to-global}  $(Y,d^Y,\mathfrak{m}^Y)$ is an $RCD^{\ast}(K,N)$ space, and its group of deck transformations  acts by measure preserving isometries. In particular, this holds for the universal cover  \cite{wang}.  

\begin{thm}\label{jikang}
 (Wang) Let $(X,d,\mathfrak{m})$ be an $RCD^{\ast}(K,N)$ space. Then $X$ is semi-locally-simply-connected, so its universal cover $\tilde{X}$ is simply connected.
\end{thm}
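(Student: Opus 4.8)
This restates Wang's theorem \cite{wang-rcd}; here is how I would organize a proof from the structural results this paper relies on. Since $RCD^{\ast}(K,N)$ spaces are geodesic, hence connected and locally path connected, the second assertion follows from the first by standard covering space theory, so the plan is to prove semi-local simple connectedness: for every $p \in X$ I need a radius $r>0$ such that every loop based at $p$ and contained in $B_p(r,X)$ is null-homotopic in $X$. I would argue by contradiction. If this fails at some $p$, there is a sequence of loops $\gamma_i$ based at points $p_i \to p$ with $\mathrm{length}(\gamma_i)\to 0$, none contractible inside a fixed ball $B_p(r_0,X)$. To make the obstruction manipulable I would phrase it through the $\delta$-covers of Sormani--Wei and the universal cover constructed by Mondino--Wei \cite{mondino-wei}: the failure corresponds to the inverse system of $\delta$-covers of $B_p(r_0,X)$ not stabilizing, which produces nontrivial deck transformations lying in the subgroups generated by ever-smaller loops.

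The heart of the argument is a blow-up. For each $i$ I would choose a scale $r_i$ comparable to the intrinsic ``size'' of $\gamma_i$ --- for instance the infimum of the radii $\rho$ for which $\gamma_i$ is null-homotopic inside the ball of radius $\rho$ about $p_i$ in the relevant $\delta$-cover --- so that $r_i\to 0$, and rescale to the pointed spaces $(X, r_i^{-1} d, p_i)$. By Remark \ref{normalize} these are $RCD^{\ast}(r_i^2 K,N)$ spaces, so along a subsequence they converge in the pointed measured Gromov--Hausdorff sense to an $RCD^{\ast}(0,N)$ space $(Y,d_Y,\mathfrak{m}_Y,q)$; rescaling the associated covers and passing to an equivariant Gromov--Hausdorff limit yields a limit space with a limit group $G_\infty$ of measure-preserving isometries. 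By the choice of $r_i$ the rescaled loops have uniformly bounded length and converge to a loop $\gamma_\infty$ based at $q$ which should represent a nontrivial element of $G_\infty$.

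Then I would dismantle $Y$ using the two structural inputs this paper advertises. If some $G_\infty$-orbit is unbounded, the standard ray construction together with Gigli's splitting theorem forces an isometric splitting $Y \cong \mathbb{R}\times Y'$ with $Y'$ of one lower dimension, and I would arrange the whole contradiction as an induction on dimension so that this case is absorbed by the inductive hypothesis. If instead all $G_\infty$-orbits are bounded, I would blow up $Y$ once more at a point where the splitting blow-up property of Mondino--Naber applies, so that iterated tangents are Euclidean and one reaches a space isometric to some $\mathbb{R}^k$ still carrying a nontrivial limiting ``small-loop'' group of isometries --- which is impossible, since $\mathbb{R}^k$ is simply connected and admits no nontrivial connected cover. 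This contradiction would establish semi-local simple connectedness, and with it simple connectedness of $\tilde X$.

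The step I expect to be the main obstacle is the claim at the end of the second paragraph --- that the topological obstruction carried by the $\gamma_i$ genuinely survives the Gromov--Hausdorff limit. Gromov--Hausdorff convergence is not a homotopy equivalence, so a priori $\gamma_\infty$ could be contractible although no $\gamma_i$ is, and the renormalization could silently kill the relevant group element. Controlling this forces one to track, at all scales simultaneously, how the loops lift to the $\delta$-covers and how the displacements of the corresponding deck transformations behave under rescaling; this is precisely why one works with equivariant convergence of the covers rather than with bare Gromov--Hausdorff convergence of the rescaled spaces, and it is the technical core of Wang's argument \cite{wang-rcd}.
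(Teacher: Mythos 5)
Theorem \ref{jikang} is not proved in this paper: it is quoted as an external result from Wang \cite{wang-rcd} (building on \cite{pan-wang}, \cite{pan-wei}, \cite{wang}), so there is no internal argument for your sketch to be measured against. On its own merits, your outline has the right overall shape for this circle of results --- contradiction, reformulation through the Sormani--Wei $\delta$-covers and the Mondino--Wei universal cover, blow-up at a scale tuned to the offending loop, equivariant limit, and dimension descent through Gigli's splitting (Theorem \ref{gigli}) and the Mondino--Naber blow-up (Theorem \ref{zoom}) --- and you correctly flag the technically delicate point, namely choosing $r_i$ so that the deck transformation produced by $\gamma_i$ has displacement bounded away from $0$ and $\infty$ after rescaling, so that it survives as a nontrivial element of $G_\infty$.

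Two steps as written would need repair. First, an unbounded $G_\infty$-orbit does not by itself give a line; a ray is not enough for Gigli's theorem, and to upgrade rays to a line one must also use (approximate) cocompactness of the action on the rescaled covers (translate midpoints of long segments back to a bounded region and pass to the limit). Second, the terminal contradiction ``$\mathbb{R}^k$ is simply connected and has no nontrivial connected cover'' is aimed at the wrong object: $\mathbb{R}^k$ arises as the limit of the rescaled \emph{base} $X$, while the nontrivial group $G_\infty$ acts on the limit of the rescaled \emph{covers}, and a nontrivial compact group of isometries of a Euclidean space is of course possible. The contradiction must instead come from a no-small-subgroup / gap statement in the spirit of Lemma \ref{gap} and Theorem \ref{nss}: at a regular limit point, a sequence of deck transformations with displacement comparable to $r_i$ cannot simultaneously converge to the identity and remain nontrivial, once one has arranged (via the splitting and the dimension bookkeeping of Remark \ref{dim-semicont-2}) that the $G_\infty$-orbits lie in a fibre of vanishing size. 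With those two repairs your plan is consistent with the published argument in \cite{wang-rcd}.
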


Due to Theorem \ref{jikang}, for an $RCD^{\ast}(K,N)$ space $(X,d,\mathfrak{m})$ we can think of its fundamental group $\pi_1(X)$ as the group of deck transformations of the universal cover $\tilde{X}$. 

One of the most powerful tools in the study of $RCD^{\ast}(K,N)$ spaces is the Bishop--Gromov inequality by Bacher--Sturm. All we will need here is the following version, and refer the reader to \cite{bacher-sturm} for stronger ones.

\begin{thm}\label{bg-in}
 (Bishop--Gromov) For each $K \in \mathbb{R}$, $N \geq 1$, $R>r>0$, there is $C(K,N, R, r)> 0 $ such that for any pointed $RCD^{\ast}(K,N)$ space $(X, d, \mathfrak{m},p)$, one has
\[\mathfrak{m}(B_p(R,X)) \leq C \cdot \mathfrak{m}(B_p(r,X)) . \]
\end{thm}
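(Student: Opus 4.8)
The plan is to reduce the statement to the sharp measure-ratio monotonicity of Bacher--Sturm, which says that for a pointed $RCD^{\ast}(K,N)$ space $(X,d,\mathfrak{m},p)$ the map
\[
s \longmapsto \frac{\mathfrak{m}(B_p(s,X))}{\int_0^s \left(\sigma_{K,N}^{(\cdot)}\text{-type volume density}\right)(u)\,du}
\]
is non-increasing in $s$; more precisely, writing $v_{K,N}(s)$ for the volume of the ball of radius $s$ in the $N$-dimensional model space of constant curvature $K/(N-1)$ (with the obvious interpretation when $N=1$ or $K>0$ forces a bounded model), one has
\[
\frac{\mathfrak{m}(B_p(R,X))}{\mathfrak{m}(B_p(r,X))} \le \frac{v_{K,N}(R)}{v_{K,N}(r)}
\]
for all $0<r<R$. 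Granting this, I would simply set
\[
C(K,N,R,r) := \sup_{0<r<R}\frac{v_{K,N}(R)}{v_{K,N}(r)} = \frac{v_{K,N}(R)}{v_{K,N}(r)},
\]
which is a finite positive constant depending only on $K$, $N$, $R$, $r$, and is manifestly independent of the space $X$ and the point $p$. This yields $\mathfrak{m}(B_p(R,X)) \le C\cdot\mathfrak{m}(B_p(r,X))$ as claimed.

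The steps, in order, are: (i) recall from \cite{bacher-sturm} that the $CD^{\ast}(K,N)$ condition — and hence a fortiori the $RCD^{\ast}(K,N)$ condition — implies the generalized Bishop--Gromov volume growth inequality, being careful that the reduced curvature-dimension condition $CD^{\ast}(K,N)$ gives the Bishop--Gromov inequality with the \emph{same} exponent $N$ (this is one of the main points of the Bacher--Sturm paper, since the non-reduced $CD(K,N)$ a priori only gives it with a worse constant); (ii) observe that the model volume function $v_{K,N}$ is continuous, positive on $(0,\infty)$ (or on $(0,\pi\sqrt{N/K})$ when $K>0$, which still covers all $R$ of interest since balls of larger radius are the whole space and the bound is trivial there), and strictly increasing, so the ratio $v_{K,N}(R)/v_{K,N}(r)$ is a well-defined finite quantity; (iii) combine to extract the constant $C$ and conclude.

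The only genuine subtlety — not really an obstacle, but the place where one must be precise — is making sure that the version of Bishop--Gromov being invoked is valid for $RCD^{\ast}(K,N)$ rather than merely $RCD(K,N)$, and that it holds with the optimal dimension parameter; this is exactly what Bacher--Sturm established, so the proof is essentially a citation once the model-space volume asymptotics are recorded. One should also note the normalization convention for $\mathfrak{m}$ is irrelevant here since the inequality is homogeneous of degree zero in $\mathfrak{m}$, consistent with Remark \ref{normalize}.
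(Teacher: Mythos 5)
Your proposal is correct and follows the same route the paper takes: the paper states Theorem \ref{bg-in} without proof, citing Bacher--Sturm for the generalized Bishop--Gromov inequality, and your argument is just that citation spelled out (with the model-volume ratio $v_{K,N}(R)/v_{K,N}(r)$ made explicit as the constant $C$). The only cosmetic quibble is the $\sup_{0<r<R}$ notation, which is vacuous since $r$ is already fixed, but the intended meaning is clear and the conclusion is unaffected.
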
 

Theorem \ref{bg-in} implies that each $RCD^{\ast}(K,N)$ is proper. For a proper metric space $X$, the topology that we use on its group of isometries $Iso(X)$ is the compact-open topology, which in this setting coincides with both the topology of pointwise convergence and the topology of uniform convergence on compact sets. This topology makes $Iso(X)$ a locally compact second countable metric group. In the case $(X,d,\mathfrak{m})$ is an $RCD^{\ast}(K,N)$ space, Sosa, and independently Guijarro with the first author showed that this group is a Lie group (\cite{guijarro-santos}, \cite{sosa}).

\begin{thm}\label{gs-lie}
 (Sosa, Guijarro--Santos) Let $(X,d,\mathfrak{m})$ be an $RCD^{\ast}(K,N)$ space. Then $Iso(X)$ is a Lie group.
\end{thm}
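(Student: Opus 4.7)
The plan is to apply the Gleason--Yamabe theorem, which asserts that a locally compact Hausdorff topological group is a Lie group if and only if it has no small subgroups (the NSS property). The proof thus reduces to two tasks: local compactness of $Iso(X)$ and NSS.

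\textbf{Local compactness.} First I would note that the Bishop--Gromov inequality (Theorem \ref{bg-in}) implies that closed balls in $X$ are totally bounded, so $X$ is proper. Since every element of $Iso(X)$ is $1$-Lipschitz, for any $p \in X$ and $R > 0$ the set $\{g \in Iso(X) : d(g(p), p) \leq R\}$ is equicontinuous and pointwise relatively compact, hence by Arzelà--Ascoli it is relatively compact in the compact-open topology. This gives local compactness of $Iso(X)$.

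\textbf{No small subgroups.} Here I would argue by contradiction. Suppose there is a sequence of non-trivial subgroups $G_i \leq Iso(X)$ that shrinks to $\{e\}$ in the compact-open topology. Pick a point $p \in X$ at which every tangent cone is isometric to $\mathbb{R}^n$, where $n$ is the rectifiable dimension; such points are abundant by Mondino--Naber's splitting blow-up property. Set $r_i := \sup\{d(g(p), p) : g \in G_i\}$, which tends to $0$, and consider the rescaled pointed metric measure spaces $(X, r_i^{-1} d, c_i \mathfrak{m}, p)$ for suitable normalizing constants $c_i$. By Remark \ref{normalize} these are $RCD^{\ast}(r_i^2 K, N)$ spaces, and Bishop--Gromov precompactness lets us pass to a subsequence converging in the pointed Gromov--Hausdorff sense to the tangent cone $\mathbb{R}^n$ at $p$. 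In the rescaled metric each $G_i$ acts with displacement at $p$ bounded by $1$, and by the choice of $r_i$ some element achieves displacement close to $1$. The equivariant Gromov--Hausdorff convergence framework recalled in Section 2.5 then delivers a non-trivial closed subgroup $G_{\infty} \leq Iso(\mathbb{R}^n)$.

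To close the argument, I would use that $Iso(\mathbb{R}^n) = O(n) \ltimes \mathbb{R}^n$ is a Lie group and therefore enjoys NSS. Since the original sequence $G_i$ was assumed to shrink to the identity at every scale, iterating the rescaling procedure produces arbitrarily small non-trivial closed subgroups inside a fixed neighborhood of the identity of $Iso(\mathbb{R}^n)$, contradicting NSS there. The main obstacle, in my view, is precisely this last step: arranging the blow-up and the equivariant passage to the limit so that (i) the limit group $G_{\infty}$ is genuinely non-trivial and (ii) the smallness of the $G_i$ at all scales is faithfully transferred to $G_{\infty}$, so that a contradiction with the Lie-group NSS on the Euclidean tangent cone can be extracted. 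All the ingredients needed for this — the splitting blow-up property, equivariant Gromov--Hausdorff convergence of isometry subgroups, and scaling invariance of the $RCD^{\ast}$ class — are already available from the tools cited in the paper.
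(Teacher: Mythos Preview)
The paper does not give its own proof of Theorem~\ref{gs-lie}; it is quoted from \cite{guijarro-santos} and \cite{sosa}, with Lemma~\ref{no-fixed} noted as a key ingredient. That said, the paper's proof of Theorem~\ref{nss} carries out essentially the argument you are reaching for, so it is worth comparing.

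Your overall plan (Gleason--Yamabe reduction to NSS, then blow-up at a regular point) is the right one and matches the original references. However, the gap you flag is real and your sketch does not close it. After rescaling by $r_i^{-1}$ you obtain a non-trivial closed $G_\infty \leq Iso(\mathbb{R}^n)$; since every $g\in G_i$ satisfies $r_i^{-1}d(gp,p)\le 1$, the $G_\infty$-orbit of $0$ lies in the unit ball, so $G_\infty$ is merely some compact subgroup---not a small one---and that is no contradiction. Your ``iteration'' does not help: if you blow up more slowly the displacement at $p$ collapses to $0$ and you have no mechanism forcing non-triviality elsewhere; if you blow up faster you lose the bound. You are producing various compact limit groups, not a sequence shrinking to the identity inside a fixed Lie group. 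A second, minor gap: you need $r_i>0$, which requires choosing the regular point $p$ outside the measure-zero fixed sets of non-trivial elements via Lemma~\ref{no-fixed}.

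The missing idea is a dimension argument through the quotient, exactly as in the proof of Theorem~\ref{nss}. One first blows up slowly enough that the $G_i$ remain small, so both the space and the quotient $X/G_i$ converge to $\mathbb{R}^n$; then one uses Lemma~\ref{no-fixed} together with the Mondino--Naber basepoint shift (Theorem~\ref{zoom}) to find $q_i$ with $\eta_i(q_i)=\sup_{h\in G_i}d(hq_i,q_i)>0$ at which further blow-ups of the quotient still contain $\mathbb{R}^n$. Rescaling by $1/\eta_i(q_i)$ and applying Proposition~\ref{gigli-corollary} gives a limit $\mathbb{R}^n\times Z$ on which the limit group acts with orbits equal to the $Z$-fibers; since rectifiable dimension does not drop, $Z$ is a point and the limit group is trivial, contradicting the normalization. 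That transitivity-plus-dimension step is what replaces your iteration.
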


An important ingredient in the proof of Theorem \ref{gs-lie} is the following lemma \cite{guijarro-santos}.

\begin{lem}\label{no-fixed}
 Let $(X,d,\mathfrak{m})$ be an $RCD^{\ast}(K,N)$ space and $f: X \to X$ an isometry. If $\mathfrak{m} \left( \{ x \in X \vert fx=x \}     \right) \neq 0$, then $f = Id_X$.
\end{lem}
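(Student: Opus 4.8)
The plan is to exploit the measure-contraction / essential non-branching structure of $RCD^{\ast}(K,N)$ spaces together with the rigidity of optimal transport to show that the fixed-point set, if it has positive measure, must be the whole space. First I would set $F := \{x \in X \mid fx = x\}$ and assume $\mathfrak{m}(F) > 0$. Since $F$ is closed and has positive measure, by the Lebesgue density theorem (valid here because $RCD^{\ast}(K,N)$ spaces are locally doubling by Bishop--Gromov, Theorem \ref{bg-in}) almost every point of $F$ is a density point. Pick such a density point $p \in F$ and a small radius $r>0$. The key observation is that $f$ fixes $p$ and acts as a measure-preserving isometry, so it permutes geodesics emanating from $p$; if I can show $f$ fixes a dense set of directions at $p$, then by continuity $f = \mathrm{Id}$ near $p$, and then a connectedness/open-closed argument finishes the job.

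The technical heart is the following: for a density point $p \in F$ and a generic point $x$ near $p$, consider the midpoint $z$ of a geodesic from $p$ to $x$. Applying $f$, since $fp = p$, the point $fz$ is a midpoint of a geodesic from $p$ to $fx$, and $d(p,fx) = d(p,x)$, $d(fx, F) \le d(fx, p)\cdot(\text{small})$ when $x$ is close to $F$... Actually the cleaner route is to use the $L^2$-optimal transport characterization directly: take $\mu_0 = \mathfrak{m}\llcorner F / \mathfrak{m}(F)$ restricted to a small ball (which is $f$-invariant since $F$ is $f$-invariant and $f$ preserves $\mathfrak{m}$) and $\mu_1$ a normalized restriction of $\mathfrak{m}$ to a small ball around a generic point. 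The essentially-non-branching property of $RCD^{\ast}(K,N)$ spaces (Rajala--Sturm) gives that optimal dynamical plans are unique and induced by maps. Since $f$ is a measure-preserving isometry fixing $\mu_0$, it maps the unique $\OptGeo(\mu_0,\mu_1)$ plan to the unique $\OptGeo(\mu_0, f_\#\mu_1)$ plan; pushing the uniqueness through and letting the ball around $p$ shrink forces the transport geodesics to be fixed by $f$, hence $f$ fixes an open set.

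The main obstacle I anticipate is making the "density point $\Rightarrow$ $f$ fixes nearby geodesics" step rigorous without circular reasoning: one must avoid assuming differentiable structure and instead argue purely metric-measure-theoretically. I expect the right tool is the combination of (i) the good behavior of the measure under the exponential-type map along $RCD$ geodesics from a point to a positive-measure set, and (ii) Lemma-type rigidity: if $f$ preserves $\mathfrak{m}$, fixes $F$ pointwise with $\mathfrak{m}(F)>0$, and $F$ is not all of $X$, then near the topological boundary of $F$ one can transport mass from deep inside $F$ across $\partial F$ along geodesics, and $f$ would have to fix the images of these geodesics by uniqueness of optimal transport — but these images exit $F$, contradicting that $f$ moves points outside $F$ unless $f$ is trivial there too. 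Iterating (using that $X$ is geodesic hence connected, and the set where $f = \mathrm{Id}$ is both open by the above and closed by continuity) yields $F = X$, i.e. $f = \mathrm{Id}_X$. One should double-check the edge case $N = 1$ and the normalization conventions, but these do not affect the argument.
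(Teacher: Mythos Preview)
The paper does not prove this lemma; it is quoted from \cite{guijarro-santos} as an input to Theorem~\ref{gs-lie}. So there is no ``paper's own proof'' to compare against, and your attempt must be judged on its own.

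Your strategy (essential non-branching, uniqueness of optimal dynamical plans, then an open--closed argument) is sound, but the sketch has a real gap at the step you yourself flag. You take $\mu_1$ to be the normalized restriction of $\mathfrak{m}$ to a ball around a \emph{generic} point and then observe that $f$ sends the unique $\pi\in\OptGeo(\mu_0,\mu_1)$ to the unique element of $\OptGeo(\mu_0,f_\#\mu_1)$. Since $f_\#\mu_1\neq\mu_1$ in general, these are two \emph{different} (each unique) plans and no contradiction or ``$f$ fixes the geodesics'' conclusion follows; your boundary-crossing heuristic does not repair this. The fix is to center $\mu_1$ at the fixed point itself: choose $p\in F$ with $\mathfrak{m}(F\cap B_p(r,X))>0$, set $\mu_0=\mathfrak{m}\llcorner(F\cap B_p(r,X))$ normalized and $\mu_1=\mathfrak{m}\llcorner B_p(r,X)$ normalized. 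Because $f(p)=p$ and $f$ is a measure-preserving isometry, $f(B_p(r,X))=B_p(r,X)$ and hence $f_\#\mu_1=\mu_1$ as well as $f_\#\mu_0=\mu_0$. Now $f_\#\pi\in\OptGeo(\mu_0,\mu_1)$, so by uniqueness $f_\#\pi=\pi$. Since in $RCD^\ast(K,N)$ the optimal coupling is induced by a map $S$ (so $\pi$ is concentrated on a graph over $\mathrm{supp}\,\mu_0$), the equality $f_\#\pi=\pi$ forces $f\circ S=S$ $\mu_0$-a.e., hence $\mu_1(F)=1$, i.e.\ $\mathfrak{m}(B_p(r,X)\setminus F)=0$. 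As $F$ is closed and $\mathrm{supp}(\mathfrak{m})=X$, this gives $B_p(r,X)\subset F$; thus $F$ is open, closed, and nonempty in the connected space $X$, so $F=X$. You should also make explicit the hypothesis $f_\#\mathfrak{m}=\mathfrak{m}$ (which you already invoke); this is how the lemma is used in Section~\ref{section-5}, and for a bare isometry an extra argument would be needed.
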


 Recall that if $X$ is a proper geodesic space and $\Gamma \leq Iso (X)$ is a closed group of isometries, the metric $d^{\prime}$ on $X/\Gamma$ defined as $d^{\prime}([x],[y]) : = \inf _{g \in \Gamma }(d(gx,y)) $ makes it a proper geodesic space. 
 
 Notice that for a metric measure space $(X,d,\mathfrak{m})$, the subgroup of $Iso (X)$ consisting of measure preserving transformation is closed.  By the work of Galaz--Kell--Mondino--Sosa, the class of $RCD^{\ast}(K,N)$ spaces is closed under quotients by compact groups of measure preserving isometries \cite{galaz-kell-mondino-sosa}.

\begin{thm}\label{compact-quotient}
 (Galaz--Kell--Mondino--Sosa) Let $(X,d,\mathfrak{m})$ be an $RCD^{\ast}(K,N)$ space and $\Gamma \leq Iso (X)$ a compact group of measure preserving isometries. Then the metric measure space $(X/\Gamma, d^{\prime}, \mathfrak{m}^{\prime} ) $ is an $RCD^{\ast}(K,N)$ space, where $\mathfrak{m}^{\prime}$ is the pushforward of $\mathfrak{m}$ under the projection $X \to X/ \Gamma$.
\end{thm}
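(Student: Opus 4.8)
The plan is to check separately the two defining ingredients of the $RCD^{\ast}(K,N)$ condition for $(X/\Gamma,d',\mathfrak{m}')$: the $CD^{\ast}(K,N)$ optimal transport inequality, and infinitesimal Hilbertianity. Write $p\colon X\to X/\Gamma$ for the projection, so $\mathfrak{m}'=p_{\#}\mathfrak{m}$, and let $dg$ be the normalized Haar measure of the compact group $\Gamma$. First one records that $(X/\Gamma,d',\mathfrak{m}')$ is indeed a metric measure space: it is complete and separable, it is geodesic as recalled just before the statement, and $\mathfrak{m}'$ is Radon with full support and positive mass because $\Gamma$ is compact and $X$ is proper. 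The structural input used throughout is that $p$ is a \emph{submetry}: for each $x\in X$ and $\bar z\in X/\Gamma$ the infimum defining $d'(p(x),\bar z)$ is attained (by compactness of $\Gamma$), so there is $y\in p^{-1}(\bar z)$ with $d(x,y)=d'(p(x),\bar z)$; in particular $\Lip(f\circ p)(x)=\Lip f(p(x))$ for every Lipschitz $f$ on $X/\Gamma$, and the same identity holds for the descent to $X/\Gamma$ of a $\Gamma$-invariant Lipschitz function on $X$.

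For the transport inequality, given $\bar\mu_0,\bar\mu_1\in\mathbb{P}_2(X/\Gamma)$ with bounded support, I would choose arbitrary lifts and average them over $\Gamma$ to get $\Gamma$-invariant $\mu_0,\mu_1\in\mathbb{P}_2(X)$ with bounded support and $p_{\#}\mu_i=\bar\mu_i$; apply the $CD^{\ast}(K,N)$ condition of $X$ to obtain $\pi\in\OptGeo(\mu_0,\mu_1)$ satisfying the displayed inequality for all $t$ and all $N'\ge N$; and symmetrize, $\pi':=\int_\Gamma(g\circ\,\cdot\,)_{\#}\pi\,dg$. Since each $g$ is an isometry fixing $\mu_0$ and $\mu_1$, the plan $\pi'$ is again in $\OptGeo(\mu_0,\mu_1)$ and is $\Gamma$-invariant, and, as $s\mapsto s^{1-1/N'}$ is concave and $dg$ a probability measure, Jensen's inequality shows $\pi'$ still satisfies the $CD^{\ast}(K,N)$ inequality (averaging does not decrease the entropy integral $\int\rho_t^{1-1/N'}d\mathfrak{m}$, while the distortion term is unchanged because the densities of $\mu_0,\mu_1$ are already $\Gamma$-invariant). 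The crux is the identity $\mathbb{W}_2(\mu_0,\mu_1)=\mathbb{W}_2(\bar\mu_0,\bar\mu_1)$: the inequality $\ge$ is immediate since $p$ is $1$-Lipschitz, and for $\le$ I would disintegrate $\mu_i$ over $\bar\mu_i$ (the conditional on each orbit being a push-forward of Haar, by invariance), choose a Borel section of $p$ and a measurable selection $(\bar x,\bar y)\mapsto g(\bar x,\bar y)$ of the group element realizing $d'(\bar x,\bar y)$, build fibrewise couplings $q^{(\bar x,\bar y)}$ of the conditionals with $\int d^2\,dq^{(\bar x,\bar y)}=d'(\bar x,\bar y)^2$, and integrate these against an optimal coupling of $\bar\mu_0,\bar\mu_1$. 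Granting this, push $\pi'$ forward under $\gamma\mapsto p\circ\gamma$: the chain
\[
\mathbb{W}_2(\bar\mu_0,\bar\mu_1)^2\le\int_{\Geo(X)}d'(p\gamma_0,p\gamma_1)^2\,d\pi'\le\int_{\Geo(X)}d(\gamma_0,\gamma_1)^2\,d\pi'=\mathbb{W}_2(\mu_0,\mu_1)^2
\]
collapses to equalities, so $d(\gamma_0,\gamma_1)=d'(p\gamma_0,p\gamma_1)$ for $\pi'$-a.e.\ $\gamma$, whence $p\circ\gamma$ is a constant speed geodesic and $\bar\pi:=(p\circ\,\cdot\,)_{\#}\pi'\in\OptGeo(\bar\mu_0,\bar\mu_1)$. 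Since the $\Gamma$-invariant density $\rho_t$ descends to $\bar\rho_t$ with $\int\bar\rho_t^{1-1/N'}d\mathfrak{m}'=\int\rho_t^{1-1/N'}d\mathfrak{m}$, and the distortion coefficients agree because $d=d'$ a.e.\ along $\pi'$, the inequality for $X$ along $\pi'$ descends term by term to the $CD^{\ast}(K,N)$ inequality for $X/\Gamma$ along $\bar\pi$. Throughout, the singular parts $\mu_t^s$ must be tracked: they stay $\Gamma$-invariant under the averaging and push forward to measures singular with respect to $\mathfrak{m}'$.

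For infinitesimal Hilbertianity, the identity $\Lip(f\circ p)=\Lip f\circ p$ together with $g_{\#}\mathfrak{m}=\mathfrak{m}$ yields $\Ch_{\mathfrak{m}'}(f)=\Ch_{\mathfrak{m}}(f\circ p)$ for every $f\in W^{1,2}(X/\Gamma)$: the inequality $\le$ by pulling back a recovery sequence for $f$, and $\ge$ by Haar-averaging a recovery sequence $h_n$ for $f\circ p$ — Jensen again bounds $\int|\Lip\tilde h_n|^2\,d\mathfrak{m}$ by $\int|\Lip h_n|^2\,d\mathfrak{m}$ — and pushing the resulting $\Gamma$-invariant sequence down to $X/\Gamma$. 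Hence $f\mapsto f\circ p$ identifies $W^{1,2}(X/\Gamma)$ isometrically with the closed subspace of $\Gamma$-invariant elements of the Hilbert space $W^{1,2}(X)$, so the parallelogram identity for $\Ch_{\mathfrak{m}}$ transports to $\Ch_{\mathfrak{m}'}$ and $(X/\Gamma,d',\mathfrak{m}')$ is infinitesimally Hilbertian. Combining the two steps, $(X/\Gamma,d',\mathfrak{m}')$ is a $CD^{\ast}(K,N)$ space that is infinitesimally Hilbertian, hence an $RCD^{\ast}(K,N)$ space.

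The step I expect to be the main obstacle is the equality of Wasserstein distances $\mathbb{W}_2(\mu_0,\mu_1)=\mathbb{W}_2(\bar\mu_0,\bar\mu_1)$ for the $\Gamma$-invariant lifts, which hinges on a measurable selection of both a Borel section of $p$ and of the minimizing group elements. Once that is available, the remaining arguments are routine symmetrize-and-apply-Jensen manipulations, the only care being to verify that the absolutely continuous and singular parts of the intermediate measures behave well under $p_{\#}$ and under averaging, which is where properness of $X$ and the measure-preservation $g_{\#}\mathfrak{m}=\mathfrak{m}$ enter.
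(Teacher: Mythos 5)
The paper does not prove Theorem \ref{compact-quotient}: it is imported verbatim as a cited black box from Galaz-Garc\'ia--Kell--Mondino--Sosa \cite{galaz-kell-mondino-sosa}, so there is no in-paper argument to compare against. That said, your sketch is essentially a faithful reconstruction of the strategy in that reference, and I believe it is correct in outline: the projection $p\colon X\to X/\Gamma$ is a $1$-Lipschitz submetry because $\Gamma$ is compact and $X$ proper; for the $CD^{\ast}(K,N)$ inequality one lifts $\bar\mu_0,\bar\mu_1$ to $\Gamma$-invariant measures, applies $CD^{\ast}$ upstairs, Haar-averages the optimal plan (Jensen with concavity of $s\mapsto s^{1-1/N'}$ only improves the entropy side, while the distortion side is unchanged since the endpoint densities are already invariant), and pushes the averaged plan forward; and for infinitesimal Hilbertianity one identifies $W^{1,2}(X/\Gamma)$ isometrically with the closed $\Gamma$-invariant subspace of the Hilbert space $W^{1,2}(X)$ via $\Ch_{\mathfrak{m}'}(f)=\Ch_{\mathfrak{m}}(f\circ p)$. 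The two points you flag as delicate are indeed the ones that need real work. For $\mathbb{W}_2(\mu_0,\mu_1)=\mathbb{W}_2(\bar\mu_0,\bar\mu_1)$, a Borel selection of the minimizing group element is needed (Kuratowski--Ryll-Nardzewski or an argument via properness of the orbit map), and one should also use that the conditional of an invariant lift on a fiber is the unique $\Gamma$-invariant probability on that orbit. For the singular parts, observe that since $e_{t\#}\pi'$ is $\Gamma$-invariant, uniqueness of the Lebesgue decomposition forces $\mu_t^s$ to be $\Gamma$-invariant; one can then exhibit a $\Gamma$-\emph{invariant} Borel set carrying $\mu_t^s$ and $\mathfrak{m}$-null by intersecting translates of a carrier set via Fubini over Haar, which guarantees $p_\#\mu_t^s\perp\mathfrak{m}'$ — the uncountable union $\bigcup_{g}gB$ of a non-invariant $\mathfrak{m}$-null carrier would not be automatically $\mathfrak{m}$-null, so this normalization is not optional. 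With those two points spelled out your argument closes, and it runs along the same lines as the proof in \cite{galaz-kell-mondino-sosa}.
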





For a proper metric space $X$, a group of isometries $\Gamma \leq Iso (X)$, and $p\in X$, the \textit{isotropy group at} $p$ is defined as the subgroup of $\Gamma $ consisting of the elements $g\in \Gamma$ with $g p = p$.  Recall that $\Gamma $ is discrete with respect to the compact-open topology if and only if its action on $X$ has discrete orbits and is almost free (the isotropy group at $x$ is finite for all $x \in X$). Using Theorem \ref{compact-quotient} one can show that the class  of $RCD^{\ast}(K,N)$ spaces is closed under quotients by discrete groups (\cite{galaz-kell-mondino-sosa}, Theorem $7.24$).

\begin{cor}\label{quotient}
 (Galaz--Kell--Mondino--Sosa)  Let $(X,d,\mathfrak{m})$ be an $RCD^{\ast}(K,N)$ space and $\Gamma \leq Iso (X)$ a discrete group of measure preserving isometries. Then the proper geodesic space $X/\Gamma$ admits a measure that makes it an $RCD^{\ast}(K,N)$ space.
\end{cor}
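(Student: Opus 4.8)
The strategy is to reduce to a local statement and invoke the local-to-global principle (Theorem~\ref{local-to-global}). Concretely, it suffices to equip $X/\Gamma$ with a Radon measure $\mathfrak{m}'$ of full support and to produce, around every point of $X/\Gamma$, a small ball that is measure-preservingly isometric to a ball in an $RCD^{\ast}(K,N)$ space. The model space for this will be the quotient of $X$ by a \emph{finite} isotropy group, which is compact, so that Theorem~\ref{compact-quotient} applies.

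Fix $q\in X$. Since $\Gamma$ is discrete in the compact-open topology, the orbit $\Gamma q$ is closed and, because $X$ is proper, locally finite, while the isotropy group $\Gamma_q$ is finite. Using that closed balls of $X$ are compact, $\{g\in\Gamma:\ d(gq,q)\le 1\}$ is a finite union of cosets of $\Gamma_q$, so one can choose $r_0>0$ with $d(gq,q)\ge 4r_0$ for all $g\in\Gamma\setminus\Gamma_q$; a triangle inequality argument then gives the separation property
\[ g\,B_q(2r_0,X)\cap B_q(2r_0,X)\ne\emptyset \ \Longrightarrow\ g\in\Gamma_q \qquad (g\in\Gamma). \]
Set $Y:=X/\Gamma_q$, which is an $RCD^{\ast}(K,N)$ space by Theorem~\ref{compact-quotient}, with $\mathfrak{m}^Y$ the pushforward of $\mathfrak{m}$; write $\pi_q\colon X\to Y$ for the projection and $y:=\pi_q(q)$. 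For $0<r\le r_0/2$ I would check that $\Gamma_q x\mapsto\Gamma x$ (for $x\in B_q(r,X)$) is a well-defined bijection $\phi\colon B_y(r,Y)\to B_{[q]}(r,X/\Gamma)$: every point of either ball has a representative in $B_q(r,X)$, and two such representatives $x,x'$ satisfy $\Gamma x=\Gamma x'$ iff $\Gamma_q x=\Gamma_q x'$ by the separation property. The same property shows $\phi$ is an isometry: if $g\in\Gamma$ nearly realizes $d^{X/\Gamma}(\Gamma x,\Gamma x')\le d(x,x')<2r$, then $d(gx,q)<2r_0$, so $g\in\Gamma_q$, whence the infimum defining $d^{X/\Gamma}$ is attained within $\Gamma_q$ and equals the quotient distance on $Y$.

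For the measure I would use Lemma~\ref{no-fixed}: since $\Gamma$ is countable (a discrete subgroup of the second countable group $Iso(X)$) and each nonidentity isometry has a fixed-point set of $\mathfrak{m}$-measure zero, the regular locus $X_{\mathrm{reg}}=\{x:\ \Gamma_x=\{e\}\}$ is open (the separation property makes the singular set locally a finite union of such fixed-point sets, hence closed and, by full support of $\mathfrak{m}$, nowhere dense), dense, and of full $\mathfrak{m}$-measure, and $\Gamma$ acts freely on it. Thus $\pi_\Gamma\colon X_{\mathrm{reg}}\to X_{\mathrm{reg}}/\Gamma$ is a covering and a local isometry, and the construction recalled before Theorem~\ref{jikang} produces a measure there; extending it by zero to the negligible singular part yields a Radon measure $\mathfrak{m}'$ on $X/\Gamma$ with full support. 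Finally $\phi$ pushes $\mathfrak{m}^Y$ forward to $\mathfrak{m}'$, because both $\phi_\#\mathfrak{m}^Y$ and $\mathfrak{m}'$ restrict on the respective free loci (of full measure, matched by $\phi$) to the pushforward of $\mathfrak{m}$ under a local isometry. With $\mathfrak{m}'$ in hand, Theorem~\ref{local-to-global} gives that $(X/\Gamma,d^{\prime},\mathfrak{m}')$ is $RCD^{\ast}(K,N)$.

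I expect the main obstacle to be the identification of the local model: establishing the separation property and then checking that the quotient metric on $X/\Gamma$ coincides near $[q]$ with the one on $X/\Gamma_q$, i.e. that the relevant infima over $\Gamma$ are realized inside $\Gamma_q$. The construction of $\mathfrak{m}'$ and the verification that $\phi$ is measure preserving are bookkeeping rather than genuine difficulty once Lemma~\ref{no-fixed} is invoked; one should nevertheless confirm that $(X/\Gamma,d^{\prime},\mathfrak{m}')$ is a metric measure space in the required sense — properness and the geodesic property were already recalled in the text, and full support of $\mathfrak{m}'$ follows from density of the regular orbits.
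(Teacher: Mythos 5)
Your proof follows exactly the route the paper hints at (the paper itself gives no argument for this corollary beyond the remark that one can obtain it from Theorem~\ref{compact-quotient}, citing \cite{galaz-kell-mondino-sosa}, Theorem~7.24): localize, identify the quotient near an orbit with the quotient by the finite isotropy group, and invoke Theorem~\ref{local-to-global}. The separation estimate, the identification of $B_{[q]}(r,X/\Gamma)$ with $B_y(r,X/\Gamma_q)$, and the construction of $\mathfrak{m}'$ from the covering $X_{\mathrm{reg}}\to X_{\mathrm{reg}}/\Gamma$ are all correct.

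There is, however, a small slip in the last step: $\phi_\#\mathfrak{m}^Y$ is \emph{not} equal to $\mathfrak{m}'$. Since $\mathfrak{m}^Y$ is the full pushforward of $\mathfrak{m}$ under $X\to X/\Gamma_q$ (Theorem~\ref{compact-quotient}), on the regular part of $B_y(r,Y)$ it counts all $|\Gamma_q|$ sheets of $\pi_q$, whereas your $\mathfrak{m}'$ is the single-sheet (covering) measure. So in fact $\phi_\#\mathfrak{m}^Y=|\Gamma_q|\cdot\mathfrak{m}'$ on $B_{[q]}(r,X/\Gamma)$, not $\mathfrak{m}'$. This does not affect the conclusion: the factor is a constant, so by Remark~\ref{normalize} the rescaled space $\bigl(X/\Gamma_q,\,d^Y,\,\tfrac{1}{|\Gamma_q|}\mathfrak{m}^Y\bigr)$ is still $RCD^{\ast}(K,N)$, and $\phi^{-1}$ is a measure-preserving isometry onto a ball there, which is all that Theorem~\ref{local-to-global} requires. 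You should simply state the model space with this normalization rather than claiming $\phi_\#\mathfrak{m}^Y=\mathfrak{m}'$.
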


Due to Theorem \ref{compact-quotient} (Corollary \ref{quotient}), whenever we have an $RCD^{\ast}(K,N)$ space and a compact (discrete) group of measure preserving isometries $\Gamma \leq Iso (X)$, we will assume that the quotient $X / \Gamma$ is equipped with a measure that makes it an $RCD^{\ast}(K,N)$ space. However, notice that if $\Gamma$ is infinite and discrete then this measure is not the one given by the image of the measure of $X$ under the projection $X \to X/\Gamma$.

For $K \in \mathbb{R},$ $ N \in [1, \infty ) $, $D>0$, we will denote by $RCD^{\ast}(K,N; D)$ the class of compact $RCD^{\ast}(K,N)$ spaces of diameter $\leq D$. 

\subsection{Gromov--Hausdorff topology}

\begin{defn}\label{pmgh}
\rm Let $(X_i,p_i)$ be a sequence of pointed proper metric spaces. We say that it \textit{converges in the pointed Gromov--Hausdorff sense} (or pGH sense) to a proper pointed metric space $(X,p)$ if there is a sequence of functions $\phi_i : X_i \to X \cup \{ \ast \}$ with $\phi_i(p_i ) \to p $  and such that for each $R>0$ one has 
\begin{equation}\label{pgh2}
\phi_i^{-1}(B_p(R,X)) \subset B_{p_i} (2R, X_i) \text{ for large enough }i,
\end{equation}
\begin{equation}\label{pgh3}
\lim_{i \to \infty } \sup_{x_1,x_2 \in B_{p_i}(2R,X_i)} \vert d(\phi_i(x_1),\phi_i(x_2)) - d(x_1,x_2) \vert =0  , 
\end{equation}
\begin{equation}\label{pgh4}
  \lim_{i \to \infty} \sup_{y \in B_p(R,X) } \inf_{x \in B_{p_i}(2R,X_i)}  d(\phi _i (x),y)  = 0. 
\end{equation}
If in addition to that, $(X_i,d_i,\mathfrak{m}_i)$, $(X,d,\mathfrak{m})$ are metric measure spaces, the maps $\phi_i$ are Borel measurable, and
\[  \int_X f \cdot d((\phi_i)_{\ast}\mathfrak{m}_i) \to \int_X f \cdot d(\mathfrak{m})     \]
for all $f: X \to \mathbb{R}$ continuous with compact support, then we say that $(X_i,d_i, \mathfrak{m}_i,p_i)$ converges to $(X,d,\mathfrak{m},p)$ in the \textit{pointed measured Gromov--Hausdorff sense} (or pmGH sense).
\end{defn}

\begin{rem}
\rm Whenever we say that a sequence of spaces $X_i$ converges in the pointed (measured) Gromov--Hausdorff sense to some space $X$, we implicitly assume the existence of the maps $\phi_i$ satisfying the above conditions, and if a sequence $x_i \in X_i$ is such that $\phi_i (x_i) \to x \in X$, by an abuse of notation we say that $x_i$ \textit{converges} to $x$.
\end{rem}

\begin{rem}
\rm If there is a sequence of groups $\Gamma_i$ acting on $X_i$ by (measure preserving) isometries with diam$(X_i/\Gamma_i) \leq C$ for some $C>0$, one could ignore the points $p_i$ when one talks about pointed (measured) Gromov--Hausdorff convergence, as any pair of limits are going to be isomorphic as metric (measure) spaces. 

A particular instance of this situation is when all the spaces $X_i$ have diameter $\leq C$ for some $C>0$. In that case, we simply say that the sequence $X_i$ converges in the (measured) Gromov--Hausdorff sense (or (m)GH sense) to $X$.
\end{rem}

\begin{rem}\label{ghinv}
\rm  If a sequence of pointed proper metric spaces $(X_i, p_i)$ converges in the pGH sense to the pointed proper metric space $(X,p)$,  from the functions $\phi_i : X_i \to X \cup \{ \ast \}$ given by Definition \ref{pmgh} one could construct approximate inverses $\psi_i: X \to X_i$ such that for each $R>0$ one has 
\begin{equation}\label{ghinverse}
\sup_{x \in B_p(R,X)} d( \phi_i \psi_i (x) , x)  \to 0 \text{ as } i \to \infty   .
\end{equation}
\end{rem}
One of the main features of the class of $RCD^{\ast}(K,N)$ spaces is the Gromov--Hausdorff compactness property (\cite{gromov-ms}, \cite{bacher-sturm}).
\begin{defn}
\rm A pointed $RCD^{\ast}(K,N)$ space $(X,d,\mathfrak{m},p)$ is said to be \textit{normalized} if 
\[   \int_{B_p(1,X)}(1-d(p,\cdot ))d\mathfrak{m} = 1.  \]
Clearly there is a unique $c> 0$ such that $(X,d,c\mathfrak{m},p)$ is normalized, and by Remark \ref{normalize}, it is also an $RCD^{\ast}(K,N)$ space.
\end{defn}

\begin{thm}\label{compactness}
 (Gromov) If $(X_i,d_i, \mathfrak{m}_i,p_i)$ is a sequence of pointed normalized $RCD^{\ast}(K,N)$ spaces, then one can find a subsequence that converges in the pmGH sense to some pointed metric measure space $(X,d, \mathfrak{m},p)$.
\end{thm}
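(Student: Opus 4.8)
The plan is to prove pointed Gromov--Hausdorff precompactness at the level of metric spaces first, and then upgrade it to the measured setting by using the normalization hypothesis together with Bishop--Gromov to control ball masses uniformly from above and below.

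\textbf{Metric precompactness.} By the Bishop--Gromov inequality (Theorem \ref{bg-in}), for all $R > r > 0$ there is $C(K,N,R,r)$ with $\mathfrak{m}_i(B_x(R,X_i)) \le C(K,N,R,r)\,\mathfrak{m}_i(B_x(r,X_i))$ for every $x \in X_i$ and every $i$. A standard packing argument turns this volume doubling into metric doubling: if $\{x_j\}$ is a maximal $\varepsilon$-separated subset of $B_{p_i}(R,X_i)$, the balls $B_{x_j}(\varepsilon/2,X_i)$ are pairwise disjoint and contained in $B_{p_i}(R+\varepsilon,X_i)$, while each of them has $\mathfrak{m}_i$-measure at least $c(K,N,R,\varepsilon)\,\mathfrak{m}_i(B_{p_i}(R+\varepsilon,X_i))$ by the displayed inequality, so the number of the $x_j$ is at most $N_0(K,N,R,\varepsilon)$. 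Hence the balls $B_{p_i}(R,X_i)$ are uniformly totally bounded, and Gromov's precompactness criterion (the usual diagonal argument over a countable dense set) gives a subsequence along which $(X_i,d_i,p_i)$ converges in the pGH sense to a pointed proper geodesic space $(X,d,p)$; fix maps $\phi_i : X_i \to X \cup \{\ast\}$ as in Definition \ref{pmgh}.

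\textbf{Uniform mass bounds.} Since $1-d(p_i,\cdot)$ takes values in $[0,1]$ on $B_{p_i}(1,X_i)$, normalization forces $\mathfrak{m}_i(B_{p_i}(1,X_i)) \ge 1$; and since $1-d(p_i,\cdot) \ge 1/2$ on $B_{p_i}(1/2,X_i)$ while the integral of $1-d(p_i,\cdot)$ over $B_{p_i}(1,X_i)$ equals $1$, we get $\mathfrak{m}_i(B_{p_i}(1/2,X_i)) \le 2$. Feeding the latter into Bishop--Gromov yields $\mathfrak{m}_i(B_{p_i}(R,X_i)) \le C(K,N,R)$ for all $R$ and all $i$. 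For a lower bound away from $p_i$, note that for any $x \in X_i$ the ball $B_x(d(x,p_i)+1,X_i)$ contains $B_{p_i}(1,X_i)$, hence has mass $\ge 1$, so another application of Bishop--Gromov gives $\mathfrak{m}_i(B_x(r,X_i)) \ge c(K,N,r,d(x,p_i)) > 0$.

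\textbf{Passing the measures to the limit.} For each fixed $R$, pushing forward under $\phi_i$ the restriction of $\mathfrak{m}_i$ to $B_{p_i}(2R,X_i)$ produces Borel measures supported in a bounded region of $X$ with total mass $\le C(K,N,R)$; as $X$ is proper, a diagonal argument over $R \in \mathbb{N}$ extracts a further subsequence along which $(\phi_i)_\ast\mathfrak{m}_i$ converges vaguely to a Radon measure $\mathfrak{m}$ on $X$, i.e.\ $\int_X f\,d((\phi_i)_\ast\mathfrak{m}_i) \to \int_X f\,d\mathfrak{m}$ for every $f \in C_c(X)$, which is exactly the measured clause of Definition \ref{pmgh}. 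It then remains to see that $(X,d,\mathfrak{m})$ is a metric measure space in our sense: completeness, separability and the geodesic property are preserved under pGH limits, $\mathfrak{m}$ is Radon by construction, and given $x \in X$ we pick $x_i \in X_i$ with $\phi_i(x_i) \to x$ and test against a bump function supported near $x$ --- the lower bound of the previous step, transported through the approximate isometry $\phi_i$, then gives $\mathfrak{m}(B_x(r,X)) > 0$ for all $r > 0$, so $\supp(\mathfrak{m}) = X$ and $\mathfrak{m}(X) > 0$. (By lower semicontinuity of the $CD^\ast(K,N)$ condition and stability of infinitesimal Hilbertianity, the limit is in fact $RCD^\ast(K,N)$, though this is not needed for the statement.)

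\textbf{Main obstacle.} The metric precompactness is the classical Gromov argument and is routine given the packing estimate; the delicate point is the interplay in the last two steps, where the normalization must be used to extract both a uniform \emph{upper} bound on ball masses (so the vague limit is locally finite) and a uniform \emph{lower} bound on them (so the limit is nonzero and has full support), and one must check that neither bound is lost when transported through the approximate isometries $\phi_i$.
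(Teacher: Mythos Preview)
Your argument is correct and follows the standard route: Bishop--Gromov yields uniform metric doubling (hence pGH precompactness by Gromov's criterion), the normalization pins down two-sided bounds on ball masses, and then vague compactness of measures on a proper space gives the measured limit with full support. One small technical point worth tightening is the measurability of the $\phi_i$, required by Definition \ref{pmgh}; this is harmless since one can always replace $\phi_i$ by a Borel map at the same accuracy (e.g.\ by choosing values from a countable dense set), but it should be said.

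As for the comparison: the paper does not actually prove Theorem \ref{compactness}. It is stated as a known result attributed to Gromov, with citations to \cite{gromov-ms} and \cite{bacher-sturm}, and used as a black box thereafter. Your write-up is essentially the proof one finds in those references, so there is nothing to contrast.
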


\begin{thm}\label{compactness-2}
 (Bacher--Sturm) The class of pointed normalized $RCD^{\ast}(K,N) $ spaces is closed under pmGH convergence.
\end{thm}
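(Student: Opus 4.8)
The plan is to verify that each defining property of an $RCD^{\ast}(K,N)$ space survives passage to a pmGH limit, after disposing of the easy structural points. So let $(X_i,d_i,\mathfrak{m}_i,p_i)$ be pointed normalized $RCD^{\ast}(K,N)$ spaces converging in the pmGH sense to $(X,d,\mathfrak{m},p)$. That the limit is a metric measure space is standard: by Definition \ref{pmgh} it is a proper pointed geodesic space, hence complete and separable, and $\mathfrak{m}$ is a nonzero Radon measure with $\supp(\mathfrak{m})=X$ since every ball of $X$ is GH-approximated by balls of the $X_i$ which, by normalization and the Bishop--Gromov comparison (Theorem \ref{bg-in}), carry a definite amount of mass that persists in the limit. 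The normalization passes too: $f:=\max(1-d(p,\cdot),0)$ is continuous with compact support, so $\int f\,d\mathfrak{m}=\lim_i\int f\,d((\phi_i)_{\ast}\mathfrak{m}_i)$, and because the $\phi_i$ distort distances and base points negligibly near $p$, the right-hand side equals $\lim_i\int_{B_{p_i}(1,X_i)}(1-d(p_i,\cdot))\,d\mathfrak{m}_i=1$. It remains to check the reduced curvature-dimension inequality $CD^{\ast}(K,N)$ and infinitesimal Hilbertianity.

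For the $CD^{\ast}(K,N)$ inequality I would realize the convergence extrinsically: isometrically embed all $X_i$ and $X$ into a single proper space $Z$ so that $X_i\to X$ in the Hausdorff sense on bounded sets and $\mathfrak{m}_i\to\mathfrak{m}$ in duality with compactly supported continuous functions on $Z$. Given $\mu_0,\mu_1\in\mathbb{P}_2(X)$ with bounded support and bounded densities, approximate them by $\mu_0^i,\mu_1^i\in\mathbb{P}_2(X_i)$ with uniformly controlled densities and $\mathbb{W}_2$-converging to $\mu_0,\mu_1$ in $Z$ (a mollification-and-truncation scheme compatible with Bishop--Gromov); apply the $CD^{\ast}(K,N)$ condition on each $X_i$ to get plans $\pi^i\in\OptGeo(\mu_0^i,\mu_1^i)$; extract, via weak compactness of $\mathbb{P}(\Geo(Z))$ under the uniform $\mathbb{W}_2$-bounds, a limit $\pi$; check $\pi\in\OptGeo(\mu_0,\mu_1)$ and that $\pi$ is concentrated on $\Geo(X)$; and pass the convexity inequality to the limit using joint lower semicontinuity of the R\'enyi-type entropy functionals along $\mathfrak{m}_i\to\mathfrak{m}$. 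This is the Lott--Villani--Sturm stability argument in the form adapted to the reduced condition by Bacher--Sturm \cite{bacher-sturm}; the delicate step is the density approximation, which must preserve the a priori bounds needed for the semicontinuity.

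Infinitesimal Hilbertianity is not manifestly stable, so here I would invoke the characterization of $RCD^{\ast}(K,N)$ among $CD^{\ast}(K,N)$ spaces by the existence of a unique $\mathrm{EVI}_{K,N}$-gradient flow of the Boltzmann entropy $\mathrm{Ent}_{\mathfrak{m}}$ in $(\mathbb{P}_2(X),\mathbb{W}_2)$ \cite{erbar-kuwada-sturm, agmr}. The $\mathrm{EVI}_{K,N}$ inequality only involves $\mathbb{W}_2$-distances between probability measures and values of $\mathrm{Ent}_{\mathfrak{m}}$, quantities controlled along the extrinsic convergence; running the flows on the $X_i$ from convergent initial data and extracting a limit curve produces an $\mathrm{EVI}_{K,N}$-flow on $X$, whence $X$ is $RCD^{\ast}(K,N)$, and in particular infinitesimally Hilbertian. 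The main obstacle throughout is technical but essential: one must compare probability measures, optimal geodesic plans, and entropy flows that live on a varying sequence of spaces, and it is precisely the extrinsic realization together with the weak-compactness and entropy-semicontinuity lemmas that makes this rigorous.
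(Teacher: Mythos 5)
The paper states this result as cited background (attributed to Bacher--Sturm for the $CD^{\ast}$ side, with the Riemannian part going back to Ambrosio--Gigli--Savar\'e and Erbar--Kuwada--Sturm) and supplies no proof, so there is no argument in the paper to compare against line by line. Your sketch is a correct high-level outline of the standard proof from the literature: realize the convergence extrinsically in a common proper ambient space, approximate marginals while retaining density control, use tightness and properness to extract a limiting geodesic plan, and pass the entropy convexity inequality to the limit via lower semicontinuity; then obtain the Riemannian property from stability of the $\mathrm{EVI}_{K,N}$-gradient flow of the Boltzmann entropy. You also correctly flag the genuinely delicate step (the density approximation compatible with the bounds needed for semicontinuity) rather than glossing over it.

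One structural remark worth recording: the two halves of your sketch are not logically independent, and the second subsumes the first. By Erbar--Kuwada--Sturm, existence of the $\mathrm{EVI}_{K,N}$-gradient flow of $\mathrm{Ent}_{\mathfrak{m}}$ on $(\mathbb{P}_2(X),\mathbb{W}_2)$ is \emph{equivalent} to the full $RCD^{\ast}(K,N)$ condition, i.e., it already encodes both the reduced curvature--dimension inequality and infinitesimal Hilbertianity. So once you establish $\mathrm{EVI}_{K,N}$ in the limit by running the flows on the $X_i$ and passing to the limit, the separate $CD^{\ast}(K,N)$ stability argument (the most technical part of your outline) is redundant. The cleaner organization is therefore to observe that $\mathrm{EVI}_{K,N}$ is phrased purely in terms of $\mathbb{W}_2$-distances and entropy values, both of which behave continuously (respectively lower semicontinuously) under pmGH/$\mathbb{D}$-convergence of normalized spaces, and to deduce the conclusion in one step; your first paragraph then survives only as the verification that the limit is again a normalized metric measure space, which is needed in any case.
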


 Let $(X,d,\mathfrak{m})$ be an $RCD^{\ast}(K,N)$ space, $p \in X$, and $r >0$. We will denote as $\mathfrak{m}^{p}_{r}$ the constant multiple of $\mathfrak{m}$ that satisfies 
 \[  \int_{B_{p}(r, X)} \left( 1 - \frac{1}{r} d(p, \cdot )  \right) d \mathfrak{m}^{p}_r  = 1.  \] 
 That is, the pointed metric measure space $(X, r^{-1} d , \mathfrak{m}^{p}_r,p)$ is normalized.

\begin{defn}
\rm Let $(X,d,\mathfrak{m})$ be an $RCD^{\ast}(K,N)$ space and $m \in \mathbb{N}$. We say that $p \in X$ is an $m$\textit{-regular} point if for each $\lambda_i \to \infty$, the sequence $(X,\lambda_i d, \mathfrak{m}^{p}_{1/ \lambda_i}, p)$ converges in the pmGH sense to $(\mathbb{R}^m, d^{\mathbb{R}^m}, \mathcal{H}^m, 0)$.
\end{defn}
 
Mondino--Naber showed that the set of regular points in an $RCD^{\ast}(K,N)$ space has full measure \cite{mondino-naber}. This result was refined by Bru\'e--Semola who showed that most points have the same local dimension \cite{brue-semola}.

\begin{thm}\label{bs-dimension}
 (Bru\'e--Semola) Let $(X,d,\mathfrak{m})$ be an $RCD^{\ast}(K,N)$ space. Then there is a unique $m \in \mathbb{N} \cap [0, N]$ such that the set of $m$-regular points in $X$ has full measure. This number $m$ is called the \textit{rectifiable dimension }of $X$.
\end{thm}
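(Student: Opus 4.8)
The plan is as follows. Uniqueness of $m$ comes for free: writing $\mathcal{R}_j$ for the set of $j$-regular points, the sets $\mathcal{R}_j$ are pairwise disjoint, since a single blow-up sequence $\lambda_i\to\infty$ at a point $x$ has a pmGH limit that is unique up to isomorphism, and $(\mathbb{R}^j,d^{\mathbb{R}^j},\mathcal{H}^j,0)\not\cong(\mathbb{R}^{j'},d^{\mathbb{R}^{j'}},\mathcal{H}^{j'},0)$ for $j\neq j'$. Combined with Mondino--Naber's theorem that $\mathfrak{m}\big(X\setminus\bigcup_{j\geq 1}\mathcal{R}_j\big)=0$, the real task is to show that \emph{some single} $\mathcal{R}_m$ already has full measure. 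I would do this by proving that each stratum $\mathcal{R}_j$ is invariant, modulo $\mathfrak{m}$-null sets, under a large family of flows, and that this family connects any two regions of positive measure.

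\textbf{Step 1: a toolbox of regular Lagrangian flows.} Using Gigli's first- and second-order calculus on $RCD^{\ast}(K,N)$ spaces together with the Ambrosio--Trevisan well-posedness theory, for a test vector field $b$ — e.g.\ $b=\nabla f$ with $f$ a heat-flow regularization of a Lipschitz function, so that $b\in W^{1,2}$ and $\operatorname{div} b\in L^\infty$ — one obtains an essentially unique flow $(X_t)_{t\in\mathbb{R}}$ with $(X_t)_{\#}\mathfrak{m}=\rho_t\mathfrak{m}$ and $e^{-|t|\,\|\operatorname{div} b\|_\infty}\leq\rho_t\leq e^{|t|\,\|\operatorname{div} b\|_\infty}$. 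In particular every $X_t$ is a measure-class isomorphism: it sends $\mathfrak{m}$-null sets to $\mathfrak{m}$-null sets and positive-measure sets to positive-measure sets, and $X_{-t}$ is the flow of $-b$.

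\textbf{Step 2: the local dimension is a flow invariant.} The heart of the argument is to show that for $\mathfrak{m}$-a.e.\ $x$ and every $t$, if $x\in\mathcal{R}_j$ then $X_t(x)\in\mathcal{R}_j$. I would fix $x$ to be a Lebesgue point of $b$ in a quantitative sense, rescale the space around $x$ by $\lambda\to\infty$ — so that by $j$-regularity $(X,\lambda d,\mathfrak{m}^x_{1/\lambda},x)$ converges in the pmGH sense to $(\mathbb{R}^j,d^{\mathbb{R}^j},\mathcal{H}^j,0)$ — and then show that the correspondingly rescaled vector fields converge to a Sobolev vector field on $\mathbb{R}^j$ (using the Euclidean structure of the tangent module at regular points and the stability of Sobolev vector fields under pmGH convergence) and that the correspondingly time-rescaled flows converge to the regular Lagrangian flow of this limit field. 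Since flows of $\mathbb{R}^j$ keep $\mathbb{R}^j$ as their tangent everywhere, passing this stability to the limit identifies the tangent at $X_t(x)$ with $\mathbb{R}^j$, i.e.\ $X_t(x)\in\mathcal{R}_j$; subdividing $[0,t]$ into short steps, with the estimates uniform over a set of good points of almost full measure, then yields the claim for all $t$. This step is where I expect the main obstacle to lie: the commutation of blow-ups with regular Lagrangian flows and the identification of blow-up limits of Sobolev vector fields require the full weight of Gigli's second-order calculus together with quantitative stability estimates for Lagrangian flows.

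\textbf{Step 3: transitivity and conclusion.} Fix $m$ with $\mathfrak{m}(\mathcal{R}_m)>0$ and let $A$ be the union of the sets $X^b_t(\mathcal{R}_m)$ over $t\in\mathbb{Q}$ and over a countable family of test vector fields $b$ dense in an appropriate sense. By Step 2, $A\subset\mathcal{R}_m$ up to an $\mathfrak{m}$-null set, while by Step 1, $\mathfrak{m}(A)\geq\mathfrak{m}(\mathcal{R}_m)>0$ and $A$ is essentially invariant under all such flows. If $\mathfrak{m}(X\setminus A)>0$, picking a density point $x_0$ of $X\setminus A$ and using that $X$ is geodesic and that the cotangent module is non-degenerate near $x_0$, one can construct a test vector field whose flow carries a positive-measure piece of $A$ into a neighbourhood of $x_0$, contradicting the invariance of $A$. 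Hence $\mathfrak{m}(X\setminus\mathcal{R}_m)=0$, which is the assertion. This last step is technically softer than Step 2 but still requires some care in manufacturing enough vector fields to steer mass between arbitrary positive-measure sets.
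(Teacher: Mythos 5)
This statement is not proved in the paper you are reading; it is quoted as an external result attributed to Bru\'e--Semola, whose 2020 \emph{Comm.\ Pure Appl.\ Math.} article is titled precisely ``Constancy of the dimension for $RCD(K,N)$ spaces via regularity of Lagrangian flows,'' so there is no in-paper proof against which to compare. Your high-level plan --- build a toolbox of regular Lagrangian flows, show that each regularity stratum $\mathcal{R}_j$ is flow-invariant up to $\mathfrak{m}$-null sets, and conclude by a connectivity argument --- is the right reading of their strategy, and your uniqueness observation via disjointness of the strata is fine.

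The mechanism you propose for your Step 2, which you yourself flag as the main obstacle, is however not the one that works and would likely not work at all. You suggest commuting blow-ups with regular Lagrangian flows: rescaling the vector field and the flow about a regular point, arguing that the rescaled data converge to a Sobolev field and its flow on the Euclidean tangent, and propagating $j$-regularity along the trajectory. No blow-up-stability theorem for regular Lagrangian flows on $RCD$ spaces is available; the Ambrosio--Trevisan theory gives existence, uniqueness and density bounds, but not pmGH stability under joint rescaling of the space, the field, and the flow, and establishing such a result would be at least as hard as the dimension-constancy theorem itself. What Bru\'e--Semola actually prove --- the ``regularity of Lagrangian flows'' in their title --- is a Lusin-type Lipschitz estimate: for each $\varepsilon>0$ there is a Borel set $E$ with $\mathfrak{m}(X\setminus E)<\varepsilon$ on which the flow map $X_t$ is bi-Lipschitz, with a constant quantified by the Sobolev norm of the field, the divergence bound, and the time $t$. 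Combining this with the Mondino--Naber rectifiable structure of each $\mathcal{R}_j$ and the mutual absolute continuity of $\mathfrak{m}$ with $\mathcal{H}^j$ on $\mathcal{R}_j$, one sees that a bi-Lipschitz flow map cannot carry a positive-measure piece of $\mathcal{R}_j$ into $\mathcal{R}_k$ unless $j=k$. That pointwise quantitative estimate, interacting with rectifiability rather than with blow-up limits of the flow, is what makes the scheme tractable; without it, your Step 2 has a genuine gap.
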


\begin{rem}\label{homogeneous}
\rm By Theorem \ref{gs-lie}, an $RCD^{\ast}(K,N)$ space $(X,d,\mathfrak{m}) $ whose isometry group acts transitively is isometric to  a Riemannian manifold, so its rectifiable dimension coincides with its topological dimension.
\end{rem}

The well known Cheeger--Gromoll splitting theorem was extended by Cheeger--Colding to  limits of Riemannian manifolds with lower Ricci curvature bounds \cite{cheeger-colding}, and later by Gigli to the setting  of $RCD^{\ast}(K,N)$ spaces \cite{gigli}.

\begin{thm}\label{gigli}
 (Gigli) For each $i\in \mathbb{N}$, let $(X_i,d_i, \mathfrak{m}_i, p_i)$ be a pointed normalized $RCD^{\ast} (-\varepsilon_i ,N)$ space with $\varepsilon _ i \to 0$. Assume the sequence $(X_i, d_i,\mathfrak{m}_i, p_i)$ converges in the pmGH sense to a space $(X,d,\mathfrak{m},p)$. If $(X,d)$ contains an isometric copy of $\mathbb{R}^{m}$, then there is $c>0$ and a metric measure space $(Y, d^Y, \nu )$ such that $(X,d,c \mathfrak{m})$ is isomorphic to the product $(\mathbb{R}^m\times Y, d ^{\mathbb{R}^m} \times d^Y, \mathcal{H}^m \otimes \nu)$. Moreover, if $N-m \in [0,1)$ then $Y$ is a point, and in general, $(Y,d^Y,\nu)$ is an $RCD^{\ast}(0,N-m)$ space. 
\end{thm}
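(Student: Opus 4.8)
The plan is to deduce the statement from Gigli's splitting theorem for a single $RCD^{\ast}(0,N)$ space \cite{gigli}, combined with the stability of the curvature-dimension condition under pointed measured Gromov--Hausdorff convergence. First I would upgrade the limit to an $RCD^{\ast}(0,N)$ space: although Theorem \ref{compactness-2} is stated for a fixed lower curvature bound, the $RCD^{\ast}$ condition is stable under pmGH convergence when the bounds improve, so since each $(X_i,d_i,\mathfrak{m}_i,p_i)$ is a normalized $RCD^{\ast}(-\varepsilon_i,N)$ space with $\varepsilon_i\to 0$, a standard stability argument (see \cite{agmr}; cf. \cite{cheeger-colding} for the smooth-limit analogue) shows that $(X,d,\mathfrak{m})$ is a normalized $RCD^{\ast}(0,N)$ space. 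If $m=0$ the statement holds trivially with $c=1$, $Y=X$, $\nu=\mathfrak{m}$, so from now on assume $m\geq 1$, whence $X$ is non-compact.

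Next I would split off one Euclidean factor at a time. A coordinate axis $\ell$ of the embedded $\mathbb{R}^m\subset X$ is a line in $X$, so Gigli's theorem \cite{gigli} gives an isomorphism of metric measure spaces from $(X,d,\mathfrak{m})$ onto $(\mathbb{R}\times X_1,\ d^{\mathbb{R}}\times d^{X_1},\ \mathcal{H}^1\otimes\mathfrak{m}_1)$, where $(X_1,d^{X_1},\mathfrak{m}_1)$ is an $RCD^{\ast}(0,N-1)$ space (a point if $N-1<1$) and the $\mathbb{R}$-coordinate is, up to an additive constant, a Busemann function of $\ell$. To iterate, I would use that the Busemann function of a line lying in a Euclidean factor is, up to sign and constant, the corresponding linear coordinate; hence, restricted to the embedded $\mathbb{R}^m$, this coordinate is a Euclidean coordinate, so its zero level set meets $\mathbb{R}^m$ in an affine copy of $\mathbb{R}^{m-1}$. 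Since a slice of a metric product carries the metric of the complementary factor, $X_1$ contains an isometric copy of $\mathbb{R}^{m-1}$, and the argument applies again to $X_1$. Running this $m$ times produces an isomorphism of metric measure spaces
\[ (X,d,\mathfrak{m})\;\cong\;\big(\mathbb{R}^m\times Y,\ d^{\mathbb{R}^m}\times d^Y,\ c^{-1}\,\mathcal{H}^m\otimes\nu\big) \]
for a suitable $c>0$, with $Y:=X_m$ and $\nu:=\mathfrak{m}_m$; here one uses that the normalized $\mathcal{H}^m$ on $\mathbb{R}^m$ is a fixed positive multiple of the $m$-fold product $\mathcal{H}^1\otimes\cdots\otimes\mathcal{H}^1$, the extra constant being absorbed into $c$. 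At the $j$-th step the complementary factor is $RCD^{\ast}(0,N-j)$; this forces $N\geq m$, since otherwise at some step $j<m$ the complementary factor would be forced to be a point yet would still have to contain $\mathbb{R}^{m-j}$ with $m-j\geq 1$. Hence $Y$ is $RCD^{\ast}(0,N-m)$, it is a point when $N-m\in[0,1)$, and multiplying $\mathfrak{m}$ by $c$ yields the asserted isomorphism.

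The genuinely hard content is Gigli's one-line splitting theorem, invoked here as a black box. Granting it, the two steps requiring care are the stability argument that passes the lower bound $0$ to the limit, and the iteration: one must check that peeling off lines one at a time is compatible with the prescribed isometric $\mathbb{R}^m$ (the Busemann-function/slice observation) and keep track of the measure normalizations across the $m$ steps. The bound $N\geq m$ and the degenerate case $N-m<1$ are then automatic from the drop $N\mapsto N-1$ at each application of Gigli's theorem.
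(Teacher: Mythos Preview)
The paper does not prove this theorem; it is stated as a background result attributed to Gigli and cited as \cite{gigli}, so there is no ``paper's own proof'' to compare against. Your proposal is a correct outline of how the stated version follows from Gigli's one-line splitting theorem together with the stability of the $RCD^{\ast}$ condition under pmGH convergence (which lets the bound $-\varepsilon_i\to 0$ pass to the limit). The only points worth tightening are: make explicit that ``isometric copy of $\mathbb{R}^m$'' means a distance-preserving embedding, so that the Busemann function of a coordinate axis of $\mathbb{R}^m$ computed in $X$ restricts to the linear coordinate on $\mathbb{R}^m$ (this is what makes the iteration go through); and note that the case $N-m\in[0,1)$ is handled at the last step, where $X_{m-1}$ is $RCD^{\ast}(0,N-m+1)$ with $N-m+1\in[1,2)$ and contains a line, hence splits as $\mathbb{R}$ times a point.
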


This condition of Theorem \ref{gigli} is not stable under blow-up. That is, if one considers a sequence $\lambda_i \to \infty$, then the sequence of normalized spaces $(X_i, \lambda_i d_i,  (\mathfrak{m}_i)^{p_i}_{1/\lambda_i} , p_i )$ may fail to converge to a space containing a copy of $\mathbb{R}^m$. However, Mondino--Naber showed that this issue could be fixed by slightly shifting the basepoint \cite{mondino-naber}.

\begin{thm}\label{zoom}
 (Mondino--Naber) For each $i\in \mathbb{N}$, let $(X_i,d_i, \mathfrak{m}_i)$ be a normalized $RCD^{\ast} (-\varepsilon_i ,N)$ space with $\varepsilon_i \to 0$. Assume that for some choice $p_i \in X_i$, the sequence $(X_i, d_i,\mathfrak{m}_i, p_i)$ converges in the pmGH sense to $(\mathbb{R}^m, d^{\mathbb{R}^m}, \mathcal{H}^m, 0)$. Then there is a sequence of subsets $U_i \subset B_{p_i}(1,X_i)$ with $\mathfrak{m}_i(U_i)/ \mathfrak{m}_i(B_{p_i}(1,X_i)) \to 1 $ such that for any sequence $\lambda_i \to \infty$ and any sequence $q_i \in U_i$, the sequence of spaces $(X_i, \lambda_id_i,(\mathfrak{m}_i)^{q_i}_{1/\lambda_i}, q_i)$ converges (up to subsequence) in the pmGH sense to a pointed $RCD^{\ast}(0,N)$ space containing an isometric copy of $\mathbb{R}^m$.
\end{thm}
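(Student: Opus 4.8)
The plan is to reduce the statement to a statement about \emph{almost splitting maps}: I would produce, near each $p_i$, an $m$-tuple of harmonic functions that behave like Euclidean coordinates at scale $1$, control (via Bochner) the scale-invariant defect that measures failure of the splitting relations at every smaller scale, use a maximal function argument to locate the sets $U_i$ where this defect stays small all the way down to scale $1/\lambda_i$, and finally pass to a pmGH-limit of the blow-ups at points of $U_i$, where the limiting splitting maps are exact and hence force a Euclidean factor.

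Concretely, first I would use the convergence $(X_i,d_i,\mathfrak m_i,p_i)\to(\mathbb R^m,d^{\mathbb R^m},\mathcal H^m,0)$ to transplant the Euclidean coordinates: composing a Gromov--Hausdorff approximation with the coordinate functions of $\mathbb R^m$ and then taking the harmonic replacement on $B_{p_i}(2,X_i)$ yields functions $u_i=(u_i^1,\dots,u_i^m)$ which are $\delta_i$-splitting maps, with $\delta_i\to 0$: each $u_i^j$ is harmonic, $\mathrm{Lip}\,u_i^j\le C(N)$ (gradient estimates on $RCD^{\ast}(-\varepsilon_i,N)$ spaces), and
\[
\frac1{\mathfrak m_i(B_{p_i}(2,X_i))}\int_{B_{p_i}(2,X_i)}\Big(\sum_{j,k}\big|\langle\nabla u_i^j,\nabla u_i^k\rangle-\delta_{jk}\big|+\sum_j|\mathrm{Hess}\,u_i^j|^2\Big)\,d\mathfrak m_i\longrightarrow 0,
\]
which follows from energy minimality, the stability of the Cheeger energy and of harmonic functions under pmGH convergence, the fact that the coordinate functions on the limit $\mathbb R^m$ split it exactly, and the Bochner inequality of Gigli applied to control the Hessian term by $\Delta|\nabla u_i^j|^2$. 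I would then define, for $q\in B_{p_i}(1,X_i)$ and $r\in(0,1]$, the scale-$r$ defect $\mathcal E_i(q,r)$ to be the same averaged integral over $B_q(r,X_i)$ with the Hessian term weighted by $r^2$ (so that $\mathcal E_i$ is scale-invariant), and record that the maximal function $q\mapsto\sup_{0<r\le 1}\mathcal E_i(q,r)$ is small in $L^1\!\big(B_{p_i}(1,X_i),\mathfrak m_i\big)$ by the weak $(1,1)$ estimate, which is available because the Bishop--Gromov inequality (Theorem \ref{bg-in}) makes $\mathfrak m_i$ doubling.

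Next I would set $\eta_i\to 0$ decaying slowly enough and let
\[
U_i:=\Big\{\,q\in B_{p_i}(1,X_i)\ :\ \mathcal E_i(q,r)\le\eta_i\ \text{ for all }r\in[1/\lambda_i,1]\,\Big\};
\]
the $L^1$ bound on the maximal function together with Chebyshev and doubling gives $\mathfrak m_i\big(B_{p_i}(1,X_i)\setminus U_i\big)/\mathfrak m_i\big(B_{p_i}(1,X_i)\big)\to 0$. For $q_i\in U_i$ and any $\lambda_i\to\infty$, on the rescaled normalized spaces $(X_i,\lambda_i d_i,(\mathfrak m_i)^{q_i}_{1/\lambda_i},q_i)$ the rescaled functions $v_i^j:=\lambda_i\big(u_i^j-u_i^j(q_i)\big)$ are defined on balls of radius $\sim\lambda_i$ and, by the defining property of $U_i$ and scale-invariance of $\mathcal E_i$, are $\eta_i$-splitting maps with defect $\le\eta_i$ at every scale $s\in[1,\lambda_i]$. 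Passing to a pmGH-subsequential limit $(Y,d^Y,\nu,y)$, which is an $RCD^{\ast}(0,N)$ space by Theorems \ref{compactness} and \ref{compactness-2}, the $v_i^j$ converge to Lipschitz functions $v^1,\dots,v^m$ on $Y$ that are harmonic with $\langle\nabla v^j,\nabla v^k\rangle=\delta_{jk}$ and $\mathrm{Hess}\,v^j=0$ everywhere; by the splitting theorem of Gigli each $v^j$ induces a line factor and jointly they yield an isometric splitting $Y\cong\mathbb R^m\times Y'$. Hence $Y$ contains an isometric copy of $\mathbb R^m$, which is the conclusion.

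The main obstacle is the second step above: upgrading an \emph{averaged} smallness of the defect to smallness at \emph{every} scale $r\in[1/\lambda_i,1]$ simultaneously, which requires the correct scale-invariant Bochner/segment-type integral inequalities in the $RCD^{\ast}$ setting and enough care that the defect does not accumulate as the scale shrinks; and, relatedly, verifying that the pmGH-limit of $\eta_i$-splitting maps is an exact splitting, which rests on the second-order stability of the Sobolev calculus (convergence of gradients, of harmonic functions, and of Hessians) under pmGH convergence. Once these analytic facts are in place, the measure estimate for $B_{p_i}(1,X_i)\setminus U_i$ and the identification of the blow-up limit via Gigli's splitting theorem are comparatively routine.
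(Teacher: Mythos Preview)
The paper does not prove this theorem at all: it is quoted as a background result of Mondino--Naber and simply cited to \cite{mondino-naber}. So there is no ``paper's own proof'' to compare against; your sketch is essentially a summary of the original Mondino--Naber argument (harmonic $\delta$-splitting maps, Bochner control of the Hessian, a doubling/maximal-function estimate to find good points, and passage to the limit via Gigli's splitting theorem), and the outline is broadly correct.

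There is, however, one genuine slip in your write-up. The theorem asserts the existence of sets $U_i$ that work for \emph{every} sequence $\lambda_i\to\infty$; in particular $U_i$ must be defined before $\lambda_i$ is chosen. Your definition
\[
U_i:=\Big\{\,q\in B_{p_i}(1,X_i)\ :\ \mathcal E_i(q,r)\le\eta_i\ \text{ for all }r\in[1/\lambda_i,1]\,\Big\}
\]
depends on $\lambda_i$, so as stated it does not deliver the conclusion. The fix is the obvious one: since you already control the full maximal function $q\mapsto\sup_{0<r\le 1}\mathcal E_i(q,r)$ in $L^1$, take $U_i$ to be the sublevel set of \emph{that} (i.e.\ require $\mathcal E_i(q,r)\le\eta_i$ for all $r\in(0,1]$). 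This set is independent of $\lambda_i$, still has nearly full measure by the weak $(1,1)$ and Chebyshev, and gives the needed control at every scale down to $1/\lambda_i$ regardless of how fast $\lambda_i\to\infty$.
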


Some consequences of Theorems \ref{gigli} and \ref{zoom} are the following results, which were proven by Kitabeppu in \cite{kitabeppu}.

\begin{thm}\label{dim-semicont}
 (Kitabeppu) Let $(X_i, d_i, \mathfrak{m}_i, p_i)$ be a sequence of pointed $RCD^{\ast}(K,N) $ spaces of rectifiable dimension $m$ converging in the pmGH sense to the space $(X,d, \mathfrak{m}, p)$. Then the rectifiable dimension of $X$ is at most $m$.
\end{thm}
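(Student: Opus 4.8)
\emph{Proof proposal.} The plan is to run the standard ``slow blow-up'' scheme (this is essentially Kitabeppu's argument in \cite{kitabeppu}). Write $k$ for the rectifiable dimension of the limit $X$; the goal is to show $k\le m$. By Theorem \ref{bs-dimension} the $k$-regular points of $X$ have full $\mathfrak{m}$-measure, so I fix one such point $x\in X$ and, using Definition \ref{pmgh}, a sequence $x_i\in X_i$ with $\phi_i(x_i)\to x$. First I would check that for each fixed scale $\lambda>0$ one has $(X_i,\lambda d_i,(\mathfrak{m}_i)^{x_i}_{1/\lambda},x_i)\to(X,\lambda d,\mathfrak{m}^x_{1/\lambda},x)$ in the pmGH sense as $i\to\infty$: moving the basepoint along $x_i\to x$ preserves pmGH convergence, rescaling the metric by a constant is harmless, and the normalization constants converge because the functions $y\mapsto\max(0,1-\lambda d_i(x_i,y))$ are continuous and compactly supported and their $\mathfrak{m}_i$-integrals converge to the $\mathfrak{m}$-integral of $y\mapsto\max(0,1-\lambda d(x,y))$, which is positive since $x\in\supp\mathfrak{m}$. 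Since $x$ is $k$-regular, $(X,\lambda d,\mathfrak{m}^x_{1/\lambda},x)\to(\mathbb{R}^k,d^{\mathbb{R}^k},\mathcal{H}^k,0)$ as $\lambda\to\infty$, so (pmGH convergence being metrizable) a diagonal argument produces $\lambda_i\to\infty$ with
\[ Y_i:=(X_i,\lambda_i d_i,(\mathfrak{m}_i)^{x_i}_{1/\lambda_i},x_i)\ \longrightarrow\ (\mathbb{R}^k,d^{\mathbb{R}^k},\mathcal{H}^k,0). \]
By Remark \ref{normalize} the $Y_i$ are normalized $RCD^{\ast}(-\varepsilon_i,N)$ spaces with $\varepsilon_i:=\lambda_i^{-2}|K|\to 0$, and since $m$-regularity of a point is unchanged when the metric and the measure are each multiplied by a positive constant, the rectifiable dimension of $Y_i$ is still $m$.

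Next I would feed the sequence $Y_i\to(\mathbb{R}^k,d^{\mathbb{R}^k},\mathcal{H}^k,0)$ into the splitting blow-up property, Theorem \ref{zoom} (with its ``$m$'' equal to $k$). This gives sets $V_i\subset B_{x_i}(1,Y_i)$ whose relative measure tends to $1$, such that for every $q_i\in V_i$ and every $\mu_i\to\infty$ the blow-ups $(X_i,\mu_i\lambda_i d_i,(\mathfrak{m}_i)^{q_i}_{1/(\mu_i\lambda_i)},q_i)$ subconverge to an $RCD^{\ast}(0,N)$ space containing an isometric copy of $\mathbb{R}^k$. Since $X_i$ has rectifiable dimension $m$, its $m$-regular points have full measure, hence full measure for the measure of $Y_i$; as $V_i$ has positive measure for large $i$, I can pick $q_i\in V_i$ that is $m$-regular. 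Finally, fixing $i$ and using $m$-regularity of $q_i$, $(X_i,\mu\lambda_i d_i,(\mathfrak{m}_i)^{q_i}_{1/(\mu\lambda_i)},q_i)\to(\mathbb{R}^m,d^{\mathbb{R}^m},\mathcal{H}^m,0)$ as $\mu\to\infty$, so a second diagonal argument yields $\mu_i\to\infty$ with $(X_i,\mu_i\lambda_i d_i,(\mathfrak{m}_i)^{q_i}_{1/(\mu_i\lambda_i)},q_i)\to(\mathbb{R}^m,d^{\mathbb{R}^m},\mathcal{H}^m,0)$. Along a subsequence, Theorem \ref{zoom} gives a limit of these same spaces that contains an isometric copy of $\mathbb{R}^k$; by uniqueness of pmGH limits this limit is $\mathbb{R}^m$, so $\mathbb{R}^m$ contains an isometric copy of $\mathbb{R}^k$, which forces $k\le m$.

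The step I expect to be most delicate is the coordination of the two nested slow-blow-up (diagonal) arguments: I need one scale sequence $\lambda_i\to\infty$ along which the $k$-dimensional tangent of $X$ at $x$ is realized as a pmGH limit of rescaled copies of the $X_i$, and then a further scale sequence $\mu_i\to\infty$, chosen only after the Mondino--Naber points $q_i\in V_i$ are fixed, along which the blow-up at $q_i$ simultaneously sees the $\mathbb{R}^m$ coming from $m$-regularity and, on a subsequence, the $\mathbb{R}^k$ coming from Theorem \ref{zoom}. The surrounding bookkeeping---convergence of the normalizing constants defining $(\mathfrak{m}_i)^{x_i}_{1/\lambda}$, the scale-invariance of $m$-regularity and hence of the rectifiable dimension under constant rescalings, and the fact that the full-measure set of $m$-regular points meets the almost-full-measure sets $V_i$---is routine but should be written out carefully.
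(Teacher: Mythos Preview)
The paper does not give its own proof of this statement: it is stated as a result of Kitabeppu and attributed to \cite{kitabeppu}, with the remark that it is a consequence of Theorems \ref{gigli} and \ref{zoom}. Your proposal is a correct reconstruction of that argument using precisely the tools the paper has assembled (Theorem \ref{bs-dimension} to locate a $k$-regular point, a slow-blow-up diagonalization, Theorem \ref{zoom} to produce the $\mathbb{R}^k$ factor after a further blow-up, and a second diagonalization against $m$-regularity of the chosen $q_i$), so there is nothing to compare and nothing to fix.
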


\begin{cor}\label{regular-dimension}
 Let $(X,d,\mathfrak{m})$ be an $RCD^{\ast}(K,N)$ space of rectifiable dimension $m$. Then 
\[  m = \sup \{ n \in \mathbb{N} \vert \text{ there is an }n\text{-regular point }x \in X   \}. \]
\end{cor}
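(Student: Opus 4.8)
The plan is to prove both inequalities. Write $m$ for the rectifiable dimension of $X$ and $s := \sup\{ n \in \mathbb{N} \mid \text{there is an } n\text{-regular point } x \in X \}$. The inequality $m \leq s$ is immediate: by Theorem \ref{bs-dimension} the set of $m$-regular points has full measure, hence is nonempty (since $\mathfrak{m}(X) > 0$), so $m$ belongs to the set over which the supremum is taken, giving $m \leq s$.

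The content is the reverse inequality $s \leq m$, i.e. showing that no $n$-regular point with $n > m$ can exist. Suppose $x \in X$ is an $n$-regular point. By definition, for any sequence $\lambda_i \to \infty$ the pointed normalized spaces $(X, \lambda_i d, \mathfrak{m}^x_{1/\lambda_i}, x)$ converge in the pmGH sense to $(\mathbb{R}^n, d^{\mathbb{R}^n}, \mathcal{H}^n, 0)$. I would now regard this convergent sequence as an instance of Theorem \ref{dim-semicont}: each space $(X, \lambda_i d, \mathfrak{m}^x_{1/\lambda_i})$ is an $RCD^{\ast}(\lambda_i^{-2} K, N)$ space by Remark \ref{normalize}, and it has the same rectifiable dimension $m$ as $(X,d,\mathfrak{m})$, since rescaling the metric and multiplying the measure by a constant does not change which points are regular nor the value of the rectifiable dimension. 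The limit $(\mathbb{R}^n, d^{\mathbb{R}^n}, \mathcal{H}^n)$ has rectifiable dimension exactly $n$ (every point of $\mathbb{R}^n$ is $n$-regular). Theorem \ref{dim-semicont} then forces $n \leq m$. Since this holds for every $n$-regular point, $s \leq m$, and combined with the first inequality, $s = m$.

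One technical point to be careful about is that Theorem \ref{dim-semicont} as stated is for $RCD^{\ast}(K,N)$ spaces with a fixed $K$, whereas the rescaled sequence has curvature bound $\lambda_i^{-2} K \to 0$; this is harmless since $\lambda_i^{-2} K \geq K$ for $\lambda_i \geq 1$ when $K \leq 0$ and $\lambda_i^{-2} K \geq K$ trivially fails only when $K > 0$, in which case $\lambda_i^{-2} K \in [0, K]$ so all spaces are $RCD^{\ast}(\min(K,0), N)$; in either case one may apply Theorem \ref{dim-semicont} with the uniform lower bound $\min(K,0)$. The main (minor) obstacle is thus just bookkeeping the curvature bounds under rescaling and confirming that the rectifiable dimension is a scaling invariant; no new geometric input beyond Theorems \ref{bs-dimension} and \ref{dim-semicont} is needed.
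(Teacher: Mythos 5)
Your proof is correct and follows the natural route. The paper itself does not give an explicit argument for this corollary (it simply cites Kitabeppu alongside Theorem~\ref{dim-semicont}), but your derivation — applying the dimension semi-continuity of Theorem~\ref{dim-semicont} to the blow-up sequence $(X,\lambda_i d, \mathfrak{m}^x_{1/\lambda_i}, x)$ at a putative $n$-regular point, after noting via Remark~\ref{normalize} that these blow-ups remain $RCD^{\ast}(\min(K,0),N)$ of rectifiable dimension $m$ — is exactly how one is meant to extract the corollary from that theorem, and the easy inequality $m\leq s$ follows from Theorem~\ref{bs-dimension} as you say.

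One stylistic remark: the discussion of the curvature bound is a bit tangled ("$\lambda_i^{-2}K \geq K$ trivially fails only when $K>0$, in which case $\lambda_i^{-2}K\in[0,K]$"). It would be cleaner to simply observe that for $\lambda_i\geq 1$ every rescaled space is $RCD^{\ast}(-|K|,N)$, since $\lambda_i^{-2}K\geq -|K|$ in all cases, and apply Theorem~\ref{dim-semicont} with the fixed bound $-|K|$. The claim that rectifiable dimension is invariant under rescaling the metric and multiplying the measure by a constant is correct and worth making explicit as you did, since the blow-up criterion defining $m$-regularity is manifestly invariant under such changes.
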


\begin{rem}\label{dim-semicont-2}
\rm From Theorem \ref{dim-semicont}, it follows that if the spaces $X_i$ in Theorem \ref{gigli} have rectifiable dimension $n$, then the rectifiable dimension of the space $Y$ is at most $n-m$.
\end{rem}

\subsection{Equivariant Gromov--Hausdorff convergence}

There is a well studied notion of convergence of group actions in this setting. For a pointed proper metric space $(X,p)$, we define the  distance between two functions $h_1,h_2 : X \to X$ as
\begin{equation}\label{d0}
d_0 (h_1, h_2) : = \inf_{r > 0 } \left\{ \frac{1}{r} + \sup_{x \in B_p(r, X)} d(h_1x, h_2x)  \right\}  .         
\end{equation} 
When we restrict this metric to $Iso(X)$ we get a left invariant (not necessarily geodesic) metric that induces the compact open topology and makes $Iso (X)$ a proper metric space. However, this distance is defined on the full class of functions $X \to X$, where it is not left invariant nor proper anymore.

Recall that if a sequence of pointed proper metric spaces $(X_i, p_i)$ converges in the pGH sense to the pointed proper metric space $(X,p)$,  one has functions $\phi_i : X_i \to X \cup \{ \ast \}$ and $\psi_i : X \to X_i$  with $\phi_i(p_i) \to p$ and satisfying Equations  \ref{pgh2}, \ref{pgh3}, \ref{pgh4}, and \ref{ghinverse}.

\begin{defn}\label{def:equivariant}
\rm Consider a sequence of pointed proper metric spaces $(X_i,p_i)$ that converges in the pGH sense to a pointed proper metric space $(X,p)$ and a sequence of closed groups of isometries $\Gamma_i \leq Iso(X_i)$. We say that the sequence $\Gamma_i$ \textit{converges equivariantly} to a closed group $\Gamma \leq Iso (X)$ if:
\begin{itemize}
\item For each $g \in \Gamma$, there is a sequence $g_i \in \Gamma_i $ with $d_0 (\psi_i g_i \phi_i , g) \to 0$ as $i \to \infty$.
\item For a sequence $g_i \in \Gamma_i$ and $g \in Iso(X)$, if there is a subsequence $g_{i_k}$ with $d_0 (\psi_{i_k} g_{i_k} \phi_{i_k} , g) \to 0$ as $k \to \infty$, then $g \in \Gamma$.
\end{itemize}
We say that a sequence of isometries $g_i \in \Gamma _i$ \textit{converges} to an isometry $g \in \Gamma$ if $d_0(\psi_i g_i \phi_i , g  ) \to 0$ as $i \to \infty$.
\end{defn}

This definition of equivariant convergence allows one to take limits before or after taking quotients \cite{fukaya-yamaguchi}.

\begin{lem}\label{equivariant}
 Let $(Y_i,q_i)$ be a sequence of proper metric spaces that converges in the pGH sense to a proper space $(Y,q)$, and  $\Gamma_i \leq Iso(Y_i)$ a sequence of closed groups of isometries that converges  equivariantly to a closed group $\Gamma \leq Iso (Y)$. Then the sequence $(Y_i/\Gamma_i, [q_i])$ converges in the pGH sense to $(Y/\Gamma , [q])$.
\end{lem}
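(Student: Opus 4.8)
\textbf{Proof plan for Lemma \ref{equivariant}.}

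The plan is to exhibit explicit Gromov--Hausdorff approximations between the quotients $Y_i/\Gamma_i$ and $Y/\Gamma$ built directly from the given maps $\phi_i : Y_i \to Y \cup \{\ast\}$, $\psi_i : Y \to Y_i$ and the equivariant convergence $\Gamma_i \to \Gamma$. First I would fix the projections $\rho_i : Y_i \to Y_i/\Gamma_i$ and $\rho : Y \to Y/\Gamma$, recall that the quotient metrics are $d([x],[y]) = \inf_{g}\, d(gx,y)$, and choose the natural basepoints $[q_i]$, $[q]$. The candidate approximation $\bar\phi_i : Y_i/\Gamma_i \to (Y/\Gamma) \cup \{\ast\}$ would be defined by $\bar\phi_i([x]) := [\phi_i(x)]$ for $x$ in a large ball around $q_i$ (and $\ast$ otherwise); the first thing to check is that this is well defined up to a small error, i.e.\ that if $[x]=[x']$ in $Y_i/\Gamma_i$ with both $x,x'$ in a controlled ball, then $[\phi_i(x)]$ and $[\phi_i(x')]$ are close in $Y/\Gamma$. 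This is where the first bullet of Definition \ref{def:equivariant} enters: writing $x' = g_i x$ with $d(x,x')$ bounded, the element $g_i$ can be taken (after passing to a subsequence and using properness together with the second bullet of the definition) to converge to some $g \in \Gamma$, so $\phi_i(g_i x)$ is close to $g\phi_i(x)$, and hence $[\phi_i(x')] = [\phi_i(g_i x)]$ is close to $[g\phi_i(x)] = [\phi_i(x)]$.

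Next I would verify the three defining inequalities of pGH convergence (Equations \ref{pgh2}, \ref{pgh3}, \ref{pgh4}) for the $\bar\phi_i$. The inclusion \ref{pgh2} and the density condition \ref{pgh4} descend almost immediately from the corresponding statements for $\phi_i$, since $\rho$ and $\rho_i$ are $1$-Lipschitz and surjective and balls project onto balls. The metric-distortion estimate \ref{pgh3} is the substantive point: given $[x_1],[x_2]$ in a ball of $Y_i/\Gamma_i$, pick representatives $x_1$ and $x_2$ realizing (up to $\varepsilon$) the infimum $d([x_1],[x_2]) = \inf_g d(g x_1, x_2)$, note these representatives lie in a ball of $Y_i$ whose radius is controlled by the diameter bound implicit in the hypotheses (or simply by the distance being bounded), apply \ref{pgh3} for $\phi_i$ to get $|d(\phi_i x_1,\phi_i x_2) - d(x_1,x_2)|$ small, and then bound $d([\phi_i x_1],[\phi_i x_2]) \le d(\phi_i x_1, \phi_i x_2)$ from one side. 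For the reverse inequality I would use the $\psi_i$ and the first bullet of equivariant convergence: given $g\in\Gamma$ nearly realizing $d([\phi_i x_1],[\phi_i x_2]) = \inf_g d(g\phi_i x_1, \phi_i x_2)$, lift it to $g_i \in \Gamma_i$ with $\psi_i g_i \phi_i$ close to $g$ in the $d_0$ metric, which gives $d(g_i x_1, x_2) \lesssim d(g \phi_i x_1, \phi_i x_2) + o(1)$, hence $d([x_1],[x_2]) \le d([\phi_i x_1],[\phi_i x_2]) + o(1)$. Combining the two directions yields \ref{pgh3} for $\bar\phi_i$, and $\bar\phi_i([q_i]) = [\phi_i(q_i)] \to [q]$ is clear.

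The main obstacle is the well-definedness/compatibility argument in the first paragraph: one must pass from ``$x$ and $x'=g_i x$ are $\Gamma_i$-equivalent and close'' to ``$\phi_i(x)$ and $\phi_i(x')$ are nearly $\Gamma$-equivalent'', uniformly in $i$, and the only handle on $g_i$ is the abstract equivariant convergence. The clean way is a compactness argument: if the desired estimate failed, one would get sequences $x_i, g_i x_i$ with $d(x_i, g_i x_i)$ bounded but $d([\phi_i x_i],[\phi_i x_i]) = \inf_g d(g\phi_i x_i, \phi_i x_i)$ bounded below by a fixed $\delta > 0$; since $d_0$ restricted to $Iso(Y_i)$ is proper and the $g_i$ move a fixed ball a bounded amount, the $\psi_i g_i \phi_i$ are precompact and subconverge to some $g \in Iso(Y)$, which by the second bullet of Definition \ref{def:equivariant} lies in $\Gamma$; but then $g\phi_i(x_i)$ is close to $\phi_i(g_i x_i)$, which is close to $\phi_i(x_i)$, contradicting the lower bound $\delta$. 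Once this is in place, everything else is bookkeeping with the defining inequalities, and the lemma follows. (For the measured statement, if one also wants pmGH convergence, one additionally pushes forward the measures through $\rho_i$ and $\bar\phi_i$ and uses that $\rho_{i\#}\mathfrak{m}_i \to \rho_\#\mathfrak{m}$ weakly, but the metric statement is what is claimed here.)
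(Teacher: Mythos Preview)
Your plan is correct and is essentially the standard argument. Note, however, that the paper does not actually prove Lemma~\ref{equivariant}: it is stated with a reference to Fukaya--Yamaguchi \cite{fukaya-yamaguchi}, so there is no in-paper proof to compare against. What you outline is precisely the argument one finds in that reference and in the broader literature on equivariant Gromov--Hausdorff convergence.

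Two small technical points are worth tightening. First, your claim that condition~\ref{pgh2} ``descends almost immediately'' from the $1$-Lipschitz property of the projections is too quick: a class $[x]$ can have $[\phi_i(x)]$ close to $[q]$ in $Y/\Gamma$ while the chosen representative $\phi_i(x)$ is far from $q$ in $Y$, so one still needs to lift the $g\in\Gamma$ witnessing $d(g\,\phi_i(x),q)<R$ to some $g_i\in\Gamma_i$ and then apply \ref{pgh2} for $\phi_i$ to $g_i x$. This is exactly the mechanism you already spell out for \ref{pgh3}, so no new idea is required. Second, in the reverse inequality for \ref{pgh3}, the element $g\in\Gamma$ nearly realizing the quotient distance depends on $i$; to lift it to $\Gamma_i$ uniformly you need the easy observation (from the first bullet of Definition~\ref{def:equivariant} together with a finite-net argument on a compact subset of $\Gamma$) that the approximation of elements of $\Gamma$ by elements of $\Gamma_i$ is uniform on compacta. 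With these two details filled in, your proof goes through.
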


Since the isometry groups of proper metric spaces are locally compact, one has an Arzel\'a-Ascoli type result (\cite{fukaya-yamaguchi}, Proposition 3.6).

\begin{thm}\label{equivariant-compactness}
 (Fukaya--Yamaguchi) Let $(Y_i,q_i) $ be a sequence of proper metric spaces that converges in the pGH sense to a proper space $(Y,q)$, and take a sequence $\Gamma_i \leq Iso(Y_i)$ of closed groups of isometries. Then there is a subsequence $(Y_{i_k}, q_{i_k}, \Gamma_{i_k})_{k \in \mathbb{N}}$ such that $\Gamma_{i_k}$ converges equivariantly  to a closed group $\Gamma \leq Iso(Y)$.
\end{thm}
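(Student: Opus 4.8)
The plan is an Arzel\`a--Ascoli selection argument carried out at the level of the conjugated maps $\psi_ig_i\phi_i\colon Y\to Y$, followed by a Blaschke-type diagonal extraction of the subsequence and an identification of the limit group. I would first record two quantitative facts. Fix $R,L>0$ and set $\Gamma_i(L):=\{g_i\in\Gamma_i : d_i(q_i,g_iq_i)\le L\}$. From the pGH axioms \ref{pgh2}, \ref{pgh3}, \ref{pgh4} and the approximate-inverse relation \ref{ghinverse}: for $g_i\in\Gamma_i(L)$ and $i$ large, $\psi_ig_i\phi_i$ is well defined on $\overline{B_q(R,Y)}$, maps it into a fixed compact ball $\overline{B_q(R',Y)}$ with $R'=R'(R,L)$, and is an almost isometry there with additive error $\varepsilon_i(R)\to 0$ that does not depend on $g_i$; thus these maps form a uniformly equicontinuous family with uniformly bounded image. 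Conversely, if $\psi_ig_i\phi_i(q)$ stays in a bounded subset of $Y$ then $d_i(q_i,g_iq_i)$ stays bounded, so the displacement condition transfers freely across the conjugation.

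For the subsequence: for each pair $(R,L)\in\mathbb N^2$ the family $\{(\psi_ig_i\phi_i)|_{\overline{B_q(R,Y)}}: g_i\in\Gamma_i(L),\ i\ \text{large}\}$ is a precompact subset of $C(\overline{B_q(R,Y)},\overline{B_q(R',Y)})$ by Arzel\`a--Ascoli. By the Blaschke selection theorem (compactness of closed subsets of a compact metric space under Hausdorff distance) and a diagonal argument over the countably many pairs $(R,L)$, I can pass to a subsequence $\{i_k\}$ along which, for every $(R,L)$, this family converges in Hausdorff distance to a compact set $\mathcal K_{R,L}$, and the $\mathcal K_{R,L}$ are compatible under restriction.

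Then I would set
\[
\Gamma:=\bigl\{\,g\in\mathrm{Iso}(Y)\ :\ g|_{\overline{B_q(R,Y)}}\in\mathcal K_{R,L}\ \text{ for all }R>0\text{ and all }L>d(q,gq)\,\bigr\},
\]
and verify that $\Gamma$ is a closed subgroup of $\mathrm{Iso}(Y)$ to which $\Gamma_{i_k}$ converges equivariantly. The heart of the matter is the identity $\Gamma=\Gamma_\infty$, where $\Gamma_\infty:=\{g\in\mathrm{Iso}(Y):\exists\,g_{i_k}\in\Gamma_{i_k}\text{ with }d_0(\psi_{i_k}g_{i_k}\phi_{i_k},g)\to0\}$: the inclusion $\Gamma_\infty\subseteq\Gamma$ is immediate from the construction of the $\mathcal K_{R,L}$, while for $\Gamma\subseteq\Gamma_\infty$ one picks, for each ball $\overline{B_q(m,Y)}$, an element $g^{(m)}_{i_k}\in\Gamma_{i_k}$ with $\psi_{i_k}g^{(m)}_{i_k}\phi_{i_k}\to g$ uniformly on that ball, and diagonalizes over $m$ \emph{along the single sequence $\{i_k\}$}. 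The same estimate shows that any sub-subsequential limit of maps $\psi_{i_k}g_{i_k}\phi_{i_k}$ with $g_{i_k}\in\Gamma_{i_k}$ lies in $\Gamma$, which is the second clause of Definition \ref{def:equivariant}, while the first clause is the definition of $\Gamma_\infty=\Gamma$. Next, $\Gamma_\infty$ is a group: $\mathrm{Id}_Y\in\Gamma_\infty$ by \ref{ghinverse}, and if $g,h\in\Gamma_\infty$ are witnessed by $g_{i_k},h_{i_k}$ then $g_{i_k}h_{i_k}^{\pm1}\in\Gamma_{i_k}$ and $\psi_{i_k}(g_{i_k}h_{i_k}^{\pm1})\phi_{i_k}\to gh^{\pm1}$ along the full subsequence, using that $\psi_{i_k}$ and $\phi_{i_k}$ approximately invert one another on bounded sets; it is closed by a tail-splicing diagonal argument along $\{i_k\}$. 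Finally each element of $\Gamma_\infty$ is a genuine isometry: distance preservation passes to the locally uniform limit, and surjectivity follows since the inverses $g_{i_k}^{-1}$ have the same bounded displacement and hence subconverge to some $h\in\mathrm{Iso}(Y)$ with $hg=gh=\mathrm{Id}_Y$.

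The step I expect to be the main obstacle is the organization of the subsequence: one wants a single subsequence along which every possible sub-subsequential limit is already realized as a genuine limit, so that the resulting $\Gamma$ is simultaneously a closed group and exactly the collection of all such limits. Working with a naive ``set of all subsequential limits'' of the original sequence does not suffice --- two subsequences of $\mathbb N$ can have empty intersection, so such a set need not be closed under composition before one passes to a subsequence --- which is why the argument is routed through the Hausdorff limits $\mathcal K_{R,L}$ of equicontinuous families. A secondary point requiring some care, though routine given the almost-isometry estimates, is checking that limits of the maps $\psi_ig_i\phi_i$ are surjective, via control of the inverses $g_i^{-1}$.
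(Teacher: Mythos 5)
The paper does not prove this theorem: it attributes it to Fukaya--Yamaguchi and cites it directly (\cite{fukaya-yamaguchi}, Proposition 3.6), so there is no internal proof to compare against. Your argument is a correct blind proof along the standard lines of that reference -- Arzel\`a--Ascoli on the conjugated maps plus a diagonal extraction -- and your observation that the naive ``set of all sub\-sequential limits'' is not visibly closed under composition (two subsequences of $\mathbb N$ may have disjoint tails) is exactly the point that forces one to choose the subsequence carefully; routing the choice through Hausdorff limits $\mathcal K_{R,L}$ of the uniformly equicontinuous families is a clean way to do it, and the verification $\Gamma=\Gamma_\infty$ together with group closure and closedness is carried out correctly. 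One small technical point worth flagging: the maps $\psi_ig_i\phi_i$ are built from the Gromov--Hausdorff approximations $\phi_i,\psi_i$, which are merely Borel $\varepsilon_i$-isometries, not continuous, so ``Arzel\`a--Ascoli'' and ``Blaschke in $C(\overline{B_q(R,Y)},\overline{B_q(R',Y)})$'' cannot be invoked verbatim. The standard repair is either to run the diagonal argument pointwise over a countable dense subset of $Y$ (the limit function on the dense set extends uniquely to an isometry because the additive distortions $\varepsilon_i(R)$ tend to $0$), or to note that the relevant compact metric space for the Blaschke step is the set of genuine isometric embeddings $\overline{B_q(R,Y)}\to\overline{B_q(R',Y)}$ with the sup metric, and that each $\psi_ig_i\phi_i$ lies within $\varepsilon_i(R)$ of this set. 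With that reading your proof is complete.
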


As a consequence of Theorem \ref{gigli}, it is easy to understand the situation when the quotients of a sequence converge to $\mathbb{R}^m$.

\begin{prop}\label{gigli-corollary}
  For each $i \in \mathbb{N}$, let $(X_i, d_i, \mathfrak{m}_i, p_i)  $ be a  normalized pointed $RCD^{\ast}(- \varepsilon_i,N)$ space of rectifiable dimension $n$ with $\varepsilon_i \to 0$.  Assume $(X_i,d_i,\mathfrak{m}_i,p_i)$ converges in the pmGH sense to $(X,d^X,\mathfrak{m}, p)$, there is a sequence of closed groups of isometries $\Gamma_i \leq Iso (X_i)$ that converges equivariantly to $\Gamma \leq Iso(X)$, and the sequence of pointed proper metric spaces $(X_i/\Gamma_i , [p_i])$ converges in the pGH sense to $(\mathbb{R}^m,0)$. Then there is $c>0$ and a metric measure space $(Y, d^Y, \nu )$ such that $(X,d^X,c \mathfrak{m})$ is isomorphic to the product $(\mathbb{R}^m\times Y, d ^{\mathbb{R}^m} \times d^Y, \mathcal{H}^m \otimes \nu)$. Moreover, if $N-m \in [0,1)$,  then $Y$ is a point, and in general, $(Y,d^Y,\nu)$ is an $RCD^{\ast}(0,N-m)$ space of rectifiable dimension $\leq n-m$. Furthermore, the $\Gamma$-orbits coincide with the $Y$-fibers of the splitting $\mathbb{R}^m \times Y$.
\end{prop}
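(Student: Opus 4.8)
The plan is to deduce everything from Gigli's splitting theorem (Theorem \ref{gigli}) applied to the sequence $(X_i,d_i,\mathfrak{m}_i,p_i)$, together with the equivariant convergence hypothesis. First I would observe that since $(X_i/\Gamma_i,[p_i]) \to (\mathbb{R}^m,0)$ in the pGH sense and the $\Gamma_i$ are groups of isometries, the orbits of $\Gamma$ in $X$ are the fibers of a map $X \to \mathbb{R}^m$: indeed, by Lemma \ref{equivariant} the quotient $(X/\Gamma,[p])$ is isometric to $(\mathbb{R}^m,0)$, so the projection $\rho : X \to X/\Gamma = \mathbb{R}^m$ is a well-defined $1$-Lipschitz surjection with $\rho^{-1}(\rho(x))$ equal to the $\Gamma$-orbit of $x$. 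The key geometric point is that the quotient metric being $\mathbb{R}^m$ forces $X$ itself to contain an isometric copy of $\mathbb{R}^m$. To see this, I would lift a geodesic line in $\mathbb{R}^m$ through $[p]$ to $X$: pick a unit-speed line $\sigma : \mathbb{R} \to \mathbb{R}^m = X/\Gamma$ with $\sigma(0) = [p]$, and for each interval $[-T,T]$ choose a minimizing geodesic in $X$ projecting to $\sigma|_{[-T,T]}$ (such a lift exists and has the same length because $d^{X/\Gamma}([x],[y]) = \inf_{g\in\Gamma} d(gx,y)$ and $\Gamma$ is closed, hence the infimum is attained and one can concatenate); a diagonal/limiting argument using properness of $X$ produces a line $\bar\sigma : \mathbb{R} \to X$ with $\rho\circ\bar\sigma = \sigma$ and $\mathrm{length}(\bar\sigma|_{[s,t]}) = |t-s| = \mathrm{length}(\sigma|_{[s,t]})$, so $\bar\sigma$ is a geodesic line in $X$. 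Doing this for $m$ coordinate directions, or more carefully invoking that $X/\Gamma$ is the full Euclidean $\mathbb{R}^m$ and splitting off lines inductively, yields an isometric copy of $\mathbb{R}^m$ inside $X$.

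Once $\mathbb{R}^m \hookrightarrow X$ isometrically, Theorem \ref{gigli} applies verbatim (the hypotheses on the $X_i$ being normalized $RCD^\ast(-\varepsilon_i,N)$ with $\varepsilon_i \to 0$ converging in pmGH to $X$ are exactly those of Theorem \ref{gigli}): there is $c>0$ and a metric measure space $(Y,d^Y,\nu)$ with $(X,d^X,c\mathfrak{m})$ isomorphic to $(\mathbb{R}^m\times Y, d^{\mathbb{R}^m}\times d^Y, \mathcal{H}^m\otimes\nu)$, with $Y$ a point when $N-m\in[0,1)$ and $(Y,d^Y,\nu)$ an $RCD^\ast(0,N-m)$ space otherwise. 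The dimension bound $\dim_{\mathrm{rect}}Y \le n-m$ is then Remark \ref{dim-semicont-2} combined with Theorem \ref{dim-semicont}: the $X_i$ have rectifiable dimension $n$, so $\dim_{\mathrm{rect}} X \le n$ by Theorem \ref{dim-semicont}, and since rectifiable dimension is additive over the product splitting $\mathbb{R}^m\times Y$ (a regular point of $Y$ of dimension $k$ gives an $(m+k)$-regular point of $X$ via Corollary \ref{regular-dimension}), we get $m + \dim_{\mathrm{rect}}Y \le n$.

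The remaining and most delicate assertion is that the $\Gamma$-orbits coincide with the $Y$-fibers $\{x\}\times Y$ of the splitting. By construction $\rho : X \to X/\Gamma$ has $\Gamma$-orbits as fibers, and the projection $\pi_{\mathbb{R}^m} : \mathbb{R}^m\times Y \to \mathbb{R}^m$ has the $Y$-fibers as its fibers; since $X/\Gamma$ is isometric to $\mathbb{R}^m$, it suffices to show that $\rho$ and $\pi_{\mathbb{R}^m}$ coincide up to an isometry of $\mathbb{R}^m$, equivalently that every $\Gamma$-orbit is contained in a single $Y$-fiber and conversely. For the containment "$Y$-fiber $\subseteq$ $\Gamma$-orbit": the distance in $X/\Gamma$ between $[(x,y)]$ and $[(x,y')]$ is $\le d^Y(y,y')$ on one hand, but must be $0$ since both project to the same point of $\mathbb{R}^m$ under the natural factorization — here I use that the quotient map $X\to X/\Gamma$ cannot increase distances and that, because $\Gamma$ acts by isometries preserving the Euclidean factor (any isometry of $\mathbb{R}^m\times Y$ fixing the quotient $\mathbb{R}^m$ setwise and acting trivially on it must preserve each slice $\{x\}\times Y$, by the structure of isometry groups of such products — cf. de Rham-type splitting, or directly: $\Gamma$ acts trivially on $X/\Gamma = \mathbb{R}^m$, so each $g\in\Gamma$ commutes with $\rho$, hence preserves fibers of $\rho$, and fibers of $\rho$ are the $\Gamma$-orbits), the orbit and the fiber coincide. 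I expect the main obstacle to be this last step: pinning down that $\Gamma$ acts within the $Y$-slices requires knowing that $\rho$ (the quotient projection) and $\pi_{\mathbb{R}^m}$ (the splitting projection) have the same fibers, which I would establish by showing both are $1$-Lipschitz submetries to $\mathbb{R}^m$ inducing the full Euclidean metric, and that such a submetry is unique up to isometry because the $\mathbb{R}^m$-factor is "rigid" — any geodesic line in $X$ must lie in a slice $\mathbb{R}^m\times\{y\}$ or project isometrically to a line in $\mathbb{R}^m$, and the $\Gamma$-orbit through a point $x$, being at bounded Hausdorff distance from... — in fact the cleanest argument is: the $\Gamma$-orbits are precisely the fibers of $\rho$, $\rho$ factors through $\pi_{\mathbb{R}^m}$ (as both map onto $\mathbb{R}^m$ and $\rho$ is $\Gamma$-invariant while $\Gamma$ preserves... ), so I would carefully verify that $\Gamma$ preserves the splitting by noting $\Gamma \leq \mathrm{Iso}(\mathbb{R}^m\times Y) = \mathrm{Iso}(\mathbb{R}^m)\times\mathrm{Iso}(Y)$ and its image in $\mathrm{Iso}(\mathbb{R}^m)$ must be trivial (else the quotient would be a proper quotient of $\mathbb{R}^m$, not $\mathbb{R}^m$ itself), whence each $g\in\Gamma$ has the form $(x,y)\mapsto(x,g_Y y)$ and the orbits are exactly the sets $\{x\}\times(\Gamma_Y\cdot y)$; a final density/transitivity argument (the orbit closure of $\Gamma_Y$ on $Y$ must be all of $Y$, since the quotient $Y/\overline{\Gamma_Y}$ must be a point for $X/\Gamma$ to be exactly $\mathbb{R}^m$) shows $\overline{\Gamma_Y} = \mathrm{Iso}(Y)$ acts transitively, so by Remark \ref{homogeneous} $Y$ is a homogeneous space and the orbits fill the slices, giving the claim.
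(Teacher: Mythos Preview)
Your plan for the splitting part is essentially the paper's: lift lines through the submetry $\rho:X\to X/\Gamma=\mathbb{R}^m$, apply Theorem~\ref{gigli}, and invoke Remark~\ref{dim-semicont-2} for the dimension bound. That part is fine.

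The genuine gap is in the final step, where you try to identify the $\Gamma$-orbits with the $Y$-fibers. Your ``cleanest argument'' relies on $\mathrm{Iso}(\mathbb{R}^m\times Y)=\mathrm{Iso}(\mathbb{R}^m)\times\mathrm{Iso}(Y)$, which is false in general: if $Y$ itself has a Euclidean factor (e.g.\ $Y=\mathbb{R}$), the isometry group of the product is strictly larger than the product of the isometry groups. So you cannot decompose $g\in\Gamma$ as $(g_{\mathbb{R}^m},g_Y)$ a priori, and the subsequent claim that the $\mathrm{Iso}(\mathbb{R}^m)$-component is trivial has no foundation. Your earlier attempts in that paragraph are circular (they assume $\Gamma$ preserves the slices in order to conclude it).

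The paper avoids this entirely by exploiting one extra piece of information you did not record: the splitting obtained from the lifted lines comes with a distinguished slice $\mathbb{R}^m\times\{q\}$ on which the quotient map satisfies $\rho(x,q)=x$. With this compatibility in hand, a direct inequality does the job. If $g(x_1,q)=(x_2,y)$, then for every $t\ge 1$,
\[
t\,|x_1-x_2| \;=\; |\rho(x_1+t(x_2-x_1),q)-\rho(x_2,y)| \;\le\; d^X\big((x_1+t(x_2-x_1),q),(x_2,y)\big) \;=\; \sqrt{(t-1)^2|x_2-x_1|^2+d^Y(q,y)^2},
\]
and letting $t\to\infty$ forces $x_1=x_2$. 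Thus every $\Gamma$-orbit through the slice $\mathbb{R}^m\times\{q\}$ stays in a single $Y$-fiber; since both the $\Gamma$-orbits and the $Y$-fibers are partitions of $X$ indexed by $\mathbb{R}^m$, the two partitions coincide. You should replace your isometry-group decomposition argument with this computation.
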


\begin{proof}
One can use the submetry $\phi: X \to  X/\Gamma  = \mathbb{R}^m$ to lift the lines of $\mathbb{R}^m$ to lines in $X$ passing through $p$. By Theorem \ref{gigli} and Remark \ref{dim-semicont-2},  we get the desired splitting $X = \mathbb{R}^m \times Y$ with the property that for some $q \in Y$, we have $\phi ( x , q)=x $ for all $x \in \mathbb{R}^m$.

 Now we show that the action of $\Gamma $ respects  the $Y$-fibers. Let $g \in \Gamma$ and assume $g(x_1,q ) = (x_2,y)$ for some $x_1,x_2 \in \mathbb{R}^m$, $y \in Y$. Then for all $t \geq 1$, one has
\begin{eqnarray*}
t \vert x_1-x_2 \vert & = & \vert \phi ( x_1+ t(x_2-x_1),q)  - \phi ( x_1,q) \vert \\
& = & \vert  \phi  ( x_1+ t(x_2-x_1),q)- \phi (x_2,y)  \vert        \\
& \leq & d^X (  ( x_1 + t (x_2-x_1),q) , (x_2,y) )\\
& = & \sqrt{    \vert  (t-1) (x_2-x_1) \vert ^2  + d^Y(q,y)^2 }.
\end{eqnarray*} 
As $t \to \infty$, this is only possible if $x_1=x_2$.
\end{proof}

In \cite{gromov-af}, Gromov studied which is the structure of discrete groups that act almost transitively on spaces that look like $\mathbb{R}^n$. Using the Malcev embedding theorem, he showed that they look essentially like lattices in nilpotent Lie groups. In \cite{breuillard-green-tao}, Breuillard--Green--Tao studied in general what is the structure of discrete groups that have a large portion acting on a space of controlled doubling. It turns out that the answer is still essentially just lattices in nilpotent Lie groups. In (\cite{zamora-fglahs}, Sections 7-9) the ideas from \cite{gromov-af} and \cite{breuillard-green-tao} are used to obtain the following structure result.

\begin{thm}\label{zamora-1}
 Let $(Z,p)$ be a proper pointed geodesic space of topological dimension $\ell \in \mathbb{N}$ and let $(D_i , p_i )$ be a sequence of pointed discrete metric spaces converging in the pGH sense to $(Z,p)$. Assume there is a sequence of isometry groups $\Gamma_i \leq Iso (D_i)$  acting transitively and with the property that for each $i$, $\Gamma_i$ is generated by its elements that move $p_i$ at most $1$. Then for large enough $i$, there are finite index subgroups $G_i \leq \Gamma_i$ and finite normal subgroups $F_i \triangleleft G_i  $ such that $G_i / F_i $ is isomorphic to a quotient of a lattice in a nilpotent Lie group of dimension $\ell$. In particular, if the groups $\Gamma_i$ are abelian, for large enough $i$ their rank is at most $\ell$. 
\end{thm}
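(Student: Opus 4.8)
The strategy is to reduce the statement to the main structural theorem of Breuillard--Green--Tao on approximate groups, following the route laid out in \cite{gromov-af} and \cite{zamora-fglahs}. First I would fix the generating set: let $S_i \subset \Gamma_i$ be the set of elements moving $p_i$ at most $1$, which by hypothesis generates $\Gamma_i$. Since $(D_i,p_i) \to (Z,p)$ in the pGH sense and $\Gamma_i$ acts transitively, the orbit $S_i^n \cdot p_i$ fills a ball of radius comparable to $n$ in $D_i$, so from the Bishop--Gromov-type volume control inherited from the limit one obtains a doubling-type estimate: there is a constant $K = K(Z)$ (depending only on the dimension $\ell$ and the local geometry of $Z$ near $p$, via a packing argument) such that $|S_i^{2n}| \leq K |S_i^n|$ for all $n$ in a wide range of scales, once $i$ is large. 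This is the hypothesis needed to feed the sets $S_i^n$ into the Breuillard--Green--Tao machinery.

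Next, applying the structure theorem for sets of bounded doubling (equivalently, for $O(1)$-approximate groups obtained from the $S_i^n$), one gets for large $i$ a finite-index subgroup $G_i \leq \Gamma_i$ and a finite normal subgroup $F_i \triangleleft G_i$ such that $G_i/F_i$ is nilpotent, indeed virtually a lattice in a nilpotent Lie group of some dimension $d_i$; the rank/step are bounded uniformly because the doubling constant is. The remaining point is to identify $d_i = \ell$. The upper bound $d_i \leq \ell$ comes from the fact that a lattice in a nilpotent Lie group of dimension $d$ has polynomial growth of degree equal to the homogeneous dimension $\geq d$, while the growth of $\Gamma_i$ (hence of $G_i/F_i$) at the relevant scales is governed by $\mathrm{vol}\, B(p,n) \approx n^{\ell}$ in $Z$; so growth degree $\leq$ (homogeneous dimension) forces $d_i \leq \ell$. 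For the matching lower bound $d_i \geq \ell$, one uses that the quotient map $D_i \to D_i/\Gamma_i$ is a point (transitive action) while the Cayley graph of $G_i/F_i$ with its word metric, rescaled by $1/n$, Gromov--Hausdorff converges (along the scales $n \to \infty$) to a graded nilpotent Lie group $\mathfrak{N}$ equipped with a Carnot--Carathéodory metric; but this limit must coincide with an equivariant blow-up of $Z$, which is $\mathbb{R}^\ell$-like of topological dimension $\ell$, so $\dim \mathfrak{N} \geq \ell$. Combining gives $d_i = \ell$. Finally, in the abelian case $F_i$ can be absorbed and $G_i/F_i$ is a quotient of $\mathbb{Z}^{d_i} = \mathbb{Z}^\ell$, so $\mathrm{rank}(\Gamma_i) = \mathrm{rank}(G_i) \leq \ell$.

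The main obstacle I anticipate is making the dimension count $d_i = \ell$ rigorous and uniform in $i$, rather than just for a fixed group: one must ensure the nilpotent Lie group produced by Breuillard--Green--Tao has dimension \emph{exactly} $\ell$ and not merely $\leq \ell$, and this requires carefully matching the asymptotic cone / blow-up of the Cayley graph of $G_i$ with the blow-up of $Z$ (which is not literally $\mathbb{R}^\ell$ but only has topological dimension $\ell$), using that topological dimension is monotone under the relevant limits and that a Carnot group of dimension $d$ has topological dimension $d$. The volume-doubling estimate and its uniformity across scales — needed so that a single application of the structure theorem works for all large $i$ — is the other technical point, but it follows from the pGH convergence together with properness of $Z$ by a standard covering argument. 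I would present the details by citing (\cite{zamora-fglahs}, Sections 7--9), where exactly this combination of \cite{gromov-af} and \cite{breuillard-green-tao} is carried out, and only sketch the reduction above.
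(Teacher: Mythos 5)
The paper does not give a proof of this theorem; it cites \cite{zamora-fglahs}, Sections 7--9 for it, and your proposal also ultimately defers to that reference. So the real question is whether your sketch of the reduction is accurate, and here there are genuine gaps.

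First, there is no ``Bishop--Gromov-type volume control'' in this setting: $Z$ is merely a proper pointed geodesic space, with no curvature hypothesis whatsoever, so no volume comparison is available. Properness gives finiteness of covering numbers of each fixed ball $B_p(R,Z)$, but it does not give a uniform doubling constant across scales, and finite topological dimension alone does not either (a proper geodesic space of topological dimension $1$ can have exponential volume growth). What does save the argument is a fact you never invoke: since each $\Gamma_i$ acts transitively on $D_i$ and $D_i\to Z$ in the pGH sense, the limit $Z$ is itself homogeneous, acted on transitively by the closed equivariant-limit group, and then Montgomery--Zippin / Gleason--Yamabe (or Berestovskii's description of homogeneous proper geodesic spaces of finite topological dimension) identifies this group and hence $Z$ as Lie-group objects, from which polynomial volume growth does follow. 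This homogeneity-plus-Hilbert-Smith input is the missing key idea; without it the doubling estimate you want to feed into Breuillard--Green--Tao is simply not justified. Relatedly, the natural framework here is not raw doubling of the sets $S_i^n$ but the discrete-approximation setup of Section \ref{reformulations}: build $f_i:\Gamma_i\to\Gamma$ for the limit Lie group $\Gamma\leq \Iso(Z)$, then apply Theorems \ref{bgt} and \ref{malcev}; the number $\ell$ enters because $Z$ is identified as a Lie coset space of topological dimension $\ell$, not through a volume-growth exponent.

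Second, your dimension count via polynomial growth degrees is not correct as stated. The growth exponent of a lattice in a nilpotent Lie group is the \emph{homogeneous} dimension $Q$, which satisfies $Q\geq d$ with equality only in the abelian case; likewise, if $Z$ is (as it turns out to be) a nilpotent Lie group with a left-invariant sub-Finsler metric, the covering numbers of $B_p(R,Z)$ grow like $R^{Q(Z)}$, not like $R^\ell$, and $Q(Z)\geq\ell$. Comparing growth exponents therefore matches $Q$'s, not the topological dimensions, and does not directly give $d_i\leq\ell$. Finally, the lower bound $d_i\geq\ell$ that you spend a paragraph on is not needed: the conclusion ``$G_i/F_i$ is a quotient of a lattice in a nilpotent Lie group of dimension $\ell$'' only requires an upper bound on the Hirsch length (a lattice in a $d$-dimensional group with $d\leq\ell$ is trivially also a quotient of a lattice in an $\ell$-dimensional group), and it is only the upper bound that is used in the ``in particular'' clause about abelian rank. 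The asymptotic-cone/blow-up matching you propose for the lower bound conflates the Gromov--Hausdorff scale at which $D_i$ approximates $Z$ (scale $1$) with the large-scale behaviour of the Cayley graph, and the ``equivariant blow-up of $Z$'' has no reason to look like $\mathbb{R}^\ell$.
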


\begin{thm}\label{zamora-2}
 Let $(X_i,d_i,\mathfrak{m}_i, p_i)$ be a sequence of simply connected pointed $RCD^{\ast}( -\varepsilon_i ,N)$ spaces with $\varepsilon_i \to 0$ admitting a sequence of discrete groups of isometries $\Gamma_i \leq Iso (X_i)$ with diam$(X_i/\Gamma _i ) \to 0$.  If the sequence $(X_i,d_i,\mathfrak{m}_i, p_i)$ converges in the pGH sense to a space $(X,d,\mathfrak{m}, p)$, then $X$ is isometric to $\mathbb{R}^m$ for some $m \leq N$.
\end{thm}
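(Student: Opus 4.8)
The plan is to form an equivariant Gromov--Hausdorff limit, deduce from Theorem~\ref{zamora-1} that the $\Gamma_i$ are virtually nilpotent, and then play the nilpotent structure of the limiting group against the splitting theorem and the simple connectedness of the $X_i$. Normalizing the measures and passing to subsequences, I may assume (Theorem~\ref{compactness}) that $(X_i,d_i,\mathfrak{m}_i,p_i)\to(X,d,\mathfrak{m},p)$ in the pmGH sense and (Theorem~\ref{equivariant-compactness}) that $\Gamma_i$ converges equivariantly to a closed subgroup $\Gamma\le Iso(X)$. Since $\diam(X_i/\Gamma_i)\to0$, Lemma~\ref{equivariant} forces $X/\Gamma$ to be a point, so $\Gamma$ acts transitively on $X$. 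By Theorem~\ref{compactness-2} $X$ is $RCD^{\ast}(0,N)$, so by Theorem~\ref{gs-lie} and Remark~\ref{homogeneous} it is a homogeneous Riemannian manifold with $\mathrm{Ric}\ge0$, of dimension $m\le N$ equal to its rectifiable dimension. If $m=0$ we are done; assume $m\ge1$.

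Next I control the groups. A \v{S}varc--Milnor argument --- subdivide a geodesic from $p_i$ to $\gamma p_i$ into arcs of length $\le\diam(X_i/\Gamma_i)$ and replace each endpoint by a nearby point of the orbit $\Gamma_i p_i$ --- shows that $\Gamma_i$ is generated by the set of its elements moving $p_i$ at most $3\diam(X_i/\Gamma_i)$. Let $D_i:=\Gamma_i p_i$ with the induced metric: it is a discrete space carrying a transitive $\Gamma_i$-action, it is $\diam(X_i/\Gamma_i)$-dense in $X_i$, so $D_i\to X$ in the pGH sense, and for large $i$ the group $\Gamma_i$ is generated by elements moving $p_i$ at most $1$. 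Since $X$ is an $m$-dimensional manifold its topological dimension is $m$, so Theorem~\ref{zamora-1} applies with $Z=X$: for large $i$ there are finite index subgroups $G_i\le\Gamma_i$ and finite normal subgroups $F_i\triangleleft G_i$ with $G_i/F_i$ a quotient of a lattice in a nilpotent Lie group of dimension $m$. In particular the $\Gamma_i$ are virtually nilpotent with uniformly bounded index and nilpotency class.

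Equivariant convergence respects products and inverses of isometries, so iterated commutator identities pass to the limit and bounded index subgroups limit to bounded index subgroups; hence $\Gamma$ contains a closed finite index subgroup $\Gamma'$ that is a nilpotent Lie group. As $X$ is connected and $\Gamma'$ has finite index in the transitive group $\Gamma$, its orbits, which are open, exhaust $X$, so $\Gamma'$ is transitive and $X$ is a homogeneous space of a nilpotent Lie group with $\mathrm{Ric}\ge0$. By the Alekseevskii--Kimelfeld theorem $X\cong\mathbb{R}^k\times C$ with $C$ compact homogeneous; the image of $\Gamma'$ in $Iso(C)$ is a nilpotent group acting transitively on the compact $C$, so its closure is a compact nilpotent Lie group whose identity component is a torus, which forces $C$ to be a flat torus (here one could equally use Gigli's splitting theorem, Theorem~\ref{gigli}, for the Euclidean factor). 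Thus $X$ is a complete flat homogeneous manifold, $X\cong\mathbb{R}^k\times T^{m-k}$.

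It remains to show that the torus factor is trivial, i.e. $k=m$; this is the step that uses that the $X_i$ are simply connected, and it is the main obstacle. A nontrivial $T^{m-k}$ would give $\pi_1(X)\cong\mathbb{Z}^{m-k}\ne0$, so it suffices to see that $X$, being a pGH limit of the simply connected spaces $X_i$ equipped with the collapsing actions $\Gamma_i$, is itself simply connected. The approach I would take is to track the behaviour of $\delta$-covers under Gromov--Hausdorff convergence in the spirit of Sormani--Wei: by the semi-local simple connectedness of $RCD^{\ast}(K,N)$ spaces (Theorem~\ref{jikang}) universal covers are $\delta$-covers for small $\delta$, and the equivariant convergence $\Gamma_i\to\Gamma$ lets one transport the triviality of $\pi_1(X_i)$ to the limit; equivalently, the nontrivial deck group $\mathbb{Z}^{m-k}$ of the universal cover $\mathbb{R}^m\to X$ cannot arise as the limit of the trivial deck groups of the $X_i$. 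Granting that $X$ is simply connected, a complete simply connected flat Riemannian manifold is isometric to $\mathbb{R}^m$, so $X\cong\mathbb{R}^m$ with $m\le N$, as desired.
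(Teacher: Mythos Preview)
Your argument has a genuine gap at exactly the point you flag: you do not prove that $X$ is simply connected, and the sketch you offer does not close it. The Sormani--Wei $\delta$-cover machinery you invoke is formulated for compact spaces, while $X$ here is noncompact (indeed $\mathbb{R}^k\times T^{m-k}$), so the continuity of $\delta$-covers under GH convergence is not available in the form you need. More fundamentally, simple connectedness is not preserved under pGH limits in general, and the presence of the collapsing groups $\Gamma_i$ does not by itself forbid a torus factor appearing in the limit: ruling this out is precisely the nontrivial content that must be supplied. The earlier steps also carry some looseness (passing virtual nilpotency from $\Gamma_i$ to $\Gamma$ via ``commutator identities pass to the limit'' would need to be made precise, though this could be repaired by invoking Theorem~\ref{bgt} directly to get that the identity component of $\Gamma$ is nilpotent), but the simple-connectedness step is the essential missing piece.

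The paper proceeds quite differently and much more briefly: it cites \cite[Theorem~6]{zamora-fglahs} as a black box to conclude that $X$ is a simply connected nilpotent Lie group --- that external result already packages the passage from simple connectedness of the $X_i$ to simple connectedness of $X$, together with the identification of $X$ as a Lie group rather than merely a homogeneous space. Then, since $X$ is $RCD^\ast(0,N)$ the metric is Riemannian, and Milnor's theorem \cite[Theorem~2.4]{milnor} forces a nilpotent Lie group with nonnegative Ricci curvature to be abelian, hence $X\cong\mathbb{R}^m$. Your route via Theorem~\ref{zamora-1}, Alekseevskii--Kimel'fel'd, and the torus factor is essentially an attempt to rebuild the cited theorem from scratch; that is a reasonable ambition, but the hard step --- why no torus survives --- is the substance of that result and cannot be waved through.
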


\begin{proof}
 By (\cite{zamora-fglahs}, Theorem 6), $X$ is a simply connected nilpotent Lie group. By Theorem \ref{compactness-2}, $X$ is an $RCD^{\ast}(0,N)$ space, so its metric is Riemannian. By (\cite{milnor}, Theorem 2.4), $X$ is then abelian hence isometric to $\mathbb{R}^m$ for some $m \leq N$.   
\end{proof}

\subsection{Group norms}

Let $(X,p)$ be a pointed proper geodesic space and $\Gamma \leq Iso(X)$ a group of isometries. The \textit{norm} $\Vert \cdot \Vert_p : \Gamma \to \mathbb{R}$ associated to $p$ is defined as $\Vert g \Vert_p : = d(gp,p)$.  We denote as $\mathcal{G}(\Gamma, X ,  p,r)$ the subgroup of $\Gamma$ generated by the elements of norm $\Vert \cdot \Vert_p \leq r$. The \textit{spectrum} $\sigma (\Gamma )$ is defined as the set of $r \geq 0 $ such that  $\mathcal{G}(\Gamma ,X, p,r) \neq \mathcal{G}(\Gamma ,X, p , r- \varepsilon )$ for all $\varepsilon > 0 $.  Notice that we always have $0 \in \sigma (\Gamma)$.  If we want redundancy we sometimes write $\sigma(\Gamma , X,p)$ to denote the spectrum of the action of $\Gamma$ on the pointed space $(X,p)$. This spectrum is closely related to the covering spectrum introduced by Sormani--Wei in \cite{sormani-wei-spectrum}, and it also satisfies a continuity property.   

\begin{prop}\label{spec-cont}
 Let $(X_i,p_i)$ be a sequence of pointed proper metric spaces that converges in the pGH sense to $(X,p)$ and consider a sequence of closed isometry groups $\Gamma _i \leq Iso (X_i)$ that converges equivariantly to a closed group $\Gamma \leq Iso (X)$. Then for any convergent sequence $r_i \in \sigma (\Gamma _ i ) $, we have $\lim_{i \to \infty} r_i \in \sigma (\Gamma ) $.
\end{prop}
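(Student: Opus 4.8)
The plan is to establish the two inclusions corresponding to the two directions of the claim separately: first, that the limit of $r_i \in \sigma(\Gamma_i)$ lies in $\sigma(\Gamma)$, by showing that at a value $r \notin \sigma(\Gamma)$ the subgroups $\mathcal{G}(\Gamma, X, p, s)$ are locally constant in $s$, and this stability passes to the approximating groups. The key observation is that $\mathcal{G}(\Gamma_i, X_i, p_i, r_i)$ is generated by elements $g$ with $\Vert g \Vert_{p_i} = d(g p_i, p_i) \le r_i$; under equivariant convergence, such elements converge (after passing to subsequences) to elements $g \in \Gamma$ with $\Vert g \Vert_p \le \lim r_i =: r$, and conversely every element of $\Gamma$ with norm $< r$ is a limit of elements of $\Gamma_i$ with norm $\le r_i$ for large $i$. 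So in the limit the ``generating sets'' match up, at least up to an arbitrarily small error in the norm threshold.

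Concretely, I would argue by contradiction: suppose $r = \lim_i r_i \notin \sigma(\Gamma)$. Then there is $\varepsilon > 0$ with $\mathcal{G}(\Gamma, X, p, r+\varepsilon) = \mathcal{G}(\Gamma, X, p, r - \varepsilon)$; call this common subgroup $H$. I claim that for large $i$ we have $\mathcal{G}(\Gamma_i, X_i, p_i, r_i) = \mathcal{G}(\Gamma_i, X_i, p_i, r_i - \varepsilon/2)$, contradicting $r_i \in \sigma(\Gamma_i)$. The nontrivial inclusion is ``$\subseteq$'': take a generator $g_i \in \Gamma_i$ with $d(g_i p_i, p_i) \le r_i$. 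I want to write $g_i$ as a product of elements of $\Gamma_i$ each of norm $\le r_i - \varepsilon/2$. The idea is that $g_i$ converges subsequentially to some $g \in \Gamma$ with $\Vert g \Vert_p \le r < r + \varepsilon$, hence $g \in H = \mathcal{G}(\Gamma, X, p, r-\varepsilon)$, so $g = h_1 h_2 \cdots h_k$ with each $h_j \in \Gamma$ of norm $\le r - \varepsilon$. By the first axiom of equivariant convergence each $h_j$ is a limit of a sequence $h_{j,i} \in \Gamma_i$, which for large $i$ can be taken with $\Vert h_{j,i} \Vert_{p_i} \le r - \varepsilon/2 \le r_i - \varepsilon/4$. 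Then $h_{1,i} \cdots h_{k,i}$ converges to $g$, and since $g_i$ also converges to $g$ (and the action is, say, effective near $p$ in the relevant sense), $g_i^{-1} h_{1,i} \cdots h_{k,i}$ converges to the identity; in particular its norm at $p_i$ tends to $0$, so for large $i$ it is a product of $\le$ some fixed number of generators of norm $\le \varepsilon/4 < r_i - \varepsilon/4$ — here I would invoke that a near-identity isometry appearing as $\psi_i g' \phi_i$ close to $\mathrm{id}$ forces $g'$ to have small norm, hence $g_i = (h_{1,i} \cdots h_{k,i})(g_i^{-1} h_{1,i} \cdots h_{k,i})^{-1}$ exhibits $g_i$ inside $\mathcal{G}(\Gamma_i, X_i, p_i, r_i - \varepsilon/2)$ for large $i$.

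The main obstacle, and the point requiring the most care, is the passage from ``$g_i$ and $h_{1,i}\cdots h_{k,i}$ both converge to $g$'' to ``$g_i^{-1} h_{1,i}\cdots h_{k,i}$ has small norm at $p_i$,'' because the metric $d_0$ on isometries is only left invariant and the number $k$ of factors (and hence the accumulated error in composing approximate conjugations through $\psi_i, \phi_i$) could a priori grow with $i$. This is handled by noting that $k$ is fixed once $g \in H$ is written as a product of $\le k$ short generators, a choice made in the limit space independently of $i$; then composing a bounded number $k+1$ of isometries that each $d_0$-converge to their limits still $d_0$-converges, and closeness in $d_0$ to the identity forces closeness of $g_i^{-1}h_{1,i}\cdots h_{k,i}\,p_i$ to $p_i$. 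One also has to be slightly careful that finitely many short generators of $\Gamma_i$ near $p_i$ suffice to realize near-identity elements; this is automatic since such an element has norm $< \varepsilon/4$ and is therefore itself a single generator of $\mathcal{G}(\Gamma_i, X_i, p_i, \varepsilon/4)$. Running the same argument with the roles reversed (or simply noting the inclusion $\mathcal{G}(\Gamma_i, X_i, p_i, r_i - \varepsilon/2) \subseteq \mathcal{G}(\Gamma_i, X_i, p_i, r_i)$ is trivial) completes the contradiction and proves $r \in \sigma(\Gamma)$.
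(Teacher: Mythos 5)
Your proposal follows essentially the same route as the paper's proof: argue by contradiction, extract a subsequential limit $g$ of a ``witnessing'' sequence $g_i$, write $g$ as a finite product $h_1 \cdots h_k$ of short elements of $\Gamma$ using $r \notin \sigma(\Gamma)$, lift the $h_j$ back to $\Gamma_i$ by equivariant convergence, and observe that $g_i (h_1^i \cdots h_k^i)^{-1}$ converges to the identity, hence has small norm --- the crucial point being, as you correctly emphasize, that $k$ is fixed independently of $i$.

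Two details in your write-up deserve attention, though neither is fatal. First, your claim that $r \notin \sigma(\Gamma)$ yields $\varepsilon > 0$ with $\mathcal{G}(\Gamma,X,p,r+\varepsilon) = \mathcal{G}(\Gamma,X,p,r-\varepsilon)$ does not follow from the definition of $\sigma$, which only detects jumps from the left; a priori $\mathcal{G}(\Gamma,X,p,r+\varepsilon)$ could be strictly larger for every $\varepsilon>0$ (if spectrum points accumulate at $r$ from above). Fortunately you never need the right-hand equality: $\|g\|_p \le r$ already gives $g \in \mathcal{G}(\Gamma,X,p,r) = \mathcal{G}(\Gamma,X,p,r-\varepsilon)$, which is all that is used. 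Second, you aim for the full subgroup inclusion $\mathcal{G}(\Gamma_i,X_i,p_i,r_i) \subseteq \mathcal{G}(\Gamma_i,X_i,p_i,r_i-\varepsilon/2)$ by treating one generator $g_i$ at a time and passing to subsequences, but the set of generators (elements of norm $\le r_i$) is typically infinite, so this controls only a single sequence rather than the whole subgroup. The paper avoids the uniformity issue entirely by reading off, directly from $r_i \in \sigma(\Gamma_i)$, a canonical witness $g_i$ of norm exactly $r_i$ lying outside $\mathcal{G}(\Gamma_i,X_i,p_i,r_i-\varepsilon)$ for every $\varepsilon>0$; running your computation on that specific sequence produces the contradiction immediately, with no subgroup comparison required. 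Restructuring your argument around this single witness makes it rigorous and brings it exactly in line with the paper.
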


\begin{proof}
Let $r = \lim_{i \to \infty}r_i$ By definition, there is a sequence $g_i \in \Gamma_i$ with $\Vert g_i \Vert_{p_i} = r_i$, and $g_i \notin \mathcal{G}(\Gamma _i , X_i, p_i, r_i - \varepsilon )$ for all $\varepsilon >0$. Up to subsequence, we can assume that $g_i$ converges to some $g \in Iso (X)$ with $\Vert g \Vert_p = r$.  

If $r \notin \sigma (\Gamma )$, it would mean there are $h_1, \ldots , h_k \in \Gamma $ with $\Vert h_j \Vert_p  < r$ for each $j \in \{1, \ldots , k \}$, and $h_1 \cdots h_k = g$. For each $j$, choose sequences $h_j^i \in \Gamma _i$ that converge to $h_j$. As the norm is continuous with respect to convergence of isometries, for $i$ large enough one has $ \Vert h_j^i \Vert_{p_i} < r_i $ for each $j$. 

The sequence $g_i ( h_1^i \cdots h_k^i)^{-1}  \in \Gamma_i$ converges to $g(h_1 \cdots h_k )^{-1}=e \in \Gamma$, so its norm is less than $r_i$ for $i$ large enough, allowing us to write $g_i$ as a product of $k+1$ elements with norm $< r_i$, thus a contradiction.
\end{proof}

\begin{rem}
\rm It is possible that an element in $\sigma (\Gamma)$ is not a limit of elements in $\sigma (\Gamma_i)$, so this spectrum is not necessarily continuous with respect to equivariant convergence (see \cite{kapovitch-wilking}, Example 1).
\end{rem}

Let $(X,p)$ be a pointed proper metric space and $\Gamma \leq Iso(X)$ a group of isometries. A \textit{short basis} of $\Gamma$ with respect to $p$ is a countable collection $\{    \gamma _1 , \gamma_2 , \ldots \} \subset \Gamma $ such that $\gamma_{j+1} $ is an element of minimal norm $\Vert \cdot \Vert_p$ in $\Gamma \backslash \langle \gamma_1 , \ldots , \gamma_j \rangle$.   Notice that if $\beta \subset \Gamma $ is a short basis with respect to $p$, then
\[  \sigma ( \Gamma , X, p ) = \{ 0 \} \cup \{ \Vert \gamma \Vert _p \vert \gamma \in \beta  \}      .   \]
If $\Gamma$ is discrete, a short basis always exists, and if additionally $\Gamma$ is finitely generated, then any short basis is finite. If a doubling condition is assumed, one can control how many elements of the spectrum lie on a compact subinterval of $(0,\infty )$.

\begin{prop}\label{long-short-basis}
  Let $(X,d,\mathfrak{m}, p ) $ be a pointed metric measure space, and assume that for each $R > r > 0$, there is $C(R,r) > 0 $ with the property that for each $x \in X$ one has 
\[\mathfrak{m}(B_x(R,X)) \leq C \cdot \mathfrak{m}(B_x(r,X)) . \]
Then for each $[a, b ] \subset (0, \infty )$, any discrete group $\Gamma \leq Iso (X)$, and any short basis $\beta \subset \Gamma$, there are at most $C(3 b  , a /2 )$ elements of $\beta$ with norm in $[a,b]$.
\end{prop}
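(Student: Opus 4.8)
The plan is to use a standard packing argument. Suppose $\gamma_{j_1}, \ldots, \gamma_{j_\ell}$ are the elements of the short basis $\beta$ with norm in $[a,b]$, listed in the order they appear in $\beta$ (so $j_1 < j_2 < \cdots < j_\ell$). The key observation from the definition of a short basis is that for $s < t$, the element $\gamma_{j_t}$ was chosen to be of minimal norm in $\Gamma \setminus \langle \gamma_1, \ldots, \gamma_{j_t - 1}\rangle$, and in particular $\gamma_{j_t} \notin \langle \gamma_1, \ldots, \gamma_{j_s}\rangle$, so $\gamma_{j_s}^{-1}\gamma_{j_t} \notin \langle \gamma_1, \ldots, \gamma_{j_s}\rangle$; moreover $\gamma_{j_s}^{-1}\gamma_{j_t}$ has norm $\Vert\gamma_{j_s}^{-1}\gamma_{j_t}\Vert_p = d(\gamma_{j_s}p, \gamma_{j_t}p)$, and by minimality of the choice of $\gamma_{j_t}$ (an element of minimal norm outside the subgroup generated by all previous elements, which includes $\gamma_{j_s}$) this norm is at least $\Vert\gamma_{j_t}\Vert_p \geq a$. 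Hence the points $\gamma_{j_1}p, \ldots, \gamma_{j_\ell}p$ are pairwise at distance $\geq a$, i.e. they form an $a$-separated set.

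Next I would localize: all these points lie in $\overline{B_p(b, X)}$ since $\Vert\gamma_{j_t}\Vert_p \leq b$. The balls $B_{\gamma_{j_t}p}(a/2, X)$ are therefore pairwise disjoint and all contained in $B_p(b + a/2, X) \subset B_p(3b, X)$ (using $a \leq b$, so $a/2 \leq b$, hence $b + a/2 \leq 3b$; if one prefers $b+a/2$ works directly but $3b$ is a clean bound). Each of these balls satisfies, by the hypothesized doubling inequality applied with $R = 3b$ and $r = a/2$ centered at $x = \gamma_{j_t}p$,
\[
\mathfrak{m}(B_{\gamma_{j_t}p}(3b, X)) \leq C(3b, a/2)\cdot \mathfrak{m}(B_{\gamma_{j_t}p}(a/2, X)),
\]
and since $B_p(3b, X) \subset B_{\gamma_{j_t}p}(6b, X)$... actually the cleanest route is: $B_p(b+a/2,X) \subseteq B_{\gamma_{j_t}p}(3b, X)$ because any point of $B_p(b+a/2,X)$ is within $b + a/2 + \Vert \gamma_{j_t}\Vert_p \leq 3b$ of $\gamma_{j_t}p$, giving $\mathfrak{m}(B_p(b+a/2,X)) \leq \mathfrak{m}(B_{\gamma_{j_t}p}(3b,X)) \leq C(3b,a/2)\,\mathfrak{m}(B_{\gamma_{j_t}p}(a/2,X))$. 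Summing the disjoint balls:
\[
\ell \cdot \frac{\mathfrak{m}(B_p(b+a/2,X))}{C(3b,a/2)} \leq \sum_{t=1}^{\ell}\mathfrak{m}(B_{\gamma_{j_t}p}(a/2,X)) = \mathfrak{m}\!\left(\bigsqcup_{t=1}^\ell B_{\gamma_{j_t}p}(a/2,X)\right) \leq \mathfrak{m}(B_p(b+a/2,X)),
\]
so $\ell \leq C(3b, a/2)$, which is the claimed bound.

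The only genuinely delicate point is the separation claim, i.e. that a short basis produces an $a$-separated orbit-image among the relevant generators: one must be careful that the correct comparison is $d(\gamma_{j_s}p, \gamma_{j_t}p) = \Vert \gamma_{j_s}^{-1}\gamma_{j_t}\Vert_p$ and that $\gamma_{j_s}^{-1}\gamma_{j_t}$ genuinely lies outside $\langle\gamma_1,\ldots,\gamma_{j_t-1}\rangle$ (it does, since otherwise $\gamma_{j_t} \in \langle \gamma_1,\ldots,\gamma_{j_t-1}\rangle$, contradicting the short basis property), so that minimality of $\Vert\gamma_{j_t}\Vert_p$ in that coset complement forces $d(\gamma_{j_s}p,\gamma_{j_t}p) \geq \Vert\gamma_{j_t}\Vert_p \geq a$. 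Everything else is the routine disjoint-ball packing estimate, and finiteness of $\ell$ (hence that the list is well-defined and the argument does not run into infinitely many such elements) follows a posteriori from the bound itself.
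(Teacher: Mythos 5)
Your proof is correct and follows essentially the same route as the paper: first observe that the short basis property forces $a$-separation of the orbit points $\gamma_{j_t}p$ (since otherwise $\gamma_{j_s}^{-1}\gamma_{j_t}$, which lies outside $\langle\gamma_1,\ldots,\gamma_{j_t-1}\rangle$, would have strictly smaller norm than $\gamma_{j_t}$), and then run the disjoint-ball packing argument against the doubling constant $C(3b,a/2)$. The only cosmetic difference is that you compare against the fixed ball $B_p(b+a/2,X)$ while the paper compares against $B_{g_\ell p}(3b,X)$ for each $\ell$ and sums; both yield the same bound.
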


\begin{proof}
Notice that if $g,h \in \Gamma$ are distinct elements of a short basis with $\Vert g \Vert_p , \Vert h \Vert_p \geq a$, one must have $d(gp,hp) \geq a$. Otherwise, $d(g^{-1}h p , p) < a $ and one must take $g^{-1}h$ as an element of the short basis instead of $g$ or $h$.
 
Now assume one has a discrete group of isometries $\Gamma \leq Iso(X)$ having a short basis $\beta$ with $N$ elements $\{ g_1 , \ldots , g_N \}$ whose norm is in $[a,b]$. Since the balls of the collection $ \{  B_{g_jp}(a/2,X) \}_{j=1}^N$ are disjoint and all of them are contained in $B_{g_{\ell}p}(3b,X)$ for each $\ell \in \{1 , \ldots , N \}$, we get
\[ \sum_{j=1}^N \mathfrak{m}(B_{g_jp}(a/2,X)) \leq \mathfrak{m}(B_{g_{\ell}p}(3b)) \leq C (3b , a/2)\cdot \mathfrak{m}(  B_{g_{\ell}p}(a/2,X)).  \] 
Summing over all $\ell$, we conclude that $N \leq C(3b,a/2)$.
\end{proof}

It is well known that around a regular point, the spectrum displays a gap phenomenon.

\begin{lem}\label{gap}
 (Gap Lemma) Let $(Y_i,d_i^Y,\mathfrak{m}_i^Y, p_i)$ be a sequence of pointed $RCD^{\ast}(K,N)$ spaces and  $\Gamma_i \leq Iso (Y_i) $ a sequence of discrete groups of measure preserving isometries such that the sequence of $RCD^{\ast}(K,N)$ spaces $(Y_i/\Gamma_i, [p_i])$ converges in the pmGH sense to an $RCD^{\ast}(K,N)$ space $(X, q)$ with $q$ an $m$-regular point. Then there is $\eta > 0 $ and $\eta_i \to 0$ such that for $i$ large enough we have
\[  \sigma (\Gamma _i , Y_i , p_i) \cap [\eta_i , \eta] = \emptyset .   \]
\end{lem}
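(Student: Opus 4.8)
The plan is to argue by contradiction using the blow-up (zoom-in) technology of Mondino--Naber together with the splitting theorem, very much in the spirit of Proposition~\ref{gigli-corollary}. Suppose the conclusion fails. Then there is $\eta_0 > 0$ and, after passing to a subsequence, sequences $a_i \to 0$ with $a_i \in \sigma(\Gamma_i, Y_i, p_i) \cap (0, \eta_0]$; that is, each $\Gamma_i$ has an element $g_i$ of norm $\|g_i\|_{p_i} = a_i$ that is not contained in the subgroup generated by elements of smaller norm. The idea is to rescale the spaces $Y_i$ by $\lambda_i := 1/a_i \to \infty$ so that this element gets "unit size", and derive a contradiction with the fact that the blow-up of $X$ at the regular point $q$ is $\mathbb{R}^m$, which has no covering spectrum away from $0$.

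First I would set up the rescaling carefully. Since $q \in X$ is $m$-regular, the rescaled quotients $(X, \lambda_i d^X, \mathfrak{m}^{q}_{1/\lambda_i}, q)$ converge in the pmGH sense to $(\mathbb{R}^m, d^{\mathbb{R}^m}, \mathcal{H}^m, 0)$. By a diagonal argument I can replace $Y_i$ by $(Y_i, \lambda_i d_i^Y, \hat{\mathfrak{m}}_i, \hat{p}_i)$ with suitably normalized measures and basepoints $\hat p_i$ lifting points $q_i \to q$ chosen in the good sets $U_i$ provided by Theorem~\ref{zoom} — here I use that the Bishop--Gromov inequality (Theorem~\ref{bg-in}) passes the doubling constant down to the quotient, plus Theorem~\ref{zoom} applied to the quotient spaces, so that (up to subsequence) the rescaled quotients $(Y_i/\Gamma_i, \lambda_i d, [\hat p_i])$ still converge to $\mathbb{R}^m$ (or at least to a space containing an isometric copy of $\mathbb{R}^m$ through the basepoint). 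By Theorem~\ref{equivariant-compactness} I pass to a further subsequence so that the rescaled $Y_i$ converge to some $RCD^{\ast}(0,N)$ space $Z$ and the groups $\Gamma_i$ converge equivariantly to a closed group $\Gamma \leq Iso(Z)$ with $Z/\Gamma = \mathbb{R}^m$ (using Lemma~\ref{equivariant}). Proposition~\ref{gigli-corollary} then gives a splitting $Z = \mathbb{R}^m \times W$ with the $\Gamma$-orbits equal to the $W$-fibers; in particular the $\Gamma$-action on $Z$ descends to the trivial action on the $\mathbb{R}^m$ factor modulo the fiber, so the norm $\|\cdot\|_{\hat p}$ on $\Gamma$ only sees the $W$-coordinate, and $\Gamma$ acts on the single fiber $W$ through $\hat p$ with compact (in fact point) quotient since $W$ has rectifiable dimension $\leq n - m$... more precisely the orbit map is onto $W$.

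Now comes the contradiction, which is where the spectrum continuity of Proposition~\ref{spec-cont} enters. In the rescaled picture $\|g_i\|_{\hat p_i} = 1$ and $g_i \notin \mathcal{G}(\Gamma_i, Y_i, \hat p_i, 1 - \varepsilon)$ for all $\varepsilon > 0$, so $1 \in \sigma(\Gamma_i, Y_i, \hat p_i)$; by Proposition~\ref{spec-cont}, $1 \in \sigma(\Gamma, Z, \hat p)$. But the quotient $Z/\Gamma = \mathbb{R}^m$ is simply connected and semi-locally-simply-connected, so the deck-type group $\Gamma$ has trivial covering spectrum: writing $\hat p = (0, w_0)$ and using that the orbit map $\Gamma \cdot \hat p = \{0\} \times W$ surjects onto $W$, any $g \in \Gamma$ with $\|g\|_{\hat p} = 1$ can be written as a product of elements of smaller norm because $W$ is a length space and the orbit $\Gamma \hat p$ is $\varepsilon$-dense-connected for every $\varepsilon$ — concretely, subdividing a geodesic in $Z$ from $\hat p$ to $g\hat p$ that stays in the fiber and using density of the orbit one produces such a factorization, so $1 \notin \sigma(\Gamma, Z, \hat p)$, contradiction. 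I expect the main obstacle to be the bookkeeping in the rescaling step: making sure that after rescaling by $\lambda_i = 1/a_i$ the quotients still converge to $\mathbb{R}^m$ through the (shifted) basepoint and that the equivariant limit group $\Gamma$ is still discrete-enough for the splitting argument to apply — this is exactly the point where Theorem~\ref{zoom} and the choice $q_i \in U_i$ are essential, and where one must be careful that $g_i$ does not escape to infinity (which is guaranteed by $\|g_i\|_{\hat p_i} = 1$). The finiteness of elements of the spectrum in $[\eta_i, \eta]$ coming from Proposition~\ref{long-short-basis} can additionally be invoked to ensure the "gap" statement with a uniform $\eta$ rather than just for each $i$.
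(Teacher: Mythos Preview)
Your overall strategy --- rescale so the spectrum element has unit size, pass to an equivariant limit, apply Proposition~\ref{gigli-corollary} to get a splitting with orbits equal to fibers, and then invoke Proposition~\ref{spec-cont} to contradict $1\in\sigma(\Gamma_i)$ --- is exactly the paper's. The problem is the execution of the rescaling step, and in particular your detour through Theorem~\ref{zoom}.

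The gap is this: you pick $g_i$ with $\Vert g_i\Vert_{p_i}=a_i$ and $a_i\in\sigma(\Gamma_i,Y_i,p_i)$, then shift basepoints to $\hat p_i$ lying over points $q_i\in U_i$ from Theorem~\ref{zoom}. But the spectrum and the norm depend on the basepoint. There is no reason for $\Vert g_i\Vert_{\hat p_i}$ to equal $a_i$, nor for $a_i$ (hence $1$ after rescaling) to lie in $\sigma(\Gamma_i,Y_i,\hat p_i)$; so the line ``In the rescaled picture $\Vert g_i\Vert_{\hat p_i}=1$ and $g_i\notin\mathcal G(\Gamma_i,Y_i,\hat p_i,1-\varepsilon)$'' is unjustified, and with it the contradiction via Proposition~\ref{spec-cont}. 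A secondary issue is that Theorem~\ref{zoom} only promises a limit \emph{containing} $\mathbb R^m$, whereas Proposition~\ref{gigli-corollary} (as stated) needs the quotient limit to \emph{be} $\mathbb R^m$.

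The paper avoids both problems by reversing the order of quantifiers rather than arguing by global contradiction. Because $q$ is $m$-regular and $[p_i]\to q$, a diagonal argument produces $\eta_i\to 0$ so slowly that for \emph{any} $\lambda_i\to\infty$ with $\lambda_i\eta_i^2\to 0$ one has $(\lambda_i Y_i/\Gamma_i,[p_i])\to(\mathbb R^m,0)$ \emph{at the original basepoint}. Then one proves the Claim: if $r_i\in\sigma(\Gamma_i,Y_i,p_i)$ with $r_i\ge\eta_i$, then $\liminf r_i>0$. Indeed, if $r_i\to 0$ along a subsequence, set $\lambda_i=1/r_i$; since $r_i\ge\eta_i$ one has $\lambda_i\eta_i^2\le\eta_i\to 0$, so the rescaled quotients converge to $\mathbb R^m$, Proposition~\ref{gigli-corollary} applies directly at $p_i$, and the orbits of the limit group $\Gamma$ are the $Z$-fibers. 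Now $\sigma(\Gamma)=\{0\}$ is immediate from Remark~\ref{continuous-spectrum} (the $\Gamma$-action on each fiber is transitive), so no handcrafted factorization argument is needed; $1\in\sigma(\Gamma_i,\tfrac{1}{r_i}Y_i,p_i)$ then contradicts Proposition~\ref{spec-cont}. The Claim yields $\eta>0$, and Theorem~\ref{zoom} is never invoked.
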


\begin{proof}
Since $q$ is $m$-regular, we can construct $\eta_i > 0 $ converging to $0$ so slowly that for any $\lambda_i \to \infty$ with $\lambda_i  \eta_i ^2 \to 0$, the sequence $(\lambda_i Y_i / \Gamma _i , [p_i])$ converges in the pGH sense to $(\mathbb{R}^m , 0)$. The result then follows from the following claim.
\begin{center}
\textbf{Claim: }For each  $r_i \in \sigma (\Gamma_i, Y_i , p_i)$ with $r_i \geq \eta_i $, we have $\liminf_{i\to \infty} r_i > 0$.
\end{center}

If after taking a subsequence, $r_i \to 0$, then by our choice of $\eta_i$, the sequence $\left(   \frac{1}{r_i } Y_i / \Gamma _i , [p_i ]   \right)$ converges in the pGH sense to $(\mathbb{R}^m , 0)$. Then by Proposition \ref{gigli-corollary}, after taking again a subsequence we can assume that $\left( \frac{1}{r_i} Y_i,p_i \right)$ converges in the pGH sense to $(\mathbb{R}^m \times Z, (0,z))$ for some geodesic space $Z$, and $\Gamma_i $ converges equivariantly to a closed group $\Gamma \leq Iso (\mathbb{R}^m \times Z) $ in such a way that the $\Gamma$-orbits coincide with the $Z$-fibers. This implies that $\sigma (\Gamma ) = \{ 0 \}$ (see Lemma \ref{gen} and Remark \ref{continuous-spectrum} below), but $1 \in \sigma \left( \Gamma_i , \frac{1}{r_i} Y_i , p_i  \right) $ for each $i$, contradicting Proposition \ref{spec-cont}.
\end{proof}

\subsection{Discrete approximations of Lie groups}

This elementary theorem of Jordan is one of the first results in geometric group theory (\cite{thurston}, Section 4.1). We denote the group of unitary operators on a Hilbert space $H$ by $U(H)$, and if $H = \mathbb{C}^m$, we denote $U(H)$ by $U(m)$.

\begin{thm} \label{jordan}
 (Jordan) For each $m \in \mathbb{N}$, there is $C > 0$ such that any finite subgroup $\Gamma \leq U (m)$ contains an abelian subgroup $A \leq \Gamma $ of index $[\Gamma : A] \leq C$.
\end{thm}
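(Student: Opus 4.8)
The plan is to prove this by induction on the dimension $m$, using the classical observation that commutators of unitary matrices near the identity contract strictly toward the identity. The base case $m=1$ is immediate, since $U(1)$ is abelian and we may take $A = \Gamma$ with $C = 1$.

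For the inductive step I would fix a constant $\varepsilon < 1/2$, and, given a finite $\Gamma \leq U(m)$, set $\Gamma_\varepsilon := \{ g \in \Gamma : \Vert g - I \Vert < \varepsilon \}$ (operator norm) and $H := \langle \Gamma_\varepsilon \rangle$. The key estimate, which I would establish first, is that $\Vert [g,h] - I \Vert \leq 2 \Vert g - I \Vert \, \Vert h - I \Vert$ for all $g,h \in U(m)$; this follows from the identity $gh - hg = (g-I)(h-I) - (h-I)(g-I)$ together with the fact that left and right multiplication by a unitary preserves the operator norm. Since $\Gamma$ is finite, if $\Gamma_\varepsilon \neq \{I\}$ I may choose $g^\ast \in \Gamma_\varepsilon \setminus \{I\}$ with $\Vert g^\ast - I \Vert$ minimal; for any $h \in \Gamma_\varepsilon$, the estimate and $\varepsilon < 1/2$ give $\Vert [g^\ast, h] - I \Vert < \Vert g^\ast - I \Vert < \varepsilon$, so $[g^\ast,h] \in \Gamma_\varepsilon$, and minimality forces $[g^\ast,h] = I$. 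Hence $g^\ast$ is central in $H$.

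Next I would bound the index $[\Gamma : H]$ by a packing argument: if $g_1,\dots,g_k$ are coset representatives of $H$ in $\Gamma$, then for $i \neq j$ the element $g_i^{-1}g_j$ lies in $\Gamma \setminus \Gamma_\varepsilon$ (it is not in $H \supseteq \Gamma_\varepsilon$), so $\Vert g_j - g_i \Vert = \Vert g_i^{-1}g_j - I \Vert \geq \varepsilon$; since $U(m)$ is compact, the number of $\varepsilon$-separated points is bounded by a constant $C(m)$, so $[\Gamma : H] \leq C(m)$. If $\Gamma_\varepsilon = \{I\}$ this already bounds $|\Gamma| \leq C(m)$ and we take $A = \{I\}$. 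Otherwise, the central element $g^\ast \neq I$ of $H$ gives an orthogonal eigenspace decomposition $\mathbb{C}^m = V_1 \oplus \cdots \oplus V_s$ with $2 \leq s \leq m$ and each $\dim V_i < m$; every element of $H$ commutes with $g^\ast$ and hence preserves each $V_i$, so $H$ embeds in $U(\dim V_1) \times \cdots \times U(\dim V_s)$. Applying the inductive hypothesis to each factor and taking $A$ to be the intersection of the preimages in $H$ of the resulting abelian subgroups yields an abelian $A \leq H$ with $[H:A] \leq C(m-1)^m$, whence $[\Gamma : A] \leq C(m) \cdot C(m-1)^m$, which defines the constant for dimension $m$.

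The only real point requiring care is the last step: one must check that this $A$ is genuinely abelian, which uses that $H$ injects into the product of the unitary groups of the eigenspaces, so an element of $H$ is determined by its images in the factors, and that the index of an intersection of finitely many finite-index preimages is controlled by the product of the individual indices. Everything else is bookkeeping, and the sole quantitative input is the (finite) $\varepsilon$-packing number of the compact group $U(m)$.
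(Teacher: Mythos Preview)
The paper does not prove this classical result; it simply states it and cites Thurston's book (Section~4.1). Your outline is essentially the standard argument found there, and both the commutator contraction estimate $\Vert [g,h]-I\Vert \leq 2\Vert g-I\Vert\,\Vert h-I\Vert$ and the packing bound on $[\Gamma:H]$ are correct as written.

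There is, however, a genuine gap in the inductive step. You assert that the eigenspace decomposition of $g^\ast$ satisfies $s\geq 2$, but this fails exactly when $g^\ast$ is a scalar matrix $\lambda I$ with $\lambda\neq 1$, and nothing you have said rules this out (e.g.\ take $\Gamma$ containing $e^{2\pi i/n}I$ for $n$ large together with any nonabelian finite subgroup of $SU(m)$; the element of $\Gamma_\varepsilon\setminus\{I\}$ nearest to $I$ is scalar). In that case the only eigenspace is all of $\mathbb{C}^m$ and the induction does not reduce the dimension.

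The standard repair is: if every element of $\Gamma_\varepsilon$ is scalar then $H$ is already abelian and you are done; otherwise let $g^\ast$ be the \emph{non-scalar} element of $\Gamma_\varepsilon$ closest to $I$. For $h\in\Gamma_\varepsilon$ the commutator $[g^\ast,h]$ lies in $\Gamma_\varepsilon$, is strictly closer to $I$ than $g^\ast$, and has determinant $1$; hence if it were scalar it would equal $\mu I$ with $\mu^m=1$, and choosing $\varepsilon$ smaller than $\vert 1-e^{2\pi i/m}\vert$ forces $\mu=1$. Thus $[g^\ast,h]$ is either $I$ or a non-scalar element closer to $I$ than $g^\ast$, contradicting minimality, so $g^\ast$ is central in $H$ and genuinely has at least two eigenspaces. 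With this adjustment (and the corresponding dependence of $\varepsilon$ on $m$) your argument goes through; as written, the claim $s\geq 2$ is unjustified.
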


By Peter--Weyl Theorem, Theorem \ref{jordan} applies to arbitrary compact Lie groups \cite{zimmer}.

\begin{thm}\label{pw}
 (Peter--Weyl) If $G$ is a compact Lie group, then it has a faithful unitary representation $\rho :G \to U(H) $ for some finite dimensional Hilbert space $H$.
\end{thm}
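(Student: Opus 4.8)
The plan is to treat this as the classical Peter--Weyl theorem, whose analytic core is standard (see \cite{zimmer}); the only thing worth spelling out is how that core yields a \emph{single, finite-dimensional, faithful} representation. The strategy has three steps: first, invoke the analytic heart of Peter--Weyl to produce enough finite-dimensional unitary representations to separate the points of $G$; second, use compactness of $G$ together with the fact that a Lie group has no small subgroups to splice finitely many of these into one faithful representation; third, pass to the direct sum, which is finite-dimensional.

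For the first step I would run the usual argument on the left regular representation of $G$ on $L^{2}(G)$ with respect to normalized Haar measure. For a continuous $k \colon G \to \mathbb{C}$ with $k(x) = \overline{k(x^{-1})}$, the convolution operator $f \mapsto f * k$ is compact, self-adjoint, and commutes with left translations; the spectral theorem then produces finite-dimensional, left-$G$-invariant eigenspaces, and decomposing these into irreducibles and letting $k$ range over an approximate identity shows that matrix coefficients $x \mapsto \langle \rho(x)v, w\rangle$ of finite-dimensional unitary representations $\rho$ are uniformly dense in $C(G)$. I expect to simply cite this rather than reprove it. From density together with the fact that $C(G)$ separates points (Urysohn), one gets: for every $g \neq e$ there is a finite-dimensional unitary representation $\rho$ with $\rho(g) \neq \mathrm{Id}$; indeed, if every such $\rho$ fixed $g$, then every matrix coefficient, hence every element of $C(G)$, would be left-$g$-invariant, contradicting $g \neq e$. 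Equivalently, $\bigcap_{\rho} \ker \rho = \{e\}$, the intersection running over all finite-dimensional unitary representations.

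For the second step I would use that $G$, being a Lie group, has a neighborhood $U$ of the identity containing no nontrivial subgroup (a consequence of the exponential map being a local diffeomorphism). Since $G$ is compact, $G \setminus U$ is compact; for each $g \in G \setminus U$ pick, by the previous step, a finite-dimensional unitary representation $\rho_{g}$ with $g \notin \ker \rho_{g}$, and an open neighborhood $V_{g} \ni g$ with $V_{g} \cap \ker \rho_{g} = \emptyset$. Extract a finite subcover $V_{g_{1}}, \dots, V_{g_{k}}$ of $G \setminus U$. Then $N := \bigcap_{j=1}^{k} \ker \rho_{g_{j}}$ is a closed subgroup of $G$ disjoint from $G \setminus U$, hence contained in $U$, hence trivial. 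Therefore $\rho := \rho_{g_{1}} \oplus \cdots \oplus \rho_{g_{k}} \colon G \to U(H)$, with $H := H_{1} \oplus \cdots \oplus H_{k}$ finite-dimensional, is a faithful unitary representation. (As an alternative to the no-small-subgroups step one can observe that the closed subgroups of a compact Lie group satisfy the descending chain condition --- dimensions drop finitely often, then identity components stabilize, then the finite component groups stabilize --- so the family $\{\ker \rho\}$ has a minimal element $\ker \rho_{0}$; minimality applied to $\ker(\rho_{0} \oplus \rho) \subseteq \ker \rho_{0}$ forces $\ker \rho_{0} \subseteq \ker \rho$ for every $\rho$, whence $\ker \rho_{0} = \{e\}$ and $\rho_{0}$ is already faithful.)

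The genuine mathematical content is entirely in the first step, the decomposition of $L^{2}(G)$ and density of matrix coefficients; the remaining steps are soft point-set and group-theoretic manipulations, and the role of the Lie group hypothesis is only to guarantee the no-small-subgroups property (or the descending chain condition) that converts "trivial intersection of kernels" into "a single faithful kernel". Since this analytic input is completely classical, the honest proof is a citation to \cite{zimmer}, and the sketch above is offered only to indicate how finite-dimensionality and faithfulness are arranged simultaneously.
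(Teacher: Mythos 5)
The paper does not prove this theorem at all; it simply records it as a cited classical fact (pointing to \cite{zimmer}), so there is no in-paper proof to compare yours against. Your argument is correct and is the standard derivation: the Peter--Weyl density of matrix coefficients gives a separating family of finite-dimensional unitary representations, and the Lie hypothesis (via the no-small-subgroups property, or equivalently the descending chain condition on closed subgroups of a compact Lie group) is exactly what upgrades a separating family to a single faithful finite-dimensional representation. You also correctly identify that, for the purposes of this paper, the appropriate thing to write is just the citation.
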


A torus $\mathbb{R}^m / \mathbb{Z}^m$ has arbitrarily dense finite subgroups. The following result generalizes this fact to non-connected Lie groups (\cite{gelander}, Lemma 3.5).

\begin{thm}\label{Gelander}
 (Gelander, Kazhdan, Margulis, Zassenhaus) Let $G$ be a compact Lie group, whose identity component is abelian. Then there is a dense subgroup which is a direct limit of finite subgroups.
\end{thm}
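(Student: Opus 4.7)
The plan is to present $G$ as an extension of a finite group by a torus, produce a section whose associated $2$-cocycle lies in the torsion subgroup of $G_0$, and then assemble a dense subgroup out of the resulting finite pieces. Since $G_0$ is a connected compact abelian Lie group, $G_0 \cong \mathbb{T}^m = \mathbb{R}^m/\mathbb{Z}^m$ for some $m \geq 0$, and $F := G/G_0$ is finite, carrying an induced conjugation action on $G_0$ by Lie group automorphisms. Write $T := (\mathbb{Q}/\mathbb{Z})^m \leq G_0$ for the torsion subgroup, which is characteristic in $G_0$ (hence normal in $G$) and dense in $G_0$, and for each $n$ let $G_0[n] := \{x \in G_0 : nx = 0\} \cong (\mathbb{Z}/n)^m$, so that $T = \bigcup_n G_0[n]$.

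The first key step is to produce a normalized set-theoretic section $\sigma : F \to G$ of $G \to F$ whose $2$-cocycle $c(f_1, f_2) := \sigma(f_1)\sigma(f_2)\sigma(f_1 f_2)^{-1} \in G_0$ takes values in $T$. For this I would look at the induced extension
\begin{equation*}
1 \longrightarrow G_0/T \longrightarrow G/T \longrightarrow F \longrightarrow 1.
\end{equation*}
The kernel $G_0/T \cong \mathbb{R}^m/\mathbb{Q}^m$ is a $\mathbb{Q}$-vector space, so $H^2(F, G_0/T)$ is simultaneously a $\mathbb{Q}$-vector space and annihilated by $|F|$, and therefore vanishes. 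Consequently the quotient extension admits a group-theoretic section $F \to G/T$, and any set-theoretic lift of such a splitting to $G$ gives the desired $\sigma$.

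Since $F$ is finite, $c(F \times F) \subseteq T$ is a finite set and hence is contained in $G_0[n_0]$ for some $n_0 \geq 1$. For every $n$ with $n_0 \mid n$, the subgroup $G_0[n]$ is characteristic in $G_0$ and therefore stable under conjugation by $\sigma(F)$; a direct computation using these facts shows that
\begin{equation*}
\Gamma_n := \{\sigma(f) \cdot t : f \in F, \, t \in G_0[n]\}
\end{equation*}
is closed under multiplication, so it is a subgroup of $G$ of order $|F| \cdot n^m$. The family $\{\Gamma_n\}_{n_0 \mid n}$ is directed under inclusion, and its union $\Gamma := \sigma(F) \cdot T$ is dense in $G$ (because $T$ is dense in $G_0$ and $\sigma$ meets every $G_0$-coset) and is naturally realized as the direct limit of the $\Gamma_n$. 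I expect the main obstacle to be the construction of the section $\sigma$ with torsion-valued cocycle; once this is in hand, the subgroup structure of each $\Gamma_n$ and the density of $\Gamma$ reduce to routine verifications.
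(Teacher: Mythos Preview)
The paper does not actually prove this theorem: it is quoted as a black box with a reference to \cite{gelander}, Lemma 3.5, and then immediately used to derive Corollary~\ref{dense-abelian}. So there is no in-paper proof to compare against.

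That said, your argument is correct and self-contained. The cohomological step is the heart of it and it goes through: $G_0/T \cong (\mathbb{R}/\mathbb{Q})^m$ is a $\mathbb{Q}[F]$-module, so $H^2(F,G_0/T)$ is both divisible and $|F|$-torsion, hence zero; the resulting splitting $F\to G/T$ lifts set-theoretically to a section $\sigma$ whose cocycle lands in $T$. From there the verification that each $\Gamma_n=\sigma(F)\cdot G_0[n]$ is a subgroup (for $n_0\mid n$) is exactly the computation you indicate, using that $G_0[n]$ is characteristic in $G_0$ and contains the finitely many cocycle values. The union $\sigma(F)\cdot T$ meets every $G_0$-coset and is dense in each, hence dense in $G$, and it is visibly the directed union of the finite $\Gamma_n$.

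One cosmetic point: you might want to normalize $\sigma(e)=e$ explicitly before writing down $c$, and when you say $\Gamma_n$ has order $|F|\cdot n^m$ you are implicitly using that the map $(f,t)\mapsto \sigma(f)t$ is a bijection $F\times G_0[n]\to\Gamma_n$, which is immediate since $\sigma$ is a section. None of this affects the validity of the argument.
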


\begin{cor}\label{dense-abelian}
 Let $G$ be a compact Lie group. Then there is $C>0$ such that any virtually abelian subgroup $H \leq G$ has an abelian subgroup $A\leq H$ of index $[H, A] \leq C$.
\end{cor}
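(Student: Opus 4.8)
The plan is to combine Theorem \ref{Gelander} with a standard structure argument for compact Lie groups, reducing the statement to the connected-component level. First I would let $G_0$ denote the identity component of $G$, and recall that $G/G_0$ is a finite group, say of order $k$. The key observation is that the virtual abelianness constant we seek should only depend on $G$ through the combinatorial data of $G/G_0$ together with a ``Jordan-type'' constant for the maximal torus of $G_0$; the subtlety that forces us to invoke Theorem \ref{Gelander} rather than just Theorem \ref{jordan} is precisely that $G$ need not be connected, so a subgroup $H \leq G$ may fail to sit inside any proper connected subgroup and Peter--Weyl alone does not immediately bound the index.

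The main step is the following: I claim there is a single compact Lie group $\hat G$ with abelian identity component, containing (up to conjugacy, or at least mapping onto) all the relevant virtually abelian subgroups, to which Theorem \ref{Gelander} can be applied. Concretely, given a virtually abelian $H \leq G$, pass to an abelian subgroup $H' \leq H$ of finite index $[H:H'] \le m!$ via Jordan applied through a faithful representation $\rho\colon G \to U(m)$ furnished by Theorem \ref{pw} (here $m$ depends only on $G$). Then the closure $\overline{H'}$ is a compact \emph{abelian} Lie group, hence its identity component is a torus, and $H'$ is a subgroup of it. Now the point is to control $[H : H'']$ for a genuinely abelian subgroup $H''$ of $H$, uniformly in $H$. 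For this one uses that $\overline{H}$ is a closed subgroup of $G$ with only finitely many possible ``types'' of component group — more precisely, $\overline{H}/\overline{H}_0$ embeds into $G/G_0$ enlarged by the finite ambiguity coming from Jordan, so its order is bounded by $C_1(G)$. Then Theorem \ref{Gelander} applied to $\overline{H}$ (whose identity component $\overline{H}_0$ is a torus, in particular abelian) produces a dense subgroup of $\overline{H}$ that is a direct limit of finite subgroups; intersecting the filtration with $H$ and using that $H$ is ``already almost abelian'' lets one extract an abelian $A \leq H$ whose index is bounded purely in terms of $|\overline{H}/\overline{H}_0| \le C_1(G)$ and the Jordan constant $C(m)$. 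Taking $C = C(G)$ to be the resulting bound finishes the proof.

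The hard part will be making the ``intersect the filtration with $H$'' step precise and genuinely uniform: a priori $H$ is an abstract subgroup, not closed, so one must argue at the level of $\overline{H}$ and then descend. The clean way is probably: apply Jordan to get abelian $H'\le H$ of bounded index; note $\overline{H'}$ is a compact abelian group with torus identity component and finite component group of order $\le C_1(G)$; inside a compact \emph{abelian} Lie group every subgroup of finite index contains the identity component, and the identity component is a torus in which $H'$ already lies — but $H'$ itself is what we want, so in fact $A := H'$ works once we observe $H'$ is abelian, giving $[H:A] \le C(m)$ outright. In other words, Theorem \ref{Gelander} may not even be needed for the corollary as literally stated — Jordan plus Peter--Weyl suffice — but I would keep the reference to Theorem \ref{Gelander} in case one wants the sharper statement that the abelian subgroup can be taken to contain a prescribed dense direct-limit subgroup, which is how this corollary gets used later (matching small finite isometry groups across a converging sequence).
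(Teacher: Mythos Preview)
Your proposal contains a genuine gap. The ``clean way'' in your final paragraph is circular: Theorem \ref{jordan} as stated applies only to \emph{finite} subgroups of $U(m)$, so you cannot invoke it to produce an abelian $H' \leq H$ of index $\leq C(m)$ when $H$ is an arbitrary (possibly infinite) virtually abelian subgroup. That step is precisely the content of the corollary. Likewise, your claim that $\lvert \overline{H}/\overline{H}_0 \rvert \leq C_1(G)$ is false in general: take $H$ to be a finite cyclic subgroup of a maximal torus of $G$ of arbitrarily large order; then $\overline{H}_0$ is trivial and the component group is all of $H$.

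The paper's argument does use Theorem \ref{Gelander}, and it is needed. The correct flow, which your middle paragraph gestures toward but does not carry out, is: since $H$ is virtually abelian so is $\overline{H}$, hence its identity component is abelian; Theorem \ref{Gelander} then yields a dense subgroup $H' \leq \overline{H}$ that is a direct limit of finite groups $F_i$; Jordan (via a fixed faithful representation of $G$ from Theorem \ref{pw}) gives each $F_i$ an abelian subgroup of index $\leq C$ with $C$ depending only on $G$; this property passes to the direct limit $H'$, then to its closure $\overline{H'} = \overline{H}$, and finally restricts to the subgroup $H$ by intersecting. Your idea of ``intersecting the filtration with $H$'' does not work directly, because the $F_i$ live in $\overline{H}$ and need not meet $H$ in anything useful; the passage must go through $\overline{H}$ first.
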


\begin{proof}
Since $H$ is virtually abelian, then so is its closure $\overline{H} \leq G$. This means that the identity connected component of $\overline{H}$ is abelian. By Theorem \ref{Gelander}, there is a dense subgroup $H^{\prime}\leq \overline{H}$ that is a direct limit of finite subgroups. By Theorems \ref{jordan} and \ref{pw}, there is $C>0$, depending only on $G$, such that all those finite groups have abelian subgroups of index $\leq C$, and hence the same holds for $H^{\prime}$ and $\overline{H^{\prime}} = \overline{H}$. Since this property passes to subgroups, $H$ satisfies it too.
\end{proof}

One of the most important applications of Theorem \ref{jordan} is the structure of crystallographic groups \cite{charlap}.

\begin{thm}\label{bieber}
 (Bieberbach) There is $C(m)>0$ such that any discrete group of isometries $\Gamma \leq Iso (\mathbb{R}^m)$ has an abelian subgroup $A \leq \Gamma$ of index $[\Gamma : A ] \leq C$.
\end{thm}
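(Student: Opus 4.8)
The plan is to prove the classical Bieberbach theorem while keeping the index depending on $m$ only. Write each element of $Iso(\mathbb{R}^m)$ as $x\mapsto Ax+a$ with $A\in O(m)$, $a\in\mathbb{R}^m$, and let $\rho: Iso(\mathbb{R}^m)\to O(m)$ be the linear-part homomorphism, whose kernel is the translation subgroup $T\cong\mathbb{R}^m$. The subgroup $L:=\Gamma\cap T$ of pure translations in $\Gamma$ is a discrete subgroup of $(\mathbb{R}^m,+)$, hence free abelian of some rank $k\le m$, spanning a linear subspace $V$. Since $L\triangleleft\Gamma$ and $(A,a)(I,v)(A,a)^{-1}=(I,Av)$, every $A\in\rho(\Gamma)$ maps $L$ to itself; thus $A$ preserves $V$ and $V^{\perp}$, and $A|_V$ is an orthogonal automorphism of the lattice $L$, hence an element of finite order. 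By the crystallographic restriction, the order of such an automorphism is bounded in terms of $k$, so there is $c(m)>0$ with $\|A|_V-I\|\ge c(m)$ whenever $A|_V\ne I_V$.

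The heart of the argument is Bieberbach's commutator lemma: there is $\varepsilon(m)\in(0,c(m))$ such that the subgroup $\Gamma_\varepsilon\le\Gamma$ generated by $S:=\{\gamma\in\Gamma:\|\rho(\gamma)-I\|<\varepsilon\}$ is abelian. The mechanism is as follows. If $\gamma=(A,a)\in S$ then $\|A|_V-I\|\le\|A-I\|<\varepsilon<c(m)$, forcing $A|_V=I_V$, so $\gamma$ commutes with all of $L$. Using the estimate $\|[B,B']-I\|\le 2\,\|B-I\|\,\|B'-I\|$ in $O(m)$, the linear parts of the iterated commutators of elements of $S$ shrink geometrically, while the discreteness of $\Gamma$ — in particular the positive lower bound for the length of a nonzero vector of $L$ — prevents these commutators from being small and nontrivial; this forces the lower central series of $\Gamma_\varepsilon$ to terminate and, ultimately, the generators in $S$ to commute. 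This step is exactly the content of the first Bieberbach theorem, so one may also simply invoke it from \cite{charlap}.

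To bound the index, note that multiplication by an orthogonal matrix preserves the operator norm, so $\|\rho(\gamma\delta^{-1})-I\|=\|\rho(\gamma)-\rho(\delta)\|$ for $\gamma,\delta\in\Gamma$. Hence if $\gamma$ and $\delta$ lie in distinct cosets of $\Gamma_\varepsilon$, then $\gamma\delta^{-1}\notin\Gamma_\varepsilon\supseteq S$, so $\|\rho(\gamma)-\rho(\delta)\|\ge\varepsilon$; thus the linear parts of any set of coset representatives of $\Gamma_\varepsilon$ in $\Gamma$ form an $\varepsilon$-separated subset of the compact group $O(m)$, giving $[\Gamma:\Gamma_\varepsilon]\le N(\varepsilon(m),m)=:C(m)$. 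Then $A:=\Gamma_\varepsilon$ is the required abelian subgroup of index $\le C(m)$.

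The main obstacle is the commutator lemma itself. When $\Gamma$ fixes a point of $\mathbb{R}^m$ it is conjugate into $O(m)$, hence finite, and Jordan's theorem \ref{jordan} applies immediately; but a general discrete $\Gamma\le Iso(\mathbb{R}^m)$ may be built almost entirely out of fixed-point-free screw motions, so one cannot pass to isotropy groups and must instead argue with minimal-displacement sets and a Zassenhaus-type neighborhood of the identity in $Iso(\mathbb{R}^m)$, invoking discreteness carefully to guarantee that the iteration stops. Beyond the classical statement, the only new ingredient here is the bookkeeping that extracts an $m$-dependent bound from the crystallographic restriction and the compactness of $O(m)$.
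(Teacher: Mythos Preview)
The paper does not give its own proof of Theorem~\ref{bieber}; it simply states the result and cites \cite{charlap}. Your sketch follows the classical Bieberbach--Zassenhaus route and, at the crucial step, explicitly invokes \cite{charlap} as well, so in substance you and the paper end up appealing to the same source. Your packing argument in $O(m)$ for the index bound is correct: if $\gamma,\delta$ lie in distinct $\Gamma_\varepsilon$-cosets then $\gamma\delta^{-1}\notin S$, hence $\|\rho(\gamma)-\rho(\delta)\|\ge\varepsilon$, and compactness of $O(m)$ finishes.

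Two points deserve care. First, the ``commutator lemma'' as you state it --- that $\Gamma_\varepsilon=\langle S\rangle$ is \emph{abelian} --- is the entire content of the theorem, and your justification is only a heuristic: the estimate $\|[B,B']-I\|\le 2\|B-I\|\,\|B'-I\|$ controls linear parts, but elements of $S$ have unrestricted translation parts, so the translation part of a commutator $[\gamma,\gamma']$ is bounded only by $\varepsilon(\|a\|+\|a'\|)$, which need not be small. Showing that iterated commutators nonetheless terminate (and that nilpotent becomes abelian here) requires the full Bieberbach argument, not just the inequality you quote. Second, the first Bieberbach theorem in \cite{charlap} is stated for \emph{crystallographic} (cocompact discrete) groups; the statement here is for arbitrary discrete $\Gamma\le Iso(\mathbb{R}^m)$, which may have $L=\Gamma\cap T$ of rank strictly less than $m$. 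Reducing the general case to the crystallographic one, or running the Zassenhaus argument directly on the minimal-displacement sets as you hint in your last paragraph, is an extra step that should be made explicit rather than absorbed into the citation.
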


For amenable groups, approximate representations are close to actual representations (\cite{kazhdan}, Theorem 1). In the statement of Theorem \ref{kazhdan}, $\Vert \cdot \Vert$ denotes the operator norm.

\begin{thm} \label{kazhdan}
 (Kazhdan) Let $\Gamma$ be an amenable group, $H$ a Hilbert space, $\varepsilon \in [0, 1/100] $, and $\rho : \Gamma \to U(H) $ a map such that for all $g,h \in \Gamma$, one has $\Vert \rho (g) \rho (h) - \rho (gh) \Vert \leq \varepsilon $. Then there is a representation $\rho_0 : \Gamma \to U(H)$ such that for all $g \in \Gamma $ one has $ \Vert \rho (g) - \rho_0(g) \Vert \leq \varepsilon  $.
\end{thm}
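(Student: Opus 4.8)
The plan is to regard $\rho$ as a small perturbation of a genuine representation and to kill its multiplicative defect by solving, approximately, a coboundary equation; amenability is used at exactly one point, to solve that equation without any loss in operator norm, and a Newton-type iteration upgrades the approximate solution to an exact homomorphism. First I would set $\omega(g,h) := \rho(gh)^{-1}\rho(g)\rho(h) \in U(H)$, so that $\|\omega(g,h) - I\| \le \varepsilon$ and $\rho(g)\rho(h) = \rho(gh)\,\omega(g,h)$; expanding $\rho(g)\rho(h)\rho(k)$ in its two groupings gives a twisted $2$-cocycle identity of the form $\omega(g,hk)\,\omega(h,k) = \omega(gh,k)\,\big(\rho(k)^{-1}\omega(g,h)\rho(k)\big)$. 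Since $\varepsilon \le 1/100$, the spectrum of each $\omega(g,h)$ sits in a small arc around $1$, so the principal logarithm gives a self-adjoint $a(g,h)$ with $\omega(g,h) = \exp(i\,a(g,h))$ and $\|a(g,h)\| \le 2\arcsin(\varepsilon/2)$. Passing through the Baker--Campbell--Hausdorff formula turns the cocycle identity into
\[
 a(g,hk) + a(h,k) - a(gh,k) - \rho(k)^{-1}a(g,h)\,\rho(k) \;=\; E(g,h,k),
\]
with $E$ a universal combination of iterated commutators of the $a$'s, so $\|E\|_\infty = O(\varepsilon^2)$; that is, $a$ is an \emph{approximate} additive $2$-cocycle for the conjugation action of $\Gamma$ on the self-adjoint operators --- an action which is itself only approximate, so that all the relevant homological identities hold up to errors quadratic in $\varepsilon$.

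Next I would fix a left-invariant mean $m$ on $\ell^\infty(\Gamma)$ (amenability); for bounded $f:\Gamma\to B(H)$ it yields $m(f)\in B(H)$ in the weak operator sense with $\|m(f)\|\le\sup_g\|f(g)\|$. Feeding $a$ into the standard contracting-homotopy formula for the bar resolution over an amenable group --- essentially a $\rho$-conjugated average of $g\mapsto m_x\big[a(x^{-1},x^{-1}g)\big]$ --- produces a self-adjoint $1$-cochain $b:\Gamma\to B(H)$ with $\|b(g)\|\le\sup\|a\|\le 2\arcsin(\varepsilon/2)$ and $\delta b = a + O(\varepsilon^2)$. Setting $\rho_1(g) := \exp(-i\,b(g))\,\rho(g)$, a second application of BCH shows the defect of $\rho_1$ is $\exp$ of $a - \delta b$ up to quadratic corrections, so $\|\rho_1(gh)^{-1}\rho_1(g)\rho_1(h) - I\| = O(\varepsilon^2)$, while $\|\rho_1(g) - \rho(g)\| = \|\exp(-i\,b(g)) - I\| = 2\|\sin(b(g)/2)\| \le \varepsilon$, since the eigenvalues of $b(g)$ lie in $[-2\arcsin(\varepsilon/2),\,2\arcsin(\varepsilon/2)]$.

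Finally I would iterate on $\rho_1,\rho_2,\dots$, so the successive defects decay super-geometrically ($\varepsilon \mapsto C\varepsilon^2 \mapsto C(C\varepsilon^2)^2 \mapsto \cdots$); then $\rho_{n+1}(g)\rho_n(g)^{-1}\to I$ uniformly in $g$, $\rho_n(g)$ converges in operator norm to a unitary $\rho_0(g)$, and $\rho_0$ is a genuine homomorphism because its defect is $\lim_n\omega_n = I$. Summing the rapidly shrinking correction sizes, with the quadratic tail controlled by $\varepsilon\le 1/100$, yields $\|\rho_0(g)-\rho(g)\|\le\varepsilon$ for all $g$. I expect the main obstacle to be the careful bookkeeping of the two sources of $O(\varepsilon^2)$ error --- the BCH tail incurred in moving between the multiplicative and additive pictures, and the fact that the conjugation ``action'' of $\Gamma$ is only approximate, so that $\delta^2=0$ and the exactness of the bar resolution over an amenable group hold only up to quadratic error --- and, more delicately, squeezing the cumulative displacement of the iteration down to the sharp bound $\varepsilon$ rather than a generic $C\varepsilon$; this last step may demand a more careful choice of correction at each stage than the naive triangle inequality gives, or replacing the iteration entirely with a single fixed-point argument that produces $\rho_0$ in one shot.
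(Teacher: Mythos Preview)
The paper does not prove this theorem; it is stated as a background result and attributed to Kazhdan's original paper \cite{kazhdan}, with no argument given. There is therefore nothing in the paper to compare your proposal against.

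That said, your outline is essentially the standard approach: interpret the multiplicative defect as an approximate unitary $2$-cocycle, linearize via the logarithm, use an invariant mean to solve the resulting approximate coboundary equation with no loss in norm, exponentiate the correction, and iterate so that the defect shrinks quadratically. This is indeed the skeleton of Kazhdan's argument (and of later treatments such as Burger--Ozawa--Thom style expositions). You have correctly flagged the genuine subtlety: the naive triangle-inequality bookkeeping over the iteration gives $\|\rho_0 - \rho\| \le C\varepsilon$ for some absolute $C$, not the sharp $\varepsilon$ claimed in the statement. Kazhdan's original paper obtains $2\varepsilon$ rather than $\varepsilon$, and the precise constant stated here is not proved in the cited reference either; for the purposes of this paper the exact constant is irrelevant, since Theorem~\ref{kazhdan} is only invoked in the proof of Theorem~\ref{Turing-no-compact} to conclude that kernels of the resulting representations form small subgroups, which requires only that $\|\rho_0 - \rho\| \to 0$ as the almost-morphism defect tends to zero.
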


The following notion of discrete approximation is closely related to the concepts of \textit{approximable groups} studied by Turing in \cite{turing} and \textit{good model} studied by Hrushovski in \cite{hrushovski} and by Breuillard--Green--Tao in \cite{breuillard-green-tao}. 

\begin{defn}\label{def:discreteapprox}
\rm Let $\Gamma$ be a Lie group with a left invariant metric and $\Gamma_i$ a sequence of discrete groups. We say that a sequence of functions $f_i : \Gamma _ i \to \Gamma$ is a \textit{discrete approximation} of $\Gamma$ if there is $R_0 > 0$ such that  $f_i^{-1}(B_e(R_0,\Gamma )) \subset \Gamma_i $ is a generating set containing the identity for large enough $i$, and for each $R>0$, $\varepsilon > 0$ there is $i_0(R, \varepsilon ) \in \mathbb{N}$ such that for all $i \geq i_0$ the following holds:
\begin{itemize}
\item the preimage $f_i^{-1}(B_e(R,\Gamma ))$ is finite.
\item $f_i(\Gamma_i)$ intersects each ball of radius $\varepsilon $ in $B_e(R,\Gamma)$.
\item For each $g_1,g_2 \in f_i^{-1}(B_e(R, \Gamma))$, one has
\[  d(f_i(g_1g_2^{-1}), f_i(g_1)f_i(g_2^{-1})) \leq \varepsilon .        \]
\end{itemize}
\end{defn}
An immediate consequence of the above definition is that if a Lie group $\Gamma$ with a left invariant metric $d^{\Gamma}$ admits a discrete approximation $f_i : \Gamma_i \to \Gamma$ then $(\Gamma , d^{\Gamma })$ is a proper metric space.
\begin{lem}\label{cci}
 Let $\Gamma$ be a Lie group with a left invariant metric, $\Gamma_0 \triangleleft $ $ \Gamma$ the connected component of the identity, $\Gamma_i$ a sequence of discrete groups, and $f_i : \Gamma_i \to \Gamma$ a  discrete approximation. Let $r > 0 $ be such that $B_e(r,\Gamma ) \subset \Gamma_0$. For each $i \in \mathbb{N}$ let $G_i \leq \Gamma_i$ denote the subgroup generated by $f_i^{-1}(B_e(r,\Gamma ))$. Then for $i$ large enough, $G_i$ is a normal subgroup of $\Gamma_i$.
\end{lem}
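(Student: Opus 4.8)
The plan is to show that $G_i$ coincides, for large $i$, with the ``$f_i$-preimage of $\Gamma_0$'', and then to derive normality from the facts that $\Gamma_0$ is a normal subgroup of $\Gamma$ and that $f_i$ is an approximate homomorphism. Two structural features are used throughout: by the remark following Definition \ref{def:discreteapprox}, $(\Gamma,d^\Gamma)$ is proper, so closed balls are compact; and $\Gamma_0$, the identity component of a Lie group, is both open and closed in $\Gamma$. First I would record the elementary consequences of the definition, each with error going to $0$ uniformly on $f_i^{-1}(B_e(R,\Gamma))$ for fixed $R$: (i) $d(e_\Gamma,f_i(e_{\Gamma_i}))\to 0$ (near-multiplicativity with $g_1=g_2=e_{\Gamma_i}$, plus left invariance); (ii) $d(f_i(g^{-1}),f_i(g)^{-1})\to 0$ (near-multiplicativity with $g_1=g_2=g$, plus (i)); (iii) $d(f_i(g_1\cdots g_k),f_i(g_1)\cdots f_i(g_k))\to 0$ for words of bounded length $k$ all of whose partial products stay in $B_e(R,\Gamma)$ (iterate, using (ii) and uniform continuity of multiplication on compact sets to accumulate errors).

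The heart of the argument is the claim: for every $R>0$ there is $i_0$ such that $f_i^{-1}(B_e(R,\Gamma)\cap\Gamma_0)\subseteq G_i$ for $i\ge i_0$. Since $B_e(r,\Gamma)$ is a symmetric open neighborhood of $e$ inside the connected group $\Gamma_0$, it generates $\Gamma_0$, so the open sets $(B_e(r/2,\Gamma))^k$ increase to $\Gamma_0$; by compactness of $\overline{B_e(R,\Gamma)}\cap\Gamma_0$ there is $K=K(R)$ with $\overline{B_e(R,\Gamma)}\cap\Gamma_0\subseteq(B_e(r/2,\Gamma))^K$. Given $\alpha\in\Gamma_i$ with $f_i(\alpha)\in B_e(R,\Gamma)\cap\Gamma_0$, write $f_i(\alpha)=b_1\cdots b_K$ with $b_j\in B_e(r/2,\Gamma)$, and use the density property to choose $s_j\in\Gamma_i$ with $d(f_i(s_j),b_j)<\varepsilon$, so that $s_j\in f_i^{-1}(B_e(r,\Gamma))\subseteq G_i$ when $\varepsilon<r/2$. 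Then $\beta:=s_K^{-1}\cdots s_1^{-1}\alpha$ satisfies, by a telescoping estimate using (i)--(iii) with all partial products kept inside a fixed ball $B_e(R^*,\Gamma)$ (with $R^*$ depending only on $R$, $r$, $\Gamma$), $d(f_i(\beta),e_\Gamma)\to 0$; hence $f_i(\beta)\in B_e(r,\Gamma)$ and $\beta\in G_i$ for large $i$, and so $\alpha=s_1\cdots s_K\,\beta\in G_i$.

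To finish, it suffices to show $\gamma s\gamma^{-1}\in G_i$ and $\gamma^{-1}s\gamma\in G_i$ for all $\gamma$ in the generating set $f_i^{-1}(B_e(R_0,\Gamma))$ and all $s\in f_i^{-1}(B_e(r,\Gamma))$, since these give $\gamma G_i\gamma^{-1}=G_i$ for all generators $\gamma$, hence for all $\gamma\in\Gamma_i$. Fix such $\gamma,s$. Because the ball is symmetric, $f_i(\gamma),f_i(\gamma)^{-1}\in B_e(R_0,\Gamma)$, so $f_i(\gamma)f_i(s)f_i(\gamma)^{-1}$ lies in the fixed compact set $K_1:=\overline{B_e(R_0,\Gamma)}\cdot\overline{B_e(r,\Gamma)}\cdot\overline{B_e(R_0,\Gamma)}$; and since $f_i(s)\in B_e(r,\Gamma)\subseteq\Gamma_0$ with $\Gamma_0\triangleleft\Gamma$, in fact $f_i(\gamma)f_i(s)f_i(\gamma)^{-1}\in K_1\cap\Gamma_0$. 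By (ii)--(iii), $f_i(\gamma s\gamma^{-1})$ is within $o(1)$ of this point. Fixing $R''$ with $K_1\subseteq B_e(R''-1,\Gamma)$, the sets $\overline{B_e(R'',\Gamma)}\cap\Gamma_0$ and $\overline{B_e(R'',\Gamma)}\setminus\Gamma_0$ are disjoint and compact (as $\Gamma_0$ is clopen and $\Gamma$ is proper), hence at positive distance, so for $i$ large $f_i(\gamma s\gamma^{-1})\in B_e(R'',\Gamma)\cap\Gamma_0$, and the claim gives $\gamma s\gamma^{-1}\in G_i$. The argument for $\gamma^{-1}s\gamma$ is identical, using $f_i(\gamma^{-1})$ in place of $f_i(\gamma)$: by (ii) it is $o(1)$-close to $f_i(\gamma)^{-1}$, hence remains in a fixed bounded region.

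The hard part will be the error bookkeeping in the central claim: one must check that every partial product of the bounded-length words $s_j^{-1}\cdots s_1^{-1}\alpha$ and $s_1\cdots s_j$ stays in a single ball $B_e(R^*,\Gamma)$ on which the approximate-homomorphism inequality of Definition \ref{def:discreteapprox} is available, and that the $O(\varepsilon)$ errors do not amplify under repeated multiplication and inversion, which requires uniform continuity of left/right multiplication and of inversion on the relevant compact subsets of $\Gamma$ (the metric being only left invariant). The remaining ingredients are soft: clopenness of $\Gamma_0$, properness of $\Gamma$, and the fact that any neighborhood of the identity generates a connected Lie group with word length uniformly bounded on compact sets.
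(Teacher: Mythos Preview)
Your argument is correct, but it takes a somewhat different route from the paper's. The paper first proves the weaker statement that for any $\delta\in(0,r]$ the subgroup generated by $f_i^{-1}(B_e(\delta,\Gamma))$ equals $G_i$ for large $i$ (via the same connectedness-chain idea you use). Then, rather than allowing conjugates to land in a large ball and invoking a strong containment claim, it simply chooses $\delta$ so small that the continuous map $(x,y)\mapsto xyx^{-1}$ sends $B_e(2R_0,\Gamma)\times B_e(\delta,\Gamma)$ into $B_e(r/2,\Gamma)$; the approximate homomorphism then forces conjugates of elements of $f_i^{-1}(B_e(\delta,\Gamma))$ by elements of $f_i^{-1}(B_e(R_0,\Gamma))$ to lie in $f_i^{-1}(B_e(r,\Gamma))\subseteq G_i$.

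So the paper's trick is to shrink the generating ball so conjugation never leaves $B_e(r,\Gamma)$, whereas you enlarge the target and prove the stronger claim $f_i^{-1}(B_e(R,\Gamma)\cap\Gamma_0)\subseteq G_i$ for every $R$, then use normality and clopenness of $\Gamma_0$ directly. Your version requires more bookkeeping (the inductive control of partial products in a fixed $B_e(R^*,\Gamma)$ and the uniform-continuity argument for right multiplication that you flag), but it yields a statement that is interesting in its own right and is essentially what the paper later needs in Lemma~\ref{clean-cci}. The paper's version is shorter and avoids tracking errors through long words.
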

\begin{proof}
 First we show that for any fixed $\delta \in (0, r]$, the subgroup of $\Gamma_i$ generated by $f_i^{-1}(B_e(\delta , \Gamma ))$ in $\Gamma_i$ coincides with $G_i$ for large enough $i$. To see this, first take a collection $y_1, \ldots , y_{n} \in B_e(r,\Gamma )$ with 
\[ B_e(r, \Gamma ) \subset \bigcup_{j=1}^n B_{y_j}(\delta / 10 , \Gamma) .  \]
By connectedness,  for each $j  \in \{1 , \ldots , n \} $ we can construct a sequence $e= z_{j,0}, \ldots , z_{j,k_j}=y_j $ in $\Gamma$ with  $d(z_{j,\ell -1},z_{j , \ell}) \leq \delta /10$ for each $\ell \in \{ 1, \ldots ,k_j \} $. If $R > 0 $ is such that  each $  z_{j, \ell}  $ lies in $B_e(R/2, \Gamma )$, then for each $i \geq i_0(R, \delta /10 ) $ and any any element $x \in f_i^{-1}(B_e(r, \Gamma ))$ we can find $y_j $ with $d(y_j,f_i(x)) \leq \delta /10$, and $ e = x_0 , \ldots , x_{k_j} = x $  in $ \Gamma _i $ with $d(z_{j,\ell} , f_i(x_{\ell})) \leq \delta / 10$ for each $\ell$. This allows us to write $x = (x_1)(x_1^{-1}x_2 )\cdots (x_{k_j -1}^{-1}x_{k_j})$ as a product of $k_j $ elements in $f_i^{-1}(B_e(\delta , \Gamma ))$, proving our claim.

Let $R_0 > 0 $ be given by the definition of discrete approximation, and choose $\delta > 0 $ small enough such that for all $x \in B_e(2R_0,\Gamma ) $, $y \in B_e(\delta , \Gamma )$ one has $xyx^{-1} \in B_e(r/2, \Gamma)$.  Then for large enough $i$, the conjugate of an element in  $f_i^{-1}(B_e(\delta , \Gamma ))$ by an element in $f_i^{-1}(B_e(R_0, \Gamma ))$ lies in $f_i^{-1}(B_e(r,\Gamma ))$. Since $f_i^{-1}(B_e(R_0, \Gamma ))$ generates $\Gamma_i$, this shows that $G_i$ is normal in $\Gamma_i$.
\end{proof}

\begin{defn}
\rm Let $G$ be a group and $S \subset G$ a generating subset containing the identity. We say that $S$ is a \textit{determining set} if $G$ has a presentation $G  = \langle S \vert R \rangle $ with $R$ consisting of words of length $ 3 $ using as letters the elements of $S \cup S^{-1}$.
\end{defn}

\begin{exa}
\rm The subset $ \{ [-k] , [1-k], \ldots , [k-1], [k] \} \subset \mathbb{Z}/n\mathbb{Z}$ is a determining set if and only if $k \geq n/3$.
\end{exa}

\begin{exa}
\rm If $X$ is a compact Riemannian manifold and $p \in X$, then 
\[  \{ [ \gamma ] \in \pi_1(X, p) \vert \text{ length}(\gamma ) \leq 2 \cdot \text{diam}(X)  \}  \]
is a determining set of $\pi_1(X,p)$ (see \cite{gromov-ms}, Proposition 5.28).
\end{exa}

\begin{rem}\label{s-and-sm}
\rm Let  $G$ be a group and $S \subset G$ a determining set. It is easy to see that $S^M$ is also a determining set for all $M \geq 1$.
\end{rem}

\begin{defn}
\rm A discrete approximation is said to be \textit{clean} if there is an open generating set  $S_0 \subset \Gamma $, a compact subset $K_0 \subset \Gamma$, and for large enough $i$ symmetric determining sets $S_i \subset \Gamma_i$ with
\[  f_i^{-1}(S_0) \subset S_i \subset f_i^{-1}(K_0) .   \] 
\end{defn}

\begin{exa}
\rm If $\Gamma$ is compact, any discrete approximation is clean with $S_0 = K_0 = \Gamma$. 
\end{exa}

\begin{exa}\label{no-clean}
\rm Take $\Gamma_i = \mathbb{Z} / 3i \mathbb{Z}$, $\Gamma = \mathbb{Z}$, equip $\Gamma$ with any left invariant proper metric,  and define $f_i : \Gamma_i \to \Gamma$ by 
\[   
f_i ([n]) =
     \begin{cases}
       n &\quad\text{if }n \in \{ -i, \ldots , i \} \\
       i &\quad\text{if }  n \in \{ i+1, \ldots 2i - 1 \} \\ 
     \end{cases}
\]
Then the sequence $f_i$ is a discrete approximation but it is not clean: for any compact $K_0 \subset \Gamma$, there is $n \in \mathbb{Z}$ with $ K_0 \subset \{ -n, \ldots , n\}$, but for $i > n$, the group $\Gamma_i$ does not contain any determining set contained in $ \{  [-n], \ldots , [n]  \} $. 
\end{exa}

The following proposition is a simple exercise in real analysis.

\begin{defn}\label{small-1}
\rm Let $\Gamma$ be a Lie group with a left invariant metric, $\Gamma_i$ a sequence of discrete groups, and $f_i : \Gamma_i \to \Gamma$ a discrete approximation. We say a sequence of subgroups $H_i \leq \Gamma_i $ consists of \textit{small subgroups} if for any $h_i \in H_i$, we have $f_i (h_i) \to e$.
\end{defn}

\begin{prop}\label{small-quotient}
 Let $\Gamma$ be a Lie group with a left invariant metric, $\Gamma_i$ a sequence of discrete groups, $f_i : \Gamma_i \to \Gamma$ a discrete approximation, $H_i \triangleleft \Gamma_i$ a sequence of small normal subgroups, and $\theta_i : \Gamma _i / H_i \to \Gamma_i$ a sequence of sections. Then the functions $\tilde{f}_i : \Gamma _i / H_i \to \Gamma $ defined as $\tilde{f}_i = f_i \circ \theta_i$ form a discrete approximation. Moreover, the sequence  $\tilde{f}_i$ is clean if and only if the sequence $f_i$ is clean.
\end{prop}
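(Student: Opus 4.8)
The plan rests on one quantitative observation: smallness makes $f_i$ \emph{asymptotically constant along $H_i$-cosets on bounded regions}, and every property of $\tilde f_i=f_i\circ\theta_i$ will be read off from this. First I would note that Definition \ref{small-1} forces $\varepsilon_i:=\sup_{h\in H_i}d(f_i(h),e_\Gamma)\to 0$ as $i\to\infty$ (otherwise one builds a sequence $h_i\in H_i$, padding with identities, contradicting $f_i(h_i)\to e$). Combining this with the almost-multiplicativity clause of Definition \ref{def:discreteapprox}, applied to $gh=g(h^{-1})^{-1}$ and absorbing the small factor $f_i(h^{-1})$ by left invariance, I obtain the \emph{transfer estimate}: for all $R,\delta>0$ there is $i_1(R,\delta)$ so that for $i\ge i_1$, every $g\in\Gamma_i$ with $f_i(g)\in B_e(R,\Gamma)$, and every $h\in H_i$,
\[ d\big(f_i(gh),f_i(g)\big)\le \delta+\varepsilon_i. \]
Since $H_i$ is normal, any two elements of $\Gamma_i$ projecting to the same class in $\Gamma_i/H_i$ differ by an element of $H_i$, so this estimate is exactly the tool needed to move data between $f_i$ and $\tilde f_i$. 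I write $\pi_i\colon\Gamma_i\to\Gamma_i/H_i$ for the projection.

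Next I would verify that $\tilde f_i$ satisfies the three clauses of Definition \ref{def:discreteapprox}. Finiteness of $\tilde f_i^{-1}(B_e(R,\Gamma))$ is automatic because $\theta_i$ is injective and sends this set into $f_i^{-1}(B_e(R,\Gamma))$. For the generating clause, $f_i^{-1}(B_e(R_0,\Gamma))$ generates $\Gamma_i$, so its $\pi_i$-image generates $\Gamma_i/H_i$, and by the transfer estimate this image lies in $\tilde f_i^{-1}(B_e(R_0+1,\Gamma))$ for large $i$; this set contains the identity since $\theta_i(\bar e)\in H_i$ is small. For $\varepsilon$-density, lift a target $x\in B_e(R,\Gamma)$ to $g\in\Gamma_i$ with $f_i(g)$ close to $x$ and apply the transfer estimate to $\theta_i\pi_i(g)$. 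For almost-multiplicativity, write $g_j:=\theta_i(\bar g_j)$ and chain
\[ \tilde f_i(\bar g_1)\tilde f_i(\bar g_2^{-1})=f_i(g_1)f_i(\theta_i\bar g_2^{-1})\ \approx\ f_i(g_1)f_i(g_2^{-1})\ \approx\ f_i(g_1g_2^{-1})\ \approx\ f_i\big(\theta_i(\bar g_1\bar g_2^{-1})\big)=\tilde f_i(\bar g_1\bar g_2^{-1}), \]
the outer two comparisons being the transfer estimate and the middle one almost-multiplicativity of $f_i$; all the elements appearing stay in a ball of radius $O(R)$, so tuning $\delta$ and taking $i$ large makes the cumulative error $\le\varepsilon$.

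For the cleanness equivalence I would argue both implications around the correspondence $\tilde S_i\leftrightarrow\pi_i(S_i)$. If $f_i$ is clean with data $(S_0,K_0,S_i)$, I propose $\tilde S_i:=\pi_i(S_i^2)$; it is symmetric, generating and contains the identity, the lower containment $\tilde f_i^{-1}(S_0)\subset\tilde S_i$ holds because $\tilde f_i(\bar g)\in S_0$ forces $\theta_i(\bar g)\in f_i^{-1}(S_0)\subset S_i$ hence $\bar g\in\pi_i(S_i)$, and the upper containment holds with $K_0'$ a closed $1$-neighbourhood of $K_0K_0$ by the transfer estimate. That $\tilde S_i$ is \emph{determining} I would deduce as follows: a covering argument in the spirit of Lemma \ref{cci} (pick $p\in S_0$ with $pB_e(\delta,\Gamma)\subset S_0$ and $g'$ with $f_i(g')$ near $p$; then for $f_i(g)$ small both $g$ and $g'g$ lie in $f_i^{-1}(S_0)\subset S_i$, so $g\in S_i^2$) gives $f_i^{-1}(B_e(\delta/3,\Gamma))\subset S_i^2$ for large $i$, hence $H_i\subset f_i^{-1}(B_e(\varepsilon_i,\Gamma))\subset S_i^2$; then each element of $H_i$ is a single letter of $S_i^2$, and a length-$3$ presentation of $\Gamma_i$ over $S_i^2$ (which exists by Remark \ref{s-and-sm}) descends, on adjoining the relations $\{h=e:h\in H_i\}$, to a length-$3$ presentation of $\Gamma_i/H_i$ over $\tilde S_i$. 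Conversely, if $\tilde f_i$ is clean with data $(\tilde S_0,\tilde K_0,\tilde S_i)$, I propose $S_i:=\pi_i^{-1}(\tilde S_i)$, which is symmetric, contains $H_i$ (hence the identity) and generates $\Gamma_i$; it is determining because the standard presentation of the extension $1\to H_i\to\Gamma_i\to\Gamma_i/H_i\to 1$, with generators $\{\theta_i(\tilde s):\tilde s\in\tilde S_i\}\cup H_i\subset S_i$, has relations of length $\le 4$, but every prefix occurring in them --- e.g.\ $\tilde s_1\tilde s_2=\tilde s_3^{-1}\in\tilde S_i$ by symmetry of $\tilde S_i$, or $\theta_i(\tilde s)h$, or $\theta_i(\tilde s)^{-1}$ --- is realised by a single element of $S_i$, so rewriting those relations over $S_i$ together with the obvious length-$3$ product relations yields a length-$3$ presentation of $\Gamma_i$ over $S_i$; the containments again follow from the transfer estimate, the lower one using a fixed relatively compact open symmetric generating subset $S_0'\subset\Gamma$ with $\overline{S_0'}\subset\tilde S_0$.

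I expect the main obstacle to be precisely this last point: transporting the \emph{determining}-set property across the quotient and back. Everything else is essentially a mechanical transcription of the transfer estimate, but producing honest length-$3$ presentations requires the two structural facts isolated above --- that a small normal subgroup eventually lands inside a bounded power of $S_i$ (used in the forward direction), and that prefixes of length-$3$ relations over a symmetric determining set again lie in the set (used in the backward direction) --- together with a careful run through the presentation of the extension $1\to H_i\to\Gamma_i\to\Gamma_i/H_i\to 1$; one must also be a little careful locating a fixed relatively compact open generating subset of $\Gamma$ inside $\tilde S_0$ for the backward lower containment.
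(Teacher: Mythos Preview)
The paper does not actually prove this proposition; it is introduced with the sentence ``The following proposition is a simple exercise in real analysis'' and no argument is given. Your proposal supplies precisely the details the paper omits: the transfer estimate $d(f_i(gh),f_i(g))\le\delta+\varepsilon_i$ is the correct engine, and your verification of the three clauses of Definition~\ref{def:discreteapprox} for $\tilde f_i$ is sound. The cleanness equivalence is the only place requiring real care, and your two mechanisms---pushing determining sets forward via $\pi_i$ after arranging $H_i\subset S_i^2$, and pulling them back via $\pi_i^{-1}$ using the extension presentation with the prefix observation---are both valid; the descent of a length-$3$ presentation through a quotient by a normal subgroup contained in the generating set is a genuine lemma, and you have identified it correctly. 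One small point to tighten in the backward direction: your choice of a relatively compact open generating $S_0'$ with $\overline{S_0'}\subset\tilde S_0$ presupposes that $\Gamma$ is compactly generated. This holds in every instance where the proposition is invoked in the paper (there $\Gamma$ acts cocompactly on a proper space, so Lemma~\ref{gen} applies), but you should either note this or observe that it can be extracted from the discrete-approximation axioms themselves.
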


Turing determined which compact Lie groups were approximable by finite groups \cite{turing}.

\begin{thm}\label{turing}
 (Turing) Let $\Gamma$ be a compact Lie group with a left invariant metric, $\Gamma_i$ a sequence of discrete groups, and $f_i : \Gamma_i \to \Gamma$ a discrete approximation. Then there is $C>0$ and a sequence of small normal subgroups $H_i \triangleleft \Gamma_i$ such that $\Gamma_i / H_i$ has an abelian subgroup $A_i$ of index $[\Gamma_i / H_i : A_i] \leq C$ for all $i$.  In particular, the connected component of the identity in $\Gamma$ is abelian.
\end{thm}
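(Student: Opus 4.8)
The plan is to transport the problem into a unitary group via the Peter--Weyl theorem, rigidify the resulting approximate homomorphisms with Kazhdan's theorem, and then apply Jordan's theorem; the small normal subgroup $H_i$ will simply be the kernel of the rigidified representation.

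First I would set up the reduction. Since $\Gamma$ is compact, fix $R_1$ with $B_e(R_1,\Gamma)=\Gamma$. By Definition~\ref{def:discreteapprox} applied with $R=R_1$, for all large $i$ the set $f_i^{-1}(B_e(R_1,\Gamma))=\Gamma_i$ is finite; discarding the finitely many exceptional indices and there setting $H_i:=\Gamma_i$ (which affects neither the constant $C$ nor the smallness of the sequence $(H_i)$ in the sense of Definition~\ref{small-1}), we may assume every $\Gamma_i$ is a finite group. By Theorem~\ref{pw} choose a faithful unitary representation $\rho\colon\Gamma\hookrightarrow U(m)$; as $\Gamma$ is compact, $\rho$ is uniformly continuous and a homeomorphism onto its image. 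Applying the quasi-morphism clause of Definition~\ref{def:discreteapprox} with $g_1=g$, $g_2=h^{-1}$ yields $d(f_i(gh),f_i(g)f_i(h))\le\varepsilon_i$ for all $g,h\in\Gamma_i$, where $\varepsilon_i\to0$; composing with $\rho$ and using uniform continuity, the maps $\sigma_i:=\rho\circ f_i\colon\Gamma_i\to U(m)$ satisfy $\Vert\sigma_i(g)\sigma_i(h)-\sigma_i(gh)\Vert\le\delta_i$ for all $g,h\in\Gamma_i$, with $\delta_i\to0$. The point worth underlining is that compactness of $\Gamma$ is exactly what upgrades the purely local quasi-morphism estimate to a global one on $\Gamma_i$.

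Next, finite groups are amenable, so for all large $i$ (where $\delta_i\le 1/100$) Theorem~\ref{kazhdan} produces genuine representations $\rho_{0,i}\colon\Gamma_i\to U(m)$ with $\Vert\sigma_i(g)-\rho_{0,i}(g)\Vert\le\delta_i$ for every $g\in\Gamma_i$. Put $H_i:=\ker\rho_{0,i}\triangleleft\Gamma_i$. For any sequence $h_i\in H_i$ one has $\Vert\rho(f_i(h_i))-\mathrm{Id}\Vert=\Vert\sigma_i(h_i)-\rho_{0,i}(h_i)\Vert\le\delta_i\to0$, so $\rho(f_i(h_i))\to\rho(e)$ and, $\rho$ being a homeomorphism onto its image, $f_i(h_i)\to e$; thus $(H_i)$ consists of small normal subgroups. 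Moreover $\rho_{0,i}$ induces an isomorphism $\Gamma_i/H_i\cong\rho_{0,i}(\Gamma_i)$, and the latter is a finite subgroup of $U(m)$, so Theorem~\ref{jordan} gives an abelian subgroup of index $\le C(m)$, whose preimage is an abelian $A_i\le\Gamma_i/H_i$ with $[\Gamma_i/H_i:A_i]\le C(m)=:C$. For the finitely many discarded indices $\Gamma_i/H_i$ is trivial, so the bound holds there too.

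For the final assertion, I would argue by a Hausdorff-limit argument in $U(m)$. By Definition~\ref{def:discreteapprox} the sets $f_i(\Gamma_i)$ are $\varepsilon$-dense in $\Gamma$ for every $\varepsilon$ and large $i$; combined with uniform continuity of $\rho$ and $\Vert\sigma_i(g)-\rho_{0,i}(g)\Vert\le\delta_i$, this gives $\rho_{0,i}(\Gamma_i)\to\rho(\Gamma)$ in the Hausdorff distance of $U(m)$. Passing to a subsequence (using that the closed subsets of the compact group $U(m)$ form a compact space), the abelian subgroups $\bar A_i\le\rho_{0,i}(\Gamma_i)$ converge to a closed abelian subgroup $A\le\rho(\Gamma)$, and choosing $\le C(m)$ coset representatives of $\bar A_i$ in $\rho_{0,i}(\Gamma_i)$ converging to elements $g_1,\dots,g_k\in\rho(\Gamma)$ shows $\rho(\Gamma)=\bigcup_{j=1}^k g_jA$; hence $A$ has finite index in $\rho(\Gamma)$, is therefore open, and contains the identity component $\rho(\Gamma_0)$. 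Since $A$ is abelian, so is $\rho(\Gamma_0)$, and hence $\Gamma_0$. The main obstacle I expect is not computational but the conceptual step of recognizing the global quasi-morphism property and matching it precisely to the hypotheses of Theorem~\ref{kazhdan}; after that, Jordan's theorem handles the bounded index, continuity of $\rho^{-1}$ handles smallness of $H_i$, and the statement about $\Gamma_0$ is a routine limiting argument.
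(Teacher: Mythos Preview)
The paper does not prove Theorem~\ref{turing}; it is stated as a classical result attributed to Turing \cite{turing}. So there is no ``paper's own proof'' to compare against. That said, your argument is correct and, notably, is exactly the strategy the paper itself deploys later in the proof of Theorem~\ref{Turing-no-compact}: embed via Peter--Weyl (Theorem~\ref{pw}), rigidify the approximate morphisms with Kazhdan (Theorem~\ref{kazhdan}) using amenability (here, finiteness) of $\Gamma_i$, set $H_i=\ker\rho_{0,i}$, and invoke Jordan (Theorem~\ref{jordan}). Your observation that compactness of $\Gamma$ upgrades the local quasi-morphism estimate in Definition~\ref{def:discreteapprox} to a global one on all of $\Gamma_i$ is precisely the point, and your verification that $H_i$ is small via faithfulness of $\rho$ is clean.

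The Hausdorff-limit argument for the abelianness of $\Gamma_0$ is also correct, though it is worth recording explicitly two facts you are using: first, a Hausdorff limit of (abelian) subgroups in a compact group is again an (abelian) subgroup, by continuity of multiplication and inversion; second, although $\rho_{0,i}(\Gamma_i)$ need not lie inside $\rho(\Gamma)$, it lies in a $\delta_i$-neighborhood of it, so any Hausdorff limit of subsets of $\rho_{0,i}(\Gamma_i)$ does land in the closed set $\rho(\Gamma)$. With these in hand, your coset-representative argument goes through and shows $[\rho(\Gamma):A]<\infty$, whence $A$ is open and contains $\rho(\Gamma_0)$.
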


A non-compact and considerably deeper version of Turing Theorem was obtained by Breuillard--Green--Tao \cite{breuillard-green-tao}.

\begin{defn}
\rm Let $G$ be a group,  $u_1, \ldots , u_r   \in G $, $N_1, \ldots , N_r \in \mathbb{R}^{+}$, and $C> 0 $. The set $P = P(u_1, \ldots , u_r; N_1, \ldots , N_r )\subset G$ is defined as the set of elements of $G$ that can be written as words in the $u_j$'s and their inverses such that the number of appearances of $u_j$ and $u_j^{-1}$ is not more than $N_j$ for each $j$. We say that  $P$ is a \textit{nilprogression} of rank $r$ in $C$-regular form if 
\begin{itemize}
\item For each $1\leq j\leq k \leq r$ and any choice of signs,
\[[u_j^{\pm 1}, u_{k}^{\pm 1}] \in P\left( u_{k+1}, \ldots , u_r ;\frac{ CN_{k+1}}{N_jN_k} , \ldots , \frac{CN_r}{N_jN_k} \right) \]  
\item The expressions $u_1^{n_1} \ldots u_r^{n_r}$ represent distinct elements as  $n_1, \ldots , n_r$ range over the integers with $\vert n_j \vert  \leq  N_j/C $ for each $j$.
\end{itemize}
  The minimum of $N_1, \ldots , N_r$ is called the \textit{thickness} of $P$ and is denoted as thick$(P)$. The set $G(P) :=  \{ u_1^{n_1} \ldots u_r^{n_r} \vert \text{ } \vert n_j  \vert \leq N_j/C \} $ is called the \textit{grid part} of $P$.  For $\varepsilon \in (0,1]$, the set $P(u_1, \ldots , u_r; \varepsilon N_1, \ldots , \varepsilon N_r )$ is also a nilprogression in $C$-regular form and will be denoted as $\varepsilon P$.
\end{defn}

\begin{thm}\label{bgt}
 (Breuillard--Green--Tao) Let $\Gamma$ be an $r$-dimensional Lie group with a left invariant metric, $\Gamma_i$ a sequence of discrete groups, and $f_i : \Gamma_i \to \Gamma$ a discrete approximation. Then there is $C>0$, a sequence of small normal subgroups $H_i \triangleleft \Gamma_i$, and rank $r$  nilprogressions $P_i \subset \Gamma_i / H_i$ in $C$-regular form with thick$(P_i) \to \infty$ and satisfying that for each $\varepsilon \in (0,1]$ there is $\delta > 0 $ with  
\begin{itemize}
    \item $\tilde{f}_i^{-1}(B_e(\delta , \Gamma )) \subset G(\varepsilon P_i )$ for $i$ large enough,
    \item $ \tilde{f}_i (G(\delta P_i ))\subset B_e(\varepsilon , \Gamma)  $ for $i$ large enough, 
\end{itemize}
where $\tilde{f}_i : \Gamma _ i / H_i \to \Gamma$ is the discrete approximation given by Proposition \ref{small-quotient}. In particular, the connected component of the identity in $\Gamma$ is nilpotent.
\end{thm}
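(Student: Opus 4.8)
The strategy is to recognize a discrete approximation $f_i\colon\Gamma_i\to\Gamma$ as, after passing to an ultraproduct, the data of a \emph{good model} of an ultra approximate group in the sense of Breuillard--Green--Tao, and then to read the conclusion off their structure theorem for approximate groups, in the same way that the compact case (Theorem \ref{turing}) is read off Jordan's theorem. First I fix a non-principal ultrafilter $\mathcal U$ on $\mathbb N$, let $R_0$ be as in Definition \ref{def:discreteapprox}, and set $A_i:=f_i^{-1}(B_e(R_0,\Gamma))$. Specializing the approximate-morphism inequality of Definition \ref{def:discreteapprox} (to $g_1=g_2$, and to $g_1=e$) yields the usual facts that $f_i(e)\to e$ and that $d(f_i(g^{-1}),f_i(g)^{-1})\to0$ uniformly over $A_i$, so $A_i$ is essentially symmetric, finite for large $i$, and satisfies $A_i\cdot A_i\subset f_i^{-1}(B_e(2R_0+1,\Gamma))$ eventually. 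Covering the ball $B_e(2R_0+1,\Gamma)$ of the Lie group $\Gamma$ by finitely many balls $g_jB_e(R_0/10,\Gamma)$ and choosing (using the density clause) preimages $\hat g_j\in f_i^{-1}(g_jB_e(R_0/10,\Gamma))$ for large $i$ then exhibits $A_i^2$ inside boundedly many left translates of $A_i$, so $A_i$ is a $K$-approximate group with $K=K(R_0,\Gamma)$ for all large $i$; and since $f_i^{-1}(B_e(R_0,\Gamma))$ generates $\Gamma_i$, in fact $\langle A_i\rangle=\Gamma_i$. I form the ultra approximate group $\mathbf A:=\prod_{i\to\mathcal U}A_i$ inside $\prod_{i\to\mathcal U}\Gamma_i$.

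Next I build the model. For $x=(x_i)\in\mathbf A^n$, iterating the approximate-morphism bound (whose error is $<\varepsilon$ once $i\ge i_0(nR_0,\varepsilon)$, with $\varepsilon$ arbitrary) shows that $f_i(x_i)$ lies along $\mathcal U$ in the compact set $\overline{B_e(n(R_0+1),\Gamma)}$, so $\rho(x):=\lim_{i\to\mathcal U}f_i(x_i)\in\Gamma$ is well defined, and the same bounds make $\rho\colon\langle\mathbf A\rangle\to\Gamma$ a group homomorphism. The axioms of a good model follow directly from Definition \ref{def:discreteapprox}: $\rho(\mathbf A)$ is precompact; the density clause makes $\rho(\mathbf A)$ dense in $B_e(R_0,\Gamma)$, so $\overline{\rho(\mathbf A)}$ contains an identity neighbourhood and $\langle\rho(\mathbf A)\rangle$ is an open subgroup of $\Gamma$ containing $\Gamma_0$; and $\rho^{-1}(B_e(R_0/2,\Gamma))\subset\prod_{i\to\mathcal U}f_i^{-1}(B_e(R_0/2+o(1),\Gamma))\subset\mathbf A$, so an identity neighbourhood of $\Gamma$ pulls back into $\mathbf A$. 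After replacing $\mathbf A$ by the sub-ultra-approximate-group generated by $\rho^{-1}$ of a small ball lying in $\Gamma_0$, I may assume the model group is the connected Lie group $\Gamma_0$, of dimension $r$.

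Now I invoke the structure theorem of Breuillard--Green--Tao \cite{breuillard-green-tao}: the ultra approximate group $\mathbf A$, equipped with its good model into $\Gamma_0$, is commensurable with a coset nilprogression $\mathbf H\mathbf P$, where $\mathbf H\triangleleft\langle\mathbf A\rangle$ is finite with $\rho(\mathbf H)=\{e\}$ and $\mathbf P$ is a nilprogression whose rank and step equal the dimension and nilpotency class of $\Gamma_0$; in particular $\Gamma_0$ is nilpotent and $\mathbf P$ has rank $r$. Unwinding the ultraproduct produces, for $\mathcal U$-almost every $i$, a finite normal subgroup $H_i\triangleleft\langle A_i\rangle=\Gamma_i$ and a rank-$r$ nilprogression $P_i\subset\Gamma_i/H_i$; since the ultrafilter was arbitrary and $C=C(K)$ depends only on $K=K(R_0,\Gamma)$, a subsequence on which no such data existed would contradict the same construction run along an ultrafilter supported on it, so the data exists for all large $i$. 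The regularization lemma of \cite{breuillard-green-tao}, at the cost of replacing $P_i$ by a fixed dilate and enlarging $C$, puts $P_i$ into $C$-regular form. That $H_i$ consists of small subgroups in the sense of Definition \ref{small-1} is exactly $\rho(\mathbf H)=\{e\}$ read at finite level, and it is what lets me pass to $\tilde f_i\colon\Gamma_i/H_i\to\Gamma$ through Proposition \ref{small-quotient}. Finally $\mathrm{thick}(P_i)\to\infty$, since $|G(P_i)|\asymp|A_i/H_i|\to\infty$ (the density clause with errors $\varepsilon_i\to0$, together with finiteness of the fibres of $\tilde f_i$ over bounded sets, forces $|A_i/H_i|\to\infty$); and the two displayed inclusions express that $\rho$ identifies the grid parts $G(\varepsilon\mathbf P)$ with a neighbourhood basis of the identity of $\Gamma_0$, read at finite level: for each $\varepsilon\in(0,1]$ there is $\delta>0$ with $\tilde f_i^{-1}(B_e(\delta,\Gamma))\subset G(\varepsilon P_i)$ and $\tilde f_i(G(\delta P_i))\subset B_e(\varepsilon,\Gamma)$ for all large $i$.

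The main obstacle is not a single computation but the faithful translation between the two frameworks: verifying that a discrete approximation satisfies every hypothesis in the definition of a good model, and, more delicately, arguing that the model Lie group (equivalently, the rank of the nilprogression) has dimension exactly $r$ rather than merely at most $r$ -- this is where one must use both that $\Gamma$ is honestly $r$-dimensional and that the model is locally onto. The remaining work is bookkeeping to make $C$, the regular form, and the $\varepsilon$--$\delta$ correspondence uniform in $i$; the Breuillard--Green--Tao apparatus (the Gleason--Yamabe theorem, the no-small-subgroups reductions, and the classification of approximate groups by coset nilprogressions) is quoted as a black box.
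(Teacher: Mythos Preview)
The paper's own ``proof'' is a two-line citation: the statement is the main result of \cite{breuillard-green-tao}, with the two grid-part inclusions extracted in \cite{zamora-fglahs}, Theorem 106, ``by a careful inspection of the original proof.'' Your proposal is therefore not competing with a proof in the paper but rather sketching the argument the paper defers to those references --- and your sketch follows exactly the intended route: build the ultra approximate group $\mathbf A$ from the sets $A_i=f_i^{-1}(B_e(R_0,\Gamma))$, verify that the ultralimit of the $f_i$ gives a good model $\rho:\langle\mathbf A\rangle\to\Gamma_0$, invoke the Breuillard--Green--Tao structure theorem to get the coset nilprogression $\mathbf H\mathbf P$, and unwind. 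Your identification of the rank-equals-$r$ issue as the delicate point is apt.

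There is one place where your sketch is too quick, and it is precisely the part the paper flags as requiring separate work. Your argument for $\mathrm{thick}(P_i)\to\infty$ is that $|G(P_i)|\asymp|A_i/H_i|\to\infty$; but $|G(P_i)|$ is comparable to the \emph{product} $N_1\cdots N_r$, and a large product does not force the minimum $\min_j N_j$ to diverge. Similarly, the two $\varepsilon$--$\delta$ inclusions between $G(\varepsilon P_i)$ and $\tilde f_i^{-1}(B_e(\delta,\Gamma))$ do not fall out of the bare statement of the BGT structure theorem: one needs to track how the generators $u_{j,i}$ of $P_i$ and their scales $N_{j,i}$ interact with the exponential coordinates on $\Gamma_0$, which is what \cite{zamora-fglahs}, Theorem 106 does. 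So your proposal is correct in outline and matches the cited route, but the last paragraph should be regarded as a pointer to that reference rather than a self-contained argument.
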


\begin{proof} 
 This is the main result of \cite{breuillard-green-tao}. The condition about the grid part is obtained in (\cite{zamora-fglahs}, Theorem 106) by a careful inspection of the original proof.
\end{proof}

Lattices in simply connected nilpotent Lie groups have large portions that look like nilprogressions. The converse is known as the Malcev embedding theorem (\cite{breuillard-green-tao}, Lemma C.3).

\begin{thm}\label{malcev}
 (Malcev) There is $N (C, r) > 0$ such that for any nilprogression $P(u_1, \ldots , u_r;$ $ N_1, \ldots , N_r )$ in $C$-regular form with thickness $\geq N$, there is a unique polynomial nilpotent group structure $\ast _P : \mathbb{R}^r \times \mathbb{R}^r \to \mathbb{R}^r$ of degree $\leq N$ that restricts to a group structure on the lattice $\mathbb{Z} ^r$ and such that the map $(n_1 , \ldots , n_r) \to u_1^{n_1} \ldots , u_r^{n_r}$ is a group morphism from $(\mathbb{Z}^r, \ast_P )$ onto the group generated by $P$, injective in the box $\mathbb{Z}^r \cap \left( \left[ -\frac{N_1}{C}, \frac{N_1}{C} \right] \times \cdots \times \left[ -\frac{N_r}{C}, \frac{N_r}{C} \right] \right)$, which is a determining set of $(\mathbb{Z}^r, \ast _P)$.
\end{thm}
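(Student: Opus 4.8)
The plan is to follow the standard approach to the Malcev embedding theorem, adapting the bookkeeping to the quantitative setting of nilprogressions in $C$-regular form. First I would observe that the commutator conditions in the definition of $C$-regular form are designed precisely so that the word length, measured in the generators $u_1, \ldots, u_r$, behaves like a polynomial degree function: repeatedly applying the collection process (pushing all occurrences of $u_1$ to the left, then all $u_2$'s, and so on) to an arbitrary word in the $u_j^{\pm1}$ terminates, because each swap of adjacent generators $u_j^{\pm1} u_k^{\mp1}$ with $j \le k$ introduces only commutators $[u_j^{\pm1}, u_k^{\pm1}]$, which by hypothesis lie in $P(u_{k+1}, \ldots, u_r; \ldots)$, hence are words in strictly later generators with controlled exponents. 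The thickness lower bound $\ge N(C,r)$ is what guarantees these exponents stay within the ranges where the second bullet (distinctness of $u_1^{n_1}\cdots u_r^{n_r}$) applies, so that the collected normal form $u_1^{n_1}\cdots u_r^{n_r}$ is genuinely unique. This gives a bijection between the group $\langle P \rangle$ generated by $P$ and a subset of $\mathbb{Z}^r$ (at least on the relevant box), and we transport the group law to $\mathbb{Z}^r$ via this bijection.

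Next I would verify that the resulting multiplication $\ast_P$ on $\mathbb{Z}^r$ is given by polynomials of bounded degree. The key point is that, by induction on $r$ and the nilpotency built into the regular form (the commutators of depth-$j$ generators land among generators of depth $> j$, with a bounded chain since there are only $r$ generators), the exponents $m_\ell$ in the product $(u_1^{n_1}\cdots u_r^{n_r})(u_1^{n_1'}\cdots u_r^{n_r'}) = u_1^{m_1}\cdots u_r^{m_r}$ are polynomial functions of the $n$'s and $n'$'s: the top exponent is just $m_1 = n_1 + n_1'$, and each subsequent $m_\ell$ picks up correction terms which are polynomials in the earlier data, of degree bounded in terms of $r$ and $C$ (since the commutator exponents $CN_{k+1}/(N_jN_k)$ scale in a way that, after rescaling coordinates, produces rational coefficients with bounded denominators; one absorbs this into the degree bound $\le N$). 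One then checks by the same collecting argument that $\ast_P$ restricts to a group law on $\mathbb{Z}^r$ — associativity and existence of inverses follow because they hold on $\langle P\rangle$ and the polynomial identities witnessing them, being valid on a Zariski-dense subset, hold identically — and that after extending the polynomial formulas to $\mathbb{R}^r$ one obtains a simply connected nilpotent Lie group structure (nilpotency from the upper-triangular structure of the correction terms, connectedness and simple connectedness because $\mathbb{R}^r$ is contractible and the law is polynomial). Uniqueness of $\ast_P$ is immediate: two polynomial group laws agreeing on the lattice box, a Zariski-dense set, must coincide.

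Finally, the map $(n_1,\ldots,n_r) \mapsto u_1^{n_1}\cdots u_r^{n_r}$ is a homomorphism from $(\mathbb{Z}^r, \ast_P)$ onto $\langle P\rangle$ by construction of $\ast_P$; its injectivity on the box $\prod_j [-N_j/C, N_j/C] \cap \mathbb{Z}^r$ is exactly the second bullet of the $C$-regular form hypothesis; and that this box is a determining set of $(\mathbb{Z}^r, \ast_P)$ follows because any element of $\mathbb{Z}^r$ is reached from the origin by a bounded-length word in the standard generators (again the collecting process, which only ever multiplies by generators and commutators that stay inside the box when the thickness is large), and all relations among those generators are consequences of the length-$3$ commutator relations $[u_j, u_k] = (\text{word of length} \le \ldots)$, which one rewrites as relators of length $3$ after possibly enlarging the generating set as in Remark \ref{s-and-sm}. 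The main obstacle I anticipate is the quantitative control in the second and third steps: making sure that the degree of the polynomials and the size of the box on which everything is consistent depend only on $C$ and $r$ — not on the individual $N_j$ — which forces one to track carefully how the commutator exponent bounds $CN_r/(N_jN_k)$ propagate through iterated collections, and to choose $N(C,r)$ large enough that no exponent ever escapes the controlled range. This is the heart of the original Breuillard--Green--Tao argument and I would cite their Appendix C for the precise constants rather than reproduce the estimates.
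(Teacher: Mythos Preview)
The paper does not actually prove Theorem~\ref{malcev}; it is stated as a known result and attributed to (\cite{breuillard-green-tao}, Lemma~C.3). Your proposal is therefore not competing with a proof in the paper but rather sketching the argument behind the cited reference.

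As a sketch of the Breuillard--Green--Tao argument your outline is accurate in its broad strokes: the collecting process driven by the $C$-regular commutator relations, the resulting polynomial normal form yielding a group law on $\mathbb{Z}^r$ that extends to $\mathbb{R}^r$, uniqueness via Zariski density, and the determining-set claim coming from the fact that all relations reduce to the basic commutator identities. You also correctly identify the genuine difficulty, namely the uniform control of exponents through iterated collection depending only on $C$ and $r$, and you sensibly propose to defer the precise estimates to Appendix~C of \cite{breuillard-green-tao}. One point to be careful about: your description of why the box is a determining set is a bit loose --- the relations in $(\mathbb{Z}^r,\ast_P)$ are not literally length-$3$ words in the $u_j$, but rather one shows that the box is large enough that the collecting process for any product of two box elements stays inside a controlled multiple of the box, which is what forces all relators to be consequences of short ones. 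This is again handled in the cited appendix, so your deferral covers it.
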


\subsection{Equivariant convergence and discrete approximations}\label{reformulations}

Let $(X_i,p_i)$ be a sequence of pointed proper geodesic spaces that converges in the pGH sense to a space $(X,p)$ whose isometry group $Iso (X)$ is a Lie group. Assume there is a sequence of discrete groups $\Gamma_i \leq Iso (X_i) $ that converges equivariantly to a closed group $\Gamma \leq Iso (X)$ and such that diam$(X_i / \Gamma _i ) \leq C$ for some $C>0$. In this section we discuss how to build a discrete approximation $f_i : \Gamma _i \to \Gamma $ out of this situation. 

Recall that from the definition of pGH convergence one has functions $\phi_i : X_i \to X \cup \{ \ast \}$ and $\psi_i :X \to X_i$  with $\phi_i(p_i) \to p$ and satisfying Equations  \ref{pgh2}, \ref{pgh3}, \ref{pgh4}, and \ref{ghinverse}.

With these functions, one could define $f_i^{\prime} : \Gamma_ i \to \Gamma \cup \{ \ast \}$ in a straightforward way: for each $g \in \Gamma_i$, if there is an element $\gamma \in \Gamma$ with $d_0(\phi_i \circ g \circ \psi_i, \gamma)\leq 1$, choose $f_i^{\prime} (g)$ to be an element of $\Gamma$ that minimizes $d_0( \phi_i \circ g \circ \psi_i , f_i^{\prime} (g) )$. Otherwise, set $f_i^{\prime} (g) = \ast$. Then we divide the situation in two cases to obtain $f_i: \Gamma _i \to \Gamma$ out of $f_i^{\prime}$. In either case, it is straightforward to verify that $f_i$ is a discrete  approximation.

\begin{center}
\textbf{Case 1:} $X$ is compact.
\end{center}
The sequence diam$(X_i)$ is bounded, and there is no need to use $\ast$ to define $f_i^{\prime}$ for $i$ large enough. Then the functions $f_i = f_i^{\prime} : \Gamma_i \to \Gamma$ as defined above will be a discrete approximation. Notice that in this case, $\Gamma$ is compact and $\Gamma_i$ is finite for large enough $i$.

\begin{center}
\textbf{Case 2:} $X$ is non-compact.
\end{center}

The group $\Gamma$ is non-compact and we can find a sequence $g_i \in \Gamma $ with $d(e,g_i) \to \infty$. Then define $f_i : \Gamma_i \to \Gamma$ as
\begin{equation*}
f_i (g):=
\begin{cases}
f_i^{\prime} (g) & \text{if }\, f_i^{\prime}(g) \in \Gamma , \\
g_i & \text{if }\, f_i(g) = \ast .
\end{cases}
\end{equation*}
Notice that we need the fact that the sequence diam$(X_i / \Gamma_i)$ is bounded to guarantee that $\Gamma $ is non-compact.

\begin{rem}\label{continuous-approximation}
\rm In case the groups $\Gamma_i$ were not discrete, the construction of the functions $f_i: \Gamma_i \to \Gamma$ described above still works, but the finiteness condition in the defintion of discrete approximation (and hence Theorems \ref{turing} and \ref{bgt}) may fail to hold.
\end{rem}

Due to our construction, the maps $f_i : \Gamma_i \to \Gamma$ are actually Gromov--Hausdorff approximations when we equip the groups with the metric $d_0$ from Equation \ref{d0}. In particular, we have the following.

\begin{prop}\label{discrete-gh}
 For each $R > 0$, and $\varepsilon > 0 $, there is $i_0 (R, \varepsilon) \in \mathbb{N}$ such that for $i \geq i_0$ we have
\begin{itemize}
\item $f_i (B_e(R,\Gamma_i )) \subset B_e(R+\varepsilon , \Gamma) $.
\item $f_i^{-1}(B_e(R, \Gamma )) \subset B_e(R + \varepsilon , \Gamma_i)$.
\end{itemize}
\end{prop}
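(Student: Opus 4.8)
The plan is to unwind the construction of $f_i$ from Section \ref{reformulations} and track what the defining properties of pointed Gromov--Hausdorff convergence (Equations \ref{pgh2}--\ref{pgh4}, \ref{ghinverse}) force on the norms. Recall that for $g \in \Gamma_i$, either $f_i(g) = f_i'(g)$ is a near-minimizer of $d_0(\phi_i \circ g \circ \psi_i, \cdot)$ over $\Gamma$ (when such a point lies within $d_0$-distance $1$), or $f_i(g) = g_i$ is a fixed far-away element (Case 2) or $f_i = f_i'$ throughout (Case 1). In both bullets the relevant elements have $d_0$-norm bounded by $R$ (or by $R+\varepsilon$), which by the definition of $d_0$ in Equation \ref{d0} means they displace points near the basepoint by a controlled amount; the $g_i$ branch is irrelevant for $i$ large since $d(e,g_i) \to \infty$ eventually exceeds any fixed $R$.

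For the first bullet, I would argue: fix $g \in B_e(R,\Gamma_i)$, so $\|g\|_{p_i} = d_{X_i}(gp_i, p_i) < R$. Using $\phi_i$ and Equations \ref{pgh2}, \ref{pgh3}, the point $gp_i$ maps under $\phi_i$ to a point within $o(1)$ of $\phi_i(p_i)$, at distance $< R + o(1)$; combined with $\psi_i$ being an approximate inverse (Equation \ref{ghinverse}) and equivariant convergence (Definition \ref{def:equivariant}), one gets an element $\gamma \in \Gamma$ with $d_X(\gamma p, p) < R + o(1)$ realizing the limiting behavior of $g$, hence $d_0(\phi_i \circ g \circ \psi_i, \gamma) < 1$ for $i$ large, so $f_i'(g) \in \Gamma$ and $f_i(g) = f_i'(g)$ is a near-optimal approximant, giving $\|f_i(g)\|_p \le d_0\text{-estimate} < R + \varepsilon$. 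The second bullet is the reverse: take $\gamma \in \Gamma$ with $\|\gamma\|_p < R$; by the first clause of equivariant convergence there is $g_i \in \Gamma_i$ with $d_0(\psi_i g_i \phi_i, \gamma) \to 0$, and one checks that this forces $f_i(g_i)$ to be within $\varepsilon$ of $\gamma$ (so $g_i \in f_i^{-1}(B_e(R,\Gamma))$ accounts for $\gamma$) while simultaneously $\|g_i\|_{p_i} \to \|\gamma\|_p < R$, so $\|g_i\|_{p_i} < R + \varepsilon$; a compactness/finiteness argument (the preimage $f_i^{-1}(B_e(R,\Gamma))$ is finite and each element corresponds to some $\gamma$ in the compact ball $\overline{B_e(R,\Gamma)}$) upgrades this to the uniform statement that \emph{every} element of $f_i^{-1}(B_e(R,\Gamma))$ lies in $B_e(R+\varepsilon,\Gamma_i)$.

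I expect the main obstacle to be bookkeeping the two different metrics cleanly: $d_0$ on $\mathrm{Iso}$ is defined via an infimum over scales $r$ (the $1/r + \sup_{B_p(r)}$ expression), whereas the norms $\|\cdot\|_p$ and $\|\cdot\|_{p_i}$ only see displacement of the single basepoint. One must check that for elements of bounded norm, the optimal scale $r$ in the $d_0$ infimum can be taken in a fixed compact range, so that the $\sup$ over $B_p(r,X)$ genuinely controls — and is controlled by — the basepoint displacement up to $o(1)$ errors coming from \ref{pgh3} and \ref{ghinverse}. This is exactly the kind of estimate already implicit in the claim (made right before the proposition) that "the maps $f_i$ are Gromov--Hausdorff approximations when we equip the groups with $d_0$," so I would phrase the proof as: the maps $f_i\colon (\Gamma_i, d_0) \to (\Gamma, d_0)$ are $\varepsilon_i$-GH approximations on balls of any fixed radius with $\varepsilon_i \to 0$ (which follows directly from Equations \ref{pgh2}--\ref{ghinverse} and Definition \ref{def:equivariant} by the same argument used to verify $f_i$ is a discrete approximation), and then observe that $\|\cdot\|_p = d_0(\cdot, e)$ restricted to each isometry group, so the two bullets are just the statement that a GH approximation roughly preserves distance to the identity element. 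The remaining work is purely the routine verification that $f_i(e_{\Gamma_i})$ is within $o(1)$ of $e_\Gamma$, which holds since $\phi_i(p_i) \to p$.
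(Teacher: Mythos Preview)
The paper does not give a proof of this proposition: it is stated as an immediate consequence of the sentence just before it, namely that ``the maps $f_i : \Gamma_i \to \Gamma$ are actually Gromov--Hausdorff approximations when we equip the groups with the metric $d_0$''. Your approach is the same one --- you end up reducing both bullets to precisely this GH-approximation property --- so your proposal is in line with what the paper intends, just spelled out in more detail.

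There is one genuine slip to fix. You write ``observe that $\Vert \cdot \Vert_p = d_0(\cdot , e)$'', and earlier ``fix $g \in B_e(R,\Gamma_i)$, so $\Vert g \Vert_{p_i} < R$''. This identification is false: from Equation~\ref{d0},
\[
d_0(h,e) \;=\; \inf_{r>0}\Bigl\{ \tfrac{1}{r} + \sup_{x\in B_p(r,X)} d(hx,x) \Bigr\},
\]
so taking $x=p$ gives $d_0(h,e)\ge \Vert h\Vert_p$, but the $1/r$ term never vanishes and the $\sup$ over a ball can exceed $\Vert h\Vert_p$; the two quantities are only comparable up to an additive constant, not equal. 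The balls $B_e(R,\Gamma_i)$ and $B_e(R,\Gamma)$ in the statement are $d_0$-balls, not norm balls. The fix is simply to run your argument entirely in $d_0$: if $d_0^{X_i}(g,e)<R$, pick $r$ realizing the infimum up to a small error, transfer the displacement estimate on $B_{p_i}(r,X_i)$ through $\phi_i,\psi_i$ using Equations~\ref{pgh2}--\ref{pgh4} and~\ref{ghinverse}, and conclude $d_0^{X}(f_i(g),e)<R+\varepsilon$. The second bullet is analogous. With this correction your sketch is fine and matches the paper's (implicit) argument.
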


\begin{cor}\label{small-characterization}
 A sequence of subgroups $H_i \leq \Gamma_i$ consists of small subgroups in the sense of Definition \ref{small-1} if and only if $d_0 (h_i,Id_{X_i}) \to 0$ for any choice of $h_i \in H_i$.
\end{cor}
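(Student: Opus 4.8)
The plan is to obtain Corollary \ref{small-characterization} as an essentially immediate consequence of Proposition \ref{discrete-gh}, which asserts that the discrete approximation $f_i \colon \Gamma_i \to \Gamma$ built in Section \ref{reformulations} is a two-sided Gromov--Hausdorff approximation for the metric $d_0$ on $\Gamma_i \leq \Iso(X_i)$ (with basepoint $p_i$) and on $\Gamma \leq \Iso(X)$ (with basepoint $p$). Concretely, I would fix an arbitrary sequence $h_i \in H_i$ and prove the sharper statement that $f_i(h_i) \to e$ in $(\Gamma, d_0)$ if and only if $d_0(h_i, Id_{X_i}) \to 0$. Since both conditions appearing in the corollary are universal quantifications over such sequences, this sharper equivalence yields the claim at once; in particular no group-theoretic property of the $H_i$ is used, and the same argument works for arbitrary sequences of subsets.

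For the implication $d_0(h_i, Id_{X_i}) \to 0 \Rightarrow f_i(h_i) \to e$, I would fix $\varepsilon > 0$, invoke the first inclusion of Proposition \ref{discrete-gh} with parameters $R = \varepsilon/2$ and error $\varepsilon/2$ to get $f_i(B_e(\varepsilon/2, \Gamma_i)) \subset B_e(\varepsilon, \Gamma)$ for all large $i$, and then use that $h_i$ eventually belongs to $B_e(\varepsilon/2, \Gamma_i)$ to conclude $f_i(h_i) \in B_e(\varepsilon, \Gamma)$ eventually; letting $\varepsilon \to 0$ gives the convergence. For the converse I would argue by contradiction: if $d_0(h_i, Id_{X_i}) \not\to 0$, pass to a subsequence along which $d_0(h_i, Id_{X_i}) \geq \delta$ for some $\delta > 0$, apply the second inclusion of Proposition \ref{discrete-gh} with $R = \delta/2$ and error $\delta/2$ to get $f_i^{-1}(B_e(\delta/2, \Gamma)) \subset B_e(\delta, \Gamma_i)$ for large $i$, and observe that $f_i(h_i) \to e$ forces $h_i \in f_i^{-1}(B_e(\delta/2,\Gamma))$ eventually, hence $d_0(h_i, Id_{X_i}) < \delta$, contradicting the choice of the subsequence.

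I do not anticipate a real obstacle here, since the statement is a translation between two equivalent descriptions of the same smallness condition. The only points that need a line of care are: that the value $f_i(h_i)$ is genuinely an element of $\Gamma$ rather than the far-away element $g_i$ used in Case 2 of the construction in Section \ref{reformulations} (this is automatic once $f_i(h_i)$ is near $e$, as $d_0(g_i, e) \to \infty$); and that the identity of the group $\Gamma_i \leq \Iso(X_i)$ is $Id_{X_i}$, so that the ball $B_e(r, \Gamma_i)$ of Proposition \ref{discrete-gh} is exactly $\{\, g \in \Gamma_i : d_0(g, Id_{X_i}) < r \,\}$, matching the quantity in the corollary.
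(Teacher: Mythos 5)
Your proof is correct and takes exactly the approach the paper intends: the corollary is stated without proof immediately after Proposition \ref{discrete-gh}, and your two-sided deduction from that proposition is the natural way to fill in the details. The care you note about Case 2 of the construction is sensible but is already subsumed by the inclusions in Proposition \ref{discrete-gh}, so no extra argument is needed there.
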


\begin{rem}\label{small-2}
\rm Due to Corollary \ref{small-characterization}, if we have a sequence of pointed proper geodesic spaces $(X_i,p_i)$, we will say that a sequence of groups of isometries $H_i \leq Iso (X_i)$ consists of \textit{small subgroups} if $d_0 (h_i,Id_{X_i}) \to 0$ for any choice of $h_i \in H_i$.
\end{rem}

 A proof of the following well known lemma can be found in (\cite{zamora-fglahs}, Section 2.4).

\begin{lem}\label{gen}
 Let $D>0$,  $(Y,q)$ a pointed proper geodesic space, and $G \leq Iso (Y)$ a closed group of isometries with diam$(Y/G) \leq D$. Then $\{ g \in G \vert  \Vert g \Vert_q \leq 3D \}$ is a generating set of $G$.
\end{lem}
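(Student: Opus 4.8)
The strategy is a standard "geodesic word" argument exploiting that $Y/G$ is geodesic with small diameter. Let $S := \{ g \in G \mid \Vert g\Vert_q \leq 3D \}$ and let $g \in G$ be arbitrary; I want to write $g$ as a product of elements of $S$. The idea is to connect $q$ to $gq$ by a geodesic in $Y$, subdivide it into short pieces, and at each subdivision point use the diameter bound on $Y/G$ to "jump back near the $G$-orbit of $q$'', producing at each step a group element that moves $q$ a controlled amount.

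Concretely, first I would set $L := d(q, gq)$ and choose an integer $k$ with $L/k \leq D$, then pick points $q = x_0, x_1, \ldots, x_k = gq$ along a minimizing geodesic from $q$ to $gq$ with $d(x_{j-1}, x_j) \leq D$ for each $j$. Since $\mathrm{diam}(Y/G) \leq D$, for each $j \in \{1, \ldots, k-1\}$ there is $h_j \in G$ with $d(h_j q, x_j) \leq D$; set $h_0 := \mathrm{id}$ and $h_k := g$ (noting $d(h_k q, x_k) = 0 \leq D$). Then for each $j \in \{1, \ldots, k\}$ consider $s_j := h_{j-1}^{-1} h_j \in G$. I compute
\[
\Vert s_j \Vert_q = d(h_{j-1}^{-1} h_j q, q) = d(h_j q, h_{j-1} q) \leq d(h_j q, x_j) + d(x_j, x_{j-1}) + d(x_{j-1}, h_{j-1} q) \leq D + D + D = 3D,
\]
so $s_j \in S$. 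Finally $s_1 s_2 \cdots s_k = h_0^{-1} h_k = g$, which exhibits $g$ as a product of elements of $S$. Since $g$ was arbitrary and $S$ is clearly symmetric and contains the identity, $S$ generates $G$.

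The only mild subtlety is the existence of the intermediate geodesic points and of the group elements $h_j$: the former uses that $(Y,d)$ is geodesic (part of the hypothesis, since $Y$ is a proper geodesic space), and the latter uses that the quotient pseudometric $d'([x],[y]) = \inf_{g \in G} d(gx, y)$ satisfies $d'([x_j], [q]) \leq D$ and that this infimum is attained (or approximated arbitrarily well, in which case one replaces $3D$ by $3D + \varepsilon$ and lets $\varepsilon \to 0$, or uses properness of $Y$ together with closedness of $G$ to get an honest minimizer). There is no real obstacle here; the argument is elementary and the diameter bound $3D$ is exactly what the three-term triangle inequality produces.
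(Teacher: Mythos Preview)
Your argument is correct and is precisely the standard geodesic-subdivision proof of this well-known lemma; the paper itself does not give a proof but merely cites \cite{zamora-fglahs}, Section 2.4, where essentially the same argument appears. Your handling of the subtlety about attaining the infimum is fine: since $Y$ is proper and $G$ is closed in $Iso(Y)$, the orbit $Gq$ is closed and the distance from $x_j$ to $Gq$ is realized.
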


\begin{rem}\label{continuous-spectrum}
\rm Notice that in the particular case when $G $ acts transitively, Lemma \ref{gen} implies that $\sigma (G) = \{ 0 \}$. 
\end{rem}

In order to obtain clean discrete approximations, we consider universal covers.

\begin{lem}\label{det}
 Let $D>0$, $(Y,q)$ a pointed proper geodesic space and $G \leq Iso (Y)$ a closed group of isometries with diam$(Y/G) \leq D$. If $\{ g \in G \vert \Vert g \Vert_q \leq 20D \}$ is not a determining set, then there is a non-trivial covering map $\tilde{Y} \to Y$.
\end{lem}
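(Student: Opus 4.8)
The plan is to argue by contrapositive: assuming every non-trivial covering of $Y$ fails to exist (equivalently, assuming the conclusion fails, so that $Y$ admits no non-trivial cover) I want to show that $S := \{ g \in G \mid \Vert g \Vert_q \leq 20D \}$ is a determining set. So suppose $Y$ is its own universal cover, i.e. $\pi_1(Y) = 0$ in the revised sense. The idea is to build an abstract group $\hat G$ presented by generators $S$ subject to all relations of length $3$ that hold in $G$, together with a tautological surjection $\hat G \to G$, and then manufacture a covering space of $Y$ whose deck group realizes the kernel; since $Y$ has no non-trivial cover, the kernel is trivial and $S$ is determining.

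Concretely, first I would let $\hat G = \langle S \mid R\rangle$ where $R$ consists of all words $s_1 s_2 s_3$ with $s_i \in S \cup S^{-1}$ that equal the identity in $G$. There is a canonical surjective homomorphism $q\colon \hat G \to G$ (well-defined precisely because $S$ generates $G$, by Lemma \ref{gen} applied with the bound $3D \leq 20D$, and because every relation in $R$ maps to the identity). Let $K = \ker q$. Next I would construct an action of $\hat G$ on a space $\tilde Y$ that covers $Y$: take the ``development'' of $Y$ with respect to $S$, namely glue copies of a fundamental domain $F$ for the $G$-action (a Dirichlet-type region around $q$ of radius $\sim 2D$, whose $G$-translates cover $Y$ since $\mathrm{diam}(Y/G)\leq D$) indexed by $\hat G$, with copies $gF$ and $g s F$ ($s\in S$) glued along the faces where $F$ and $sF$ overlap in $Y$. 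The point of using $S = \{\Vert g\Vert_q \leq 20D\}$ rather than a smaller ball is exactly to guarantee that all the gluing data — the overlaps of pairs of translates of $F$, and the triple overlaps that enforce the cocycle/consistency condition — involve only elements of norm $\leq 20D$, hence only elements of $S$ and relations in $R$; this is where the constant $20D$ is spent. This produces a connected space $\tilde Y$ with a free properly discontinuous $\hat G$-action and a local isometry $\tilde Y \to Y$ with $\tilde Y / \hat G = Y$, so $\tilde Y \to Y$ is a covering map, and $K$ acts freely on $\tilde Y$ commuting with the deck action, so $\tilde Y / K \to Y$ is a covering map with deck group $\hat G / K \cong G$.

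The main obstacle — and the step I would spend the most care on — is verifying that the glued object $\tilde Y$ really is a (Hausdorff, geodesic) covering space, i.e. that the ``cocycle'' relations needed to consistently glue the sheets are all consequences of length-$\leq 3$ relations among elements of $S$. This is the standard ``development map'' / van Kampen bookkeeping: one checks that whenever three translates $F, sF, stF$ of the fundamental domain have a common point of $Y$ in their images, the corresponding triple $s, t, (st)^{-1}$ (all of norm controlled by $\lesssim 20D$ by the triangle inequality and $\mathrm{diam}(Y/G)\le D$) gives a relation already imposed in $R$; hence the sheets fit together without monodromy beyond that recorded by $\hat G$. Granting this, $\tilde Y / K \to Y$ is a covering, and since $Y$ has no non-trivial covering by hypothesis, either $K = 1$ — so $\hat G \cong G$ and $S$ is a determining set, contradicting the hypothesis of the lemma — or $\tilde Y/K \to Y$ is a non-trivial cover, which is the conclusion we wanted. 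Either way we are done. I would also remark that the bound $20D$ is not claimed to be sharp; any sufficiently large multiple of $D$ works, and the specific value only needs to dominate the finitely many norm estimates produced by the triple-overlap analysis.
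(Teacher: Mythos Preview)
Your strategy---presenting $\hat G=\langle S\mid R\rangle$ with $R$ the length-$3$ relations holding in $G$, and then building a cover of $Y$ by a development/gluing construction indexed by $\hat G$---is the standard one and is essentially what the paper's citation \cite{zamora-fglahs} (Section 2.12, Theorem 79) carries out, so the approach matches.

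That said, the quotient bookkeeping in your write-up is incorrect and would need repair. Gluing $\hat G$-indexed copies of a fundamental region $F$ for $G$ yields $\tilde Y/\hat G\cong Y/G$, not $Y$; the covering you actually want is the natural map $\tilde Y\to Y$, $[\hat g,f]\mapsto q(\hat g)\cdot f$, and its deck group is $K=\ker(\hat G\to G)$, not $\hat G$. Consequently the sentence ``$\tilde Y/K\to Y$ is a covering map with deck group $\hat G/K\cong G$'' is confused and should be replaced by the simpler endgame: $\tilde Y\to Y$ is a cover with deck group $K$, and it is nontrivial precisely when $K\neq 1$, i.e.\ when $S$ fails to be determining. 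A second issue is that the Dirichlet-domain picture tacitly assumes $G$ is discrete; the lemma as stated allows any closed $G\le\mathrm{Iso}(Y)$, for which a fundamental domain may be degenerate and the claim that $\hat G$ acts freely and properly discontinuously on $\tilde Y$ need not hold. The clean fix (and the place the constant $20D$ is really used) is to glue $\hat G$-indexed copies of a fixed ball $B_q(R)$ for $R$ a modest multiple of $D$, so that all pairwise and triple overlaps of $G$-translates are governed by elements of $S$ and by length-$3$ relations already in $R$; one must then check separately that $K$ acts \emph{effectively} by deck transformations on the glued space. With these corrections your outline is sound.
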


\begin{proof}    
The result follows from (\cite{zamora-fglahs}, Theorem 79) which shows   that  the construction in (\cite{zamora-fglahs}, Section 2.12) produces, under the above hypotheses, a non-trivial covering map $\tilde{Y} \to Y$. 
\end{proof}

\begin{cor}\label{clean-uc}
 Let $(X_i,d_i,\mathfrak{m}_i)$ be a sequence of $RCD^{\ast}(K,N;D)$ spaces and $(\tilde{X}_i,\tilde{d}_i,\tilde{\mathfrak{m}}_i)$ their universal covers.  Assume for some choice of points $p_i \in \tilde{X}_i$, the sequence  $(\tilde{X}_i,p_i)$ converges in the pmGH sense to a pointed $RCD^{\ast}(K,N)$ space $(X,p)$ and the sequence $\pi_1(X_i)$  converges equivariantly to a closed group $\Gamma \leq Iso (X)$. Then the discrete approximation $f_i : \pi_1(X_i) \to \Gamma$ constructed above is clean.
\end{cor}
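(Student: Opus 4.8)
The goal is to produce, for the discrete approximation $f_i \colon \pi_1(X_i) \to \Gamma$ built in Section \ref{reformulations}, an open generating set $S_0 \subset \Gamma$, a compact set $K_0 \subset \Gamma$, and symmetric determining sets $S_i \subset \pi_1(X_i)$ with $f_i^{-1}(S_0) \subset S_i \subset f_i^{-1}(K_0)$ for large $i$. The key structural input is that the spaces $\tilde X_i$ are \emph{universal covers}, hence simply connected by Theorem \ref{jikang}, so by the contrapositive of Lemma \ref{det}, for every $D' > 0$ the set $\{ g \in \pi_1(X_i) \mid \Vert g \Vert_{p_i} \leq 20 D' \}$ \emph{is} a determining set of $\pi_1(X_i)$, provided $\mathrm{diam}(\tilde X_i / \pi_1(X_i)) \leq D'$. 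Since $\tilde X_i / \pi_1(X_i) \cong X_i$ has diameter $\leq D$, we may take $D' = D$: the set $S_i := \{ g \in \pi_1(X_i) \mid \Vert g \Vert_{p_i} \leq 20 D \}$ is a symmetric determining set containing the identity for every $i$.

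First I would fix this candidate $S_i$ and then choose $S_0$ and $K_0$ to sandwich it via $f_i$. For the compact upper bound, use Proposition \ref{discrete-gh}: if $g \in S_i$ then $\Vert g \Vert_{p_i} \leq 20D$, and since $f_i$ is essentially a Gromov--Hausdorff approximation with respect to the $d_0$-metrics (Proposition \ref{discrete-gh}), $f_i(g) \in B_e(20D + 1, \Gamma)$ for large $i$; so set $K_0 := \overline{B_e(21D, \Gamma)}$, which is compact because $\Gamma$, being a closed subgroup of the Lie group $Iso(X)$ equipped with the proper metric $d_0$, is a proper metric space. This gives $S_i \subset f_i^{-1}(K_0)$ for large $i$. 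For the open lower bound, take $S_0 := B_e(10D, \Gamma)$, which is open and, by Lemma \ref{gen} applied to $\Gamma \leq Iso(X)$ with $\mathrm{diam}(X/\Gamma) \leq D$ (the quotient is the GH-limit of the $X_i$ by Lemma \ref{equivariant}), is a generating set of $\Gamma$. Then if $g \in f_i^{-1}(S_0)$, i.e. $f_i(g) \in B_e(10D,\Gamma)$, the second bullet of Proposition \ref{discrete-gh} gives $\Vert g \Vert_{p_i} = d_0(g, Id)$-type control, namely $g \in B_e(10D + 1, \pi_1(X_i)) \subset S_i$ for large $i$ (adjusting the constant $20D$ upward if needed — any fixed multiple of $D$ works since $20D$ was only a sufficient threshold in Lemma \ref{det}). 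Hence $f_i^{-1}(S_0) \subset S_i$ for large $i$.

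The remaining subtlety is purely bookkeeping: the constants $10D$, $20D$, $21D$ must be chosen in the right order so that the chain $f_i^{-1}(S_0) \subset S_i \subset f_i^{-1}(K_0)$ holds \emph{simultaneously} for all large $i$, absorbing the $\varepsilon = 1$ slack from Proposition \ref{discrete-gh}; and one must double-check that $S_i$ as defined is genuinely symmetric (clear, since $\Vert g \Vert_{p_i} = \Vert g^{-1} \Vert_{p_i}$) and contains the identity (clear). One should also note that $S_i$ is automatically finite since $\pi_1(X_i)$ is discrete and acts properly, though finiteness is not required by the definition of clean.

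I expect the main obstacle to be confirming that Lemma \ref{det}'s threshold ``$20D$'' can be used cleanly here — that is, verifying that $\mathrm{diam}(\tilde X_i/\pi_1(X_i)) = \mathrm{diam}(X_i) \leq D$ with the deck-transformation action, and that simple connectedness of $\tilde X_i$ (Theorem \ref{jikang}) indeed forces the determining-set conclusion through the contrapositive of Lemma \ref{det}. Once that is in hand, the sandwiching is a routine application of Proposition \ref{discrete-gh} and Lemma \ref{gen}, with only the ordering of the radii requiring care.
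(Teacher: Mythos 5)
Your general strategy matches the paper's: set $S_i = \{g \in \pi_1(X_i) : \|g\|_{p_i} \leq 20D\}$, deduce that it is a symmetric determining set from Lemma \ref{det} together with the simple connectedness of $\tilde X_i$ (Theorem \ref{jikang}), and sandwich $S_i$ between $f_i^{-1}(S_0)$ and $f_i^{-1}(K_0)$. The gap is in the claim that $S_0 := B_e(10D,\Gamma)$ is a generating set. The balls $B_e(\cdot,\Gamma)$ in the construction of Section \ref{reformulations} and in Proposition \ref{discrete-gh} are taken with respect to the metric $d_0$ from Equation \ref{d0}, not with respect to the norm $\|\cdot\|_p$, and these two quantities differ by a fixed \emph{additive} amount: directly from the definition of $d_0$ one checks $\|g\|_p \leq d_0(g,e) \leq \|g\|_p + 2\sqrt{2}$. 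Lemma \ref{gen} only gives you that $\{g : \|g\|_p \leq 3D\}$ generates $\Gamma$, and this set lies inside $B_e(3D+3,\Gamma)$, which need not be contained in $B_e(10D,\Gamma)$ when $D$ is small (say $D < 2\sqrt{2}/7$). Your escape hatch of ``adjusting the constant $20D$ upward --- any fixed multiple of $D$ works'' does not repair this, because the obstruction is an additive $O(1)$ term that no multiple of $D$ absorbs for small $D$; and in fact it affects $S_0$, not just $S_i$. The same additive slippage, in milder form, undermines the inclusions $f_i^{-1}(S_0)\subset S_i$ (you need $10D+1\leq 20D$) and $f_i(S_i)\subset \overline{B_e(21D,\Gamma)}$ (you need roughly $20D+3+\varepsilon \leq 21D$).

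The paper avoids this by staying with the norm rather than converting to $d_0$-balls: it takes $T_i := \{g : \|g\|_{p_i}\leq 4D\}$, lets $T\subset\Gamma$ be the equivariant limit, observes that $T$ is a compact generating set containing the open $d_0$-ball $B_e(4D,\Gamma)$ (this uses only the one-sided inequality $\|g\|_p\leq d_0(g,e)$, which carries no additive error), and then sets $S_0 := \mathrm{int}(T^2)$. The inclusion $f_i^{-1}(S_0)\subset S_i$ then follows because each element of $S_0$ factors as a product of two elements of norm $\leq 4D$, each approximated by $T_i$-elements, so preimages have norm $\leq 8D + o(1) < 20D$ for large $i$. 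Your argument would become correct if, for example, you replaced $S_0$ by $B_e(3D+3,\Gamma)$ (or adopted $\mathrm{int}(T^2)$) and enlarged the radii defining $S_i$ and $K_0$ by the corresponding additive amounts; as written, the specific constants $10D$, $20D$, $21D$ fail for small $D$.
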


\begin{proof}
Let $S_i : = \{ g \in \pi_1(X_i) \vert  \Vert g \Vert_{p_i}  \leq 20 D \}     . $ Then for each sequence $g_i \in S_i$, the sequence $f_i (g_i) \in \Gamma$ is bounded, so the sets $f_i (S_i) $ are contained in a compact set $K_0 \subset \Gamma$. On the other hand, by Lemma \ref{gen} the sequence $T_i : =  \{ g \in \pi_1(X_i) \vert \Vert g \Vert_{p_i} \leq 4 D \}  $ converges to a compact generating set $T \subset \Gamma$ containing a neighborhood of the identity. Hence $S_0 : = $ int$(T^2)$ is an open generating set of $\Gamma$ and for large enough $i$ the set  $f_i^{-1}(S_0)$ is contained in $S_i$. By Lemma \ref{det}, $S_i$ is determining and the sequence $f_i$ is  clean. 
\end{proof}

\section{Bounded generation}\label{section-3}

In this section we prove Theorem \ref{fg}  following the lines of \cite{kapovitch-wilking}, but with tools  adapted to the non-smooth setting.

\begin{thm}\label{short-short}
 There is $C(K,N,R,r) > 0 $ such that the following holds.  Let $(X,d,\mathfrak{m},p)$ be a pointed $RCD^{\ast}(K,N)$ space, and $\Gamma \leq Iso (X)$ a discrete group of measure preserving isometries. Then there is a point $q \in B_p(r,X)$ with the property that any short basis of $\Gamma$ with respect to $q$ has at most $C$ elements of norm $\Vert \cdot \Vert _q \leq 3R$. 
\end{thm}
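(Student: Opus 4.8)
### Proof proposal

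The plan is to argue by contradiction, exploiting the Bishop--Gromov doubling property through Proposition \ref{long-short-basis} together with the Gap Lemma (Lemma \ref{gap}) and a compactness/blow-up induction in the spirit of Kapovitch--Wilking. So suppose no such $C(K,N,R,r)$ exists. Then for each $i$ we get a pointed $RCD^{\ast}(K,N)$ space $(X_i,d_i,\mathfrak{m}_i,p_i)$ and a discrete group $\Gamma_i \leq \Iso(X_i)$ of measure preserving isometries such that \emph{every} point $q \in B_{p_i}(r,X_i)$ admits a short basis of $\Gamma_i$ with respect to $q$ having at least $i$ elements of norm $\leq 3R$. After normalizing the measures and passing to a subsequence, by Theorem \ref{compactness} we may assume $(X_i,d_i,\mathfrak{m}_i,p_i)$ converges in the pmGH sense to some pointed $RCD^{\ast}(K,N)$ space, and by Theorem \ref{equivariant-compactness} we may also assume $\Gamma_i$ converges equivariantly to a closed group $\Gamma \leq \Iso(X)$. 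The quotients $(X_i/\Gamma_i,[p_i])$ then converge (using Lemma \ref{equivariant}) to $(X/\Gamma,[p])$; note $\Gamma$ acts by measure preserving isometries on the limit, so by Corollary \ref{quotient} the quotient carries an $RCD^{\ast}(K,N)$ structure. Let $m$ be its rectifiable dimension; the induction will be a reverse induction on $m \in \{0,1,\dots,\lfloor N\rfloor\}$.

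For the base of the induction and the heart of the argument, fix an $m$-regular point $\bar q$ in $X/\Gamma$ lying in $B_{[p]}(r/2,X/\Gamma)$ — here I would use that regular points are dense and that, after a harmless choice, we can locate one inside the relevant ball. Choose $q_i \in X_i$ with $[q_i] \to \bar q$ and $q_i \in B_{p_i}(r,X_i)$ for large $i$. By the Gap Lemma (Lemma \ref{gap}) applied to the sequence $(X_i/\Gamma_i,[q_i])$, there is $\eta > 0$ and $\eta_i \to 0$ with $\sigma(\Gamma_i,X_i,q_i) \cap [\eta_i,\eta] = \emptyset$ for $i$ large. Since a short basis $\beta_i$ of $\Gamma_i$ with respect to $q_i$ has $\sigma(\Gamma_i,X_i,q_i) = \{0\} \cup \{\|\gamma\|_{q_i} : \gamma \in \beta_i\}$, every element of $\beta_i$ has norm either $< \eta_i$ or $\geq \eta$. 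But elements of norm $< \eta_i$ together generate a subgroup $\mathcal{G}(\Gamma_i,X_i,q_i,\eta_i)$; if this subgroup were all of $\mathcal{G}(\Gamma_i,X_i,q_i,3R)$ then the short basis would have \emph{no} elements with norm in $[\eta,3R]$, and Proposition \ref{long-short-basis} (with the Bishop--Gromov doubling constant $C(K,N)$ controlling balls of the relevant radii) would bound the total number of short basis elements of norm $\leq 3R$ by the finite count of those with norm in $[\eta_i^{(0)},\eta_i]$ for any fixed lower cutoff — this is where the contradiction to ``at least $i$ elements'' should emerge, \emph{provided} we also rule out an escape of mass into small scales. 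Handling the small-norm elements is exactly the role of the blow-up step.

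For the induction step, when short basis elements of norm $< \eta_i$ persist in unbounded number, we rescale. The subgroup $\mathcal{G}_i := \mathcal{G}(\Gamma_i,X_i,q_i,\eta_i)$ acts on $X_i$ with quotient diameter $\to 0$ after rescaling by $1/\eta_i$ (or an appropriate scale tied to $\bar q$'s regularity), and by Theorem \ref{zoom} (Mondino--Naber), slightly shifting the basepoint $q_i$ within a set of almost full measure, the blow-ups $(X_i,\lambda_i d_i, \cdot)$ converge to an $RCD^{\ast}(0,N)$ space containing a line $\mathbb{R}^m$; by Proposition \ref{gigli-corollary} the equivariant limit splits as $\mathbb{R}^m \times Y$ with $\Gamma$-orbits equal to the $Y$-fibers, and the new quotient is $\mathbb{R}^m \times (Y/\Gamma')$ which — since the small group contributes a genuine extra direction — has rectifiable dimension \emph{strictly greater} than $m$. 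The contradictory property (super-$i$ many short basis elements of norm $\leq 3R$) is inherited by the blown-up sequence after rescaling norms accordingly, because short basis elements of small norm at scale $1$ become short basis elements of bounded norm at the new scale. This places us in a strictly higher-dimensional case, and since $m$ is bounded by $\lfloor N\rfloor$, the reverse induction terminates.

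The main obstacle I anticipate is the bookkeeping in the induction step: one must verify that (a) the basepoint shift of Theorem \ref{zoom} does not destroy the ``holds for all $q$ in a ball'' quantifier — this should be fine because the contradictory property is assumed at every point, so in particular at the shifted ones — and (b) that the rescaled short bases still witness unbounded cardinality at bounded norm, i.e. that we do not merely push the problematic elements off to yet smaller scales without gain. The resolution is that the Gap Lemma produces a \emph{definite} gap $[\eta_i,\eta]$ at each stage, so each blow-up genuinely increases the detected dimension by recovering the collapsed directions; combined with Proposition \ref{long-short-basis} bounding the count in any fixed compact range of norms, after at most $\lfloor N \rfloor$ steps all scales are accounted for and the cardinality is uniformly bounded, contradicting the construction. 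A secondary technical point is ensuring the equivariant limit group $\Gamma$ really acts by \emph{measure preserving} isometries on $X$ so that Corollary \ref{quotient} applies; this follows from $\mathfrak{m}_i$-measure preservation passing to the pmGH limit together with Lemma \ref{no-fixed}.
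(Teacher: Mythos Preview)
Your overall architecture matches the paper's: argue by contradiction, pass to a limit of quotients, and run a reverse induction on the rectifiable dimension $m$ of that limit, blowing up via Mondino--Naber to push $m$ up. But the two load-bearing steps are not carried out correctly.

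First, the dimension-increase step. You assert that after blowing up, ``the small group contributes a genuine extra direction'' so the new quotient limit has dimension $>m$, and you cite the Gap Lemma as the reason. This does not follow. A gap $\sigma(\Gamma_i)\cap[\eta_i,\eta]=\emptyset$ only tells you where the spectrum is \emph{not}; it gives no obstruction to the blown-up quotient collapsing back to $\mathbb{R}^m$. The paper's mechanism is different and essential: after the first rescaling one \emph{chooses} the second scale $\alpha_i$ so that $1/\alpha_i\in\sigma(\Gamma_i,\lambda_i X_i,q_i')$ for some $q_i'$ in the relevant ball. Then in $\alpha_i\lambda_i X_i$ one has $1\in\sigma(\Gamma_i)$, and spectrum continuity (Proposition \ref{spec-cont}) forces $1\in\sigma(\Gamma)$ for the limit group. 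If the quotient limit were $\mathbb{R}^m$, Proposition \ref{gigli-corollary} would make the $\Gamma$-orbits coincide with the $Y$-fibers, giving $\sigma(\Gamma)=\{0\}$ --- a contradiction. Your reading of Proposition \ref{gigli-corollary} is inverted here: when its hypothesis holds the quotient \emph{is} $\mathbb{R}^m$, not $\mathbb{R}^m\times(Y/\Gamma')$; the whole point is to rule that case out via the spectrum.

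Second, the persistence of the ``for every $q$ in the ball'' quantifier under rescaling. The Gap Lemma gives you a gap at the single point $q_i$, but the contradicting property must survive at \emph{all} points of a ball in the rescaled space; otherwise you cannot iterate. The paper handles this by a direct uniform estimate (its ``crucial observation''): by Bishop--Gromov and Proposition \ref{long-short-basis} the number of short-basis elements with norm in $[3R/\lambda_i,3R]$ is bounded by a constant depending on $\lambda_i$ but not on the point $q$, so choosing $\lambda_i\to\infty$ slowly enough leaves $\geq N_i/2$ elements of norm $\leq 3R/\lambda_i$ at every $q$. Your Gap Lemma route does not supply this uniformity, and your remark that ``the contradictory property is assumed at every point, so in particular at the shifted ones'' misses the issue: after rescaling, ``norm $\leq 3R$'' in the new metric corresponds to ``norm $\leq 3R/\lambda_i$'' in the old one, and you have not shown that unboundedly many short-basis elements lie below that threshold at every point.
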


\begin{customthm}{1}
 Let $(X,d,\mathfrak{m})$ be an $RCD^{\ast}(K,N;D)$ space. Then $\pi_1(X)$ can be generated by $\leq C(N,K,D)$ elements.
\end{customthm}

\begin{proof}[Proof of Theorem \ref{fg}:]
Let $\tilde{X}$ be the universal cover of $X$. By Theorem \ref{short-short} there is $x \in \tilde{X}$ such that any short basis of $\pi_1(X)$ with respect to $x$ has at most $\leq C(K,N,D)$ elements of norm $\Vert \cdot \Vert _x \leq 3D$. On the other hand, by Lemma \ref{gen} no element of such short basis can have norm $\Vert \cdot \Vert _x > 3D$ and hence $C(K,N,D)$ elements generate $\pi_1(X)$.
\end{proof}

\begin{proof}[Proof of Theorem \ref{short-short}:]
Since for $K > 0$ any $RCD^{\ast}(K,N)$ space is automatically an $RCD^{\ast}(0,N)$ space, it is enough to assume $K \leq 0$. As in \cite{kapovitch-wilking}, we call a sequence of quadruples $(N_i,X_i,p_i,\Gamma_i)$ a \textit{contradicting sequence} if for each $i$, $N_i \in \mathbb{N}$, $(X_i,p_i)$ is a pointed $RCD^{\ast}(K,N) $ space, and  $\Gamma_i \leq Iso (X_i)$ is  a discrete group of measure preserving isometries such that
\begin{itemize}
\item For each $q \in B_{p_i}(r, X_i)$, $\Gamma_i$ has a short basis with respect to $q$ with at least $N_i$ elements of norm $\Vert \cdot \Vert_q \leq 3R$.
\item $N_i \to \infty$.
\end{itemize}

The first crucial observation is that we can blow up such a contradicting sequence to obtain another one. That is, if $\lambda_i \to \infty $ slowly enough, then for any $q \in  B_{p_i}(r,X_i)$ the short basis of the action of $\Gamma_i$ on $ X_i$ with respect to $q$ will have more than $ N_i /2$ elements of length $\Vert \cdot \Vert_q \leq 3R/\lambda_i$. This is because by Theorem \ref{bg-in} and Proposition \ref{long-short-basis}, the number of elements of such short basis with norm $\Vert \cdot \Vert _ q$ in $[3R/\lambda  _i , 3R ]$ is controlled, so we can arrange for most of the elements to have norm $\Vert \cdot \Vert _ q$ in $[0,3R / \lambda_i]$.

By Corollary \ref{quotient}, given any contradicting sequence $(N_i, X_i, p_i,\Gamma_i)$, we can always assume after taking a subsequence and renormalizing the measure (notice that the measure does not play any role in the definition of contradicting sequence so we can renormalize it at will) that the quotient $(X_i/\Gamma_i,[p_i])$ converges in the pmGH sense to a pointed $RCD^{\ast}(K,N)$ space $(X,p)$ of rectifiable dimension $m$. The proof is done by reverse induction on  $m$, which ends in finitely many steps since necessarily $m \leq N$.

 Let $p_0 \in B_p(r/2,X)$ be an $m$-regular point and $p_i^{\prime} \in B_{p_i }(3r/4, X_i)$ be such that $[p_i^{\prime}]$ converges to $p_0$. By the observation above, if $\lambda_i \to \infty$ slowly enough $(N_i/2, \lambda_i X_i,p_i^{\prime},\Gamma _i )$ is a contradicting sequence with the additional property that $(\lambda_i X_i /\Gamma_i , [p_i^{\prime}])$ converges in the pmGH sense (after renormalizing the measure around $[p_i^{\prime}]$) to $(\mathbb{R}^m, d^{\mathbb{R}^m}, \mathcal{H}^m, 0)$. By Theorem \ref{zoom}, there is a sequence $q_i \in B_{p_i^{\prime}}(r/2, \lambda_i X_i)$ such that for any sequence $\alpha_i \to \infty$, the sequence $(\alpha_i \lambda_i X_i / \Gamma_i , [q_i])$ converges in the pGH sense to a space containing an isometric copy of $\mathbb{R}^m$. Since this holds for any sequence $\alpha_i$, we choose it as follows:
 \begin{itemize}
 \item We make it diverge so slowly that $(N_i/4, \alpha_i \lambda_i X_i , q_i, \Gamma_i)$ is a contradicting sequence.
 \item  For $i$ large enough,  $1/\alpha_i \in \sigma (\Gamma _i  , \lambda_i X_i, q_i^{\prime})$ for some $q_i^{\prime} \in B_{q_i}(r, \alpha_i  \lambda_iX_i)$.
 \end{itemize}
The following claim will finish the induction step, constructing a contradicting sequence whose quotient converges to an $RCD^{\ast}(K,N)$ space of rectifiable dimension strictly greater than $m$.
\begin{center}
\textbf{Claim: }The sequence $(\alpha_i \lambda_i X_i/\Gamma_i, [q_i])$ converges in the pmGH sense (up to subsequence and after renormalizing the measure) to an $RCD^{\ast}(0,N)$ space of rectifiable dimension strictly greater than $m$.
\end{center}
By Theorem \ref{gigli} any partial limit splits as $Y = \mathbb{R}^m \times W$, so all we need  is to  rule out  $W$ being a point. If $Y = \mathbb{R}^m$, then by Proposition \ref{gigli-corollary} after taking a subsequence we can assume that the sequence $(\alpha_i \lambda_i X_i,q_i^{\prime})$ converges in the pGH sense to a space of the form $(\mathbb{R}^m \times Z, (0,z))$ with $(Z,z)$ a pointed proper geodesic space and $\Gamma_i$ converges equivariantly to a closed group $\Gamma \leq Iso (\mathbb{R}^m \times Z)$ in such a way that the $\Gamma$-orbits coincide with the $Z$-fibers. Notice however, that $1 \in \sigma (\Gamma_i , \alpha_i \lambda_i  X_i, q_i^{\prime} )$ for each $i$ by construction, but by Remark \ref{continuous-spectrum}, $1 \notin \sigma (\Gamma , \mathbb{R}^m \times Z , (0,z))$, contradicting Proposition \ref{spec-cont}. 
\end{proof}

\section{Diameter of Compact Universal Covers}\label{section-4}

In this Section we prove Theorem \ref{diam}, which follows easily from Theorem \ref{infinite} below. Our proof is significantly different from the one for the smooth case by Kapovitch--Wilking and is mostly geometric-group-theoretical based on the structure of approximate groups by Breuillard--Green--Tao. We begin by proving a series of Lemmas that reduce Theorem \ref{infinite} to the case when $\Gamma $ is a simply connected nilpotent Lie group.

\begin{thm}\label{infinite}
 Let $\Gamma$ be a non-compact $r$-dimensional Lie group with a left invariant metric, $\Gamma_i$ a sequence of discrete groups, and $f_i : \Gamma _i \to \Gamma$ a clean discrete approximation. Then $\Gamma_i$ is infinite for $i$ large enough.
\end{thm}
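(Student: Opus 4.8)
The plan is to argue by contradiction: suppose $\Gamma_i$ is finite for infinitely many $i$, and derive from this that $\Gamma$ must be compact. The idea is that a clean discrete approximation transports the finiteness of $\Gamma_i$ into a precompactness statement for the image $f_i(\Gamma_i)$, which eventually forces $\Gamma$ itself to be bounded. First I would invoke Theorem \ref{bgt} (Breuillard--Green--Tao) to pass to $\tilde f_i : \Gamma_i/H_i \to \Gamma$ with $H_i$ small normal and nilprogressions $P_i \subset \Gamma_i/H_i$ of rank $r$ in $C$-regular form with thick$(P_i) \to \infty$; by Proposition \ref{small-quotient} the sequence $\tilde f_i$ is again a discrete approximation, and it is clean because $f_i$ is. Since each $\Gamma_i$ is finite, each $\Gamma_i/H_i$ is finite, so each nilprogression $P_i$ lives in a finite group. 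But by the Malcev embedding theorem (Theorem \ref{malcev}), for $i$ large enough ($\mathrm{thick}(P_i) \geq N(C,r)$) the map $(n_1,\dots,n_r) \mapsto u_1^{n_1}\cdots u_r^{n_r}$ is a group morphism from $(\mathbb{Z}^r, \ast_{P_i})$ onto $\langle P_i \rangle$, injective on a box of size $\sim N_j/C$ which is a determining set of $(\mathbb{Z}^r, \ast_{P_i})$.

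The crux is then to show that if $\Gamma$ is non-compact, the thickening forces $\langle P_i \rangle$ — and hence $\Gamma_i/H_i$ — to be infinite, contradicting finiteness. I would use the cleanness of $\tilde f_i$: there is an open generating set $S_0 \subset \Gamma$, a compact $K_0 \subset \Gamma$, and symmetric determining sets $S_i$ with $\tilde f_i^{-1}(S_0) \subset S_i \subset \tilde f_i^{-1}(K_0)$. Since $\Gamma$ is non-compact and generated by $S_0$ (being an open generating set, a standard connectedness-plus-properness argument shows $\bigcup_n S_0^n = \Gamma$ is unbounded), the powers $S_i^n$ must have $\tilde f_i$-images escaping every bounded region: more precisely, for any $R>0$ there is $n$ with $S_0^n \not\subset B_e(R,\Gamma)$, so $S_i^n \not\subset \tilde f_i^{-1}(B_e(R,\Gamma))$ for large $i$, while $\tilde f_i^{-1}(B_e(R,\Gamma))$ is finite with cardinality one can bound independently of $i$ using Proposition \ref{discrete-gh}-type estimates. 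Comparing $|S_i^n|$ to $|\tilde f_i^{-1}(\text{bounded set})|$ and letting the radius grow shows $|\Gamma_i/H_i| \to \infty$; in particular $\Gamma_i/H_i$, hence $\Gamma_i$, is infinite for large $i$. Alternatively and more cleanly: $S_i \supset \tilde f_i^{-1}(S_0)$ contains the generators $u_j$ (up to bounded words), the grid part $G(P_i) = \{u_1^{n_1}\cdots u_r^{n_r} : |n_j| \leq N_j/C\}$ has cardinality $\prod_j(2N_j/C + 1) \geq (2\,\mathrm{thick}(P_i)/C+1)^r \to \infty$ by the distinctness clause in the definition of $C$-regular form, and $G(P_i) \subset \Gamma_i/H_i$, so $|\Gamma_i/H_i| \to \infty$.

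I expect the main obstacle to be making rigorous the link between non-compactness of $\Gamma$ and the divergence of the relevant grid sizes: one must ensure that the nilprogression $P_i$ genuinely "sees" the non-compact direction of $\Gamma$, i.e. that at least one thickness coordinate $N_j$ must diverge in a way that is detected inside $\Gamma_i/H_i$ rather than being collapsed. This is exactly where cleanness is essential — without it (cf. Example \ref{no-clean}) one could have $\Gamma = \mathbb{Z}$ approximated by finite cyclic groups, and the theorem would be false. The cleanness hypothesis, via the determining-set structure and Lemma \ref{det}-type reasoning being already built into the construction, guarantees the grid part of $P_i$ injects into $\Gamma_i/H_i$ with size controlled below by $\mathrm{thick}(P_i)^r \to \infty$. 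Once that divergence is established, finiteness of $\Gamma_i$ is contradicted and the theorem follows. A short final remark would note that, combined with Theorem \ref{turing}, this also recovers that $\Gamma$ non-compact precludes the $\Gamma_i$ being finite, which is the form used in Section \ref{section-4}.
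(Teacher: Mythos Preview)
Your argument has a genuine gap: you repeatedly conflate ``$|\Gamma_i/H_i| \to \infty$'' with ``$\Gamma_i/H_i$ is infinite for large $i$''. These are different statements. Your ``cleaner'' argument --- that the grid part $G(P_i)$ consists of distinct elements and has cardinality $\geq (2\,\mathrm{thick}(P_i)/C+1)^r \to \infty$ --- is correct, but it uses neither cleanness nor non-compactness of $\Gamma$, and it proves only that the sequence of cardinalities diverges. Indeed it applies verbatim to Example \ref{no-clean}, where $\Gamma_i = \mathbb{Z}/3i\mathbb{Z}$ are all finite yet $|\Gamma_i| = 3i \to \infty$. Your first argument about powers $S_0^n$ escaping bounded regions suffers the same defect: it shows at best that $S_i^n$ contains elements mapping far from $e$, hence a growing lower bound on $|\Gamma_i/H_i|$, but not that any individual quotient is infinite. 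The sentence ``in particular $\Gamma_i/H_i$, hence $\Gamma_i$, is infinite for large $i$'' is simply a non sequitur.

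The paper's proof is substantially more delicate and uses cleanness in an essential way that your outline does not capture. It first reduces (via Lemmas \ref{clean-cci} and \ref{cci-reduction}) to the case where $\Gamma$ is connected, then (via Lemma \ref{sc-reduction} and Proposition \ref{nilpotent-compact}) to the case where $\Gamma$ is simply connected nilpotent. At that point Malcev gives a surjective morphism $\theta_i : (\mathbb{Z}^r, \ast_{P_i}) \to \Gamma_i/H_i$ from an \emph{infinite} group, and the heart of the proof is the claim that $\mathrm{Ker}(\theta_i) \cap T_i^{2M} = \{e\}$ for large $i$, so that $\theta_i$ is an isomorphism by Proposition \ref{isomorphism}. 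That claim is where cleanness and simple connectedness are actually used: a hypothetical nontrivial kernel element gives a closed chain in $\Gamma$, which is discretely nullhomotoped using simple connectedness of $\Gamma$, and the determining-set structure coming from cleanness lets one lift this homotopy back to conclude the element was trivial. Your proposal does not contain any analogue of this step, and without it the finite-cyclic counterexample is not excluded.
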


\begin{lem}\label{clean-cci}
 Let $\Gamma$ be a Lie group with a left invariant metric, $\Gamma_0 \triangleleft $ $ \Gamma$ the connected component of the identity, $\Gamma_i$ a sequence of discrete groups, and $f_i : \Gamma_i \to \Gamma$ a clean discrete approximation. Let $r > 0 $ be such that $B_e(r,\Gamma ) \subset \Gamma_0$. For each $i \in \mathbb{N}$ let $G_i \leq \Gamma_i$ denote the subgroup generated by $f_i^{-1}(B_e(r,\Gamma ))$. Then for large enough $i$, there is a surjective morphism  $ \Gamma_i / G_i \to \Gamma / \Gamma_0$.
\end{lem}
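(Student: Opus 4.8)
The plan is to build the surjection $\Gamma_i/G_i \to \Gamma/\Gamma_0$ by sending a generator coming from $f_i^{-1}(K_0)$ to the component of its image, after first checking this is well-defined. First I would recall from Lemma~\ref{cci} that, for $i$ large, $G_i \triangleleft \Gamma_i$, so the quotient $\Gamma_i/G_i$ makes sense; this already uses that $f_i$ is a discrete approximation (cleanness is not needed for this part). The point of cleanness is to control \emph{generators} of $\Gamma_i$: by hypothesis there are symmetric determining sets $S_i$ with $f_i^{-1}(S_0) \subset S_i \subset f_i^{-1}(K_0)$, where $S_0$ is open and generating and $K_0$ is compact. Since $K_0$ is compact it meets only finitely many components of $\Gamma$, say the components represented by $\gamma^{(1)},\dots,\gamma^{(\ell)} \in \Gamma$; I would fix, once and for all, coset representatives for these in $\Gamma/\Gamma_0$.

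The key step is to define a map on generators and show it descends. For $s \in S_i$ with $f_i(s) \in K_0$, let $\pi(f_i(s)) \in \Gamma/\Gamma_0$ be its component, and tentatively set $\varphi_i(sG_i) := \pi(f_i(s))$. To see this is well-defined and multiplicative I would use the determining-set structure: $\Gamma_i = \langle S_i \mid R_i\rangle$ with $R_i$ consisting of length-$3$ relations in $S_i \cup S_i^{-1}$, so it suffices to check that the assignment $s \mapsto \pi(f_i(s))$ is compatible with each such relation $s_1 s_2 s_3 = e$ after passing to $\Gamma_i/G_i$. For this, the third bullet of Definition~\ref{def:discreteapprox} gives $d(f_i(s_1 s_2), f_i(s_1)f_i(s_2)) \le \varepsilon$ (applied on a fixed radius $R$ with $K_0 \subset B_e(R,\Gamma)$ and $K_0 \cdot K_0$ within a slightly larger radius), so for $i$ large $f_i(s_1 s_2)$ and $f_i(s_1)f_i(s_2)$ lie in the same component provided their distance is less than the gap between distinct components in that bounded region—this is where I must also incorporate $G_i$: elements of $f_i^{-1}(B_e(r,\Gamma))$ map into $\Gamma_0$, hence die under $\pi$, so killing $G_i$ is exactly what is needed to make the small discrepancies irrelevant. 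Iterating, $\pi(f_i(s_1 s_2 s_3)) = \pi(f_i(s_1))\pi(f_i(s_2))\pi(f_i(s_3))$ in $\Gamma/\Gamma_0$, and since $s_1 s_2 s_3 = e$ the left side is trivial; thus $\varphi_i$ respects all relations and extends to a homomorphism $\Gamma_i/G_i \to \Gamma/\Gamma_0$.

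Surjectivity is then the easy part: $S_0$ is a generating set of $\Gamma$, so its images under $\pi$ generate $\Gamma/\Gamma_0$, and every element of $S_0$ is $f_i(s)$ for some $s \in S_i$ once $i$ is large (because $f_i^{-1}(S_0) \subset S_i$ and $f_i(\Gamma_i)$ becomes $\varepsilon$-dense in bounded regions), so $\varphi_i$ hits a generating set of $\Gamma/\Gamma_0$. The main obstacle I anticipate is the well-definedness argument: one must choose the radius $R$, the approximation parameter $\varepsilon$, and the threshold $i_0$ in the right order so that (a) all products of two or three elements of $K_0$ stay within the radius where $f_i$ is $\varepsilon$-multiplicative, and (b) $\varepsilon$ is smaller than the minimal distance between distinct $\Gamma_0$-cosets intersected with that bounded region (which is positive since $\Gamma/\Gamma_0$ is discrete and only finitely many cosets are involved). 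Once these finitely many quantities are fixed in the correct dependency order, the relation-checking is routine, and cleanness is used precisely to guarantee a presentation with uniformly bounded-length relations on a generating set whose $f_i$-images lie in the fixed compact $K_0$.
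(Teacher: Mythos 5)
Your proposal is correct and follows essentially the same route as the paper: use cleanness to get determining sets $S_i$ with $f_i(S_i)\subset K_0$ compact, show that $\pi\circ f_i$ restricted to $S_i\cup S_i^{-1}$ respects products by combining approximate multiplicativity of $f_i$ with discreteness of $\Gamma/\Gamma_0$ (the paper isolates this extension step as Proposition~\ref{obvious}), note $G_i\subset\ker$, and deduce surjectivity from $S_0\Gamma_0$ generating $\Gamma$. One small imprecision: the ``small discrepancies'' in the relation check are absorbed entirely by the discreteness of $\Gamma/\Gamma_0$ and a correct order of quantifiers on $(R,\varepsilon,i_0)$, not by killing $G_i$; the quotient by $G_i$ only enters afterwards, to convert the morphism $\Gamma_i\to\Gamma/\Gamma_0$ into one from $\Gamma_i/G_i$, using Lemma~\ref{cci} for normality.
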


For the proof of Lemma \ref{clean-cci} we will need the fact below which follows immediately from the definition of determining set.

\begin{prop}\label{obvious}
 Let $G, H$ be groups, $S \subset G$ a determining set, and a function 
\[ \varphi : S \cup S^{-1} \to H\]
such that $\varphi (s_1s_2) = \varphi (s_1) \varphi (s_2)$ for all $s_1, s_2 \in S \cup S^{-1}$ with $s_1s_2 \in S \cup S^{-1}$. Then $\varphi$ extends to a unique group morphism $\tilde{\varphi} : G \to H$.
\end{prop}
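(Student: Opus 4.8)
The plan is to invoke von Dyck's theorem, i.e.\ the universal property of a group presentation. Since $S$ is a determining set, fix a presentation $G = \langle S \mid R \rangle$ in which every relator $w \in R$ is a word $w = x_1 x_2 x_3$ of length three with letters $x_1, x_2, x_3 \in S \cup S^{-1}$ (if a shorter relator appears, pad it with the identity, which lies in $S$). Consider the free group $F(S)$ on the set $S$ and the homomorphism $\Phi : F(S) \to H$ determined by sending each generator $s$ to $\varphi(s)$. It then suffices to check that $\Phi$ kills every relator $w \in R$; once this is done, $\Phi$ factors through $G$ and produces a homomorphism $\tilde\varphi : G \to H$, which will be the desired extension.

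Before dealing with the relators I would record two elementary consequences of the multiplicativity hypothesis. Applying it with $s_1 = s_2 = e$ (legitimate since $e \in S \cup S^{-1}$ and $e \cdot e = e \in S \cup S^{-1}$) gives $\varphi(e) = \varphi(e)\varphi(e)$, hence $\varphi(e) = e_H$. Applying it with $s_1 = s$ and $s_2 = s^{-1}$ for any $s \in S \cup S^{-1}$ (legitimate since $s s^{-1} = e \in S \cup S^{-1}$) gives $\varphi(s)\varphi(s^{-1}) = \varphi(e) = e_H$, so $\varphi(s^{-1}) = \varphi(s)^{-1}$ for all $s \in S \cup S^{-1}$.

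Now take a relator $w = x_1 x_2 x_3 \in R$ with $x_j \in S \cup S^{-1}$, so that $x_1 x_2 x_3 = e$ in $G$, equivalently $x_1 x_2 = x_3^{-1}$ as elements of $G$. Since $x_3 \in S \cup S^{-1}$, also $x_1 x_2 = x_3^{-1} \in S \cup S^{-1}$, so the hypothesis applies to the pair $(x_1, x_2)$ and yields $\varphi(x_1)\varphi(x_2) = \varphi(x_1 x_2) = \varphi(x_3^{-1}) = \varphi(x_3)^{-1}$, the last equality by the previous paragraph. Hence $\Phi(w) = \varphi(x_1)\varphi(x_2)\varphi(x_3) = \varphi(x_3)^{-1}\varphi(x_3) = e_H$, as required, and so $\tilde\varphi : G \to H$ exists.

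Finally, $\tilde\varphi$ restricts to $\varphi$ on $S$ by construction, and on $S^{-1}$ because $\tilde\varphi(s^{-1}) = \tilde\varphi(s)^{-1} = \varphi(s)^{-1} = \varphi(s^{-1})$; thus $\tilde\varphi$ genuinely extends $\varphi$ on $S \cup S^{-1}$. Uniqueness is immediate since $S$ generates $G$, so any homomorphism agreeing with $\varphi$ on $S$ is determined everywhere. There is no serious obstacle in this argument; the only point needing a moment's care is that the hypothesis controls only products of \emph{two} elements that land back inside $S \cup S^{-1}$, which is precisely why one rewrites the length-three relation $x_1 x_2 x_3 = e$ as $x_1 x_2 = x_3^{-1}$ before invoking it.
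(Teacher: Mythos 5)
Your proof is correct. The paper itself offers no argument for this proposition — it is dismissed with the remark that it ``follows immediately from the definition of determining set'' — so there is no alternative to compare against; what you have done is simply supply the standard verification. Your route via von Dyck's theorem is exactly the right one: extract $\varphi(e)=e_H$ and $\varphi(s^{-1})=\varphi(s)^{-1}$ from the hypothesis (both applications being legitimate because $e\in S$ and $S\cup S^{-1}$ is symmetric), and then, for a length-three relator $x_1x_2x_3$, the pivot is to rewrite $x_1x_2x_3=e$ as $x_1x_2=x_3^{-1}\in S\cup S^{-1}$ so that the multiplicativity hypothesis becomes applicable to the pair $(x_1,x_2)$ — that is the one genuinely non-mechanical step, and you have identified it correctly. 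The padding remark for shorter relators, the consistency of $\Phi$ on formal inverses with the derived identity $\varphi(s^{-1})=\varphi(s)^{-1}$, and the uniqueness from $S$ generating $G$ are all handled properly. No gaps.
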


\begin{proof}[Proof of Lemma \ref{clean-cci}:] 
Let $S_0 \subset K_0 \subset \Gamma$, $S_i \subset \Gamma_i$ be the sets given by the definition of clean discrete approximation. We then define a map $\varphi_i : S_i \cup S^{-1}_i \to \Gamma /\Gamma_0$ as the composition of $f_i$ with the projection $\Gamma \to \Gamma/\Gamma_0$. Certainly, if $i$ is large enough, $\varphi_i (s_1s_2) = \varphi_i (s_1) \varphi _i(s_2)$ for all $s_1 , s_2 \in S_i \cup S^{-1}_i$ with $s_1 s_2 \in S_i \cup S^{-1}_i$,  as the space $\Gamma / \Gamma_0$ is discrete, and $f_i$ is uniformly close to being a morphism when restricted to $(S_i \cup S_i^{-1})^2 \subset f_i^{-1}((K_0  \cup K_0^{-1} )^3)$. By Proposition \ref{obvious}, we then obtain a group morphism $\tilde{\varphi} _i: \Gamma_i \to \Gamma / \Gamma_0$ for all large $i$. 

For large enough $i$, 
\[ S_0 \subset \bigcup_{x \in  f_i (S_i)}B_x(r,\Gamma ),\]
 so $f_i(S_i)$ intersects each component of $S_0\Gamma_0$. Since the latter by hypothesis generates $\Gamma$, we conclude that $\varphi_i(S_i)$ generates $\Gamma/\Gamma_0$ and $\tilde{\varphi}_i$ is surjective. On the other hand, $G_i$ is clearly in the kernel of $\tilde{\varphi}_i$, so one gets a surjective morphism $ \Gamma _i / G_i \to \Gamma / \Gamma_0$.
\end{proof}

Proposition \ref{determining-characterization} below characterizes determining sets in terms of discrete homotopies.

\begin{defn}
\rm Let $G$ be a group and $S \subset G$ a symmetric generating set. An $S$\textit{-curve} in $G$ is a sequence of elements $ (g_0, \ldots , g_n) $ with $g_{j-1}^{-1}g_{j} \in S   $ for each $j \in \{1, \ldots , n \}$. If $g_0=g_n$, the $S$-curve $ (g_0, \ldots , g_n) $ is said to be \textit{closed based at }$g_0$. 

For two $S$-curves $g= (g_0, \ldots, g_n)$, $h=(h_0, \ldots , h_m)$ in $G$, their \textit{uniform distance} $d_U^S( g,h )$ is defined as the infimum $r \in \mathbb{N}$ such that there is a relation $ R \subset \{ 0, \ldots , n \} \times \{ 0, \ldots , m \}$  such that:
\begin{itemize}
    \item For all $i \in \{0, \ldots , n \}$, $j \in \{0, \ldots , m \}$, there are $i^{\prime} \in \{0, \ldots , n \}$, $ j^{\prime} \in \{0, \ldots , m\}$ with $(i,j^{\prime}), (i^{\prime }, j) \in R$.
    \item If $(i_1, j_1),(i_2, j_2) \in R $ and $i_1<i_2$, then $j_1 \leq j_2$.
    \item $g_i^{-1}h_j \in S^r$ for all $(i,j ) \in R$.
\end{itemize}
\end{defn}
\begin{prop}\label{homotopy-replacement}
 Let $G$ be a group, $S \subset G$ a symmetric generating set containing the identity, and $g=(g_0, \ldots , g_n)$, $h=(h_0, \ldots , h_m)$ two closed $S$-curves based at $e$ with $d_U^S(g,h) \leq N$. For each $j \in \{1, \ldots , n \}$, $\ell \in \{1, \ldots , m\}$ set $s_j = g_{j-1}^{-1}g_j$ and $t_{\ell} = h_{\ell -1}^{-1} h_{\ell}$. Then starting with the word $s_1\ldots s_n$ one can obtain the word $t_1 \ldots t_m$ as words with letters in $S^{N+1}$ by a finite number of substitutions of the form $\theta \to \theta_1\theta_2$ or $\theta_1\theta_2 \to \theta$ with $\theta, \theta_1, \theta_2 \in S^{N+1}$ satisfying $\theta = \theta_1\theta_2$. 
\end{prop}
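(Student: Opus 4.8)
The plan is to prove this by induction on the uniform distance $N = d_U^S(g,h)$, reducing everything to a single elementary move. First I would fix a relation $R \subset \{0,\dots,n\} \times \{0,\dots,m\}$ witnessing $d_U^S(g,h) \leq N$, so that $g_i^{-1}h_j \in S^N$ whenever $(i,j) \in R$. The key observation is that $R$, viewed as a monotone ``staircase'' in the grid $\{0,\dots,n\}\times\{0,\dots,m\}$, can be refined to a maximal monotone path from $(0,0)$ to $(n,m)$ in which consecutive pairs differ by a unit step in one coordinate: using the first bulleted property of $R$ one checks that $(0,0)$ and $(n,m)$ can be taken in $R$ (after possibly adjusting, since $g_0 = h_0 = e$ and the curves are closed so $g_n = h_n = e$ as well, giving $g_n^{-1}h_m = e \in S^N$), and using the second property (monotonicity) one can interpolate. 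Along such a path, each unit step corresponds to inserting or deleting one letter.

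The main step is then the following local claim: if $(i,j), (i',j') \in R$ are consecutive along the refined staircase with $(i',j') = (i+1, j)$ (an $s$-step) or $(i',j') = (i, j+1)$ (a $t$-step), then I can pass from the word read along $g$ up to index $i$ concatenated with the word read along $h$ from index $j$, to the corresponding word with the split point moved by one unit, using a single substitution of the allowed form with letters in $S^{N+1}$. Concretely, writing $a = g_i^{-1}h_j \in S^N$, an $s$-step replaces the boundary segment by inserting $s_{i+1}$ and the new connecting element becomes $g_{i+1}^{-1}h_j = s_{i+1}^{-1}a$, and since $s_{i+1} \in S \subset S^{N+1}$ and $a = s_{i+1}\cdot(s_{i+1}^{-1}a)$ with both factors in $S^{N+1}$, this is exactly a substitution $\theta \to \theta_1\theta_2$; the $t$-step is symmetric. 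Composing these single moves along the whole staircase transforms $s_1 \cdots s_n$ (the word for $g$, corresponding to the degenerate relation concentrated at the start) into $t_1 \cdots t_m$.

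I expect the main obstacle to be bookkeeping rather than conceptual: making precise what ``the word read along $g$ up to $i$ concatenated with the word along $h$ from $j$'' means as a bona fide word in letters from $S^{N+1}$, and verifying that the connecting element $g_i^{-1}h_j$ genuinely stays in $S^N$ at every intermediate stage (this is where the hypothesis $d_U^S(g,h) \leq N$, i.e. $g_i^{-1}h_j \in S^r$ with $r \leq N$ for $(i,j) \in R$, is used, together with the fact that $S$ is symmetric and contains the identity so that $S^k \subseteq S^{k+1}$ and inverses behave well). A secondary subtlety is handling the endpoints: since $g$ and $h$ are both closed based at $e$, we have $g_0 = h_0 = g_n = h_m = e$, so the staircase can legitimately start and end at relations in which the connecting element is the identity, which makes the first and last substitutions trivial insertions/deletions. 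Once the staircase is set up correctly, each move is manifestly of the required form, and the total number of moves is $n + m$, which is finite.
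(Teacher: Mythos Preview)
Your proposal is correct and follows essentially the same route as the paper: both arguments track the hybrid word $s_1\cdots s_i\,(g_i^{-1}h_j)\,t_{j+1}\cdots t_m$ as the index $(i,j)$ moves monotonically through the relation $R$ from $(n,m)$ to $(0,0)$, with each step realized by one or two substitutions in $S^{N+1}$. The paper organizes this as reverse induction on $i+j$ with three cases $(i{+}1,j),(i,j{+}1),(i{+}1,j{+}1)\in R$, its diagonal case passing through the auxiliary letter $g_i^{-1}h_{j+1}\in S^{N+1}$, which is exactly your split of a diagonal step into two unit steps through a point possibly outside $R$; note that your opening line about ``induction on $N$'' is not actually used and can be dropped, and that $(0,0),(n,m)\in R$ already follow from the two relation axioms, so no adjustment is needed.
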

\begin{proof}
Let $R \subset \{ 0, \ldots , n \} \times \{ 0, \ldots , m \}$ be such that 
    \begin{itemize}
    \item For all $i \in \{0, \ldots , n \}$, $j \in \{0, \ldots , m \}$, there are $i^{\prime} \in \{0, \ldots , n \}$, $ j^{\prime} \in \{0, \ldots , m\}$ with $(i,j^{\prime}), (i^{\prime }, j) \in R$.
    \item If $(i_1, j_1),(i_2, j_2) \in R $ and $i_1<i_2$, then $j_1 \leq j_2$.
    \item $g_i^{-1}h_j \in S^N$ for all $(i,j ) \in R$.
\end{itemize} 
For each $(i,j )\in R$, set $u_{i,j} : = g_i^{-1}h_j \in S^{N}$. We claim that for each $(i,j)\in R$, the word $s_1 \ldots s_i u_{i,j}t_{j+1} \ldots t_m$ can be obtained from $s_1\ldots s_n$ as words with letters in $S^{N+1}$ by a finite number of substitutions of the form $\theta \to \theta_1\theta_2$ or $\theta_1\theta_2 \to \theta$ with $\theta, \theta_1, \theta_2 \in S^{N+1}$ satisfying $\theta = \theta_1\theta_2$. 

We verify this claim by reverse induction on $i+j$. As $u_{n,m} = e$, the claim is trivial for $(i,j ) = (n,m)$. Then for each $(i,j) \in R$ with $(i,j) \neq (n,m)$, there are three cases to consider:

\begin{itemize}
    \item $(i+1,j)\in R$, in which case, $s_{i+1}u_{i+1,j} = u_{i,j}$.
    \item  $( i,j+1) \in R$, in which case, $u_{i,j+1}  = u_{i,j}t_{j+1}$.
    \item  $(i+1,j+1) \in R$, in which case, one can perform the substitutions $s_{i+1}u_{i+1, j+1} \to $ $(g_i^{-1}h_{j+1})  \to u_{i,j}t_{j+1}$, where $(g_i^{-1}h_{j+1}) \in S^{N+1}$ is considered as a one-letter word.
\end{itemize}

\begin{figure}
\centering
\psfrag{a}{$s_{i+1}$}
\psfrag{b}{$u_{i+1,j}$}
\psfrag{c}{$u_{i,j}$}
\psfrag{d}{$t_{j+1}$}
\psfrag{e}{$u_{i,j+1}$}
\psfrag{f}{$u_{i,j}$}
\psfrag{g}{$s_{i+1}$}
\psfrag{h}{$t_{j+1}$}
\psfrag{i}{$t_{j+1}$}
\psfrag{j}{$s_{i+1}$}
\psfrag{k}{$u_{i+1, j+1 }$}
\psfrag{l}{$u_{i,j}$}
\psfrag{m}{\small $g_i^{-1}h_{j+1}$}
\includegraphics[width=0.9\textwidth]{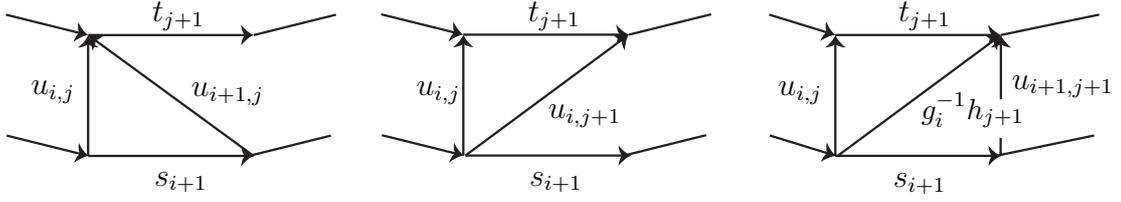}
\caption{One can obtain the word $s_1 \ldots s_i u_{i,j}t_{j+1} \ldots t_m$ from $s_1 \ldots s_i s_{i+1} u_{i+1,j}t_{j+1} \ldots t_m$, $s_1 \ldots s_i u_{i,j+1}t_{j+2} \ldots t_m$, or $s_1 \ldots s_i s_{i+1}u_{i+1,j+1}t_{j+2} \ldots t_m$, respectively.}\label{replace}
\end{figure}

In either case, one can combine the induction hypothesis with the above substitutions to obtain $s_1 \ldots s_i u_{i,j}t_{j+1} \ldots t_m$. As $u_{0,0} = e$ (see Figure \ref{replace}), the result follows from the claim applied to $(0,0) \in R$.
\end{proof}

\begin{prop}\label{determining-characterization}
 Let $G$ be a group, $S \subset G$ a symmetric generating set containing the identity, then 
\begin{enumerate}
    \item If $S$ is a determining set, then for any closed $S$-curve $\gamma_0$  based at $e$, there is a sequence $\gamma_1, \ldots , \gamma_n $ of closed $S$-curves based at $e$ with $d_U^S(\gamma_{j-1}, \gamma_j ) \leq 1$ for each $j \in \{ 1, \ldots , n\}$ and $\gamma_n = (e)$.\label{det-1}
    \item If for each closed $S$-curve $\gamma _0 $  based at $e$, there is a sequence of closed $S$-curves $\gamma_1, \ldots , \gamma_n$  based at $e$ with $d_U^S(\gamma_{j-1}, \gamma_j ) \leq N$ for each $j \in \{ 1, \ldots , n\}$ and $\gamma_n = (e)$, then $S^{N+1}$ is a determining set.\label{det-2}
\end{enumerate}
\end{prop}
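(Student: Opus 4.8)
The plan is to move back and forth between the combinatorial characterization of a determining set (a presentation with relators of length $3$) and the geometric statement about contractibility of closed $S$-curves, using Proposition~\ref{homotopy-replacement} for part~(\ref{det-2}) and an explicit construction for part~(\ref{det-1}). The basic dictionary is this: a closed $S$-curve $\gamma=(g_0,\ldots,g_n)$ based at $e$ corresponds to the word $s_1\cdots s_n$ with $s_j:=g_{j-1}^{-1}g_j\in S$, which represents $e$ in $G$; conversely every word in the letters of $S$ representing $e$ arises this way, and every intermediate word produced by the moves below again represents $e$, hence comes from a genuine closed $S$-curve based at $e$.

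For part~(\ref{det-1}), I would start from a presentation $G=\langle S\mid R\rangle$ with $R$ consisting of length-$3$ words in $S$. If $\gamma_0$ is a closed $S$-curve based at $e$ with associated word $w$, then $w$ lies in the normal closure of $R$, so by the standard theory of group presentations (van Kampen's lemma) it can be brought to the empty word by finitely many elementary moves, each inserting or deleting, at some position, either a pair $ss^{-1}$ (with $s,s^{-1}\in S$) or a cyclic conjugate of an element of $R\cup R^{-1}$. The point is that each such move changes the associated closed $S$-curve by uniform distance $\leq 1$: inserting $ss^{-1}$ after $g_k$ produces the detour $(\ldots,g_k,g_ks,g_k,\ldots)$, and $g_ks$ is within $S$ of $g_k$; inserting a relator $t_1t_2t_3$ (with $t_1t_2t_3=e$) after $g_k$ produces $(\ldots,g_k,g_kt_1,g_kt_1t_2,g_k,\ldots)$, where $g_kt_1$ and $g_kt_1t_2=g_kt_3^{-1}$ are both within $S$ of $g_k$. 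In each case the obvious matching of vertices of the old and new curves is a relation witnessing $d_U^S\leq 1$. Concatenating over all the moves yields the desired sequence $\gamma_0,\gamma_1,\ldots,\gamma_n=(e)$. This is exactly where the length-$3$ hypothesis is used: a $3$-step detour has its ``midpoint'' $g_kt_1t_2$ at distance one from the base $g_k$ of the detour.

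For part~(\ref{det-2}), set $\overline{S}:=S^{N+1}$, which is again a symmetric generating set containing $e$, and let $R$ be the set of all length-$3$ words $\theta_1\theta_2\theta_3$ in $\overline{S}$ representing $e$ in $G$; there is an evident surjection $\pi\colon \tilde{G}:=\langle\overline{S}\mid R\rangle\to G$, and it suffices to show $\pi$ is injective. Let $W$ be a word in $\overline{S}$ with $\pi(W)=e$. First I would rewrite $W$, inside $\tilde{G}$, as a word $w$ with all letters in $S\subset\overline{S}$: each letter $\theta\in S^{N+1}$ factors as (a letter of $S$)(an element of $S^{N}\subset\overline{S}$), and peeling off one letter of $S$ at a time is a move realized by a length-$3$ relator of $R$; the resulting $w$ still represents $e$. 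Now $w$ is the word of a closed $S$-curve $\gamma_0$ based at $e$, so by hypothesis there is a sequence $\gamma_0,\ldots,\gamma_n=(e)$ with $d_U^S(\gamma_{j-1},\gamma_j)\leq N$. For each $j$, Proposition~\ref{homotopy-replacement} transforms the word of $\gamma_{j-1}$ into the word of $\gamma_j$ by substitutions $\theta\leftrightarrow\theta_1\theta_2$ with $\theta,\theta_1,\theta_2\in S^{N+1}=\overline{S}$ and $\theta=\theta_1\theta_2$, and each such substitution is a move in $\tilde{G}$ implemented by the relator $\theta_1\theta_2\theta^{-1}\in R$. Chaining these for $j=1,\ldots,n$ turns $w$ into the word of $(e)$, i.e.\ the empty word; hence $W$ is trivial in $\tilde{G}$, so $\pi$ is an isomorphism and $S^{N+1}$ is a determining set.

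Most of this is routine bookkeeping. The one point requiring genuine care is the matching of constants in part~(\ref{det-2}): Proposition~\ref{homotopy-replacement} is designed to output substitutions living precisely in $S^{N+1}$, which is exactly why the conclusion involves $S^{N+1}$ and not $S$ itself; together with the elementary but essential observation in part~(\ref{det-1}) that inserting a relator of length $3$ perturbs a closed $S$-curve by uniform distance only $1$, so that a genuine presentation by length-$3$ relators is equivalent to a ``discrete homotopy'' theory at scale $1$.
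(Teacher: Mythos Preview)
Your proof is correct and follows essentially the same approach as the paper's. For part~(\ref{det-1}) the paper phrases the reduction via the substitutions $t\leftrightarrow t_1t_2$ with $t=t_1t_2$ in $S$ rather than your van Kampen insertions and deletions of $ss^{-1}$ and length-$3$ relators, but these are interchangeable since $e\in S$; your version is arguably more explicit about where the combinatorial group theory enters. For part~(\ref{det-2}) the paper works directly with word reductions while you introduce the group $\tilde G=\langle \overline S\mid R\rangle$ and prove the surjection $\pi$ is injective, but the content is identical: both expand a word in $S^{N+1}$ to a word in $S$, apply the hypothesis to get a chain of closed $S$-curves, and then invoke Proposition~\ref{homotopy-replacement} to realize each step by substitutions in $S^{N+1}$.
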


\begin{proof} \ref{det-1} Let $\gamma_0 = (g_0, \ldots , g_m)$ be a closed $S$-curve  based at $e$, and consider $s_j := g_{j-1}^{-1}g_j \in S$ for $j \in \{1, \ldots , m\}$. As $s_1 \cdots s_m = e$ and $S$ is determining, we can reduce the word $s_1 \ldots  s_m$ to the trivial word by a finite number of substitutions of the form $ t \to  t_1 t_2 $ or $t_1t_2 \to t$ with $t, t_1, t_2 \in S$ and $t = t_1t_2$. Let $w_0, w_1, \ldots , w_k$ be the words obtained throughout this process with $w_0 = s_1\ldots s_m$ and $w_k = e$.

For $j \in \{1, \ldots , k\}$, set $\gamma_j := ( e, w_j^1, \ldots, w_j^1 \cdots w_j^{m_j})$, where $w_j = w_j^1 \ldots w_j^{m_j}$.  It is straightforward to check that $\gamma_k = (e)$ and  for each $j \in \{1, \ldots , k \}$,   $\gamma_j$  is a closed $S$-curve based at $e$ with $d_U^S(\gamma_{j-1}, \gamma_j)\leq 1$.

\ref{det-2} Consider a word $t_1\cdots t_n $ with letters in $S^{N+1}$ representing the trivial element in $G$. For each $j \in \{1, \ldots , n\}$, take $s_1, \ldots , s_{N+1} \in S$ such that $s_1\cdots s_{N+1} = t_j$, and set $u_{\ell} := s_1\cdots s_{\ell}$ for $\ell \in \{1, \ldots , N \}$. As $u_{N+1} = t_j$ and $u_{\ell -1}s_{\ell} = u_{\ell}$ for each $\ell$, one can expand the one letter word $t_j$ to the word $s_1\ldots s_{N+1}$ by a finite number of substitutions $\theta \to \theta_1\theta_2 $ or $\theta _1 \theta_2 \to \theta$ with $\theta, \theta_1 , \theta_2 \in S^{N+1}$ and $\theta = \theta_1 \theta_2$. By doing this for each $j$ we can, by a finite number of substitutions of the above form, expand the word we began with ($t_1 \cdots t_n$) to a word $\sigma_1 \ldots \sigma_m$ with letters in $S$. This word corresponds to a closed $S$-curve $\gamma_0 = (e, \sigma_1, \sigma_1\sigma_2 , \ldots, \sigma_1 \cdots \sigma_m)$. 

By hypothesis, there are closed $S$-curves $\gamma_1, \ldots , \gamma_k$ based at $e$ with $d_U^S(\gamma_{j-1}, \gamma_j ) \leq N$ for each $j \in \{ 1, \ldots , k\}$ and $\gamma_k = (e)$. For each $j\in \{0, 1, \ldots , k \}$, set $\gamma_j = (e, \sigma _{j,1},\sigma _{j,1}\sigma _{j,2}, \ldots, \sigma _{j,1} \cdots \sigma _{j,m_j})$.

Then by repeated applications of Proposition \ref{homotopy-replacement}, one can obtain from $\sigma_1 \ldots \sigma_m$ the trivial word  as words with letters in $S^{N+1}$ by a finite number of substitutions of the form $\theta \to \theta_1\theta_2$ or $\theta_1\theta_2 \to \theta$ with $\theta, \theta_1, \theta_2 \in S^{N+1}$ satisfying $\theta = \theta_1\theta_2$, meaning that $S^{N+1}$ is determining. 
\end{proof}

\begin{prop}\label{determining-inheritance}
 Let $G$ be a group, $S \subset G$ a symmetric determining set, and $H \triangleleft G$ a normal subgroup with $[G:H] < \infty$. If $S$ intersects each $H$-coset in $G$, then $(S^3 \cap H)^2$ is a determing set of $H$.
\end{prop}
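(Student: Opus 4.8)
The plan is to use the homotopical characterization of determining sets from Proposition \ref{determining-characterization}. First I would record the key input: since $S$ is a symmetric determining set for $G$, part \ref{det-1} of Proposition \ref{determining-characterization} says that every closed $S$-curve in $G$ based at $e$ can be contracted to the trivial curve $(e)$ through a sequence of closed $S$-curves, each pair consecutive ones at uniform distance $\leq 1$. I want to deduce the analogous statement for $H$ with the generating set $T := S^3 \cap H$, possibly at the cost of enlarging the uniform-distance bound by a controlled constant; then part \ref{det-2} will immediately give that $T^{N+1}$ is determining for $H$, and one checks $T^{N+1} \subseteq (S^3\cap H)^{M}$ for the relevant $M$ — actually since the statement only claims $(S^3\cap H)^2$ is determining, I will need to be careful to get the uniform-distance bound down to $N=1$, which is the sharp case of part \ref{det-2}.

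The core construction is a \emph{retraction} of $S$-curves in $G$ onto $T$-curves in $H$. Fix a set of coset representatives: for each $gH \in G/H$ pick $r(gH) \in S$ lying in that coset, with $r(H) = e$ (possible by the hypothesis that $S$ meets every $H$-coset and contains the identity). For a vertex $g$ of an $S$-curve lying in $H$ we keep it; for a general vertex $g$ we "project" it to $g\cdot r(gH)^{-1} \in H$. Given a closed $S$-curve $\gamma = (g_0,\dots,g_n)$ based at $e$ (so $g_0 = g_n = e \in H$), replacing each $g_j$ by its projection $\bar g_j := g_j r(g_jH)^{-1}$ yields a closed sequence in $H$ based at $e$; consecutive differences satisfy $\bar g_{j-1}^{-1}\bar g_j = r(g_{j-1}H) g_{j-1}^{-1} g_j r(g_jH)^{-1} \in S\cdot S\cdot S = S^3$, and this element lies in $H$, hence in $T$. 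So the projection of an $S$-curve is a genuine $T$-curve in $H$, and $\gamma$ and its projection are at uniform distance $\leq 1$ as $S$-curves in $G$ (each $g_j$ and $\bar g_j$ differ by the element $r(g_jH)^{-1} \in S$). Now take the contraction $\gamma = \gamma_0, \gamma_1, \dots, \gamma_k = (e)$ in $G$ with $d_U^S(\gamma_{\ell-1},\gamma_\ell)\leq 1$ guaranteed by Proposition \ref{determining-characterization}\ref{det-1}, and project each $\gamma_\ell$ to a $T$-curve $\bar\gamma_\ell$ in $H$. One checks that $d_U^T(\bar\gamma_{\ell-1}, \bar\gamma_\ell)$ is bounded by a constant: if $R_\ell$ realizes $d_U^S(\gamma_{\ell-1},\gamma_\ell)\leq 1$, the same relation $R_\ell$ works for the projected curves, and for $(i,j)\in R_\ell$ the element $\bar g_i^{-1}\bar h_j = r(g_iH)g_i^{-1}h_j\, r(h_jH)^{-1}$ lies in $S\cdot S\cdot S = S^3$ and in $H$ (since $g_i^{-1}h_j \in S$ and projecting preserves $H$-membership of the difference only up to the representatives — here I must verify $g_iH = h_jH$, which holds because $g_i^{-1}h_j\in S$ need not be in $H$; so actually $\bar g_i^{-1}\bar h_j \in S^3$ but possibly $\notin H$ — this is the subtle point I will need to handle, see below). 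Modulo that, $\bar g_i^{-1}\bar h_j \in S^3 = T' $ gives $d_U^T(\bar\gamma_{\ell-1},\bar\gamma_\ell)\leq 1$ measured with respect to $S^3\cap H$ after one more adjustment, and since $\bar\gamma_k = (e)$, Proposition \ref{determining-characterization}\ref{det-2} with $N=1$ yields that $(S^3\cap H)^2$ is a determining set of $H$.

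The step I expect to be the main obstacle is precisely the bookkeeping in the uniform-distance estimate for the projected homotopy: when $(i,j)\in R_\ell$ with $g_i^{-1}h_j\in S$ but $g_i,h_j$ in \emph{different} $H$-cosets, the naive projected difference $\bar g_i^{-1}\bar h_j$ need not land in $H$, so it is not directly a "$T^r$" element. The fix is to project consistently — i.e. when comparing $\gamma_{\ell-1}$ and $\gamma_\ell$ one should note that the relation $R_\ell$ can be refined, or one enlarges the relation so that matched pairs lie in the same coset; concretely, because $S$ contains $e$ one can insert "stationary" steps and pad both curves so that along the matching the cosets agree, at the cost of passing from $S^3\cap H$ to $(S^3\cap H)$ with a uniform bound still equal to $1$ (or, if necessary, $2$, in which case one gets $(S^3\cap H)^3$ and then notes the statement as written may require the slightly more careful argument that keeps $N=1$). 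I would also double-check the edge cases: that $r(H)=e$ can indeed be arranged, that $T = S^3\cap H$ is symmetric (clear, as $S$ is) and generates $H$ (follows since $S$ generates $G$ and any word in $S$ representing an element of $H$ can be regrouped into blocks of three, each in the same coset pattern — another place the coset-representative trick is used), and that $T$ contains the identity. Once the homotopy transfer is set up cleanly, the conclusion is immediate from Proposition \ref{determining-characterization}.
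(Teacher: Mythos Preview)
Your approach is exactly the paper's: project $S$-curves in $G$ to $T$-curves in $H$ via coset representatives in $S$, transfer the $S$-homotopy, and invoke Proposition \ref{determining-characterization} \ref{det-2}. However, the ``main obstacle'' you flag is a phantom. By construction, $\bar g_i = g_i\, r(g_iH)^{-1}$ and $\bar h_j = h_j\, r(h_jH)^{-1}$ both lie in $H$ --- that is the entire point of the projection, and normality of $H$ is what makes it work. Hence $\bar g_i^{-1}\bar h_j \in H$ automatically, regardless of whether $g_i$ and $h_j$ lie in the same coset. Since also $\bar g_i^{-1}\bar h_j = r(g_iH)\,(g_i^{-1}h_j)\, r(h_jH)^{-1} \in S\cdot S\cdot S = S^3$, you get $\bar g_i^{-1}\bar h_j \in S^3\cap H = T$ directly, so $d_U^T(\bar\gamma_{\ell-1},\bar\gamma_\ell)\leq 1$ with no padding, no refinement, and no case analysis needed. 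Proposition \ref{determining-characterization} \ref{det-2} with $N=1$ then gives exactly $(S^3\cap H)^2$.

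There is one genuine small gap in your outline: you never say how to start from a closed $T$-curve $\gamma_0$ in $H$, which is what part \ref{det-2} requires. A $T$-curve is not an $S$-curve (its steps lie in $S^3$, not $S$), so you cannot directly feed it into the $S$-contraction machinery. The paper handles this by subdividing: since $T\subset S^3$, each $T$-step splits into three $S$-steps, yielding a closed $S$-curve $\alpha_0$ in $G$ whose projection is at $d_U^T$-distance $\leq 1$ from $\gamma_0$; then contract $\alpha_0$ in $G$ and project the whole homotopy. With that one sentence added and the spurious obstacle removed, your argument is the paper's argument.
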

\begin{proof}
  It is easy to see that $ T: = S^3 \cap H  $ generates $H$ (\cite{hall}, Lemma 7.2.2). Choose $\{ x_1, \ldots , x_M \} \subset S$ a set of $H$-coset representatives with $x_1 = e$ and let $\eta: G\to H$ be the map that sends $g$ to $gx_j^{-1}$, where $x_j$ is the only element of $\{ x_1, \ldots , x_M \} $ with $gx_j^{-1} \in H$. By our construction of $T$, if $g_0, g_1 \in G$ are such that $g_0^{-1}g_1 \in S$, then $\eta(g_0)^{-1}\eta(g_1) \in T$. 
  
  This implies that for each $S$-curve $w =  (g_0, \ldots , g_n) $ in $G$, the sequence $ \tilde{\eta} (w) : = (\eta(g_0), \ldots , \eta (g_n)) $ is a $T$-curve in $H$, and if two $S$-curves $w_0, w_1$ have the same endpoints and $d_U^S(w_0,w_1) \leq 1$, then $\tilde{\eta}(w_0), \tilde{\eta}(w_1)$ also have the same endpoints and $d_U^T(\tilde{\eta}(w_0), \tilde{\eta}(w_1))\leq 1$.

On the other hand, for each $T$-curve $(h_0, \ldots , h_n )$ in $H$ there is an $S$-curve $(g_0, \ldots, g_{3n})$ in $G$ with $g_{3j} = h_j$ for each $j \in \{0, \ldots , n\}$. Then it is easy to check that the $d_U^T$-distance between $\tilde{\eta}(g_0, \ldots , g_{3n})$ and $(h_0, \ldots , h_n)$ is $\leq 1$.

From the above, for any closed $T$-curve $\gamma_0$ based at $e$ in $H$, there is a closed $S$-curve $\alpha_0$ based at $e$ in $G$ with  $d_U^T(\tilde{\eta}(\alpha_0) , \gamma_0 ) \leq 1$. Then by Proposition \ref{determining-characterization} \ref{det-1}, there is a sequence of closed $S$-curves $\alpha_1 \ldots , \alpha_n$ based at $e$ in $G$ with $d_U^{S}(\alpha_{j-1}, \alpha_j) \leq 1$ for each $j\in \{1, \ldots , n\}$ and $\alpha_n = (e)$.

Applying $\tilde{\eta}$ we obtain a sequence of closed $T$-curves $\tilde{\eta}(\alpha_0), \ldots ,  \tilde{\eta}(\alpha_n)$ based at $e$ in $H$ with $d_U^T(\tilde{\eta}(\alpha_0) ,$ $ \gamma_0 ) \leq 1$, 
\[ d_U^T( \tilde{\eta}(\alpha_{j-1}), \tilde{\eta}(\alpha_j) ) \leq 1\text{ for each }j \in \{1, \ldots , n\},\] 
and $\tilde{\eta}(\alpha_n) = (e)$, which by Proposition \ref{determining-characterization} \ref{det-2} implies that $T^2$ is a determining set in $H$.
\end{proof}
\begin{lem}\label{cci-reduction}
 Under the conditions of Lemma \ref{clean-cci}, if $\Gamma / \Gamma_0$ is finite then there is a clean discrete approximation $h_i : G_i \to \Gamma_0$.
\end{lem}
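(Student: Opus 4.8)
The plan is to promote the ``raw'' discrete approximation $f_i : \Gamma_i \to \Gamma$ to one on the subgroup $G_i \le \Gamma_i$ landing in $\Gamma_0$, and then to check that cleanness is inherited. Since $G_i$ is, by Lemma \ref{clean-cci}, the kernel of the surjection $\tilde\varphi_i : \Gamma_i \to \Gamma/\Gamma_0$ (for large $i$), and $\Gamma/\Gamma_0$ is finite by hypothesis, $G_i$ is a normal finite-index subgroup of $\Gamma_i$. The natural candidate for $h_i$ is simply the restriction $h_i := f_i|_{G_i}$: its image lies in a bounded neighborhood of $\Gamma_0$, and after composing with a fixed retraction-type adjustment one can arrange $h_i(G_i) \subset \Gamma_0$ (concretely, since $f_i^{-1}(B_e(r,\Gamma)) \subset G_i$ generates $G_i$ and $B_e(r,\Gamma)\subset\Gamma_0$, the approximate-morphism property forces $f_i(g)$ to be within a uniformly small distance of $\Gamma_0$ for every $g \in G_i$ with controlled word length, so projecting to the nearest point of $\Gamma_0$ changes $f_i$ by a uniformly small amount on each ball).

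First I would verify the defining properties of Definition \ref{def:discreteapprox} for $h_i$. The generating-set condition: $h_i^{-1}(B_e(R_0',\Gamma_0))$ contains $f_i^{-1}(B_e(r,\Gamma))$ (for a suitable $R_0' \ge r$), which generates $G_i$ by definition of $G_i$; and it contains the identity. Finiteness of $h_i^{-1}(B_e(R,\Gamma_0))$ follows from finiteness of $f_i^{-1}(B_e(R',\Gamma))$ for a slightly larger $R'$, since $h_i$ and $f_i$ differ by a bounded projection. The density condition ($h_i(G_i)$ is $\varepsilon$-dense in $B_e(R,\Gamma_0)$): here I would use that $f_i(\Gamma_i)$ is $\varepsilon$-dense in $B_e(R,\Gamma)$, and that any point of $B_e(R,\Gamma_0)$ can be joined to $e$ inside $\Gamma_0$ by a short chain — pulling back along $f_i$ and staying in $G_i$ exactly as in the proof of Lemma \ref{cci} — so the points of $f_i(\Gamma_i)$ near that chain come from elements of $G_i$. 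The approximate-morphism inequality on $h_i^{-1}(B_e(R,\Gamma_0))$ is immediate from that of $f_i$ plus the bounded error introduced by the projection to $\Gamma_0$.

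The cleanness of $h_i$ is where the determining-set machinery of Section \ref{reformulations} enters, and I expect this to be the main obstacle. Let $S_0 \subset K_0 \subset \Gamma$ and $S_i \subset \Gamma_i$ be the data witnessing cleanness of $f_i$. The idea is to intersect everything with $\Gamma_0$ and $G_i$ respectively. On the $\Gamma$ side, $S_0 \cap \Gamma_0$ is an open generating subset of $\Gamma_0$ (it contains a neighborhood of $e$, and $\Gamma_0$ is connected, so any neighborhood of $e$ generates it) and $K_0 \cap \Gamma_0$ is compact; actually, to get a clean generating set I would take $S_0' := \mathrm{int}((S_0 \cap \Gamma_0)^3)$ or similar, and $K_0' := (K_0)^{3} \cap \Gamma_0$. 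On the $\Gamma_i$ side, the relevant sets are $S_i' := (S_i^3 \cap G_i)^2$, and here Proposition \ref{determining-inheritance} does exactly the work needed: $S_i$ is a symmetric determining set of $\Gamma_i$, $G_i \triangleleft \Gamma_i$ has finite index, and — this is the hypothesis one must check — $S_i$ intersects each $G_i$-coset in $\Gamma_i$. That last point holds because $\Gamma/\Gamma_0$ is finite and $S_i \supset f_i^{-1}(S_0)$ with $f_i(S_i)$ hitting (for large $i$) every coset of $\Gamma_0$ in $\Gamma$, which maps onto the cosets of $G_i$ in $\Gamma_i$ via the surjection of Lemma \ref{clean-cci}. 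Granting this, Proposition \ref{determining-inheritance} yields that $S_i' = (S_i^3 \cap G_i)^2$ is a determining set of $G_i$, and one checks $h_i^{-1}(S_0') \subset S_i' \subset h_i^{-1}(K_0')$ by tracking the bounded distortion between $f_i$ and $h_i$ together with $f_i^{-1}(S_0) \subset S_i$. Symmetry of $S_i'$ is clear. This establishes that $h_i$ is a clean discrete approximation of $\Gamma_0$, completing the reduction.
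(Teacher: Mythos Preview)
Your overall strategy matches the paper's, and in particular the cleanness argument --- taking $S_i' = (S_i^3 \cap G_i)^2$ and invoking Proposition~\ref{determining-inheritance} --- is exactly what the paper does, with essentially the same choice of open set and compact set on the $\Gamma_0$ side.

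The construction of $h_i$, however, differs and your version has a gap. You propose composing $f_i|_{G_i}$ with nearest-point projection onto $\Gamma_0$, justified by ``the approximate-morphism property forces $f_i(g)$ to be within a uniformly small distance of $\Gamma_0$ for every $g\in G_i$ with controlled word length.'' But the discrete-approximation axioms must be verified on all of $h_i^{-1}(B_e(R,\Gamma_0))$, and elements there have no uniform bound (in $i$) on their word length in the generators $f_i^{-1}(B_e(r,\Gamma))$; the word length typically grows as the approximation becomes finer. Moreover, since $\Gamma_0$ is open, the $\Gamma_0$-cosets are separated by some $\rho>0$, so $d(f_i(g),\Gamma_0)$ is either $0$ or at least $\rho$ --- there is no ``small but nonzero'' regime for the projection error. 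The paper avoids this entirely by a case split: if $\Gamma_0$ is compact then $\Gamma$ is compact and one takes $h_i=f_i|_{G_i}$ directly; if $\Gamma_0$ is non-compact, one picks $g_i\in\Gamma_0$ with $d(e,g_i)\to\infty$ and sets $h_i(g)=f_i(g)$ when $f_i(g)\in\Gamma_0$ and $h_i(g)=g_i$ otherwise. Sending the ``bad'' elements to infinity ensures they lie outside every fixed ball $B_e(R,\Gamma_0)$ for large $i$, so they never enter the verification; the approximate-morphism inequality on the remaining elements then reduces to that of $f_i$ together with coset separation (if $f_i(g_1),f_i(g_2)\in\Gamma_0$ then $f_i(g_1g_2^{-1})$ is close to $f_i(g_1)f_i(g_2)^{-1}\in\Gamma_0$, hence also in $\Gamma_0$).

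A smaller point: you assert $G_i=\ker\tilde\varphi_i$, but Lemma~\ref{clean-cci} only gives $G_i\subset\ker\tilde\varphi_i$; and your check that $S_i$ meets every $G_i$-coset runs the surjection $\Gamma_i/G_i\to\Gamma/\Gamma_0$ in the wrong direction. Both are repairable via the chain argument of Lemma~\ref{cci} (showing that $s_1,s_2\in S_i$ with $f_i(s_1),f_i(s_2)$ in the same $\Gamma_0$-coset lie in the same $G_i$-coset), but as written the argument is incomplete.
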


\begin{proof}
We split the situation in two cases. In either case, it is straightforward to check that $h_i$ is a discrete approximation.

\begin{center}
    \textbf{Case 1:} $\Gamma_0$ is compact.
\end{center}
For $i$ large enough, $G_i$ is finite and $G_i = f_i^{-1}(\Gamma_0)$, so we define $h_i$ as $f_i \vert_{G_i}$.
\begin{center}
    \textbf{Case 2:} $\Gamma_0$ is non-compact.
\end{center}
Consider a sequence $g_i \in \Gamma_0$ with $d(e,g_i) \to \infty $. Then define $h_i : G_i \to \Gamma_0$ as
\begin{equation*}
h_i (g):=
\begin{cases}
f_i (g) & \text{if }\, f_i(g) \in \Gamma_0 , \\
g_i & \text{if }\, f_i(g) \notin \Gamma_0.
\end{cases}
\end{equation*}
Let $S_0\subset  K_0 \subset \Gamma$, $S_i \subset \Gamma_i$ be the sets given by the definition of clean discrete approximation. Then by Proposition \ref{determining-inheritance}, $S_i^{\prime} : = (S_i ^3 \cap G_i )^2 $ is a determining set. Since the maps $h_i$ are approximate morphisms, we have
\[ h_i^{-1}(S_0 \cap \Gamma_0)   \subset S_i^{\prime} \subset    h_i^{-1}(K^7_0 \cap \Gamma_0 )  \] 
for large enough $i$, showing that the discrete approximation $h_i:G_i \to \Gamma_0$ is clean.
\end{proof}

\begin{rem}\label{geodesic}
\rm Notice that if $\Gamma$ is a connected Lie group with a proper left invariant metric $d$, then for any left invariant geodesic metric $d_1$ in $\Gamma$ and any $R>0$ there is $R^{\prime}>0$ with  $B_e^{d_1}(R, \Gamma )  \subset B_e^{d}(R^{\prime} , \Gamma )  $. Hence if one has a discrete approximation $f_i : \Gamma_i \to (\Gamma, d)$,  the maps $f_ i : \Gamma _i \to (\Gamma , d_1)$ also form a discrete approximation.
\end{rem}

\begin{lem}\label{sc-reduction}
 Let $\Gamma$ be a connected Lie group with a left invariant geodesic metric, $\Gamma_i$ a sequence of discrete groups, $f_i : \Gamma_i \to \Gamma$ a clean discrete approximation, and $K_0 \leq \Gamma$ a compact subgroup. Then $K_0 $ is contained in the center of $\Gamma$, and there is a clean discrete approximation $h_i : \Gamma _i \to \Gamma / K_0$. 
\end{lem}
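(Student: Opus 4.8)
The lemma has two parts — that $K_0$ is central, and that a clean discrete approximation $\Gamma_i\to\Gamma/K_0$ exists — and the natural candidate for the second is $h_i:=\pi\circ f_i$, where $\pi\colon\Gamma\to\Gamma/K_0$ is the quotient homomorphism; this only makes sense once centrality is known, so I would prove centrality first. Since $f_i$ is in particular a discrete approximation of the connected Lie group $\Gamma$, Theorem \ref{bgt} forces $\Gamma$ to be nilpotent, so its Lie algebra $\mathfrak g$ is nilpotent. By Engel's theorem the operators $\operatorname{ad}(X)$, $X\in\mathfrak g$, are simultaneously strictly-upper-triangularizable; as $\Gamma$ is connected, $\operatorname{Ad}(\Gamma)\leq GL(\mathfrak g)$ is the connected subgroup with Lie algebra $\operatorname{ad}(\mathfrak g)$ and therefore lies in the group of upper unitriangular matrices, hence consists of unipotent operators. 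But $\operatorname{Ad}(K_0)$ is a compact group of unipotent operators, so it is trivial (a compact subgroup of $GL(\mathfrak g)$ is conjugate into $O(\mathfrak g)$, and a unipotent orthogonal operator is the identity); thus $K_0\leq\ker\operatorname{Ad}=Z(\Gamma)$. Consequently $K_0\triangleleft\Gamma$, the quotient $\Gamma/K_0$ is a connected Lie group, and — centrality making right translation by $K_0$ equal left translation by $K_0$, hence act on $(\Gamma,d)$ by isometries — the quotient metric $d'([x],[y]):=\inf_{k\in K_0}d(xk,y)$ is a left invariant geodesic proper metric on $\Gamma/K_0$ with respect to which $\pi$ is a submetry.

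Next I would verify that $h_i\colon\Gamma_i\to\Gamma/K_0$ is a discrete approximation. Fixing $\rho>0$ with $K_0\subseteq B_e(\rho,\Gamma)$ one has $B_e(R,\Gamma)\subseteq\pi^{-1}(B_e(R,\Gamma/K_0))\subseteq B_e(R+\rho,\Gamma)$ for all $R>0$, and, combining this with the fact that $\pi$ is $1$-Lipschitz, surjective and a homomorphism, each clause of Definition \ref{def:discreteapprox} for $h_i$ (generating set, finiteness of the preimages $h_i^{-1}(B_e(R,\Gamma/K_0))$, $\varepsilon$-density, approximate-morphism inequality) reduces to the corresponding clause for $f_i$. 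The only clause needing a word is $\varepsilon$-density of $h_i(\Gamma_i)$ in $B_e(R,\Gamma/K_0)$: using centrality of $K_0$ one replaces a given class by a representative in $\Gamma$ of norm $<R$, which is $\varepsilon$-approximated by some $f_i(g)$, and $\pi$ does not increase distances.

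It remains to arrange cleanness. Let $S_0\subseteq\hat K\subseteq\Gamma$ ($S_0$ open and generating, $\hat K$ compact) and symmetric determining sets $S_i\subseteq\Gamma_i$ witness that $f_i$ is clean. After harmless adjustments — replacing $S_0$ by a symmetric open generating set that contains a ball $B_e(3\varepsilon_0,\Gamma)$ and is contained in $\hat K$, enlarging $\hat K$ accordingly, and replacing each $S_i$ by a fixed power (still symmetric and determining by Remark \ref{s-and-sm}) — I would take as witnesses for $h_i$ the open generating set $S_0':=\pi(S_0)$, the symmetric determining sets $S_i':=S_i^{M}$ for a suitable $i$-independent $M$, and the compact set $\hat K':=\pi(\hat K^{M+1})$. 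The inclusion $S_i'\subseteq h_i^{-1}(\hat K')$ holds for large $i$ directly from $S_i\subseteq f_i^{-1}(\hat K)$ and the approximate-morphism property of $f_i$. The inclusion $h_i^{-1}(S_0')=f_i^{-1}(S_0K_0)\subseteq S_i'$ is the crux: since $d$ is geodesic and $S_0K_0\subseteq\hat K\cdot B_e(\rho,\Gamma)$ is bounded, any $g$ with $f_i(g)\in S_0K_0$ can, for $i$ large, be written as a product of at most $M=M(\rho,\varepsilon_0,\hat K)$ elements $x_{\ell-1}^{-1}x_\ell$, each of which $f_i$ sends into $B_e(3\varepsilon_0,\Gamma)\subseteq S_0$ and hence which lies in $f_i^{-1}(S_0)\subseteq S_i$, by approximating an $\varepsilon_0/10$-fine chain from $e$ to $f_i(g)$ in $\Gamma$ by a sequence $x_0,\dots,x_M$ in $\Gamma_i$ — exactly as in the proof of Lemma \ref{cci}. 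This is the step that uses the hypothesis that the metric of $\Gamma$ is geodesic.

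I expect the main obstacle to be precisely this last inclusion: one must control the accumulation of approximate-morphism errors along the chain so that the consecutive ratios land inside $S_0$ rather than merely near it, and one must first check that the cleanness data of $f_i$ can be replaced by data in which $S_0$ genuinely contains a ball about $e$ inside $\hat K$; both are routine but fiddly, and they are exactly what the geodesic hypothesis is there for. The conceptual content is entirely in the first step, where Theorem \ref{bgt} (Breuillard--Green--Tao) is invoked to force $\Gamma$ nilpotent, after which compact subgroups are automatically central.
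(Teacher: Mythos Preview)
Your proof is correct and follows the paper's approach exactly: invoke Theorem \ref{bgt} to get nilpotency and hence centrality of $K_0$, set $h_i=\pi\circ f_i$, and verify that this is a clean discrete approximation using that $\pi$ is $1$-Lipschitz and proper. Your treatment is more detailed in two places --- you give a self-contained Engel/Ad argument for centrality where the paper cites a reference, and you spell out the chain argument (\`a la Lemma \ref{cci}) needed for the cleanness inclusion $h_i^{-1}(S_0')\subset S_i^M$, which the paper's one-line justification leaves implicit.
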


\begin{proof}
 By Theorem \ref{bgt}, $\Gamma$ is nilpotent, and since compact subgroups of connected nilpotent Lie groups are central (\cite{zamora-fglahs}, Section 2.6), $K_0$ is. As $K_0$ is compact normal, the metric $d^{\prime}$ in $\Gamma /K_0$ given by
 \[     d^{\prime}([a],[b]) : = \inf_{k \in K_0} d(a,kb)          \]
 is proper, geodesic, and left invariant. Then define $h_i : \Gamma_i \to \Gamma / K_0$ as the composition of $f_i$ with the projection $\Gamma \to \Gamma /K_0$. All conditions in the definition of clean discrete approximation then follow for $h_i$ from the corresponding ones of $f_i$, as the projection $\Gamma \to \Gamma / K_0$ is 1-Lipschitz and proper. 
\end{proof}

We will also need the following elementary fact about nilpotent Lie groups (\cite{corwin-greenleaf}, Section 1.2).

\begin{prop}\label{nilpotent-compact}
 Let $\Gamma$ be a connected nilpotent Lie group. Then there is a unique normal compact subgroup $K_0 \leq \Gamma$ such that the quotient $\Gamma / K_0 $ is simply connected.  
\end{prop}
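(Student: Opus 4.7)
The plan is to pass to the universal cover. Let $\pi : \tilde{\Gamma} \to \Gamma$ be the universal covering; since $\tilde{\Gamma}$ is a simply connected nilpotent Lie group, the kernel $\Lambda := \ker \pi$ is a discrete central subgroup, and the center $Z := Z(\tilde{\Gamma})$ is a vector group isomorphic to $\mathbb{R}^k$ for some $k$. Let $V \subseteq Z$ be the $\mathbb{R}$-linear span of $\Lambda$; then $\Lambda$ is a cocompact lattice in $V$, so $K_0 := V/\Lambda$ embeds as a compact central subgroup of $\Gamma$. To see that $\Gamma/K_0$ is simply connected, I will identify $\Gamma/K_0 \cong \tilde{\Gamma}/V$ and apply the long exact sequence of the fiber bundle $V \to \tilde{\Gamma} \to \tilde{\Gamma}/V$: since $V$ is path-connected and simply connected and $\pi_1(\tilde{\Gamma}) = 0$, this forces $\pi_1(\tilde{\Gamma}/V) = 0$.

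For uniqueness, suppose $K_0'$ is another compact normal subgroup with $\Gamma/K_0'$ simply connected. Compact subgroups of connected nilpotent Lie groups are central (the fact already invoked in the proof of Lemma \ref{sc-reduction}), so $K_0'$ lies in the center of $\Gamma$. Its preimage $\tilde{K}' := \pi^{-1}(K_0')$ is a closed subgroup of $\tilde{\Gamma}$ containing $\Lambda$, and the long exact sequence for $\tilde{K}' \to \tilde{\Gamma} \to \tilde{\Gamma}/\tilde{K}' = \Gamma/K_0'$ forces $\pi_0(\tilde{K}') = 0$, so $\tilde{K}'$ is connected.

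The main step will be to show $\tilde{K}' \subseteq Z$. For fixed $x \in \tilde{K}'$, the map $g \mapsto gxg^{-1}x^{-1}$ is continuous on $\tilde{\Gamma}$, and since $K_0'$ is central in $\Gamma$, this map takes values in $\Lambda$. As $\tilde{\Gamma}$ is connected, $\Lambda$ is discrete, and $e$ is in the image at $g = e$, the map must be constantly $e$; hence $x$ commutes with all of $\tilde{\Gamma}$, so $\tilde{K}' \subseteq Z$. Being a connected closed subgroup of $Z \cong \mathbb{R}^k$, $\tilde{K}'$ is a linear subspace containing $\Lambda$, and therefore containing $V$. Choosing a linear complement $W$ of $V$ inside $\tilde{K}'$ yields $K_0' = \tilde{K}'/\Lambda \cong (V/\Lambda) \times W$, and compactness of $K_0'$ forces $W = 0$, so $\tilde{K}' = V$ and $K_0' = K_0$.

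The only subtle point I anticipate is the verification that $\tilde{K}' \subseteq Z$; once this is in hand, the rest reduces to the classical structure of discrete and closed subgroups of $\mathbb{R}^k$ together with the homotopy exact sequence of a principal bundle.
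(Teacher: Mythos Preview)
Your proof is correct. The paper itself does not give a proof of this proposition; it simply records the statement as a standard fact with a reference to Corwin--Greenleaf, Section 1.2. Your argument---lifting to the universal cover, identifying the maximal compact central subgroup as $V/\Lambda$ where $V$ is the real span of the deck group $\Lambda$ inside the vector group $Z(\tilde{\Gamma})$, and using the homotopy exact sequence together with the commutator-into-a-discrete-set trick for uniqueness---is the standard one and is carried out cleanly. The step you flagged as subtle (showing $\tilde{K}' \subseteq Z$) is handled correctly: the commutator map $g \mapsto gxg^{-1}x^{-1}$ lands in the discrete kernel $\Lambda$ and is continuous on the connected group $\tilde{\Gamma}$, hence constant.
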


\begin{prop}\label{isomorphism}
 Let $G_0, G_1$ be groups, $S_0 \subset G_0$, $S_1 \subset G_1$ determining sets, and $\theta : G_0 \to G_1$ a morphism with
\begin{itemize}
    \item $S_1 \subset \theta(S_0)$.
    \item $Ker(\theta) \cap (S_0 \cup S_0^{-1})^{2} = \{ e \}$.
\end{itemize}
Then $\theta$ is an isomorphism.
\end{prop}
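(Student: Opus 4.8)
The plan is to prove surjectivity and injectivity of $\theta$ separately. Surjectivity is immediate: a determining set is in particular a generating set, so $S_1$ generates $G_1$, and since $S_1\subseteq\theta(S_0)\subseteq\theta(G_0)$ we get $\theta(G_0)=G_1$. For injectivity I would construct an explicit inverse morphism $\psi:G_1\to G_0$ using Proposition \ref{obvious} applied to the determining set $S_1$.

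First, the hypothesis $Ker(\theta)\cap(S_0\cup S_0^{-1})^2=\{e\}$ makes $\theta$ injective on $S_0\cup S_0^{-1}$: if $a,b\in S_0\cup S_0^{-1}$ and $\theta(a)=\theta(b)$, then $ab^{-1}\in Ker(\theta)\cap(S_0\cup S_0^{-1})^2=\{e\}$. Since $S_1\cup S_1^{-1}\subseteq\theta(S_0\cup S_0^{-1})$, this yields a well-defined lift $\sigma:S_1\cup S_1^{-1}\to S_0\cup S_0^{-1}$ sending each $\ell$ to the unique preimage of $\ell$ in $S_0\cup S_0^{-1}$; uniqueness forces $\sigma(e)=e$ and $\sigma(\ell^{-1})=\sigma(\ell)^{-1}$. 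Next I would check that $\sigma$ satisfies the hypothesis of Proposition \ref{obvious}: if $\ell_1,\ell_2\in S_1\cup S_1^{-1}$ with $\ell_1\ell_2\in S_1\cup S_1^{-1}$, set $\ell_3:=(\ell_1\ell_2)^{-1}\in S_1\cup S_1^{-1}$, so that $\ell_1\ell_2\ell_3=e$ is a length-$3$ relator of $(G_1,S_1)$; then $\sigma(\ell_1)\sigma(\ell_2)\sigma(\ell_3)$ is a short product of elements of $S_0\cup S_0^{-1}$ whose image under $\theta$ is $\ell_1\ell_2\ell_3=e$, hence it lies in $Ker(\theta)$ intersected with a bounded power of $S_0\cup S_0^{-1}$ and must be trivial, giving $\sigma(\ell_1)\sigma(\ell_2)=\sigma(\ell_3)^{-1}=\sigma(\ell_1\ell_2)$. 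By Proposition \ref{obvious}, $\sigma$ extends to a group morphism $\psi:G_1\to G_0$ with $\psi|_{S_1}=\sigma$; since $\theta\circ\psi$ fixes the generating set $S_1$, we get $\theta\circ\psi=\mathrm{id}_{G_1}$, and in particular $\psi$ is injective.

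It then remains to show $\psi$ is onto, equivalently that $\sigma(S_1)$ generates $G_0$, and this is where the determining-set structure of $S_0$ is used: given $u\in S_0$, write $\theta(u)\in\theta(S_0)$ as a word in $S_1\cup S_1^{-1}$ and apply $\sigma$ letter by letter to obtain $w\in\langle\sigma(S_1)\rangle$ with $\theta(w)=\theta(u)$; then $u^{-1}w\in Ker(\theta)$ lies in a bounded power of $S_0\cup S_0^{-1}$, hence equals $e$, so $u=w\in\langle\sigma(S_1)\rangle$. Thus $\psi$ is an isomorphism and $\theta=\psi^{-1}$ is too. The step I expect to be the main obstacle is the verification that $\sigma$ is a partial homomorphism — i.e. that lifting a length-$3$ relator of $(G_1,S_1)$ through $\sigma$ yields an element that is \emph{genuinely} trivial in $G_0$, not merely in $Ker(\theta)$. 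This is exactly what the kernel hypothesis is designed to provide, and the delicate point is the word-length bookkeeping: one must arrange that the lifted relator (and, in the final surjectivity step, the comparison word $u^{-1}w$) falls inside the precise power of $S_0\cup S_0^{-1}$ on which $\theta$ is known to be injective, which in turn requires controlling how far the elements of $\theta(S_0)$ sit from the identity in the $S_1$-word metric.
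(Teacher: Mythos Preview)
Your approach mirrors the paper's exactly: construct a section $\psi:S_1\to S_0$ and extend it via Proposition~\ref{obvious} to a morphism $\tilde\psi:G_1\to G_0$ inverse to $\theta$. The paper's proof is just two sentences and leaves the verification to the reader, and you have correctly located the two places where that verification is delicate.

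Unfortunately, the bookkeeping you flag as ``the main obstacle'' is an actual gap, not merely a delicate detail. When $\ell_1,\ell_2,\ell_1\ell_2\in S_1\cup S_1^{-1}$, the element $\sigma(\ell_1)\sigma(\ell_2)\sigma(\ell_1\ell_2)^{-1}$ lies in $Ker(\theta)\cap(S_0\cup S_0^{-1})^{3}$, not in $Ker(\theta)\cap(S_0\cup S_0^{-1})^{2}$, so the stated hypothesis does not force it to be trivial. In fact the proposition is false as written: take $G_0=\mathbb Z$, $G_1=\mathbb Z/3\mathbb Z$, $\theta$ the reduction map, $S_0=\{-1,0,1\}$ and $S_1=G_1$; both are determining (cf.\ the paper's own example with $k=1$, $n=3$), one has $\theta(S_0)=S_1$ and $Ker(\theta)\cap S_0^2=3\mathbb Z\cap\{-2,\dots,2\}=\{0\}$, yet $\theta$ is not injective. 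Your final surjectivity step has the same problem in a sharper form: writing $\theta(u)$ as a word in $S_1\cup S_1^{-1}$ gives no control whatsoever on its length, so $u^{-1}w$ need not lie in any fixed power of $S_0\cup S_0^{-1}$.

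The paper's proof is equally silent at this point, so the issue is with the statement rather than with your strategy. In the only place the proposition is used (end of the proof of Theorem~\ref{infinite}), the required extra room is available: the claim there gives $Ker(\theta_i)\cap T_i^{2M}=\{e\}$, and the same contraction argument would yield $Ker(\theta_i)\cap T_i^{3M}=\{e\}$ (and more) after adjusting $\delta,\delta'$; likewise one has containments of the form $\theta_i(T_i^M)\subset$ a fixed compact set, which supplies the missing control on the $S_1$-length of $\theta(S_0)$. So the intended application survives, but the proposition as stated needs either the exponent raised to $3$ together with an inclusion $\theta(S_0)\subset S_1\cup S_1^{-1}$, or some equivalent strengthening.
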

\begin{proof}
 Construct $\psi : S_1 \to S_0$ as $\psi (s) = \theta^{-1}(s)$. By the first condition such element exists, and by the second condition the map is well defined. Since $S_1$ is determining in $G_1$, there is an inverse map $\tilde{\psi} : G_1 \to G_0$.
\end{proof}

\begin{proof}[Proof of Theorem \ref{infinite}:] 
If $\Gamma$ has infinitely many connected components, then by Lemma \ref{clean-cci}, $\Gamma_i$ is infinite for large enough $i$, so we can assume $\Gamma$ has finitely many connected components.  By Lemma \ref{cci-reduction}, we can assume that $\Gamma$ is connected. By Remark \ref{geodesic} we can assume its metric is geodesic, and  by  Lemma \ref{sc-reduction} and Proposition \ref{nilpotent-compact}, we can further assume $\Gamma$ is simply connected.

By Theorem \ref{bgt}, there is $C>0$, a sequence of small subgroups $H_i \triangleleft \Gamma_i$, and rank $r$ nilprogressions $P_i \subset \Gamma_i^{\prime} : =   \Gamma_i / H_i$ in $C$-regular form with thick$(P_i) \to \infty$ and satisfying that for each $\varepsilon \in (0,1]$ there is $\delta > 0 $ with
\begin{itemize}
    \item $\tilde{f}_i^{-1}(B_e(\delta , \Gamma )) \subset G(\varepsilon P_i )$ for $i$ large enough,
    \item $ \tilde{f}_i ( G(\delta P _i) ) \subset B_e(\varepsilon , \Gamma)  $ for $i$ large enough, 
\end{itemize} 
where $\tilde{f}_i : \Gamma _ i ^{\prime } \to \Gamma$ is the discrete approximation given by Proposition \ref{small-quotient}. 

By iterated applications of Theorem \ref{malcev}, for $i$ large enough there are infinite groups $ (\mathbb{Z}^r , \ast_{P_i} )$, symmetric determining sets $T_i \subset T_i^{\prime} \subset (\mathbb{Z}^r , \ast_{P_i} )$, $\delta ^{\prime} > \delta > 0 $, and  surjective morphisms $\theta_i: (\mathbb{Z}^r , \ast_{P_i} ) \to \Gamma_i^{\prime}$ such that $Ker ( \theta_i) \cap ( T_i^{\prime })^2 = \{ e \}$ and  
\[   \tilde{f}_i^{-1} (B_e(\delta, \Gamma )) \subset \theta _i ( T_i) \subset  \tilde{f}_i^{-1} (B_e(\delta ^{\prime}/8, \Gamma )) \subset  \tilde{f}_i^{-1} (B_e(\delta ^{\prime}, \Gamma )) \subset  \theta_i (T_i^{\prime}) . \]
It then follows by induction on $N \in \mathbb{N}$ that if $a_{1,i}, \ldots , a_{N,i} \in \theta_i^{-1} (\tilde{f}_i^{-1} ( B_e(\delta^{\prime}/3, \Gamma ) ) ) \cap T_i^{\prime} $ and $b_i \in T_i$ are sequences with $\tilde{f}_i(\theta_i (b_i) ) \to e$, then 
\begin{align}
\label{induct-approximate}
\begin{split}
  a_{1,i} \cdots &a_{N,i} b_i(  a_{1,i} \cdots a_{N,i})^{-1}  \in T_i\text{ for large enough }i,
\\
 &\tilde{f}_i (\theta_i (a_{1,i} \cdots a_{N,i} b_i ( a_{1,i} \cdots a_{N,i})^{-1})) \to e. 
\end{split}
\end{align}

Also notice that if one has two sequences $g_i, h_i \in (\mathbb{Z}^r , \ast_{P_i} )$ such that $g_i, h_i \in T_i$ for large enough $i$,  and $\tilde{f}_i (\theta_i(g_i )) \to e , \tilde{f}_i  ( \theta_i(  h_i)) \to e$, then for large enough $i$ one also has 
\begin{equation}\label{product}
    g_ih_i \in T_i\text{ and }\tilde{f}_i (\theta_i(g_i h_i)) \to e.
\end{equation}

Let  $S_i \subset \Gamma_i^{\prime}$ be the sets given by the definition of clean discrete approximation. Then by connectedness of $\Gamma$, there is $M \in \mathbb{N}$ such that for $i$ large enough, $S_i \cup S_i^{-1} \subset \theta_i(T_i)^{M} $.
\begin{center}
    \textbf{Claim:} $Ker(\theta_i) \cap T_i ^{2M} = \{ e \}$ for $i$ large enough.
\end{center}
Working by contradiction, after taking a subsequence we can find sequences $ x_m^i \in T_i$ with $m \in \{1, \ldots , 2M \} $, $i \in \mathbb{N}$, such that $x_1^i \cdots x_{2M}^i \in Ker(\theta_i) \backslash \{ e \}$ for each $i$. After further taking a subsequence, we can assume there are elements $y_m \in B_e(  \delta ^{\prime} /4 , \Gamma )$ with $\tilde{f}_i(\theta_i(x_m^i)) \to y_m$ as $i \to \infty$ for $m\in \{1, \ldots , 2M \}$. 

Since $\Gamma$ is simply connected, there is a finite sequence of chains $\{ y_{\ell ,1}, \ldots , y_{\ell , k_{\ell} } \} \subset B_e( \delta ^{\prime} /4 , \Gamma )  $ with $\ell \in \{ 0 , \ldots , n \}$ such that (see Figure \ref{induction-figure}):
\begin{itemize}
    \item $k_0 = 2M$ and $y_{0, m} = y_m$ for all $m \in \{ 1, \ldots , 2M \}$.
    \item $k_n = 1$ and $y_{n,1} = e$.
    \item  For each $\ell \in \{1, \ldots, 2M \}$, there is $j_{\ell} \in \{ 1, \ldots , k_{\ell} \}$ such that the words $\{ y_{\ell ,1}, \ldots , y_{\ell , k_{\ell} } \}$ and $\{ y_{\ell-1 ,1}, \ldots , y_{\ell-1 , k_{\ell-1} } \}$ satisfy that $y_{\ell, m}= y_{\ell -1 , m } $ for $m < j_{\ell} $ and either:
    \begin{enumerate}
        \item  $y_{\ell, m} = y_{\ell-1, m+1}$ for $m > j_{\ell}$, and $y_{\ell,j_{\ell}}=y_{\ell -1, j_{\ell}} y_{\ell -1 , j_{\ell}+1}$. \label{triangle1}
        \item  $y_{\ell, m} = y_{\ell-1, m-1}$ for $m > j_{\ell}+1$, and $y_{\ell,j_{\ell}} y_{\ell, j_{\ell}+1}=y_{\ell -1, j_{\ell}} $. \label{triangle2}
    \end{enumerate}
\end{itemize}
For each $m \in \{1, \ldots , 2M \} $, set $x_{0,m}^i : = x_m^i$, and for each $\ell \in \{ 1, \ldots , n \}$, $m \in \{ 1, \ldots , k_{\ell} \}$, choose a sequence $x_{\ell , m}^i \in T_i^{\prime}$ such that $\tilde{f}_i(\theta_i (x_{\ell , m}^i)) \to y_{\ell, m}$ as $i \to \infty$. We take them so that for all $i$, one has $x_{\ell, m}^i = x_{\ell - 1 , m}^i$ for $m < j_{\ell}$, and 
\begin{itemize}
    \item   if $ j_{\ell}$ satisfies \ref{triangle1}, then $x_{\ell, m}^i = x_{\ell -1 , m+1} ^i$ for $m > j_{\ell}$.
    \item  if $ j_{\ell}$ satisfies \ref{triangle2},  then $x_{\ell, m}^i = x_{\ell -1 , m-1} ^i$ for $m > j_{\ell} + 1$.
\end{itemize}
 Notice that 
\begin{equation}\label{base-approximate}
    x_{n,1}^i \in T_i\text{ for large enough }i \text{ and } \tilde{f}_i (\theta _i (x_{n,1}^i) ) \to e.
\end{equation}
We will show by reverse induction on $\ell$ that
\begin{equation}\label{cycle-infinite}
w_{\ell}^i :=  x_{\ell , 1}^i \cdots x_{\ell , k_{\ell }} ^i \in T_i \text{ for large enough }i  \text{ and }    \tilde{f}_i (\theta_i (x_{\ell , 1}^i \cdots x_{\ell , k_{\ell }} ^i )) \to e  ,
\end{equation}
the base of induction being Equation \ref{base-approximate}.

If $j_{\ell}$ satisfies \ref{triangle1}, then  $w_{\ell -1}^i = x_{\ell-1, 1}^i \cdots x^i_{\ell-1,k_{\ell-1}} $ is obtained from $w_{\ell}^i $ by multiplying it by $ (x_{\ell, j_{\ell}+1}^i \cdots x^i_{\ell,k_{\ell}})^{-1} ( (x^i_{\ell, j_{\ell}})^{-1}  x^i_{\ell-1, j_{\ell}} x^i_{\ell -1, j_{\ell}+1}  ) (x_{\ell-1, j_{\ell}+2}^i \cdots x^i_{\ell-1,k_{\ell-1}} ) $. Putting $N := k_{\ell} - j_{\ell}$, $a_{m,i} := x_{\ell ,  j_{\ell}+m}^i = x_{\ell -1 , j_{\ell}+m +1}^i$, and $b_i = (x^i_{\ell, j_{\ell}})^{-1}  x^i_{\ell-1, j_{\ell}} x^i_{\ell -1, j_{\ell}+1}$, the induction step follows from Equations  \ref{induct-approximate} and \ref{product}. 

If $j_{\ell}$ satisfies \ref{triangle2}, then  $w_{\ell -1}^i $ is obtained from $w_{\ell}^i $ by multiplying it by 

$ (x_{\ell, j_{\ell}+2}^i \cdots x^i_{\ell,k_{\ell}})^{-1} ( (x^i_{\ell, j_{\ell}} x^i_{\ell , j_{\ell} + 1} )^{-1}  x^i_{\ell-1, j_{\ell}} ) (x_{\ell-1, j+1}^i \cdots x^i_{\ell-1,k_{\ell-1}} ) $, and just like in the first case, the induction step follows from Equations \ref{induct-approximate} and \ref{product}.

\begin{figure}
\centering
\psfrag{a}{$y_1$}
\psfrag{b}{$y_2$}
\psfrag{c}{$y_3$}
\psfrag{d}{$y_4$}
\psfrag{e}{$y_5$}
\psfrag{f}{$y_6$}
\psfrag{g}{$y_7$}
\psfrag{h}{$y_8$}
\psfrag{i}{$y_{\ell-1,j_{\ell}}$}
\psfrag{j}{$y_{\ell-1, j_{\ell}+1}$}
\psfrag{k}{$y_{\ell, j_{\ell}}$}
\psfrag{l}{$y_{\ell-1,j_{\ell}}$}
\psfrag{m}{$y_{\ell, j_{\ell}}$}
\psfrag{n}{$y_{\ell, j_{\ell}+1}$}
\includegraphics[width=0.7\textwidth]{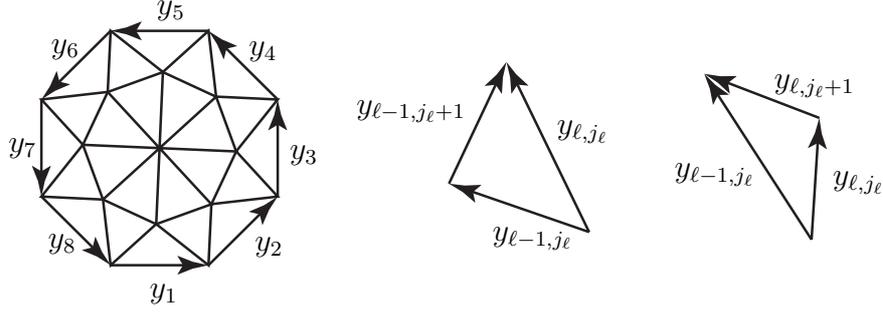}
\caption{For each $\ell$, there is $j_{\ell}$ such that either $y_{\ell-1,j_{\ell}}y_{\ell -1,j_{\ell}+1}= y_{\ell , j_{\ell}}$ or $y_{\ell-1, j_{\ell}} = y_{\ell, j_{\ell}}y_{\ell, j_{\ell}+1}$.}\label{induction-figure}
\end{figure}

Therefore, $x_{0,1}^i \cdots x_{0,2M}^i \in  Ker (\theta_i ) \cap T_i = \{ e \}$ for $i$ large enough. However, this contradicts our choice of $x_j^i=x_{0,j}^i$, proving our claim.
 
By Remark \ref{s-and-sm}, Proposition \ref{isomorphism}, and the claim, $\theta_i$ is an isomorphism and $\Gamma_i^{\prime} $ is infinite for $i$ large enough.
\end{proof}

\begin{customthm}{2}
 Let $(X,d,\mathfrak{m})$ be an $RCD^{\ast}(K,N;D)$ space. If the universal cover $(\tilde{X},\tilde{d},\tilde{\mathfrak{m}})$ is compact, then diam$( \tilde{X} )\leq \tilde{D}( N,K, D)$.    
\end{customthm}

\begin{proof}[Proof of Theorem \ref{diam}:]
Assuming the contrary, we get a sequence $X_i$ of compact $RCD^{\ast}(K,N;D)$ spaces such that their universal covers $\tilde{X}_i$ satisfy diam$(\tilde{X}_i) \to \infty$. After taking points $p_i \in \tilde{X}_i$ and a subsequence, we can assume that the sequence $(\tilde{X}_i , p_i)$ converges in the pGH sense to a pointed $RCD^{\ast}(K,N)$ space $(X,p)$ and the sequence $\pi_1(X_i)$ converges equivariantly to a closed group $\Gamma\leq Iso (X)$. Since $X$ is non-compact, and $X/\Gamma$ is compact, we conclude that $\Gamma$ is non-compact. By Corollary \ref{clean-uc}, we have a clean discrete approximation $f_i : \pi_1(X_i) \to \Gamma$. However, the groups $\pi_1(X_i)$ are finite, contradicting Theorem \ref{infinite}.
\end{proof}

\section{Uniform Virtual Abelianness}\label{section-5}

In this section we prove Theorems \ref{ua-1} and \ref{ua-2}. The main ingredient is the following result, which implies that non-collapsing sequences of $RCD^{\ast}(K,N)$ spaces cannot have small groups of measure preserving isometries.

\begin{thm}\label{nss}
 Let $(X_i, d_i, \mathfrak{m}_i,p_i) $ be a sequence of pointed $RCD^{\ast}(K,N)$ spaces of rectifiable dimension $n$ and let $H_i \leq Iso (X_i)$ be a sequence of small subgroups in the sense of Remark \ref{small-2} acting by measure preserving isometries. Assume the sequence $(X_i, d_i, \mathfrak{m}_i,p_i)$ converges in the pmGH sense to an $RCD^{\ast}(K,N)$ space $(X, d, \mathfrak{m},p) $ of rectifiable dimension $n $. Then $H_i$ is trivial for large enough $i$.
\end{thm}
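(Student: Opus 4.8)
The plan is a blow-up argument by contradiction: quotienting by a small group changes neither the limit space nor, by lower semicontinuity of the rectifiable dimension, the dimensions of the spaces, and then rescaling so that the group becomes ``visible'' produces a nontrivial group of isometries of $\mathbb{R}^n$ whose quotient is isometric to $\mathbb{R}^n$, which is impossible.

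\emph{Step 1 (pass to a quotient of the same dimension).} Suppose the conclusion fails; after passing to a subsequence, $H_i\neq\{Id_{X_i}\}$ for all $i$. Smallness gives $\varepsilon_i:=\sup_{h\in H_i}d_0(h,Id_{X_i})\to 0$, and since $d_0$ is continuous the same bound holds for the closures $\overline{H_i}\leq Iso(X_i)$; in particular $d(hp_i,p_i)\leq\varepsilon_i$ for all $h\in\overline{H_i}$, so $\overline{H_i}$ has bounded orbits, hence is a compact subgroup of the Lie group $Iso(X_i)$ (Theorem \ref{gs-lie}), still nontrivial and consisting of measure preserving isometries. By Theorem \ref{compact-quotient} the quotients $Y_i:=X_i/\overline{H_i}$ with the pushforward measures $\mathfrak{m}_i^Y$ are $RCD^{\ast}(K,N)$ spaces, and since the projections $\pi_i:X_i\to Y_i$ are $\varepsilon_i$-isometries on bounded sets with $(\pi_i)_{\ast}\mathfrak{m}_i=\mathfrak{m}_i^Y$, the sequence $(Y_i,\mathfrak{m}_i^Y,[p_i])$ also converges in the pmGH sense to $(X,\mathfrak{m},p)$. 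By Theorem \ref{dim-semicont}, $n=\dim X\leq\liminf_i\dim Y_i$, while $\dim Y_i\leq\dim X_i=n$ since rectifiable dimension cannot increase under a quotient (blow up $Y_i$ at a regular point lying below an $n$-regular point of $X_i$, using equivariant compactness). Hence $\dim Y_i=n$ for large $i$.

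\emph{Step 2 (reduce to $X=\mathbb{R}^n$ and choose a good basepoint).} First I would fix an $n$-regular point $q\in X$ and $q_i\in X_i$ with $q_i\to q$; a diagonal argument produces $\lambda_i\to\infty$, growing slowly enough that $\lambda_i\varepsilon_i\to 0$, with $(X_i,\lambda_i d_i,(\mathfrak{m}_i)^{q_i}_{1/\lambda_i},q_i)\to(\mathbb{R}^n,d^{\mathbb{R}^n},\mathcal{H}^n,0)$, and so that the $\overline{H_i}$ remain small measure preserving subgroups of the rescaled spaces (which are $RCD^{\ast}(\lambda_i^{-2}K,N)$ with $\lambda_i^{-2}K\to 0$). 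Replacing the sequence by these rescalings, I may assume $(X_i,d_i,\mathfrak{m}_i,p_i)\to(\mathbb{R}^n,\mathcal{H}^n,0)$ with $\dim X_i=\dim Y_i=n$. Next apply Theorem \ref{zoom} to $X_i\to\mathbb{R}^n$ and to $Y_i\to\mathbb{R}^n$ to obtain almost-full-measure sets $U_i\subset B_{p_i}(1,X_i)$, $V_i\subset B_{[p_i]}(1,Y_i)$ at whose points blow-ups at \emph{any} scales subconverge to $RCD^{\ast}(0,N)$ spaces containing $\mathbb{R}^n$, hence (by $\dim=n$, Theorem \ref{dim-semicont} and Theorem \ref{gigli}) isometric to $\mathbb{R}^n$. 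The set $U_i\cap\pi_i^{-1}(V_i)$ still has almost full measure in $B_{p_i}(1,X_i)$, while $\bigcup_{g\in\overline{H_i}\setminus\{Id\}}\mathrm{Fix}(g)$ is $\mathfrak{m}_i$-null by Lemma \ref{no-fixed}; I would pick $w_i$ in the nonempty complement, set $\delta_i:=\mathrm{diam}(\overline{H_i}\cdot w_i)$ and $\mu_i:=1/\delta_i$. Then $0<\delta_i\to 0$ (positivity because $w_i$ is not fixed, vanishing because $\overline{H_i}$ is small and $w_i$ lies in a fixed ball), so $\mu_i\to\infty$.

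\emph{Step 3 (conclusion) and the main obstacle.} Blowing up by $\mu_i$ at $w_i$, Theorem \ref{zoom} gives $(X_i,\mu_i d_i,\ldots,w_i)\to(\mathbb{R}^n,0)$ and $(Y_i,\mu_i d_i',\ldots,[w_i])\to(\mathbb{R}^n,0)$ up to a subsequence, and by Theorem \ref{equivariant-compactness} I may assume $\overline{H_i}$ converges equivariantly to a closed group $\Gamma\leq Iso(\mathbb{R}^n)$. The orbit $\overline{H_i}\cdot w_i$ has diameter exactly $1$ in $(X_i,\mu_i d_i)$, so its equivariant limit, the orbit $\Gamma\cdot 0$, has diameter $1$ and $\Gamma$ is nontrivial. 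By Lemma \ref{equivariant}, $(Y_i,\mu_i d_i',[w_i])\to\mathbb{R}^n/\Gamma$, so $\mathbb{R}^n/\Gamma$ is isometric to $\mathbb{R}^n$ --- which is impossible: a positive-dimensional orbit forces the quotient to have topological dimension $<n$, and a nontrivial discrete $\Gamma$ forces either nontrivial fundamental group (free action) or a genuine metric cone point (some $S^{n-1}/\mathrm{Stab}$ of $\mathcal{H}^{n-1}$-measure $<\mathrm{vol}(S^{n-1})$), so in neither case is the quotient isometric to $\mathbb{R}^n$. This contradiction finishes the proof. The hard part is Step 2: one must choose $w_i$ and the single scale $\mu_i$ so that $X_i$ and $Y_i$ simultaneously blow up to $\mathbb{R}^n$ (which forces $w_i$ into the intersection of the Mondino--Naber good sets, away from the null fixed-point set) \emph{and} the action of $\overline{H_i}$ survives with a nontrivial limit (which pins $\mu_i$ as the reciprocal of the orbit diameter), while keeping track of the measure normalizations so that all the pmGH and equivariant convergences invoked are mutually compatible; the soft fact that $\mathbb{R}^n/\Gamma$ is never isometric to $\mathbb{R}^n$ for nontrivial $\Gamma$ also needs to be spelled out.
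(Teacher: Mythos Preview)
Your argument is correct and follows essentially the same blow-up strategy as the paper: pass to compact closures $\overline{H_i}$, rescale at an $n$-regular point so that the limit is $\mathbb{R}^n$, use Lemma \ref{no-fixed} together with the Mondino--Naber good sets (Theorem \ref{zoom}) to pick a basepoint with nontrivial orbit, rescale by the reciprocal of the orbit size, and reach a contradiction from the equivariant limit.

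The only notable difference is organizational. You apply Theorem \ref{zoom} \emph{twice}, once to $X_i$ and once to $Y_i=X_i/\overline{H_i}$, intersect the two good sets, and then conclude by the soft fact that $\mathbb{R}^n/\Gamma\cong\mathbb{R}^n$ forces $\Gamma$ trivial. The paper applies Theorem \ref{zoom} only to the \emph{quotient} $\lambda_iX_i/H_i\to\mathbb{R}^n$, lifts the good basepoint, and then invokes Proposition \ref{gigli-corollary}: the blown-up total space splits as $\mathbb{R}^n\times Z$ with the limit group $H$ acting transitively on the $Z$-fibers, and Remark \ref{dim-semicont-2} forces $\dim Z\leq n-n=0$, so $Z$ is a point and $H$ is trivial, contradicting the choice of scale. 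This is slightly more economical (one application of Zoom, no need to check that $\pi_i^{-1}(V_i)$ has almost full measure or to spell out why $\mathbb{R}^n/\Gamma\not\cong\mathbb{R}^n$), but the content is the same: in either version the implicit step ``$\dim Y_i\leq n$'' that you isolate in Step 1 is what makes the limit of the quotient equal to $\mathbb{R}^n$ rather than merely containing it.
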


\begin{proof}
Working by contradiction, after taking a subsequence and replacing $H_i$ by its closure, we can assume that $H_i$ is non-trivial and compact for each $i$. 

 Let $q \in X$ be an $n$-regular point and $p_i^{\prime} \in X_i$ be such that $p_i^{\prime}$ converges to $q$. Notice that if $\lambda_i \to \infty$ slowly enough, then $(\lambda_i X_i , p_i^{\prime})$ converges (after renormalizing the measure) in the pmGH sense to $(\mathbb{R}^{n} , d^{\mathbb{R}^{n}},\mathcal{H}^{n}, 0 )$ and the groups $H_i$ are still small subgroups when acting on $(\lambda_iX_i, p_i^{\prime} )$, so by Theorem \ref{compact-quotient} the sequence $(\lambda_i X_i / H_i , [p_i^{\prime}])$ converges (after renormalizing the measure on $\lambda_i X_i / H_i$) to $(\mathbb{R}^{n} , d^{\mathbb{R}^{n}},\mathcal{H}^{n}, 0 )$ as well. 

Define $\eta _ i : X_i \to \mathbb{R}$ as 
\[ \eta_i (x) : = \sup _{h \in H_i} d^{\lambda_iX_i} ( hx,x ).    \]
Then by Lemma \ref{no-fixed}, $\mathfrak{m}_i(\eta_i^{-1}(0))=0$, and by Theorem \ref{zoom} there are sequences $q_i \in X_i$ such that $\eta_i(q_i) \neq 0$ and for any $\alpha_i \to \infty$, the sequence $(\alpha_i \lambda_i X_i /H_i, [q_i]) $ converges (possibly after taking a subsequence) in the pGH sense to a space containing an isometric copy of $\mathbb{R}^{n}$. Set $\alpha_i : = 1/ \eta_i (q_i)$. 

By Proposition \ref{gigli-corollary} after taking a subsequence we can assume that $(\alpha_i \lambda_i X_i, q_i)$ converges in the pGH sense to a pointed space $(\mathbb{R}^{n } \times Z, (0,z))$  and $H_i$ converges equivariantly to a group $H \leq Iso \left( \mathbb{R}^{n} \times Z \right)$ in such a way that the $H$-orbits coincide with the $Z$-fibers. By Remark \ref{dim-semicont-2}, $Z$ is a point and $H$ is trivial, but by our choice of $\alpha_i$, $H$ is non-trivial which is a contradiction.
\end{proof}

\begin{customthm}{4}
 Let $(X_i,d_i,\mathfrak{m}_i)$ be a sequence of $RCD^{\ast}(K,N;D)$ spaces of rectifiable dimension $n$ with compact universal covers $(\tilde{X}_i,\tilde{d}_i,\tilde{\mathfrak{m}}_i)$. If the sequence $(\tilde{X}_i,\tilde{d}_i,\tilde{\mathfrak{m}}_i)$ converges in the pointed measured Gromov--Hausdorff sense to some $RCD^{\ast}(K,N)$ space of rectifiable dimension $n$, then there is $C > 0 $ such that for each $i$ there is an abelian subgroup $A_i \leq \pi_1(X_i)$ with index  $[\pi_1(X_i): A_i] \leq C$.
\end{customthm}

\begin{proof}[Proof of Theorem \ref{ua-2}:]
Arguing by contradiction, after taking a subsequence we can assume that $\pi_1(X_i)$ does not admit an abelian subgroup of index $ \leq i$. By Theorem \ref{diam}, $X$ is compact, and after taking a subsequence we can assume the groups $\pi_1(X_i)$ converge equivariantly to a compact group $\Gamma \leq Iso (X)$. By Corollary \ref{clean-uc}, we have a clean discrete approximation $f_i : \pi_1(X_i) \to \Gamma$. By Theorem \ref{turing}, there are small subgroups $H_i \triangleleft \pi_1(X_i)$ for which the quotients $\pi_1(X_i)/H_i $ are uniformly virtually abelian. On the other hand, by Theorem \ref{nss} the groups $H_i$ are eventually trivial, showing the existence of a constant $C$ for which $\pi_1(X_i)$ admits an abelian subgroup $A_i \leq \pi_1(X_i)$ of index $[\pi_1(X_i):A_i] \leq C$.
\end{proof}

For the proof of Theorem \ref{ua-1} we require the following intermediate step.

\begin{thm}\label{Turing-no-compact}
 Let $(X_i,d_i,\mathfrak{m}_i)$ be a sequence of $RCD^{\ast}(0,N ; D)$ spaces and $(\tilde{X}_i,\tilde{d}_i,\tilde{\mathfrak{m}}_i)$ their universal covers. Then one of the following holds:
\begin{itemize}
\item There is a sequence $H_i \leq Iso (\tilde{X}_i )$ of small subgroups of measure preserving isometries such that $H_i \neq 0$ for infinitely many $i$'s.
\item There is $C > 0 $ and abelian subgroups $A_i \leq \pi_1 (X_i) $ with index $[ \pi_1 (X_i)  , A_i] \leq C$.
\end{itemize}
\end{thm}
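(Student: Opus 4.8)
The plan is to prove the dichotomy by contradiction against its second alternative. Assume there is no uniform index bound; then along a subsequence (still indexed by $i$) the group $\pi_1(X_i)$ has no abelian subgroup of index $\leq i$. Fix basepoints $p_i\in\tilde X_i$ and normalize the lifted measures. By Gromov's compactness theorem (Theorems \ref{compactness} and \ref{compactness-2}) I may pass to a further subsequence along which $(\tilde X_i,p_i)$ converges in the pmGH sense to a pointed $RCD^{\ast}(0,N)$ space $(X,p)$, and by Theorem \ref{equivariant-compactness} I may also assume $\pi_1(X_i)$ converges equivariantly to a closed subgroup $\Gamma\leq\Iso(X)$. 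By Lemma \ref{equivariant} the quotient $X/\Gamma$ is the limit of the $X_i$, hence has diameter $\leq D$; by Theorem \ref{gs-lie} the group $\Iso(X)$ is a Lie group, and Corollary \ref{clean-uc} furnishes a clean discrete approximation $f_i\colon\pi_1(X_i)\to\Gamma$, where $\pi_1(X_i)$ acts on $\tilde X_i$ by measure preserving deck transformations.

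If $\Gamma$ is compact, Turing's theorem (Theorem \ref{turing}) yields a constant $C$ and small normal subgroups $H_i\triangleleft\pi_1(X_i)$ with $\pi_1(X_i)/H_i$ admitting an abelian subgroup of index $\leq C$. Setting $H_i=\{e\}$ on the discarded indices produces a sequence of small subgroups of $\Iso(\tilde X_i)$ acting by measure preserving isometries. If infinitely many of them are non-trivial, the first alternative of the theorem holds. Otherwise $\pi_1(X_i)=\pi_1(X_i)/H_i$ eventually has an abelian subgroup of index $\leq C$, contradicting the choice of subsequence; so in the compact case the first alternative must hold.

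When $\Gamma$ is non-compact, the Breuillard--Green--Tao theorem (Theorem \ref{bgt}) again gives a constant $C$, small normal subgroups $H_i\triangleleft\pi_1(X_i)$, and rank $r=\dim\Gamma$ nilprogressions in $\pi_1(X_i)/H_i$ in $C$-regular form; if infinitely many $H_i$ are non-trivial we are done as above, so I may assume $H_i=\{e\}$ for large $i$. The goal is then to show that $\Gamma$ is virtually abelian with index bounded in terms of $N$, after which the virtual abelianness of $\pi_1(X_i)$ with uniform index follows by transporting the conclusion back through $f_i$: this uses the chain of reductions from the proof of Theorem \ref{infinite} (Lemmas \ref{clean-cci}, \ref{cci-reduction}, \ref{sc-reduction}, Remark \ref{geodesic}, Propositions \ref{nilpotent-compact} and \ref{isomorphism}), Malcev's theorem (Theorem \ref{malcev}), and the structure result Theorem \ref{zamora-1} applied to the normal subgroup of $\pi_1(X_i)$ that approximates $\Gamma^{0}$, whose limit object will be abelian.

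The step I expect to be the main obstacle is establishing that $\Gamma$ is virtually abelian, and here I would bring in the $RCD^{\ast}(0,N)$ geometry: since $X$ carries the cocompact $\Gamma$-action, iterating Gigli's splitting theorem (Theorem \ref{gigli}) gives $X\cong\mathbb{R}^{\ell}\times C$ with $C$ a line-free $RCD^{\ast}(0,N-\ell)$ space, and cocompactness of the action forces $C$ to be compact. As the Euclidean de Rham factor is canonical, $\Gamma$ respects this splitting, so $\Gamma\hookrightarrow\Iso(\mathbb{R}^{\ell})\times\Iso(C)$; the identity component $\Gamma^{0}$ projects to a connected nilpotent subgroup of $\Iso(\mathbb{R}^{\ell})$, which is necessarily abelian (a connected nilpotent group of Euclidean isometries is a product of a vector group and a torus), and to a connected nilpotent subgroup of the compact group $\Iso(C)$, which is a torus; hence $\Gamma^{0}$ is abelian. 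The image of $\Gamma$ in $\Iso(\mathbb{R}^{\ell})$ is cocompact on $\mathbb{R}^{\ell}$, hence virtually $\mathbb{Z}^{\ell}$ by Bieberbach's theorem (Theorem \ref{bieber}), while the kernel acts on the compact $C$ and so lies in a compact group to which Corollary \ref{dense-abelian} applies; assembling these pieces gives that $\Gamma$ is virtually abelian with index depending only on $\ell\leq N$, and the contradiction with the choice of subsequence is reached. The two delicate points that will require genuine care are the compactness of the line-free factor $C$ under a merely cocompact action, and the uniformity of the index when pulling the conclusion back to $\pi_1(X_i)$ along the clean discrete approximation (in particular handling the possibly infinite component group $\Gamma/\Gamma^{0}$); the remaining ingredients are already available.
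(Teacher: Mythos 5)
Your overall strategy — argue by contradiction, pass to an equivariant limit of $(\tilde X_i,\pi_1(X_i))$, and then split the analysis into the compact and non-compact cases of the limit group $\Gamma$ — is genuinely different from what the paper does, and only half of it is on solid ground. Your Case 1 (compact $\Gamma$) is correct and is essentially the argument the paper runs on one factor of its decomposition: a clean discrete approximation into a compact Lie group together with Theorem \ref{turing} produces small normal subgroups, and the no-small-subgroups hypothesis kills them, yielding the uniform index bound. You can stop there in the compact case.

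The gap is in Case 2. You correctly guess that showing $\Gamma$ virtually abelian is the easy part (your sketch via the de Rham splitting $X\cong\mathbb R^\ell\times C$ with $C$ compact is fine). What is not justified is transporting the conclusion back through the clean discrete approximation $f_i\colon\pi_1(X_i)\to\Gamma$ to produce abelian subgroups of $\pi_1(X_i)$ of \emph{uniformly bounded} index. Since $f_i$ is only an approximate morphism and $\Gamma$ is non-compact, none of the tools you invoke applies directly: Theorem \ref{turing} requires $\Gamma$ compact; Theorem \ref{bgt}, Malcev (Theorem \ref{malcev}), and the reductions in the proof of Theorem \ref{infinite} are geared to showing $\Gamma_i$ is infinite or has a lattice-like quotient, not to producing a uniform abelian index; Theorem \ref{zamora-1} gives finite-index subgroups $G_i$ with $G_i/F_i$ a quotient of a nilpotent lattice, which again is not the statement you need; and the component group $\Gamma/\Gamma_0$ may well be infinite, so Lemma \ref{cci-reduction} does not let you pass to the identity component. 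You flag this yourself as one of two "delicate points," but it is more than delicate — it is the crux, and the cited machinery does not close it.

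The paper avoids the problem entirely by never forming the limit of $\tilde X_i$. Instead it uses the Cheeger--Gromoll/Gigli splitting on \emph{each} $\tilde X_i$ individually to write $\tilde X_i = Y_i\times\mathbb R^{m_i}$ with $Y_i$ compact (passing to a subsequence so $m_i=m$ is constant), projects $\pi_1(X_i)$ to $\Iso(Y_i)$ and to $\Iso(\mathbb R^m)$, and treats each factor with a tool tailored to it: on the $\mathbb R^m$ factor the image is a discrete crystallographic group and Bieberbach (Theorem \ref{bieber}) gives a uniform abelian index $C_2$; on the compact $Y_i$ factor one bounds $\diam(Y_i)$ via Theorem \ref{zamora-2}, takes a limit only of the $Y_i$ and of $\Iso(Y_i)$ into a compact Lie group, composes with a faithful unitary representation (Peter--Weyl, Theorem \ref{pw}), and applies Kazhdan's $\epsilon$-representation theorem (Theorem \ref{kazhdan}) to the amenable groups $\phi_i(\pi_1(X_i))$; the no-small-subgroups hypothesis makes the resulting representations eventually faithful, and Corollary \ref{dense-abelian} gives a uniform abelian index $C_1$. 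Intersecting preimages gives the abelian subgroup of $\pi_1(X_i)$ of index $\leq C_1C_2$. The decisive advantage is that the split-first approach confines the limit and all the $\epsilon$-representation machinery to a \emph{compact} ambient Lie group, where Kazhdan's theorem is available — the very thing you lack in your non-compact Case 2. If you want to salvage your plan, you would need a non-compact analogue of Theorem \ref{turing} for virtually abelian targets; it is cleaner to follow the paper's decomposition.
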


\begin{proof}
By contradiction, we can assume that any sequence $H_i \leq Iso (\tilde{X}_i)$ of small subgroups of measure preserving isometries is eventually trivial, and for each $i$, the group $\pi_1(X_i)$ does not admit an abelian subgroup of index $\leq i$. Notice that these properties are passed to subsequences.

By Theorem \ref{gigli}, each universal cover $\tilde{X}_i$ splits as a product $Y_i \times \mathbb{R}^{m_i} $ with $Y_i$ a compact $RCD^{\ast}(0, N-m_i)$ space, and after passing to a subsequence, we can assume that the sequence $m = m_i$ is constant. The action of $\pi_1(X_i)$ respects such splitting, so one has natural maps $ \phi _i : \pi_1(X_i) \to Iso (Y_i)$ and $\psi_i: \pi_1 (X_i) \to Iso (\mathbb{R}^m)$ whose images consist of measure preserving isometries. 

\begin{center}
\textbf{Claim: }There is $C_1 > 0 $ and abelian subgroups $Z_i \leq \phi_i (\pi_1(X_i))$ of index $[   \phi_i (\pi_1(X_i)) : Z_i  ] \leq C_1 $.
\end{center}

By Theorem \ref{zamora-2}, the sequence diam$(Y_i)$ is bounded, and after taking a subsequence we can assume the spaces $Y_i$ converge in the mGH sense to a compact $RCD^{\ast}(0, N-m)$ space $Y$ and the groups $Iso (Y_i) $ converge equivariantly to a compact subgroup $\Gamma \leq Iso (Y)$, so we have almost morphisms $f_i : Iso (Y_i) \to \Gamma $ (see Remark \ref{continuous-approximation}). 

By Theorem \ref{gs-lie}, $Iso(Y)$ is a compact Lie group, so by Theorem \ref{pw}, it has a faithful unitary representation $\rho_0 : Iso(Y) \to U(H)$ for some finite dimensional Hilbert space $H$.  Since the maps $f _i $ are almost morphisms, the compositions 
\[ \eta_i : = \rho_0 \circ f_i  : \phi_i (\pi_1(X_i))  \to U(H) \]
are almost representations. The groups $\phi_i (\pi_1(X_i))$, being virtually abelian, are amenable, so by Theorem \ref{kazhdan}, the maps $\eta_i$ are close to actual representations $\rho_i : \phi_i (\pi_1(X_i)) \to U(H)$.

Since $\rho_0$ is faithful, the kernels $H_i : = Ker (\rho_i)$ form a sequence of small subgroups of measure preserving isometries of $Y_i$. From the splitting $\tilde{X}_i = Y_i \times \mathbb{R}^m$, one obtains inclusions $H_i \to  Iso (\tilde{X}_i)$ and hence by our no-small-subgroup assumption, the representations $\rho_i$ are faithful for large enough $i$. This implies that $\phi_i (\pi_1(X_i))$ is isomorphic to a subgroup of $U(H)$. The claim then follows from Corollary \ref{dense-abelian}.

Since $Y_i$ is compact, the map $\psi_i$ is proper and the group $\psi_i (\pi_1(X_i))$ is discrete. Hence by Theorem \ref{bieber}, there is $C_2 > 0 $ and abelian subgroups $B_i \leq \psi_i (\pi_1(X_i))$ of index $[\psi_i (\pi_1(X_i)), B_i ]\leq C_2$. Finally, the group 
\[ A_i : =  \left( \phi_i^{-1} (Z_i) \cap \psi_i^{-1}(B_i) \right) \leq \pi_1(X_i) \cap \left( Z_i \times B_i \right)    \]
is abelian of index $[\pi_1 (X_i ) : A_i] \leq C_1C_2$. This contradicts our initial assumption.
\end{proof}

\begin{customthm}{3}
 Let $(X_i,d_i,\mathfrak{m}_i)$ be a sequence of $RCD^{\ast}(0,N;D)$ spaces of rectifiable dimension $ n$ such that the sequence of universal covers $(\tilde{X}_i,\tilde{d}_i,\tilde{\mathfrak{m}}_i)$ converges in the pointed measured Gromov--Hausdorff sense to some $RCD^{\ast}(0,N)$ space of rectifiable dimension $n$. Then there is $C > 0 $ such that for each $i$ there is an abelian subgroup $A_i \leq \pi_1(X_i)$ with index  $[\pi_1(X_i): A_i] \leq C$.    
\end{customthm}

\begin{proof}[Proof of Theorem \ref{ua-1}:]
Combine Theorem \ref{nss} and Theorem \ref{Turing-no-compact}.
\end{proof}

\section{First Betti number}\label{section-6}

To prove Theorem \ref{b1} we require an extension to $RCD^{\ast}(K,N)$ spaces of the normal subgroup theorem by Kapovitch--Wilking. With the tools we have so far our proof is not much different from the one for smooth Riemannian manifolds in \cite{kapovitch-wilking}, but we include it here for completeness and convenience for the reader.

Recall that if $(X,p)$ is a pointed proper metric space, $\Gamma \leq Iso (X)$ is a closed group of isometries, and $r> 0 $, we denote by $\mathcal{G}(\Gamma , X , p , r) \leq \Gamma$ the subgroup generated by the elements $ \gamma \in \Gamma $ with norm $\Vert \gamma \Vert _p  = d( \gamma p , p ) \leq r$. Also, if $X$ is semi-locally-simply-connected, we define its fundamental group $\pi_1(X)$ to be the set of deck transformations of its universal cover.

\begin{thm}\label{normal-thm}
 (Normal Subgroup Theorem) For each $K \in \mathbb{R}$, $N \geq 1$, $D>0$,  $\varepsilon_1 > 0 $, there are $\varepsilon _0 > 0 $, $C_0 \in \mathbb{N}$ such that the following holds. For each $RCD^{\ast}(K,N;D)$ space $(X,d,\mathfrak{m})$, there is $\varepsilon \in [\varepsilon_0, \varepsilon_1]$ and a normal subgroup $G \triangleleft \pi_1(X)$ such that for all $x$ in the universal cover $\tilde{X}$ we have
\begin{itemize}
\item $G \leq \mathcal{G}( \pi_1(X) , \tilde{X}, x  , \varepsilon /1000  )$.
\item $[ \mathcal{G}(\pi_1(X), \tilde{X}, x, \varepsilon): G] \leq C_0$.
\end{itemize}
\end{thm}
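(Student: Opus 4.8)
The plan is to argue by contradiction, following the blow-up scheme used in the proof of Theorem \ref{short-short} (bounded generation). Fix $K$, $N$, $D$, $\varepsilon_1$, and suppose the conclusion fails. Then for each $C_0, \varepsilon_0$ we get a counterexample, and — since we will need to iterate — the natural object to track is a \emph{contradicting sequence}: a sequence of $RCD^\ast(K,N;D)$ spaces $(X_i,d_i,\mathfrak{m}_i)$ such that for \emph{every} choice of $\varepsilon \in [\varepsilon_0^{(i)}, \varepsilon_1]$ (with $\varepsilon_0^{(i)} \to 0$) and every normal subgroup $G \triangleleft \pi_1(X_i)$ satisfying the first bullet, the index $[\mathcal{G}(\pi_1(X_i),\tilde X_i, x, \varepsilon) : G]$ exceeds $i$ for some $x$. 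Pass to a subsequence so that the quotients $X_i = \tilde X_i / \pi_1(X_i)$ converge in the pmGH sense to an $RCD^\ast(K,N)$ space $Z$ of some rectifiable dimension $m \le N$, and run a reverse induction on $m$. The base case is $m = \lfloor N\rfloor$ (or the largest dimension that can occur); the inductive step blows up a contradicting sequence to produce another one whose quotient converges to a space of strictly larger dimension, which terminates.

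For the blow-up step I would proceed exactly as in Theorem \ref{short-short}: pick an $m$-regular point $q$ in the limit $Z$, lift it to points $q_i \in \tilde X_i$ whose images converge to $q$, and use Theorem \ref{zoom} together with the spectrum-gap mechanism (Lemma \ref{gap}, Proposition \ref{spec-cont}, Proposition \ref{gigli-corollary}) to rescale by $\lambda_i \to \infty$ so that $(\lambda_i \tilde X_i / \pi_1(X_i), [q_i])$ converges to $\mathbb{R}^m$ and then, after a further rescale $\alpha_i \lambda_i$ chosen so that $1 \in \sigma(\pi_1(X_i), \alpha_i\lambda_i \tilde X_i, \cdot)$, the quotient converges to $\mathbb{R}^m \times W$ with $W$ not a point, i.e. a space of rectifiable dimension $> m$. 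The key point is that rescaling does not destroy being a contradicting sequence: by Bishop--Gromov (Theorem \ref{bg-in}) and Proposition \ref{long-short-basis}, only boundedly many elements of a short basis have norm in any fixed compact subinterval of $(0,\infty)$, so the failure of the theorem at "scale $\varepsilon$" for the original sequence transfers to failure at "scale $\varepsilon/\lambda_i$" for the rescaled one — here one must use that $\varepsilon$ is allowed to range over the whole interval $[\varepsilon_0, \varepsilon_1]$, which is what lets the argument survive rescaling.

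For the base case, where the limit $Z$ has rectifiable dimension $m = \lfloor N\rfloor$, the blow-up cannot increase the dimension, so instead I would produce the subgroup $G$ directly. Take the limit of the actions: after equivariant convergence (Theorem \ref{equivariant-compactness}), the groups $\mathcal{G}(\pi_1(X_i), \tilde X_i, x_i, \varepsilon)$ converge to a closed group $\Gamma_\varepsilon \le \mathrm{Iso}(\hat X)$ acting on a limit $\hat X$ of the universal covers; by the splitting theorem (Theorem \ref{gigli}) and Proposition \ref{gigli-corollary}, $\hat X = \mathbb{R}^m \times W$ with $W$ a point (since $\hat X$ has full rectifiable dimension $m = \lfloor N \rfloor$ and $N - m \in [0,1)$), so $\Gamma_\varepsilon \le \mathrm{Iso}(\mathbb{R}^m)$ is a crystallographic group and, by Bieberbach (Theorem \ref{bieber}), has an abelian normal subgroup of index $\le C(m)$. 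Pulling back this abelian subgroup — or rather the preimage of the translational part — through the discrete approximation $f_i$ (Corollary \ref{clean-uc}, Section \ref{reformulations}) gives a normal subgroup $G_i \triangleleft \pi_1(X_i)$ contained in $\mathcal{G}(\pi_1(X_i), \tilde X_i, x_i, \varepsilon/1000)$ (after choosing $\varepsilon$ in $\sigma$-gaps so that small elements genuinely generate $G_i$ using loops of length $\le \varepsilon/1000$), with bounded index; this contradicts the contradicting-sequence hypothesis.

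The main obstacle I anticipate is not the dimension induction itself but the bookkeeping in the base case: one must simultaneously (i) choose $\varepsilon$ in a gap of the spectrum $\sigma(\mathcal{G})$ so that $\mathcal{G}(\pi_1(X_i), \tilde X_i, x, \varepsilon)$ is "stable" and its elements of norm $\le \varepsilon$ can be subdivided into loops of norm $\le \varepsilon/1000$ (this is where the pigeonhole over $[\varepsilon_0, \varepsilon_1]$ and Lemma \ref{gap} / Proposition \ref{long-short-basis} are used — there must be a definite scale ratio $1000$ worth of gap), (ii) ensure the normal subgroup $G_i$ obtained as a pullback is independent of the basepoint $x$, which uses that it is \emph{normal} (conjugation moves the basepoint but preserves $G_i$) together with the fact that the splitting $\mathbb{R}^m \times W$ and the translational subgroup of $\Gamma_\varepsilon$ are canonical, and (iii) transfer the index bound from the limit group $\Gamma_\varepsilon$ back to $\mathcal{G}(\pi_1(X_i),\tilde X_i,x,\varepsilon)$, which is where cleanness of the discrete approximation (Corollary \ref{clean-uc}) and the isomorphism criterion (Proposition \ref{isomorphism}) enter to guarantee no collapsing occurs. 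I would expect step (i), pinning down the quantitative gap in the spectrum uniformly along the sequence, to be the technical heart of the argument.
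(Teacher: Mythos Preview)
Your approach is quite different from the paper's, and as written has a structural gap.

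The paper does \emph{not} run a reverse induction on dimension. It takes a single equivariant limit: pass to a subsequence so that $(\tilde X_i, p_i) \to (X,p)$ and $\pi_1(X_i) \to \Gamma \le \mathrm{Iso}(X)$, get the clean discrete approximation $f_i : \pi_1(X_i) \to \Gamma$ from Corollary \ref{clean-uc}, choose $r > 0$ with $B_e(r,\Gamma) \subset \Gamma_0$ (the identity component), and set $G_i := \langle f_i^{-1}(B_e(r,\Gamma))\rangle$. Lemma \ref{cci} makes $G_i$ normal, and the proof of that lemma shows $G_i$ is generated by elements of arbitrarily small $d_0$-norm, hence of small $\|\cdot\|_x$-norm near $p_i$; normality plus Proposition \ref{conjugate-generators} transports this to every basepoint $x$, giving the first bullet. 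The index bound is a pigeonhole: with $m$ the Bishop--Gromov constant at the relevant scales and $M = m^m$, any word of length $M$ in elements of norm $\le \varepsilon$ has a subword that moves a whole ball by at most $r/2$, hence lies in $G_i$; combined with Theorem \ref{short-short} (used only to bound the size of a generating set of $\mathcal{G}(\cdot,\varepsilon)$) and Proposition \ref{index}, this gives $[\mathcal{G}(\cdot,\varepsilon):G_i] \le C^M$. No blow-up, no Bieberbach, no dimension induction.

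Your proposal has two concrete problems. First, in a genuine reverse-induction of the Theorem \ref{short-short} type there is no base case: the contradiction is simply that each step strictly increases the dimension of the quotient-limit, which is bounded by $N$. Your base-case argument invokes Proposition \ref{gigli-corollary} to get $\hat X = \mathbb{R}^m$, but that proposition requires the \emph{quotients} to converge to $\mathbb{R}^m$, whereas at the start of the induction they converge to the compact space $Z$; the splitting is not available there. Second, the inductive step needs a spectrum element at the blow-up scale so that the new quotient-limit is $\mathbb{R}^m \times W$ with $W$ nontrivial. In Theorem \ref{short-short} such an element is supplied directly by the contradicting hypothesis (a large short basis forces many small spectrum values). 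Here the contradicting hypothesis says only that no normal subgroup works over a range of $\varepsilon$'s, and you have not shown how to extract from that a point of $\sigma(\pi_1(X_i), \tilde X_i, \cdot)$ in the required window. This may be repairable, but the paper's route through the identity component of the limit Lie group sidesteps both issues entirely and is considerably shorter.
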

For the proof of Theorem \ref{normal-thm} the following elementary observations are required.
\begin{prop}\label{conjugate-generators}
 Let $Y$ be a proper geodesic space, $\Gamma \leq Iso (Y) $ a closed group of isometries,  $y \in Y$, $g \in \Gamma$, and $r > 0 $. Then $\mathcal{G}(\Gamma , Y ,g y, r) = g \cdot \mathcal{G}(\Gamma  , Y , y,r  ) \cdot g^{-1} $. 
\end{prop}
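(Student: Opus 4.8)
Proposition \ref{conjugate-generators} is an elementary statement about how the subgroup generated by short elements transforms under the translation of the basepoint by a group element, so the plan is to unwind the definitions directly. First I would recall that $\mathcal{G}(\Gamma, Y, y, r)$ is by definition the subgroup of $\Gamma$ generated by the set $S_y := \{ h \in \Gamma \mid \Vert h \Vert_y \leq r \}$, where $\Vert h \Vert_y = d(hy, y)$, and similarly $\mathcal{G}(\Gamma, Y, gy, r)$ is generated by $S_{gy} := \{ h \in \Gamma \mid d(h(gy), gy) \leq r \}$. Since conjugation by $g$ is a group automorphism of $\Gamma$, it suffices to show that it carries the generating set $S_y$ bijectively onto the generating set $S_{gy}$; then it carries the subgroup generated by the former onto the subgroup generated by the latter, which is exactly the claimed identity $\mathcal{G}(\Gamma, Y, gy, r) = g \cdot \mathcal{G}(\Gamma, Y, y, r) \cdot g^{-1}$.

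The key computation is the following: for $h \in \Gamma$, since $g$ is an isometry of $Y$,
\[
\Vert ghg^{-1} \Vert_{gy} = d\big( ghg^{-1}(gy), gy \big) = d\big( gh(y), gy \big) = d\big( h(y), y \big) = \Vert h \Vert_y .
\]
Hence $h \in S_y$ if and only if $ghg^{-1} \in S_{gy}$, i.e. $g S_y g^{-1} = S_{gy}$. I would then simply note that the map $h \mapsto ghg^{-1}$ is an automorphism of $\Gamma$ sending the generating set $S_y$ onto the generating set $S_{gy}$, and therefore sends $\langle S_y \rangle = \mathcal{G}(\Gamma, Y, y, r)$ onto $\langle S_{gy} \rangle = \mathcal{G}(\Gamma, Y, gy, r)$. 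This gives $g \cdot \mathcal{G}(\Gamma, Y, y, r) \cdot g^{-1} = \mathcal{G}(\Gamma, Y, gy, r)$, as desired.

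There is no real obstacle here; the only thing to be careful about is the convention for how isometries act and that $\Vert \cdot \Vert_y$ is genuinely the displacement function $d(hy, y)$, so that the isometry $g$ can be pulled through the distance symmetrically on both arguments. The statement and proof do not use the geodesic or properness hypotheses on $Y$ at all — they are presumably included only because that is the standing setting in which the notation $\mathcal{G}(\Gamma, Y, y, r)$ was introduced — so the argument is purely formal and consists of the one-line displacement computation above together with the observation that inner automorphisms preserve "being generated by."
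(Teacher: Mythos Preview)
Your proof is correct and is exactly the paper's approach: the paper's proof consists solely of the identity $\Vert ghg^{-1} \Vert_{gy} = \Vert h \Vert_y$, which is precisely the displacement computation you wrote out. Your additional remarks (that conjugation is an automorphism carrying generating sets to generating sets, and that properness and geodesicness are not used) are accurate elaborations of what the paper leaves implicit.
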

\begin{proof}[Proof of Proposition \ref{conjugate-generators}:]
The result follows immediately from the identity
\[  \Vert ghg^{-1}  \Vert _{ gy }=  \Vert h \Vert _y  \text{ for all }g, h \in  \Gamma   . \]
\end{proof}

\begin{prop}\label{index}
 Let $G$ be a group, $H \triangleleft G$ a normal subgroup, $S\subset G$ a symmetric generating set, and $M \in \mathbb{N}$ such that for any $s_1 , \ldots , s_M \in S$ there are $1 \leq j_0 \leq j_1 \leq M$ with $s_{j_0} \cdots s_{j_1} \in H$. Then $[G:H ] \leq \vert S \vert ^M$ 
\end{prop}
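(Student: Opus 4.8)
The plan is to bound the index by producing, for each coset of $H$ in $G$, a representative that is a product of at most $M-1$ generators from $S$. First I would argue by contradiction: suppose $[G:H] > \vert S \vert^{M}$. Since $S$ generates $G$ and is symmetric, every coset $gH$ has a representative expressible as a word in the letters of $S$; among all such words choose one, say $w = s_1 s_2 \cdots s_k$, of minimal length $k$, so that distinct cosets are represented by distinct minimal words. The number of words of length $\le M-1$ over the alphabet $S$ is at most $1 + \vert S\vert + \cdots + \vert S\vert^{M-1} \le \vert S\vert^{M}$ (with a harmless adjustment of constants, or one simply notes there are at most $\vert S\vert^{M}$ words of length exactly $M$), so by the pigeonhole principle there must be a coset whose minimal representative $w = s_1 \cdots s_k$ has length $k \ge M$.

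Next I would apply the hypothesis to the first $M$ letters $s_1,\ldots,s_M \in S$: there exist $1 \le j_0 \le j_1 \le M$ with $s_{j_0}\cdots s_{j_1} \in H$. Write $h := s_{j_0}\cdots s_{j_1} \in H$. Using normality of $H$, I can slide this subword to the left: the element $g := s_1 \cdots s_k$ satisfies
\[
g = (s_1\cdots s_{j_0-1})\, h\, (s_{j_1+1}\cdots s_k) = h'\,(s_1\cdots s_{j_0-1})(s_{j_1+1}\cdots s_k)
\]
for some $h' \in H$ (namely $h' = (s_1\cdots s_{j_0-1})\,h\,(s_1\cdots s_{j_0-1})^{-1} \in H$). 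Hence $gH = (s_1\cdots s_{j_0-1}\,s_{j_1+1}\cdots s_k)H$, and the right-hand word has length $k - (j_1 - j_0 + 1) \le k - 1 < k$. This contradicts the minimality of $w$ as a representative of the coset $gH$.

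The only mild subtlety — and the step I would be most careful about — is the bookkeeping in the pigeonhole count, i.e. making sure the threshold $\vert S\vert^M$ is matched correctly against the number of short words (one should decide whether "at most $\vert S\vert^M$ cosets" is compared with words of length $< M$ or $\le M$, and the contradiction works either way after adjusting by one). Everything else is formal: normality is exactly what lets us absorb the interior subword $h$ into $H$ while shortening the representative, and the minimality of the chosen word closes the argument. No curvature or metric input is needed here; this is a purely group-theoretic lemma used later to control $[\mathcal{G}(\pi_1(X),\tilde X,x,\varepsilon):G]$ in Theorem \ref{normal-thm}.
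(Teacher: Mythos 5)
Your proof is correct and follows essentially the same route as the paper's (very terse) argument: use the hypothesis together with normality of $H$ to shorten any word over $S$ of length $\geq M$ to a strictly shorter representative of the same coset, so every coset is represented by a word of length $< M$, and then count such words. The only cosmetic difference is that the paper phrases the shortening directly in the quotient $G/H$ (where the subword $\bar s_{j_0}\cdots\bar s_{j_1}$ is simply the identity and can be deleted), whereas you carry out the equivalent conjugation $h' = (s_1\cdots s_{j_0-1})h(s_1\cdots s_{j_0-1})^{-1}$ inside $G$; and you are right that the final count has a harmless off-by-one bookkeeping point (length $<M$ versus $\leq M$) that the paper also glosses over.
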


\begin{proof}[Proof of Proposition \ref{index}:]
By hypothesis, each element of $G/H$ can be written as a product of at most $M$ elements of $\{ sH \vert s \in S \}$. There are $\vert S \vert ^M$ such words, so the result follows. 
\end{proof}

\begin{proof}[Proof of Theorem \ref{normal-thm}:]
Working by contradiction, we assume there is a sequence of $RCD^{\ast}(K,N;D)$ spaces $X_i$ and sequences $\delta_i \to 0$ with the property that for any normal subgroup $G \triangleleft \pi_1(X_i)$ and any $\varepsilon  \in  [\delta_i , \varepsilon_1] $ there is $x$ in the universal cover $\tilde{X}_i$ such that either $G$  is not contained in  $\mathcal{G}(\pi_1(X_i) , \tilde{X}_i, x  , \varepsilon /1000  )$ or the index of $G$ in $  \mathcal{G}(\pi_1(X_i), \tilde{X}_i, x, \varepsilon)$ is greater than $i$. 

After choosing $p_i \in \tilde{X}_i$ and passing to a subsequence, we can assume $(\tilde{X}_i , p_i)$ converges in the pmGH sense to a pointed $RCD^{\ast}(K,N)$ space $(X,p)$, and $\pi_1(X_i)$ converges equivariantly to a closed group $\Gamma \leq Iso(X)$. By Corollary \ref{clean-uc}, the discrete approximation $f_i : \pi_1(X_i) \to \Gamma$ constructed in Section \ref{reformulations} is clean.

Let $r \in ( 0  , 1/2D ]  $ be such that $B_e(r, \Gamma)$ is contained in the connected component of the identity in $\Gamma$, and set $G_i \leq \pi_1(X_i)$ as the subgroup generated by $ f_i ^{-1}(B_e(r,\Gamma ))$. By Lemma \ref{clean-cci}, for $i$ large enough, $G_i $ is a normal subgroup of $\pi_1(X_i)$. Also, by Proposition \ref{discrete-gh}, if $\delta  >0 $ is small enough, for large enough $i$ we have
\begin{itemize}
\item $f_i (B_e(\delta , \pi_1(X_i))) \subset B_e(r, \Gamma ).$
\item $f_i^{-1}(B_e(\delta /2 , \Gamma )) \subset B_e(\delta , \pi_1(X_i) )$.
\end{itemize}  
From the proof of Lemma  \ref{cci}, this implies that if $i$ is large enough, then 
\[  G_i  = \left\langle   B_e(\delta , \pi_1(X_i) )   \right\rangle     .      \] 
This implies, using Equation \ref{d0}, that
\[  G_i \leq \bigcap _{x \in B_{p_i}(1/\delta , \tilde{X}_i)} \mathcal{G}( \pi_1(X_i)  , \tilde{X}_i , x , \delta ).                \]
If $\delta < 1/2D$, then any element $x$ in $\tilde{X}_i$ is sent by some $g \in \pi_1(X_i)$ to an element $gx$ of $B_{p_i}(1/\delta , \tilde{X}_i)$, and from the fact that $G_i$ is normal and Proposition  \ref{conjugate-generators} we have 
\[G_i = g^{-1} G_i g \subset g^{-1} \cdot \mathcal{G}(\pi_1(X_i) , \tilde{X}_i, gx, \delta ) \cdot g =  \mathcal{G}(\pi_1(X_i) , \tilde{X}_i, x, \delta ).  \]
The contradiction is then attained with the following claim by setting $\delta = \varepsilon /1000$.
\begin{center}
\textbf{Claim: }There is $C_0 > 0 $ such that if $\varepsilon > 0 $ is small enough, then for $i$ large enough, $[\mathcal{G}(\pi_1(X_i), \tilde{X}_i, x, \varepsilon ) : G_i  ] \leq C_0$ for all $x \in \tilde{X}_i$.
\end{center}
By Theorem \ref{bg-in}, there is $m \in \mathbb{N}$ such that any ball of radius $10/r$ in an $RCD^{\ast}(K,N)$ space can be covered by $m$ balls of radius $r/10$. Set $\varepsilon : = r/10m^m$ and  $M: = m^m $.  Then for any $x  \in \tilde{X}_i$ and elements $g_1 , \ldots , g_M   \in  \{ g \in  \pi_1(X_i) \vert \Vert g \Vert _x \leq  \varepsilon \}$, by the pigeonhole principle, there are $1 \leq j_0 \leq j_1 \leq M$ with 
\[ g_{j_0} \cdots g_{j_1} \in \{ g \in \pi_1(X_i) \vert d(gy,y) \leq r/2 \text{ for all }y \in B_x(2/r, \tilde{X}_i) \} \subset G_i  .  \] 
By Theorem \ref{short-short}, there is a set $S \subset \{ g \in  \pi_1(X_i) \vert \Vert g \Vert _x \leq  \varepsilon \} $ with $\langle S \rangle = \mathcal{G}(\pi_1(X_i), \tilde{X}_i , x, \varepsilon )$ of size $\vert S \vert \leq C ( k, N, D  )$, so by Proposition \ref{index}, we get
\[ [\mathcal{G}(\pi_1(X_i), \tilde{X}_i, x, \varepsilon ) : G_i  ] \leq C_0 : = C^M  .\] 
\end{proof}

For a geodesic space $X$ and a collection $\mathcal{U}$ of subsets of $X$, we denote by $H_1^{\mathcal{U}}(X)$ the subgroup of the first homology group $H_1(X)$ generated by the images of the maps $H_1(U) \to H_1(X)$ induced by the inclusions $U \to X$  with $U \in \mathcal{U}$. If $\mathcal{U}$ is the collection of balls of radius $\delta$ in $X$, we denote $H^{\mathcal{U}}_1(X)$ by $H_1^{\delta}(X)$. This group satisfies a natural monotonicity property.

\begin{lem}\label{refine}
 Let $X$ be a geodesic space and $\mathcal{U}, \mathcal{V} $ two families of subsets of $X$. If for each $U \in \mathcal{U}$, there is $V \in \mathcal{V}$ with $U \subset V$, then $H_1^{\mathcal{U}}(X) \leq H_1^{\mathcal{V}}(X)$.
\end{lem}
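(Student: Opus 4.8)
The plan is to unwind the definitions and observe that the inclusion of any member of $\mathcal{U}$ into $X$ factors through a member of $\mathcal{V}$, so the functoriality of $H_1$ forces the corresponding subgroups to nest. First I would recall that $H_1^{\mathcal{U}}(X)$ is by definition the subgroup of $H_1(X)$ generated by $\bigcup_{U\in\mathcal{U}}\operatorname{im}\big(\iota_{U*}\colon H_1(U)\to H_1(X)\big)$, where $\iota_U\colon U\hookrightarrow X$ is the inclusion, and similarly for $\mathcal{V}$.

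Next, fix $U\in\mathcal{U}$ and choose, using the hypothesis, some $V=V(U)\in\mathcal{V}$ with $U\subset V$. Then the inclusion $\iota_U$ factors as $U\xhookrightarrow{\ j\ } V\xhookrightarrow{\ \iota_V\ } X$, hence on homology $\iota_{U*}=\iota_{V*}\circ j_*$. Therefore $\operatorname{im}(\iota_{U*})\subset\operatorname{im}(\iota_{V*})\subset H_1^{\mathcal{V}}(X)$. Since this holds for every $U\in\mathcal{U}$, the generating set of $H_1^{\mathcal{U}}(X)$ is contained in the subgroup $H_1^{\mathcal{V}}(X)$, and so $H_1^{\mathcal{U}}(X)\leq H_1^{\mathcal{V}}(X)$, as claimed.

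There is essentially no obstacle here: the statement is purely formal, relying only on the functoriality of singular homology under inclusions and the fact that a subgroup generated by a subset of a subgroup is contained in that subgroup. The only thing to be mildly careful about is that no connectedness or basepoint issues arise, but since we are working with the (absolute, basepoint-free) first homology groups and taking the subgroup generated by all the images, no such subtlety intervenes.
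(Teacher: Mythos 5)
Your proof is correct and is exactly the (only reasonable) argument: the inclusion $U\hookrightarrow X$ factors through $V$, so functoriality of $H_1$ gives $\operatorname{im}(\iota_{U*})\subset\operatorname{im}(\iota_{V*})$, and hence the generators of $H_1^{\mathcal{U}}(X)$ all lie in $H_1^{\mathcal{V}}(X)$. The paper states this lemma without proof, treating it as immediate from the definition, and your write-up simply spells out the details.
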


\begin{defn}
\rm For metric spaces $X$ and $Y$, a function  $\phi : Y \to X$ is said to be  an $\varepsilon$-isometry if  
\begin{itemize}
\item For each $y_1, y_2 \in Y$, one has $\vert d(fy_1, fy_2)-d(y_1, y_2 ) \vert \leq \varepsilon $.
\item For each $x \in X$, there is $y \in Y$ with $d(fy,x) \leq \varepsilon$.
\end{itemize}
\end{defn}

\begin{thm}\label{sw}
 (Sormani--Wei) Let $X$ be a compact geodesic space and assume there is  $\varepsilon _0 >0$ such that $H_1^{\varepsilon _0}(X) $ is trivial. If $Y$ is a  compact geodesic space and  $f : Y \to X$ a $\delta $-isometry with $\delta \leq \varepsilon_0 /100$, then there is a surjective morphism 
\[  \tilde{\phi } : H_1(Y) \to H_1(X)        \]
whose kernel is  $H_1^{10 \delta }(Y) =  H_1^{\varepsilon_0 /10}(Y)$.
\end{thm}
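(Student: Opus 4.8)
The plan is to construct $\tilde\phi$ by a loop-chasing argument, show it annihilates $H_1^{10\delta}(Y)$, and then pin down its kernel and image by producing the analogous map in the opposite direction and checking that the two composites are the identity. Since $X$ and $Y$ are path-connected, I would work with loops and use the Hurewicz isomorphism $H_1 \cong \pi_1^{\mathrm{ab}}$, so that ``well-defined on homology'' amounts to ``sends homotopic loops to homologous loops, compatibly with concatenation''.

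First I would define $\tilde\phi$. Given a loop $\gamma$ in $Y$, choose a partition $0=t_0<\dots<t_k=1$ fine enough that each arc $\gamma([t_{j-1},t_j])$ has diameter $\le\delta$, set $x_j:=f(\gamma(t_j))$, join $x_{j-1}$ to $x_j$ by a minimizing geodesic $\sigma_j$ in $X$, and put $\hat\gamma:=\sigma_1*\cdots*\sigma_k$. As $f$ is a $\delta$-isometry, $d(x_{j-1},x_j)\le2\delta$, so inserting a partition point or replacing a geodesic $\sigma_j$ changes $\hat\gamma$ only by a loop lying in a ball of radius $\le4\delta\le\varepsilon_0$; such a loop is null-homologous because $H_1^{\varepsilon_0}(X)$ is trivial, so $[\hat\gamma]\in H_1(X)$ depends only on $\gamma$. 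Comparing partitions shows $\widehat{\alpha*\beta}$ is homologous to $\hat\alpha*\hat\beta$; and if $\gamma_0,\gamma_1$ are homotopic loops in $Y$, subdividing the homotopy into a fine grid whose cells have $\delta$-small image lets one pass from $\hat\gamma_0$ to $\hat\gamma_1$ by finitely many moves each altering the loop only inside a ball of radius $\le4\delta\le\varepsilon_0$, so $[\hat\gamma_0]=[\hat\gamma_1]$. Abelianizing, this yields a homomorphism $\tilde\phi\colon H_1(Y)\to H_1(X)$.

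Next I would run the construction backwards. Pick $g\colon X\to Y$ with $d(f(g(x)),x)\le\delta$ for all $x$; then $g$ is a $3\delta$-isometry, $f\circ g$ is $\delta$-close to $\mathrm{id}_X$, and $g\circ f$ is $2\delta$-close to $\mathrm{id}_Y$. Chasing a loop $\gamma$ in $X$ through $g$ gives a loop $\check\gamma$ in $Y$, and the auxiliary loops in the corresponding well-definedness argument all lie in balls of $Y$ of radius $\le10\delta$; since there is no homological gap for $Y$, these only vanish after passing to $H_1(Y)/H_1^{10\delta}(Y)$, so one gets a homomorphism $\hat\psi\colon H_1(X)\to H_1(Y)/H_1^{10\delta}(Y)$. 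Now $\tilde\phi$ kills $H_1^{10\delta}(Y)$ — a loop in a $10\delta$-ball of $Y$ has $\hat{\cdot}$-image in a ball of $X$ of radius $\le13\delta<\varepsilon_0$ — so $\tilde\phi$ factors as $\bar\phi\colon H_1(Y)/H_1^{10\delta}(Y)\to H_1(X)$. Using that $g\circ f$ is $2\delta$-close to $\mathrm{id}_Y$, the loop $\check{\hat\gamma}$ differs from $\gamma$ by correction squares/triangles inside $10\delta$-balls of $Y$, giving $\hat\psi\circ\bar\phi=\mathrm{id}$; using that $f\circ g$ is $\delta$-close to $\mathrm{id}_X$ together with $H_1^{\varepsilon_0}(X)=0$, the loop $\widehat{\check\gamma}$ is homologous to $\gamma$ in $X$, giving $\bar\phi\circ\hat\psi=\mathrm{id}$. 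Hence $\bar\phi$ is an isomorphism, i.e.\ $\tilde\phi$ is surjective with kernel exactly $H_1^{10\delta}(Y)$. Finally $H_1^{10\delta}(Y)=H_1^{\varepsilon_0/10}(Y)$: the inclusion $\subseteq$ is Lemma \ref{refine} since $10\delta\le\varepsilon_0/10$, and $\supseteq$ holds because a loop in an $(\varepsilon_0/10)$-ball of $Y$ has $\hat{\cdot}$-image in a ball of $X$ of radius $\le\varepsilon_0/10+3\delta<\varepsilon_0$, hence lies in $\ker\tilde\phi=H_1^{10\delta}(Y)$.

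The step I expect to be the main obstacle is establishing that $\hat\psi$ is well-defined on all of $H_1(X)$, that is, that null-homotopic loops in $X$ are carried into $H_1^{10\delta}(Y)$: this is where one must run the grid-subdivision of a homotopy on the $Y$-side and track carefully the radii of every auxiliary loop that appears. The hypothesis $\delta\le\varepsilon_0/100$ is exactly the slack that keeps every auxiliary loop on the $X$-side inside a ball of radius $<\varepsilon_0$ (so it is null-homologous there) and guarantees $10\delta\le\varepsilon_0/10$ (so that all auxiliary loops on the $Y$-side contribute only to $H_1^{\varepsilon_0/10}(Y)$).
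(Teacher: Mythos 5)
Your proof is correct, but it is organized differently from the paper's. The paper works directly with singular $1$-cycles and $2$-chains: it defines $\phi$ on $1$-cycles (independence of choices coming from $H_1^{\varepsilon_0}(X)=0$), shows $\phi$ kills boundaries by subdividing a $2$-chain finely and recreating it in $X$ via $f$, determines $\ker\tilde\phi\subset H_1^{10\delta}(Y)$ by a second, mirror subdivision argument (recreate a fine $2$-chain filling $\phi(c)$ back in $Y$ via the approximate inverse, expressing $c$ as a $\mathbb{Z}$-linear combination of small $1$-cycles), and proves surjectivity simply by lifting any loop of $X$ through the approximate inverse. You instead route through $\pi_1$ and Hurewicz and build the reverse homomorphism $\hat\psi\colon H_1(X)\to H_1(Y)/H_1^{10\delta}(Y)$ as a bona fide map, then read off surjectivity and the exact kernel from $\hat\psi\circ\bar\phi=\mathrm{id}$ and $\bar\phi\circ\hat\psi=\mathrm{id}$. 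The hard computation is the same in both: subdividing a filling ($2$-chain or homotopy grid) in $X$, pushing it to $Y$ by $g$, and tracking radii to land in $H_1^{10\delta}(Y)$. Your packaging is arguably cleaner (mutual inverses settle kernel and surjectivity at once), but it costs the extra verification that $\hat\psi$ is a genuine homomorphism on all of $H_1(X)$, which the paper avoids because it only ever invokes the reverse construction for a single loop (surjectivity) or for $1$-cycles already known to lie in $\ker\tilde\phi$. Your radius bookkeeping is sound: $g$ is a $3\delta$-isometry, $g\circ f$ is $2\delta$-close to $\mathrm{id}_Y$, $f\circ g$ is $\delta$-close to $\mathrm{id}_X$, and the auxiliary loops on the $X$-side stay well inside $\varepsilon_0$ while those on the $Y$-side stay inside $10\delta\le\varepsilon_0/10$, exactly as the hypothesis $\delta\le\varepsilon_0/100$ is designed to guarantee.
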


\begin{proof}
We follow the lines of (\cite{sormani-wei}, Theorem 3.4), where they prove this result for $\pi_1$ instead of $H_1$. Each 1-cycle in $Y$ can be thought as a family of loops $\mathbb{S}^1 \to Y$ with integer multiplicity. For each map $ \gamma : \mathbb{S}^1 \to Y$, by uniform continuity one could pick finitely many cyclically ordered points $\{ z_1, \ldots , z_m \} \subset \mathbb{S}^1$ such that $\gamma ([z_{j-1}, z_j])$ is contained in a ball of radius $\varepsilon _0  /10$ for each $j$. Then set $\phi ( \gamma ) : \mathbb{S}^1 \to X $ to be the loop with $\phi (\gamma )(z_j) = f( \gamma (z_j) )$ for each $j$, and $\phi (\gamma ) \vert _{[z_{j-1}, z_j]}$ a minimizing geodesic from $\phi (\gamma )(z_{j-1})$ to $\phi (\gamma )(z_{j})$. 

Clearly, $\phi (\gamma )$ depends on the choice of the points  $z_j$ and the minimizing paths $\phi (\gamma ) \vert _{[z_{j-1}, z_j]}$. However, the homology class of $\phi (\gamma )$ in $H_1(X)$ does not depend on these choices, since different choices yield curves that are $\varepsilon_0/2$-uniformly close, which by hypothesis are homologous. 

Assume that a 1-cycle $c$ in $Y$ is the boundary $\partial \sigma$ of a 2-chain $\sigma$. After taking iterated barycentric subdivision, one could assume that each simplex of $\sigma$ is contained in a ball of radius $\varepsilon _0 /10$. By recreating $\sigma $ in $X$ via $f$ simplex by simplex, one could find a 2-chain whose boundary is $\phi (c)$. This means that $\phi$ induces a map $ \tilde{\phi}:H_1(Y) \to H_1(X)$.

In a similar fashion, if a 1-cycle $c$ in $Y$ is such that $\phi (c) $ is the boundary of a 2-chain $\sigma$, one could again apply iterated barycentric subdivision to obtain a 2-chain $\sigma^{\prime}$ in $X$ whose boundary is $\phi (c)$ and such that each simplex is contained in a ball of radius $\delta$. Using $f$ one could recreate the 1-skeleton of $\sigma ^{\prime}$ in $Y$ in such a way that expresses $c$ as a linear combination with integer coefficients of 1-cycles contained in balls of radius $10\delta$ in $Y$. This implies that the kernel of $\tilde{\phi} $ is contained in $H_1^{10 \delta}(Y)$.

If a 1-cycle $c$ in $Y$ is contained in a ball of radius $\varepsilon _ 0 /10$, then $\phi (c)$ is contained in a ball of radius $\varepsilon _ 0 /2$ and then by hypothesis, $\phi (c)$ is a boundary. Together with the previous paragraph, this shows that the kernel of $\tilde{\phi}$ is precisely $H_1^{10 \delta}(Y) = H_1^{\varepsilon_0/10}(Y)$.

Lastly, for any loop $\gamma : \mathbb{S}^1 \to X$, one can create via $f$ a loop  $\gamma_1 : \mathbb{S}^1 \to Y$ such that $\phi ( \gamma_1)$ is $10 \delta$-uniformly close (and hence homologous) to $\gamma$, so $\tilde{\phi}$ is surjective. 
\end{proof}

Clearly, a sequence of compact metric spaces $X_i$ converges in the GH sense to a compact metric space $X$ if and only if there is a sequence of $\delta_i$-isometries $\phi_i : X_i \to X$ with $\delta_i \to 0$. Hence by Theorem \ref{sw} one gets the following.

\begin{cor}\label{sw-gap}
 Let $X$ be a pointed compact geodesic space and assume there is  $\varepsilon _0 >0$ such that $H_1^{\varepsilon _0}(X) $ is trivial. If a sequence of compact geodesic spaces $X_i$ converges to $X$ in the GH sense, then there is a sequence $\rho_i \to 0$ such that $H_1^{\rho_i}(X_i)= H_1^{\varepsilon_0/10}(X_i)$  for all $i$.
\end{cor}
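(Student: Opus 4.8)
The plan is to deduce the Corollary directly from Theorem \ref{sw} by unwinding what Gromov--Hausdorff convergence gives us. First I would recall the standard fact, already noted in the paragraph immediately preceding the statement: since the $X_i$ and $X$ are compact metric spaces, $X_i \to X$ in the GH sense is equivalent to the existence of a sequence of $\delta_i$-isometries $\phi_i : X_i \to X$ with $\delta_i \to 0$. Fix such a sequence. Discard finitely many indices so that $\delta_i \le \varepsilon_0/100$ for all $i$; this is harmless for the conclusion since we only claim the existence of \emph{some} sequence $\rho_i \to 0$ and may set $\rho_i$ to be anything for the discarded indices.

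Next I would apply Theorem \ref{sw} with $Y = X_i$, the map $f = \phi_i$, and the same $\varepsilon_0$ (valid because $H_1^{\varepsilon_0}(X)$ is trivial by hypothesis and $\delta_i \le \varepsilon_0/100$). The theorem produces a surjective morphism $\tilde\phi_i : H_1(X_i) \to H_1(X)$ whose kernel is exactly $H_1^{10\delta_i}(X_i) = H_1^{\varepsilon_0/10}(X_i)$. The key point extracted from this is the equality of subgroups
\[
H_1^{10\delta_i}(X_i) = H_1^{\varepsilon_0/10}(X_i)
\]
for every (non-discarded) $i$. Setting $\rho_i := 10\delta_i$ (and, say, $\rho_i := \varepsilon_0/10$ for the finitely many discarded indices, or simply noting we can take $i$ large), we have $\rho_i \to 0$ and $H_1^{\rho_i}(X_i) = H_1^{\varepsilon_0/10}(X_i)$, which is precisely the assertion of the Corollary.

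There is essentially no obstacle here: the entire content is in Theorem \ref{sw}, and the Corollary is a straightforward repackaging. The only mild subtlety worth stating explicitly is that Theorem \ref{sw} requires $\delta \le \varepsilon_0/100$, so one must pass to a tail of the sequence; since the Corollary only asserts the existence of a sequence $\rho_i \to 0$ with the stated property, restricting to large $i$ (and defining $\rho_i$ arbitrarily, e.g. $\rho_i = \varepsilon_0/10$, for small $i$) costs nothing. A secondary point is that $H_1^{\delta}(X_i)$ is monotone in $\delta$ by Lemma \ref{refine}, which is implicitly what makes the notation $H_1^{\rho_i}$ unambiguous and consistent, but no new argument is needed.
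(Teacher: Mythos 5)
Your proof is correct and matches the paper's approach: the paper likewise notes that GH convergence of compact spaces is equivalent to having $\delta_i$-isometries $\phi_i : X_i \to X$ with $\delta_i \to 0$, and then invokes Theorem \ref{sw} to get $H_1^{10\delta_i}(X_i) = H_1^{\varepsilon_0/10}(X_i)$, taking $\rho_i = 10\delta_i$. Your extra care about discarding finitely many indices (so that $\delta_i \le \varepsilon_0/100$) and setting $\rho_i = \varepsilon_0/10$ there is a tidy way to handle the ``for all $i$'' phrasing, but the substance is the same.
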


For the proof of Theorem \ref{b1}, it is convenient to reformulate Theorem \ref{normal-thm} in the abelian setting.

\begin{thm}\label{normal-abelian}
  For each $K \in \mathbb{R}$, $N \geq 1$, $D > 0 $, $\varepsilon_1 > 0 $, there are $\varepsilon_0$, $C_0 \in \mathbb{N}$ such that the following holds. For each $RCD^{\ast}(K,N;D)$ space $(X,d,\mathfrak{m})$, there is $\varepsilon \in [\varepsilon_0 , \varepsilon_1]$ and a subgroup $G \leq H_1(X)$ such that for all $x$ in the covering space $X^{\prime} \to X$ with Galois group $H_1(X)$ one has
\begin{itemize}
\item $G \leq \mathcal{G} (H_1(X), X^{\prime}, x, \varepsilon / 1000)  $.
\item $[\mathcal{G}(H_1(X), X^{\prime}, x, \varepsilon ): G] \leq C_0$.
\end{itemize}
\end{thm}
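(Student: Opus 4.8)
The plan is to deduce Theorem \ref{normal-abelian} from Theorem \ref{normal-thm} by pushing the normal subgroup $G \triangleleft \pi_1(X)$ forward along the abelianization map $\pi_1(X) \to H_1(X)$. First I would recall that $H_1(X)$ is the quotient of $\pi_1(X)$ by its commutator subgroup, and that the covering space $X' \to X$ with Galois group $H_1(X)$ is the quotient of the universal cover $\tilde X$ by the action of $[\pi_1(X),\pi_1(X)]$. Thus every point $x \in X'$ lifts to a point $\hat x \in \tilde X$, and the projection $q : \pi_1(X) \to H_1(X)$ intertwines the two actions; in particular $\Vert q(g) \Vert_x \le \Vert g \Vert_{\hat x}$ for every $g \in \pi_1(X)$, since the metric on $X'$ is the quotient metric and hence distances can only shrink. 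Consequently $q\big(\mathcal G(\pi_1(X),\tilde X,\hat x, r)\big) \le \mathcal G(H_1(X), X', x, r)$ for every $r > 0$ and every $x$.

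Next I would fix the $\varepsilon \in [\varepsilon_0,\varepsilon_1]$ and the normal subgroup $G \triangleleft \pi_1(X)$ produced by Theorem \ref{normal-thm} applied to $(X,d,\mathfrak m)$ (with the same $K,N,D,\varepsilon_1$), and set $\bar G := q(G) \le H_1(X)$. The first bullet of Theorem \ref{normal-thm} says $G \le \mathcal G(\pi_1(X),\tilde X, y, \varepsilon/1000)$ for every $y \in \tilde X$; taking $y = \hat x$ and applying $q$ gives $\bar G \le \mathcal G(H_1(X), X', x, \varepsilon/1000)$ for every $x \in X'$, which is the first bullet of Theorem \ref{normal-abelian}. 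For the index bound, I would argue as follows: the surjection $q$ restricts to a surjection $\mathcal G(\pi_1(X),\tilde X,\hat x,\varepsilon) \to q\big(\mathcal G(\pi_1(X),\tilde X,\hat x,\varepsilon)\big)$, and this image contains $\mathcal G(H_1(X),X',x,\varepsilon)$ provided every generator $h$ of the latter, i.e. every $h \in H_1(X)$ with $\Vert h\Vert_x \le \varepsilon$, lifts to some $g \in \pi_1(X)$ with $\Vert g \Vert_{\hat x}$ bounded in terms of $\varepsilon$. This lifting is the one genuinely delicate point (see below). Granting it, $q$ maps $\mathcal G(\pi_1(X),\tilde X,\hat x,\varepsilon)$ onto a group containing $\mathcal G(H_1(X),X',x,\varepsilon)$ and maps $G$ onto $\bar G$, so $[\mathcal G(H_1(X),X',x,\varepsilon):\bar G] \le [\mathcal G(\pi_1(X),\tilde X,\hat x,\varepsilon):G] \le C_0$ by the second bullet of Theorem \ref{normal-thm}. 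Setting the output constants of Theorem \ref{normal-abelian} equal to the $\varepsilon_0, C_0$ of Theorem \ref{normal-thm} finishes the argument.

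The main obstacle is exactly the comparison of the two norm functions on elements of small norm, which is really a statement about the covering $\pi : \tilde X \to X'$: an element $h \in H_1(X)$ with $d(h x, x) \le \varepsilon$ is represented by a loop in $X$ of length $\le \varepsilon$ based at the image of $x$, but a minimal-length lift of that loop to $\tilde X$ need not have length $\le \varepsilon$ — only a loop that is already null-homotopic-modulo-commutators lifts cheaply. The clean way around this is not to lift individual short elements of $H_1(X)$ one at a time, but to observe that the whole subgroup $\mathcal G(H_1(X),X',x,\varepsilon)$ is the image under $q$ of $\mathcal G(\pi_1(X),\tilde X,\hat x,\varepsilon)$: indeed a closed loop of length $\le \varepsilon$ in $X$ lifts to a path in $\tilde X$ from $\hat x$ to $g\hat x$ with $d(\hat x, g\hat x) \le \varepsilon$ for the deck transformation $g$ it determines, and $q(g) = h$, so every generator of $\mathcal G(H_1(X),X',x,\varepsilon)$ is hit by a generator of $\mathcal G(\pi_1(X),\tilde X,\hat x,\varepsilon)$ of the same norm; since $q$ is a homomorphism this gives $q\big(\mathcal G(\pi_1(X),\tilde X,\hat x,\varepsilon)\big) = \mathcal G(H_1(X),X',x,\varepsilon)$ on the nose. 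With this identification the index bound is immediate from the surjection $q : \mathcal G(\pi_1(X),\tilde X,\hat x,\varepsilon)/G \to \mathcal G(H_1(X),X',x,\varepsilon)/\bar G$, and the rest is the bookkeeping described above. I would also remark that one should choose the basepoint $y = \hat x \in \tilde X$ to be a genuine lift of $x \in X'$ under $\pi$, so that norms are compared at corresponding points; since Theorem \ref{normal-thm} holds for all $x \in \tilde X$ simultaneously, this causes no loss.
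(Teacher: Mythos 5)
Your proof is correct and fills in exactly the argument the paper leaves implicit (the paper states Theorem \ref{normal-abelian} as a ``reformulation'' of Theorem \ref{normal-thm} without further detail). Pushing $G\triangleleft\pi_1(X)$ forward along the abelianization $q$, checking that $q$ carries $\mathcal G(\pi_1(X),\tilde X,\hat x,\varepsilon)$ onto $\mathcal G(H_1(X),X',x,\varepsilon)$ via the lift-of-a-short-loop observation, and inheriting the index bound through the induced surjection on quotients is precisely the intended derivation.
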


\begin{customthm}{5}
 Let $(X_i,d_i,\mathfrak{m}_i)$ be $RCD^{\ast}(K,N;D)$ spaces of rectifiable dimension $ n$ and first Betti number $\beta_1(X_i) \geq r$. If the sequence $X_i$ converges in the measured Gromov--Hausdorff sense to an $RCD^{\ast}(K,N;D)$ space $X$ of rectifiable dimension $m$, then
\[ \beta_1(X) \geq r + m - n .\]     
\end{customthm}

\begin{proof}[Proof of Theorem \ref{b1}:] 
 Let $p \in X$ be an $m$-regular point and choose $p_i \in X_i$ converging to $p$. For each $i$, let $X_i^{\prime} : = \tilde{X}_i / [\pi_1(X_i),\pi_1(X_i)]$ denote the regular cover of $X_i$ with Galois group $H_1(X_i)$, and choose $q_i \in X_i^{\prime}$ in the preimage of $p_i$. Then by Lemma \ref{gap}, there is $\eta > 0 $ and a sequence $\eta_i \to 0$ such that  
\[ \mathcal{G} ( H_1(X_i), X_i^{\prime} , q_i , \eta_i) = \mathcal{G} ( H_1(X_i), X_i^{\prime} , q_i , \eta) \text{ for all }i.        \]
By Theorem  \ref{jikang}, there is  $ \varepsilon_1 \in (0, \eta ] $ such that $ H^{\varepsilon_1}_1(X)$ is trivial. By Theorem \ref{sw},  all we need to show is that for $i$ large enough, $H_1^{\varepsilon_1/10}(X_i)$ has rank $\leq n -m $. By Corollary \ref{sw-gap}, there is a sequence $\rho_i \to 0$ with $H_1^{\rho_i}(X_i) = H_1^{\varepsilon_1/10}(X_i)$ for all $i$.   By Theorem \ref{normal-abelian}, there are $\varepsilon _ 0 > 0 $, $C_0 \in \mathbb{N}$, subgroups $G_i \leq H_1(X_i)$, and a sequence $\delta_i \in [3 \varepsilon _ 0 , \varepsilon _1 /10] $ with the property that for each $x \in X_i^{\prime}$, 
\begin{center}
$\mathcal{G}( H_1(X_i),  X_i^{\prime}, x, \delta_i)$ contains $G_i$ as a subgroup of index $\leq C_0$.
\end{center}
Let $ x_1, \ldots , x_m  \in X$ be such that 
\[X = \bigcup_{j=1}^m B_{x_j}( \varepsilon_0/3,X),\]
 and for each $j$, choose $x^i_j \in X_i$ converging to $x_j$. Then for large enough $i$, the balls $B_{x^i_j}( \varepsilon_0/2, X_i)$ cover $X_i$. This implies that for large enough $i$, each ball of radius $\rho_i$ in $X_i$ is contained in a ball of the form $B_{x^i_j}( \varepsilon_0, X_i)$. Hence if we let $\mathcal{U}_i$ denote the family $\{ B_{x_j^i}(  \delta_i /3 , X_i )   \}_{j=1}^m$ for each $i$,  by Lemma \ref{refine} we get
\[ H_1^{\rho_i}(X_i) \leq  H_1^{ \mathcal{U}_i}( X_i ) \leq  H_1^{\varepsilon_1/10}(X_i) = H_1^{\rho_i} (X_i) .      \]
If we choose $y_j^i \in X_i^{\prime}$ in the preimage of $x_j^i$, then by construction of $H_1^{\mathcal{U}_i}(X_i)$ we have 
\[ H_1^{\mathcal{U}_i} (X_i) \leq  \left\langle  \bigcup_{j=1}^m \mathcal{G}(H_1(X_i), X_i^{\prime}, y_j^i ,  \delta_i  )       \right\rangle .    \]
 Since $H_1^{\mathcal{U}_i} (X_i)$ is abelian, the index of $G_i $ in $H_1^{\mathcal{U}_i}(X_i)$ is at most $C_0^m$. Therefore, the rank of $H_1^{\varepsilon_1 /10}(X_i)$ equals the rank of $G_i$ for $i$ large enough. Set $\Gamma_i  : =  \mathcal{G}( H_1(X_i), X_i^{\prime}, q_i , \eta ) $. 
 
 Since $\eta \geq \varepsilon_1$,  $\Gamma_i$ contains $G_i$, and since $\Gamma_i = \mathcal{G}( H_1(X_i), X_i^{\prime}, q_i , \eta_i )  $ and $\eta_i \leq \varepsilon_0$ for large enough $i$, the index of $G_i$ in $\Gamma_i$ is finite. Hence the following claim implies Theorem \ref{b1}. 

\begin{center}
\textbf{Claim:} For $i$ large enough, $\Gamma_i$ has rank $\leq n - m$.
\end{center}
Let $\lambda_i \to \infty$ be a sequence that diverges so slowly that $\lambda_i \eta_i \to 0$ and the sequence $(\lambda_i X_i , p_i)$ converges in the pGH sense to $(\mathbb{R}^{m}, 0)$. We can achieve this since $p$ is $m$-regular and $\eta_ i \to 0$. After taking a subsequence, we can assume the sequence $(\lambda_i X_i^{\prime}, q_i)$ converges in the pmGH sense to an $RCD^{\ast}(K,N)$ space $(Y,q)$, and the groups $H_1(X_i)$ converge equivariantly to some group $\Gamma \leq Iso (Y)$. Since all elements of $H_1(X_i) \backslash \Gamma_i$ move $q_i$ at least $\eta \lambda_i$ away, $\Gamma_i$ converges equivariantly to $\Gamma$ as well. From the definition of equivariant convergence, it follows that the $\Gamma_i$-orbits of $q_i$ converge in the pGH sense to the $\Gamma$-orbit of $q$. 

By Proposition \ref{gigli-corollary}, $Y$ splits isometrically as a product $\mathbb{R}^{m} \times Z$ with $Z$ an $RCD^{\ast}(0, N- m)$ space of rectifiable dimension $\leq n-m$, such that the $Z$-fibers coincide with the $\Gamma$-orbits. By Remark \ref{homogeneous}, $Z$ has topological dimension $\leq n - m$, and by Theorem \ref{zamora-1}, the rank of $\Gamma_i$ is at most $n - m$ for large enough $i$.
\end{proof} 

\noindent \textbf{Acknowledgments.} The authors would like to thank Andrea Mondino, Raquel Perales, Anton Petrunin, Guofang Wei for interesting discussions and helpful comments, and an anonymous referee for comments and suggestions that improved the presentation of the paper. Both authors were supported by Postdoctoral Research Fellowships granted by the Max Planck Institute for Mathematics in Bonn.

\end{document}